\documentclass{amsart}
\usepackage{amssymb}

\newtheorem{thm}{Theorem}[section]
\newtheorem{lem}[thm]{Lemma}

\newtheorem{cor}[thm]{Corollary}
\newtheorem{prop}[thm]{Proposition}

\theoremstyle{definition}  
\newtheorem{example}[thm]{Example}
\newtheorem{definition}[thm]{Definition}
\newtheorem{notation}[thm]{Notation}

\numberwithin{equation}{section}

\usepackage{pdflscape}

\usepackage{pgf}
\usepackage{tikz}
\usetikzlibrary{arrows,automata}
\usepackage[latin1]{inputenc}
\usepackage{verbatim}

\newcommand{\add}{(+1)}
\newcommand{\Z}{\mathbb Z}
\newcommand{\N}{\mathbb N}

\DeclareMathOperator{\Stab}{Stab}
\DeclareMathOperator{\Aut}{Aut}
\DeclareMathOperator{\Id}{Id}

\usepackage{color}



\newcommand{\Aa}{{\mathcal A}}

\newcommand{\Gg}{{\mathcal G}}

\newcommand{\Rr}{{\mathcal R}}

\newcommand{\Tt}{{\mathcal T}}

\newcommand{\id}{{\mathbf 1}}








\newcommand{\one}{{\mathbf 1}}

\newcommand{\tc}{G^{(2)}}

\newcommand{\Xmax}{X_{max}}

\newcommand{\pmax}{\pi_{max}}
\newcommand{\M}{\mathcal M}

\newcommand{\rst}{right topological}

\newcommand{\rset}{I_\theta}
\newcommand{\Zl}{{ \Z_\ell}}
\newcommand{\Ef}{E^{fib}}
\newcommand{ \Sstr}{structural semigroup}
\newcommand{\fg}{\mathfrak f}
\newcommand{\Cf}{\mathrm{Cov}}
\newcommand{\Ct}{\Cf_{\Et}}
\newcommand{\Et}{\Tt}
\newcommand{\Gstr}{G_\theta}
\newcommand{\Glstr}{\overline{\Gamma}_\theta}

\newcommand{\Sfib}{\M^{fib}_0(X_\theta)}
\newcommand{\RSTp}{\mathcal G}
\newcommand{\RSTfp}{\RSTp^{fib}}
\newcommand{\RST}{\RSTp_\theta}
\newcommand{\RSTf}{{\RST^{fib}}}

\newcommand{\evo}{{\mathfrak e}{\mathfrak v}_0}

\thanks{J.K.\ would like to thank Marcy Barge for discussions about the Ellis semigroup of substitutional systems. Both authors thank Eli Glasner and Olivier Mathieu for their useful comments. This project has received funding from the European Research Council (ERC) under the European Union's Horizon 2020 
research and innovation programme under the Grant Agreement No 648132. }

\begin{document}
\title[The Ellis semigroup of bijective substitutions]{The Ellis semigroup of bijective substitutions}

\author{Johannes Kellendonk}
\address{
	 Institut Camille Jordan, Universit\'{e} Lyon-1, France
	}
\email{kellendonk@math.univ-lyon1.fr}

\author{Reem Yassawi}
\address{ School of Mathematics and Statistics, The Open University, U.K. and Institut Camille Jordan, Universit\'{e} Lyon-1, France 
}
\email{reem.yassawi@open.ac.uk}

\subjclass[2010]{ 37B15, 54H20, 20M10}

\begin{abstract}
For topological dynamical systems $(X,T,\sigma)$ with abelian group $T$ which admit an equicontinuous factor 
$\pi:(X,T,\sigma)\to (Y,T,\delta)$ the Ellis semigroup $E(X)$ is an extension of $Y$ by its subsemigroup $ \Ef(X)$ of elements which preserve the fibres of $\pi$.
We establish methods to compute $\Ef(X)$ and use them to determine the Ellis semigroup 
of dynamical systems arising from primitive aperiodic bijective substitutions. 
As an application we show that for these substitution shifts, the virtual automorphism group is isomorphic to the classical automorphism group.
\end{abstract}

\maketitle

\section{Introduction}

Consider $(X,T,\sigma)$, the action $\sigma$ of an abelian group (or semigroup) $T$ by homeomorphisms on a compact  metrisable space $X$. 
The {\em Ellis semigroup} $E(X)$ of   $(X,T,\sigma)$ is {the} compactification of the group action in the topology of pointwise convergence on  $X^X$. 
The study of its topological and algebraic structure, which  was initiated by Ellis \cite{Ellis-1960}, reveals dynamical properties of $(X,T,\sigma)$ and is 
consequently an area of active study. 

One topological property which has recently incited a lot of interest is tameness:   $(X,T,\sigma)$ is {\em tame} if $E(X)$ is the sequential compactification of the action \cite{Huang-2006, glasnerCM}, that is, each element of $E(X)$ is a limit of a sequence (as opposed to a limit of a net, or generalised sequence) of homeomorphisms coming from the group action. 
This can be expressed purely using cardinality: $E(X)$ is tame if and only if its cardinality
is at most that of the continuum 
 \cite{glasnerCM}. 
 Tameness implies, for instance, the following dynamical property \cite{Huang-2006,glasner2018structure,fuhrmann2018irregular}:  
If a compact metrisable minimal system which admits an invariant measure is tame, then it is a {\em $\mu$-almost one to one} extension of its maximal equicontinuous factor. Here $\mu$ is the unique ergodic probability measure on the maximal equicontinuous factor of $(X,T,\sigma)$, and $\mu$-almost one-to-one means that the set of points in the maximal equicontinuous factor which have a unique pre-image under the factor map has full $\mu$-measure. As soon as all fibres of the maximal equicontinuous factor map contain more than one point, the system is thus not tame.

Systematic investigations focussing on the algebraic structure of $E(X)$ are to our knowledge, restricted to the question of when $E(X)$ is a group, when it has a single minimal left ideal, or, in the case of $T=\Z^+$, when its adherence subsemigroup is left simple. $E(X)$ is a group if and only if $(X,T,\sigma)$ is {\em distal} (proximality is trivial), 
$E(X)$ has a single minimal left ideal if and only if proximality is transitive (see, for instance, \cite{Auslander}), and the adherence subsemigroup is left simple if and only if forward proximality implies forward asymptoticity \cite{Blanchard}. 
Recently, a detailed computation of 
the Ellis semigroups of the dynamical systems arising from almost canonical projection method tilings \cite{Aujogue,Aujogue-Barge-Kellendonk-Lenz} has exhibited another algebraic structure which seems worthwhile investigating, namely 
the semigroups are all disjoint unions of groups. Semigroups which are disjoint unions of groups are precisely those which are {\em completely regular}, which means  that every element admits a generalised inverse with which it commutes. Ellis semigroups associated to almost canonical projection method tilings are tame \cite{Aujogue}. 

For the most part, good descriptions of Ellis semigroups are only currently available for tame systems.
The present paper arose from a desire to obtain explicit algebraic descriptions of Ellis semigroups for a class of dynamical systems which are not tame. We study the Ellis semigroup of systems $(X_\theta, \Z,\sigma)$ arising from  bijective substitutions 
$\theta$. The fibres of the maximal equicontinuous factor map of such systems are never singletons and so the resulting semigroup is not tame. They also enjoy two properties which we harness. The first is that for these $\Z$-actions, forward and backward  proximality are non-trivial and equal to forward and backward asymptoticity. We describe systems $(X,\Z,\sigma)$ with this property in Section \ref{Section:comp reg}, and show that their Ellis group $E(X)$ is the disjoint union of the acting group $\Z$ with its {\em kernel} $\M(X)$, that is, the smallest bilateral ideal of $E(X)$; in particular, $E(X)$ is  completely regular.
This reduces the task to the study of $\M(X)$.

The kernel $\M(X)$ of a compact sub-semigroup of $X^X$ is always {\em completely simple} and therefore can be described by the Rees-Suskevitch  theorem and its topological extensions (Theorems \ref{Rees-theorem}, \ref{thm-Rees-str-1}, \ref{thm-Rees-str-2}). 
This theorem characterises a completely simple semigroup  as a {\em matrix semigroup} $M[G;I,\Lambda;A]$, where $G$ is the so-called {\em structure group}, where $I$ and $\Lambda$ index the right and left ideals respectively, and where $A$ is a matrix through which the semigroup  operation is defined. Its entries are elements of $G$ which specify the idempotents. 

The second property that bijective substitution systems enjoy is that they  are {\em unique singular orbit systems}. This means that they have exactly one orbit of singular fibres 
(fibres of the factor map on which proximality is non-trivial) over an equicontinuous factor.  We study these systems in Section~\ref{fibre-preserving} in a way which can be summarised as follows.
Given an  equicontinuous factor $\pi:(X,T,\sigma)\to (Y,T,\delta)$ we obtain a
short exact sequence of right-topological semigroups for the Ellis semigroup which restricts to a short exact sequence of its kernel
\begin{equation}\label{eqn:exact}
 \Ef(X) \hookrightarrow E(X) \stackrel{\tilde\pi} \twoheadrightarrow E(Y)\cong Y \quad \mathrm{and}\quad  \M^{fib}(X) \hookrightarrow \M(X) \stackrel{\tilde\pi} \twoheadrightarrow \M(Y)\cong Y.
\end{equation}
Here $\Ef(X)$ is the subsemigroup of functions which preserve the fibres of the factor map $\pi$, and  $\M^{fib}(X)$ is the kernel of $\Ef(X)$. The two matrix semigroups associated to  $\M^{fib}(X)$ and $\M(X)$ via the Rees-Suskevitch  theorem are related (when properly normalised): they share the same $I$, $\Lambda$ and $A$, and their corresponding structure groups form an exact sequence
\begin{equation}\label{eqn:exact-1}
  \RSTfp \hookrightarrow \RSTp \stackrel{\tilde\pi} \twoheadrightarrow  Y,
\end{equation} 
derived from the above (\ref{eqn:exact}). Finally we make one further reduction: we restrict $\M^{fib}(X)$ to a  singular fibre and obtain again a completely simple semigroup to which we can apply the Rees-Suskevitch  theorem. If $Y$ contains a single $T$-orbit, say that of $y_0$, such that $\pi^{-1}(y_0)$ is singular, then the restriction of  $\M^{fib}(X)$ to $\pi^{-1}(y_0)$, denoted $\M^{fib}_{y_0}(X)$, has a matrix form which shares the same $I$, $\Lambda$ and $A$ as the other two matrix semigroups above. Furthermore, 
we show in 
Corollary~\ref{cor:reproducing M} that, if the singular fibres are finite and the idempotents generate $\M^{fib}_{y_0}(X)$, then the structure group $\mathcal G^{fib}$ equals the infinite Cartesian product $G_\pi^{Y/T}$,  where $G_\pi$ is the structure group of 
$\M^{fib}_{y_0}(X)$, and $Y/T$ is the space of $T$-orbits of $Y$. We thus obtain a description of $\M(X)$ through the finite semigroup $\M^{fib}_{y_0}(X)$ and the extension (\ref{eqn:exact-1}). We prove that the extension is algebraically split so that $\RSTp$ is a semidirect product of $\RSTfp$ with $Y$.
While $\M^{fib}(X)$ is topologically isomorphic its matrix semigroup representation, $\M(X)$ is only algebraically isomorphic to it.  

The dynamical system $(X_\theta,\Z, \sigma)$ associated to a primitive aperiodic bijective substitution of length $\ell$ has a natural equicontinuous factor, namely the adding machine $(\Z_\ell,(+1))$, and only the orbit of $0\in\Z_\ell$ has singular fibres. We use the hierarchical symmetry defined by the substitution $\theta$ to compute the matrix form of 
$\M^{fib}_{0}(X_\theta)$ in  Theorem~\ref{thm-RMG},
  \[  E_0^{fib}(X_\theta)\backslash \{\Id\} = \M^{fib}_{0}(X_\theta)\cong
   M[G_\theta;I_\theta,\Lambda;A].\]    
$M[G_\theta;I_\theta,\Lambda;A]$ is a finite semigroup, to which we refer also as the {\em structural semigroup} of the substitution. The structure group $G_\theta$ has already appeared in work by  Lemanczyk and Mentzen in \cite{L-M} who identify it as the object whose centraliser completely encodes the {\em essential centraliser} of $(X_\theta,\sigma)$.

Provided that the smallest normal subgroup of $G_\theta$ which contains the group generated by the entries of $A$, which we denote  by $\Glstr$, is all of $G_\theta$, Theorem~\ref{thm-main2} gives a complete description  of $E(X_\theta)$ from  $E_0^{fib}(X_\theta)$. In particular,   $E(X_\theta)\backslash \Z$ is completely simple and there is a semigroup isomorphism
\begin{equation}\label{eq:one}E(X_\theta)\backslash \Z = \M(X_\theta)\cong M[\Gstr^{\Z_\ell/\Z}\rtimes \Z_\ell;I_\theta,\Lambda;A].\end{equation}
On the way to achieving this we also show that  $\Ef(X_\theta)$ is topologically isomorphic to 
$$\Ef(X_\theta) \cong (      M[G_\theta;I_\theta,\Lambda;A]            \cup \{\Id\}) \:\:\times \prod_{\stackrel{[z]\in\Z_\ell /\Z}{\scriptscriptstyle{[z]\neq [0]}}}\Gstr ,$$
and this isomorphism makes clear where the non-tameness comes from.

In general, $\Glstr$ can be a proper subgroup of $\Gstr$, but the quotient group $\Gstr/\Glstr$ is always a cyclic group. We call its order $h$ the  {\em generalised height} of the substitution. $h$ is at least as large as the classical height of a constant length substitution,  and we give in Section \ref{Examples} examples where it is  strictly larger. It is related to the topological spectrum of the dynamical system which is given by the action of $\Z$ on a minimal left ideal of $E(X_\theta)$, and $E(X_\theta)$ factors onto $\Z/h\Z$. In other words, $E(X_\theta)$ is a graded semigroup and its calculation can be reduced to its elements of degree $0$. In the case of nontrivial generalised height our result is Theorem~\ref{thm-main4}.  Here, with the assumption that the generalised height equals the classical height, we are able to describe $E(X_\theta)$ algebraically in a similar way as in the trivial height case, but with the structure group $G_\theta$ replaced with $\bar\Gamma_\theta$. However, when the generalised height is strictly larger than the classical height, the extension problem (\ref{eqn:exact-1}) remains unsolved.

In Section \ref{Ellis-group}, we apply our machinery to partly answer a  recent question of Auslander and Glasner in \cite{AG-2019}. They
define the notion of a {\em semi-regular} dynamical system, and ask  whether a minimal, point distal shift which is not distal can be semi-regular. They show that the  Thue-Morse  shift  is semi-regular. We extend their result, by showing  in Corollary \ref{Vag=Aut}, that 
the shift generated by a primitive aperiodic bijective substitution is  semi-regular. Implicitly, we relate the structure group $G_\theta$ to the {\em virtual automorphism group} that Auslander and Glasner define.

Our work is related to recent work of Staynova \cite{Staynova}, in which she computes the minimal idempotents of the Ellis semigroup for dynamical systems of bijective substitutions $\theta$ that are an {\em AI extension} of their maximal equicontinuous factor. In other words,  $(X_\theta, \sigma)$ is an isometric extension, via $f:X_\theta\rightarrow X_\phi$, of a constant length substitution shift $(X_\phi, \sigma)$, which is in turn an almost one-to-one extension, via $\pmax : X_\phi\rightarrow \Xmax$, of its maximal equicontinuous factor. Martin \cite{martin} characterises the bijective substitutions that are AI extensions of their maximal equicontinuous factor using a combinatorial property on the set of two-letter words allowed for $\theta$, namely that they are {\em partitioned} into sets according to what indices they appear at, as we scan all fixed points.  Staynova uses the functoriality of the Ellis semigroup construction, namely that a map between dynamical systems induces a semigroup morphism between their Ellis semigroups, 
and the fact that the Ellis semigroup of an equicontinuous system is a group, thus having exactly one idempotent. Using Martin's combinatorial condition, she first computes the preimages of that idempotent in $E(X_\phi)$. Apart from the identity map, all pre-images are minimal idempotents and live in the unique minimal left ideal. She then pulls this information up through the factor map $f$ to find that each of these minimal idempotents has two preimages, one for each minimal left ideal in $E(X_\theta)$. 

Our work goes beyond the results of Staynova in several respects. First, our techniques apply to all bijective substitutions. Indeed it is easy to define substitutions that do not satisfy Martin's criterion, so that their dynamical systems are not AI extensions of their maximal equicontinuous factor (see Example \ref{Martin}). Second, we do not only determine the idempotents, but the complete algebraic structure of  $E(X_\theta)$, at least if generalised height is not larger than classical height.
\bigskip

This paper is organised as follows. In Section \ref{preliminaries} we provide the necessary background on semigroups and the Ellis semigroup of a dynamical system, and study $\Z$-actions for 
which forward and backward proximality implies forward and backward asymptoticity, respectively.
In Section \ref{fibre-preserving} we study the Ellis semigroup for dynamical systems which have a single orbit of singular fibres under an equicontinuous factor map. In Section \ref{bijective}, we study in detail the Ellis semigroup of a bijective substitution dynamical system, and give an algorithm that computes its structural semigroup. In Section \ref{Ellis-group} we apply our results to investigate the virtual automorphism group of bijective substitution shifts. We end in Section \ref{Examples} with some examples.


\section{Preliminaries}\label{preliminaries}
The literature on the algebraic aspects of semigroups is vast and, although our work is partly  based on now classical results from the the forties we provide some background to the reader, {who may not be familiar with the basic material.} This can all be found in \cite{howie1995fundamentals}. We then recall the basic definitions and results on the Ellis semigroup of topological dynamical systems. These can mostly be found in \cite{Auslander} or \cite{hindman}.

\subsection{Semigroups, basic algebraic notions}

A {\em semigroup} is a set $S$ with an associative binary operation, which we denote multiplicatively. {Some of the semigroups in this paper  have an identity element, but some do not.  However they  will never have a $0$ element.}

A {\em normal inverse} to $s\in S$ is an element $t\in S$ such that $sts = s$, $tst = t$ and $st=ts$. A general element in a general semigroup need not  admit a normal inverse, but if it exists, it is unique. We may therefore denote it by $s^{-1}$.
A semigroup is called {\em completely regular} if every element admits a normal inverse. Completely regular semigroups have been studied in great detail \cite{petrich1999completely}. They are exactly the semigroups which may be written as disjoint unions of groups, i.e.\ 
$S=\bigsqcup_{i} \Gg_i$ such that multiplication restricted to $\Gg_i$ defines a group structure \cite[Theorem~II.1.4]{petrich1999completely}. The normal inverse of $s\in \Gg_i$ is then its group inverse in $\Gg_i$.  

Of particular importance in the analysis of a semigroup are its
idempotents and its ideals.
An idempotent of a semigroup $S$ is an element $p\in S$ satisfying $pp=p$.
The set of idempotents of $S$ is partially ordered via
$p\leq q$ if $p = pq = qp$. 
An idempotent is called {\em minimal} if it is minimal w.r.t.\ the above order. 
In general, we cannot expect to have minimal idempotents.

A (left, right, or bilateral) ideal of a semigroup $S$ is a {nonempty} subset $I\subseteq S$ satisfying
$SI\subseteq I$, $IS\subseteq I$, {or} $SI\cup IS\subseteq I$ {respectively}. The different kind of ideals will play different roles below. When we simply say ideal we always mean bilateral ideal. 
A semigroup is called {\em simple} if it does not have any proper ideal, and left simple if it does not have any proper left ideal. Note that a left simple semigroup is simple.
(Left, right, or bilateral)  
ideals are ordered by inclusion. A {\em minimal} (left, right, or bilateral) ideal 
is a minimal element w.r.t.\ this order, that is, a (left, right, or bilateral) ideal is minimal  if it does not properly contain another (left, right, or bilateral) ideal. In general, we cannot expect to have minimal ideals, but their existence in our specific context will be guaranteed for by compactness, see below. 
 
Whereas the intersection of two left ideals may be empty, this is not the case for the intersection of two bilateral ideals, or the intersection of a left ideal with a bilateral ideal.
Therefore the intersection of all bilateral ideals of a semigroup $S$ is either the unique minimal ideal of $S$, also called the {\em kernel} of $S$, or the intersection is empty, in which case $S$ does not admit a minimal ideal.  
The kernel of a semigroup without zero element is always simple \cite{howie1995fundamentals}.

Related to left and right ideals are the so-called Green's equivalence relations. Two elements $x,y\in S$ are $\mathcal L$-related if they generate the same left ideal, that is, there are $s,s'\in S$ such that $x=sy$ and $y = s'x$. Likewise  $x,y\in S$ are $\mathcal R$-related if they generate the same right ideal.
The intersection of the $\mathcal L$-relation with the $\mathcal R$-relation is called the $\mathcal H$-relation. The relation generated by  the $\mathcal L$-relation and the $\mathcal R$-relation, that is the join of $\mathcal L$ and $\mathcal R$,  is called the $\mathcal D$-relation. The relations $\mathcal L$ and $\mathcal R$ commute, so  $x$ and $y$ are $\mathcal D$-related if there is a $z$ such that $x$ and $z$ are $\mathcal L$-related and $z$ and $y$ are $\mathcal R$-related. Two results are of importance for what follows: First, an $\mathcal H$-class of $S$ which contains an idempotent is a subgroup of $S$ whose neutral element is the idempotent \cite[Corollary~2.26]{howie1995fundamentals}, and second, two $\mathcal H$-classes containing idempotents and which belong to a common $\mathcal D$-class must be isomorphic as groups \cite[Proposition~2.3.6]{howie1995fundamentals}.


\subsection{Simple semigroups and the Rees matrix form}\label{matrix-form}
Let $G$ be a group, let  $I$ and $\Lambda$ be  non-empty sets, and  let   
$A = (a_{\lambda i})_{\lambda\in \Lambda,i\in I}$ be a  $\Lambda\times I$ matrix with entries from $G$. Then the  {\em matrix semigroup $M[G;I,\Lambda;A]$} 
 is the set $I\times G \times  \Lambda$ together with the multiplication
\begin{equation*} (i,g,\lambda)(j,h,\mu) = (i, g a_{\lambda  j} h,\mu).\end{equation*}
The matrix $A$ is called the {\em sandwich matrix} and the group $G$ is called the {\em structure group}. 

It is an easy exercise to determine the idempotents and the left and the right ideals of $M[G;I,\Lambda;A]$. Indeed, an idempotent is of the form 
\begin{equation*}\label{idempotent-form}(i,a^{-1}_{\lambda i},\lambda),\end{equation*}
the  left ideals are the sets $I\times G\times\Lambda'$, $\Lambda'\subset \Lambda$, and the right ideals are $I'\times G\times \Lambda$, $I'\subset  I$. In particular, a completely simple semigroup has minimal left and minimal right ideals, namely those for which $\Lambda'$ or $I'$ contain a single element. These minimal left and right ideals are also the $\mathcal L$ and the $\mathcal R$ classes, and so the $\mathcal H$-classes are of the form
$\{i\}\times G\times\{\lambda\}$. $\{i\}\times G\times\{\lambda\}$
 is a subsemigroup of $M[G;I,\Lambda;A]$ which is a group. 
The identity element of this group is  the idempotent $(i,a^{-1}_{\lambda i},\lambda)$. It is isomorphic to $G$ via the isomorphism $(i,g,\lambda)\mapsto a_{\lambda i}g$.  
The normal inverse of 
$(i,g,\lambda)$ is $(i,a_{\lambda i}^{-1}g^{-1} a_{\lambda i}^{-1},\lambda)$.
In particular, a matrix  semigroup  as defined above is completely regular.

A {\em completely simple} semigroup is a simple semigroup which has minimal 
idempotents.
We have the following characterisation of completely simple semigroups.\footnote{Recall that we excluded the case that $S$ has a $0$-element. 
For semigroups with $0$-element there is an analogous but slightly different characterisation \cite{howie1995fundamentals}.}

{\begin{thm}[Rees-Suskevitch]\label{Rees-theorem}
A semigroup is completely simple if and only if it is isomorphic to a matrix semigroup $M[G;I,\Lambda;A]$ for some group $G$.
\end{thm}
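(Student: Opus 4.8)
The plan is to prove both directions of the Rees--Suskevitch characterisation. The easy direction is already essentially done in the excerpt: we observed above that any matrix semigroup $M[G;I,\Lambda;A]$ has all left ideals of the form $I\times G\times\Lambda'$ and all right ideals of the form $I'\times G\times\Lambda$, so its only bilateral ideal is the whole semigroup (any bilateral ideal must be of both forms, forcing $I'=I$, $\Lambda'=\Lambda$), hence it is simple; and we exhibited its idempotents $(i,a_{\lambda i}^{-1},\lambda)$ together with the order on idempotents, which one checks directly makes every idempotent minimal (if $(j,a_{\mu j}^{-1},\mu)\le (i,a_{\lambda i}^{-1},\lambda)$ then multiplying out the defining equations forces $i=j$ and $\lambda=\mu$). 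So $M[G;I,\Lambda;A]$ is completely simple.

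For the converse, let $S$ be completely simple. First I would fix a minimal idempotent $p$; since $S$ is simple, $SpS=S$, and using minimality of $p$ one shows $Sp$ is a minimal left ideal and $pS$ a minimal right ideal, and that $S$ is the disjoint union of its minimal left ideals (equivalently its $\mathcal L$-classes) and likewise of its minimal right ideals. The key structural facts to assemble, all quotable from \cite{howie1995fundamentals}: every $\mathcal H$-class is contained in a single $\mathcal L$-class and a single $\mathcal R$-class, the intersection of a minimal left ideal with a minimal right ideal is a single $\mathcal H$-class, each such $\mathcal H$-class containing an idempotent is a group, and any two group $\mathcal H$-classes in a common $\mathcal D$-class are isomorphic. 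In a completely simple semigroup there is a single $\mathcal D$-class (it is all of $S$), so all $\mathcal H$-classes are groups isomorphic to a fixed group $G$, and moreover every $\mathcal H$-class contains an idempotent.

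Now I would set up the coordinates. Let $I$ index the minimal right ideals $\{R_i\}$ and $\Lambda$ the minimal left ideals $\{L_\lambda\}$, so that $H_{i\lambda}:=R_i\cap L_\lambda$ is a group $\mathcal H$-class with idempotent $e_{i\lambda}$. Fix a basepoint, say the $\mathcal H$-class containing $p$, reindex so $p=e_{1,1}$, and choose, for each $i$, an element $q_i\in H_{i,1}$ with $q_i q_i' = e_{1,1}$ for a chosen inverse $q_i'\in H_{1,i}$ (these exist by Green's lemma / the translation arguments in \cite{howie1995fundamentals}); similarly choose $r_\lambda\in H_{1,\lambda}$ with inverses $r_\lambda'\in H_{\lambda,1}$. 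Then define $\Phi:S\to I\times G\times\Lambda$ by writing each $s\in H_{i,\lambda}$ as $s = q_i\, g\, r_\lambda$ with $g := q_i' s r_\lambda' \in H_{1,1}\cong G$, and set $a_{\lambda i} := r_\lambda q_i \in H_{1,1}\cong G$. One then checks that $\Phi$ is a bijection and that $\Phi(st) = \Phi(s)\Phi(t)$ reduces exactly to the identity $(q_i g r_\lambda)(q_j h r_\mu) = q_i (g\, (r_\lambda q_j)\, h)\, r_\mu$, i.e.\ multiplication in $M[G;I,\Lambda;A]$ with sandwich matrix $A=(a_{\lambda i})$. This gives the isomorphism $S\cong M[G;I,\Lambda;A]$.

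The main obstacle is the bookkeeping in the converse direction: one must verify carefully that the chosen coset representatives $q_i,r_\lambda$ behave well under multiplication — in particular that for $s\in H_{i,\lambda}$ and $t\in H_{j,\mu}$ the product $st$ lands in $H_{i,\mu}$ (this uses $R_iL_\mu=R_i\cap L_\mu\neq\emptyset$ and that products respect $\mathcal R$ on the left and $\mathcal L$ on the right), and that $q_i' s r_\lambda'$ really lies in the fixed group $H_{1,1}$ rather than merely being isomorphic to an element of it. These are the standard but slightly delicate applications of Green's lemma; everything else is formal. I would organise the writeup so that these Green-relation lemmas are cited as black boxes from \cite{howie1995fundamentals}, keeping the argument to the construction of $\Phi$ and the one-line multiplication check.
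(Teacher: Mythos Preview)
Your proposal is correct and follows exactly the construction the paper sketches (deferring details to Howie): index the $\mathcal R$- and $\mathcal L$-classes, fix a base $\mathcal H$-class $G=H_{1,1}$, choose representatives $q_i\in H_{i,1}$ and $r_\lambda\in H_{1,\lambda}$, set $a_{\lambda i}=r_\lambda q_i$, and check that $(i,g,\lambda)\mapsto q_i g r_\lambda$ is an isomorphism. One bookkeeping slip to fix before writing it up: with $q_i\in H_{i,1}=R_i\cap L_1$ you have $q_i q_i'\in R_i$, so $q_i q_i'=e_{1,1}$ is impossible for $i\neq 1$; the identity you need is $q_i' q_i=e_{1,1}$ with $q_i'\in R_1$ (the symbol ``$H_{1,i}$'' does not typecheck, since $i\in I$ indexes right ideals), and dually $r_\lambda r_\lambda'=e_{1,1}$ with $r_\lambda'\in L_1$.
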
}
A proof of this theorem can be found in almost any textbook on semigroups. Since this result will be important in what follows we give a partial sketch of how to construct a Rees matrix from for a completely simple semigroup $S$. Proofs can be found in \cite{howie1995fundamentals}.
$S$ can be partitioned into its $\mathcal R$-classes, which we index by a set $I$. It can also be partitioned into its $\mathcal L$-classes, which we index by $\Lambda$. These partitions intersect yielding a partition into $\mathcal H$-classes. It can be shown that if $S$ is simple and contains an idempotent, then it consists of a single $\mathcal D$-class and that all its $\mathcal H$-classes contain an idempotent. In particular, $S$ is a disjoint union of groups which are all isomorphic. Moreover, each $\mathcal R$-class is a minimal right ideal and each $\mathcal L$-class is a minimal left ideal so that each $\mathcal H$-class is the intersection of a minimal right with a minimal left ideal.  Up to here, everything is canonical. But now we choose a minimal right ideal $R_{i_0}$ and a minimal left ideal $L_{\lambda_0}$ and set 
$$G := H_{i_0\lambda_0}$$
where we use the notation $H_{i\lambda} =  R_{i}\cap L_{\lambda}$.
As mentioned above, all other $\mathcal H$-classes are isomorphic to $G$, and indeed, given any $r_i \in H_{i\lambda_0}$ and $q_\lambda \in H_{i_0\lambda}$ 
\begin{eqnarray}\label{eq-iso1} 
&H_{i_0\lambda_0} \ni x \mapsto r_i x \in H_{i\lambda_0}& \\
\label{eq-iso2}
&H_{i_0\lambda_0} \ni x \mapsto  xq_\lambda \in H_{i_0\lambda}& 
\end{eqnarray}
are bijections which are group isomorphisms if $r_i$ and $q_\lambda$ are idempotents. Now the isomorphism between $G$ and the other $H_{i\lambda}$ will follow from the fact that $\mathcal L$ commutes with $\mathcal R$.
Taking into account these choices define the matrix $A=(a_{\lambda i})$
through
$$ a_{\lambda i} = q_\lambda r_i.$$
Then a direct calculation shows that
$$ M(G;I,\Lambda;A)\ni (i,g,\lambda) \mapsto r_i g q_\lambda \in S$$
yields the desired isomorphism.

We must ask how the Rees matrix form of a completely simple semigroup depends on the choices. The first choice is that of the right and left ideals indexed $i_0$ and $\lambda_0$, it defines the structure group $G=\mathcal H_{i_0 \lambda_0}$. 
A different choice will lead to a different but isomorphic structure group. An isomorphism can always be constructed using (\ref{eq-iso1},\,\ref{eq-iso2}). The second choice is that of the elements $r_i$ and $q_\lambda$. It affects the sandwich matrix. Indeed, one has the freedom to multiply any row of $A$ from the left and, independently, any column of $A$ from the right by an element of $G$ to obtain a sandwich matrix which defines an isomorphic semigroup. It is therefore possible to normalise $A$ in such a way that one of its rows and one of its columns contains only the identity element of $G$. More precisely, having chosen  
the right and left ideals indexed by $i_0$ and $\lambda_0$ we can always bring $A$ into its so-called {\em normalised} form by taking $r_i$ to be the unique idempotent of $\mathcal H_{i\lambda_0}$ and $q_\lambda$ to be the unique idempotent of $\mathcal H_{i_0\lambda}$ \cite[Theorem~3.4.2]{howie1995fundamentals}. Up to the choice of $i_0$ and $\lambda_0$ this {\em normalised} Rees matrix form is then unique. Since any pair $(i,\lambda)\in I\times \Lambda$ determines a unique idempotent of $S$ we can also formulate this as follows: once we have chosen an idempotent of $S$, typically denoted $e$, we obtain a unique normalised Rees matrix form for $S$. To be precise we call this {\em the normalised Rees matrix form for $S$ w.r.t.\ $e$.} In what follows the use of $e$ refers to this chosen minimal idempotent.

Given a normalised matrix semigroup $M(G;I,\Lambda;A)$ w.r.t.\ $e$,
we call the subgroup $\Gamma$ of $G$ generated by the coefficients 
$a_{\lambda i}$ of $A$ the {\em little structure group}.  
\begin{lem} \label{lem-lsgroup}
Consider a normalised matrix semigroup $M(G;I,\Lambda;A)$ w.r.t.\ $e=(i_0,1,\lambda_0)$. 
The subsemigroup of $M(G;I,\Lambda;A)$ which is generated by the idempotents is equal to $M(\Gamma;I,\Lambda;A)$. 
\end{lem}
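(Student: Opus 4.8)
The plan is to show the two inclusions $M(\Gamma;I,\Lambda;A) \subseteq \langle \mathrm{idempotents}\rangle \subseteq M(\Gamma;I,\Lambda;A)$, where $\Gamma$ is the little structure group. Throughout I use that the idempotents of the normalised semigroup are exactly the elements $e_{i\lambda} := (i, a_{\lambda i}^{-1}, \lambda)$ for $(i,\lambda)\in I\times\Lambda$, and that normalisation w.r.t.\ $e=(i_0,1,\lambda_0)$ means $a_{\lambda_0 i} = 1$ for all $i\in I$ and $a_{\lambda i_0} = 1$ for all $\lambda\in\Lambda$.

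For the easy inclusion, I would first compute a general product of idempotents. Using the multiplication rule, $e_{i\lambda} e_{j\mu} = (i, a_{\lambda i}^{-1} a_{\lambda j} a_{\mu j}^{-1}, \mu)$, and an induction shows that a product $e_{i_1\lambda_1} e_{i_2 \lambda_2}\cdots e_{i_n\lambda_n}$ has first coordinate $i_1$, last coordinate $\lambda_n$, and middle coordinate a product of terms of the form $a_{\lambda i}^{\pm 1}$ — hence an element of $\Gamma$. So the subsemigroup generated by the idempotents is contained in $I\times\Gamma\times\Lambda = M(\Gamma;I,\Lambda;A)$ (this set is closed under the multiplication precisely because the $a_{\lambda i}$ lie in $\Gamma$, so it is a genuine subsemigroup).

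For the reverse inclusion I must produce every element $(i,\gamma,\lambda)$ with $\gamma\in\Gamma$ as a product of idempotents. The key reduction is: it suffices to produce the elements $(i_0,\gamma,\lambda_0)$ for $\gamma\in\Gamma$, since $e_{i\lambda_0}\cdot (i_0,\gamma,\lambda_0)\cdot e_{i_0\lambda} = (i, a_{\lambda_0 i}^{-1} a_{\lambda_0 i_0}\gamma a_{\lambda_0 i_0}\ldots)$ — and here normalisation makes all the sandwich entries that appear equal to $1$, giving exactly $(i,\gamma,\lambda)$. Now for the generators: a single idempotent already gives $e_{i i_0}\cdot e_{i_0 \lambda}$-type products; more carefully, $e_{i_0\lambda}\, e_{i i_0}$ has middle coordinate $a_{\lambda i_0}^{-1} a_{\lambda i} a_{i_0 i}^{-1} = a_{\lambda i}$ by normalisation, so every generator $a_{\lambda i}$ of $\Gamma$ is realised (in the $\mathcal H$-class $\{i_0\}\times G\times\{\lambda_0\}$, after conjugating/multiplying by the idempotent $e_{i_0\lambda_0}=(i_0,1,\lambda_0)$ on the appropriate side to land back in position). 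Since $\{i_0\}\times G\times\{\lambda_0\}$ is a group with identity $e_{i_0\lambda_0}$ isomorphic to $G$, and the elements just constructed correspond under this isomorphism to the generators $a_{\lambda i}^{\pm1}$ of $\Gamma$ (inverses come for free in a group, or from the normal-inverse formula which again only involves sandwich entries), the subgroup they generate inside this $\mathcal H$-class is all of $\{i_0\}\times\Gamma\times\{\lambda_0\}$. Combined with the reduction, this yields all of $M(\Gamma;I,\Lambda;A)$.

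The main obstacle is purely bookkeeping: keeping track of exactly which sandwich-matrix entries appear in an iterated product and verifying that normalisation collapses the "boundary" ones to $1$ while leaving a product of the $a_{\lambda i}$ in the middle. There is no conceptual difficulty — the statement is essentially the observation that the idempotents live at the "corners" $a_{\lambda i}^{-1}$ and their products sweep out precisely the subgroup generated by the $a_{\lambda i}$ — but one should be careful that the generated \emph{subsemigroup} is automatically a subgroup here (it is, since $M(\Gamma;I,\Lambda;A)$ is itself completely simple, hence a union of groups, and contains the idempotent $e$), so "generated by idempotents as a semigroup" and "as a completely simple semigroup" coincide.
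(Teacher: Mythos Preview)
Your approach is essentially the paper's: show the easy inclusion by tracking that products of idempotents have middle coordinate in $\Gamma$, then for the reverse inclusion produce each $(i_0,a_{\lambda i},\lambda_0)$ as a product of idempotents and sandwich by $e_{i\lambda_0}$ and $e_{i_0\lambda}$ to reach an arbitrary $\mathcal H$-class. The paper's argument is nearly identical, only slightly more terse.

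One small point: your justification for obtaining the \emph{inverses} $a_{\lambda i}^{-1}$ is not quite right. Neither ``inverses come for free in a group'' nor ``$M(\Gamma;I,\Lambda;A)$ is completely simple'' shows that the \emph{subsemigroup} $K$ generated by the idempotents is itself closed under taking group inverses inside $H_{i_0\lambda_0}$; a subsemigroup of a group containing the identity is only a submonoid in general. The fix is a one-line computation you essentially already have at hand: $e\cdot e_{i\lambda}\cdot e = (i_0,1,\lambda_0)(i,a_{\lambda i}^{-1},\lambda)(i_0,1,\lambda_0) = (i_0,a_{\lambda i}^{-1},\lambda_0)$ by normalisation, so $a_{\lambda i}^{-1}$ lands in $K\cap H_{i_0\lambda_0}$ as well. (The paper's proof is equally brief on this step.)
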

\begin{proof} Let $K$ be the subsemigroup of $M(G;I,\Lambda;A)$ which is generated by the idempotents. By definition of the little structure group, $(i,G,\lambda)\cap K \subset (i,\Gamma,\lambda)$.
Normalisation implies $a_{\lambda i}=1$ provided $i=i_0$ or $\lambda=\lambda_0$. Given $a_{\lambda i}$ we know that $(i,a_{\lambda i}^{-1},\lambda)$ is an idempotent. Hence
$$(i_0,a_{\lambda i},\lambda_0)=(i_0,1,\lambda)(i,a_{\lambda i}^{-1},\lambda)(i,1,\lambda_0)\in (i_0,G,\lambda_0)\cap K.$$
This shows that $(i_0,\Gamma,\lambda_0)\subset (i_0,G,\lambda_0)\cap K$.  Hence also
$$(i,\Gamma,\lambda) = (i,1,\lambda_0) (i_0,\Gamma,\lambda_0)(i_0,1,\lambda) 
\subset (i,G,\lambda)\cap K.$$
This shows that $M(G;I,\Lambda;A)\cap K = M(\Gamma;I,\Lambda;A)$.
\end{proof}

\subsubsection{Example }\label{ex:matrix-semigroup-ex}
We consider a class   
of matrix semigroups $M[G;I,\{\pm\};A]$ which will play a major role later. For this family,  $G$ is a finite group with neutral element  $\one$
and $I\subseteq G$ is a subset which generates $G$. Fix $g_0\in I$. Let $\Lambda = \{+,-\}$ be a set of two elements. Define the $\Lambda\times I$ matrix $A=(a_{\lambda i})_{\lambda i}$
\begin{equation}\label{eq:matrix} a_{+\,g} = \one \qquad a_{-\,g} = g_0 g^{-1} \end{equation}
Then $   M[G;I,\{\pm\};A]    $ has   $2|I||G|$ elements 
of which $2|I|$ are idempotents.
Note that $M[G;I,\{\pm\};A]$ is normalised w.r.t.\ the idempotent $e=(g_0,\one,+)$.
\begin{lem}\label{lem-JSG} With the notation above, the little structure group of 
$M[G;I,\{\pm\};A]$ is the group generated by $g h^{-1}$, $g,h\in I$.
\end{lem}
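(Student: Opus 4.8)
The plan is to apply Lemma~\ref{lem-lsgroup} to the explicit family $M[G;I,\{\pm\};A]$ with sandwich matrix given by \eqref{eq:matrix}. By that lemma, the little structure group $\Gamma$ is generated by the entries $a_{\lambda i}$ of $A$. The entries in the row $\lambda=+$ are all $\one$, so they contribute nothing. The entries in the row $\lambda=-$ are the elements $a_{-\,g}=g_0 g^{-1}$ for $g$ ranging over $I$. Hence $\Gamma$ is the subgroup of $G$ generated by $\{g_0 g^{-1} : g\in I\}$. It therefore suffices to prove that this subgroup coincides with the subgroup $\Gamma'$ generated by $\{g h^{-1} : g,h\in I\}$.

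First I would check $\Gamma\subseteq\Gamma'$: since $g_0\in I$, each generator $g_0 g^{-1}$ of $\Gamma$ is of the form $g h^{-1}$ with $g:=g_0,h:=g$, hence lies in $\Gamma'$. For the reverse inclusion $\Gamma'\subseteq\Gamma$, take any $g,h\in I$ and write
\[ g h^{-1} = (g_0 h^{-1})(g_0 g^{-1})^{-1} = (g_0 h^{-1})(g g_0^{-1}). \]
Wait — more cleanly: $g h^{-1} = (g_0 g^{-1})^{-1}(g_0 h^{-1})$, and both $g_0 g^{-1}$ and $g_0 h^{-1}$ are generators of $\Gamma$, so their product (with one inverted) lies in $\Gamma$. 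Thus every generator of $\Gamma'$ lies in $\Gamma$, giving $\Gamma'\subseteq\Gamma$. Combining the two inclusions yields $\Gamma=\Gamma'$, which is the claim.

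There is essentially no obstacle here: the whole argument is the elementary observation that the cosets $\{g_0 g^{-1}: g\in I\}$ and the differences $\{g h^{-1}: g,h\in I\}$ generate the same subgroup, together with a citation of Lemma~\ref{lem-lsgroup} to identify $\Gamma$ with the group generated by the sandwich-matrix entries. The only point to be slightly careful about is that the normalisation hypothesis of Lemma~\ref{lem-lsgroup} is satisfied, which was already noted in the text: $M[G;I,\{\pm\};A]$ is normalised with respect to $e=(g_0,\one,+)$, since $a_{+\,g}=\one$ for all $g$ and $a_{-\,g_0}=g_0 g_0^{-1}=\one$.
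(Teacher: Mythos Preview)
Your argument is correct and essentially identical to the paper's one-line proof, which just records the identity $gh^{-1}=a_{-g}^{-1}a_{-h}$. One small slip: you cite Lemma~\ref{lem-lsgroup} for the fact that $\Gamma$ is generated by the entries of $A$, but that is the \emph{definition} of the little structure group; Lemma~\ref{lem-lsgroup} instead identifies the subsemigroup generated by idempotents with $M[\Gamma;I,\Lambda;A]$ and is not needed here.
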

\begin{proof} This follows directly from $g h^{-1} = a_{-g}^{-1} a_{-h}$.
\end{proof}

\subsection{Compact semigroups}\label{compact semigroups}
A {\em topological} semigroup is a semigroup $S$ equipped with a topology in which the multiplication map $S\times S\to S$ is (jointly) continuous. A semigroup (equipped with a topology) is called {\em right-topological} if, for any $s\in S$ {\em right multiplication} $\rho_s: S\to S$, $\rho_s(t) := ts$ is continuous. Note that this is equivalent to  multiplication $S\times S\to S$ being continuous in the {\em left} variable which is why 
the term left-topological is also sometimes employed. We follow here the terminology of \cite{hindman}. A topological semigroup is right-topological and left-topological (with the obvious definition), but the converse need not be true. 

Let $X$ be a topological space. The set $F(X)$ of functions $X\to X$ with the topology of pointwise convergence is 
the same as the infinite Cartesian product $X^X$ with product topology. It is  perhaps 
the simplest example of a right-topological semigroup, the semigroup product being composition of functions. If $X$ is compact then $F(X)$ is compact. Only if $X$ is discrete is $F(X)$ a topological semigroup. 
 
Let $\pi:X\to Y$ be a continuous surjection. We call the preimage $\pi^{-1}(y)$ the $\pi$-fibre of $y$. Let $F^{fib}(X)\subset F(X)$ be the subsemigroup of all functions $X\to X$ which preserve the $\pi$-fibres. Since fibres are closed subspaces of $X$, $F^{fib}(X)$ is a closed subsemigroup of $F(X)$. 
We can view $f\in F^{fib}(X)$ as a function $\tilde f$ on $Y$,
\begin{equation} \label{eq:definition-f-tilde}
\tilde f : y
\mapsto  f|_{\pi^{-1}(y)}\end{equation}
which, evaluated at $y$ is the restriction of $f$ to $\pi^{-1}(y)$, $\tilde f(y)(x) = f(x)$ for $x\in \pi^{-1}(y)$. This identification $f \mapsto \tilde f$
 yields a {\em topological isomorphism}, i.e. a homeomorphism which is also a semigroup isomorphism,   between 
$F^{fib}(X)$ and the direct product $\prod_{y\in Y} F(\pi^{-1}(y))$ 
where the semigroup multiplication in the latter space is $\tilde f_1\tilde f_2(y) = \tilde f_1(y)\circ \tilde f_2(y)$ and 
we equip it with the product topology, $F(\pi^{-1}(y))$ still carrying the topology of pointwise convergence. Recall that $F(\pi^{-1}(y))$ is a topological semigroup if the fibre of $y$ is finite. By definition of the product topology we therefore get that $\prod_{y\in Y} F(\pi^{-1}(y))$ is a topological semigroup provided all fibres are finite.

For compact semigroups one has the following results concerning their kernels and corresponding Rees matrix form.  
\begin{thm}\label{thm-Rees-str-1}
Let $S$ be a compact right-topological semigroup. Then $S$ admits a 
kernel $\M(S)$ which contains all minimal idempotents, so  that $\M(S)$ is isomorphic to a matrix semigroup. Furthermore, all minimal left ideals are compact and homeomorphic, and two $\mathcal H$-classes of $\M(S)$ which belong to the same minimal right ideal are topologically isomorphic.  
\end{thm}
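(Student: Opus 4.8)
The plan is to establish the three assertions of Theorem~\ref{thm-Rees-str-1} in sequence, using only general facts about compact right-topological semigroups together with the purely algebraic structure theory recalled above.

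\textbf{Existence of the kernel and minimal idempotents.} First I would recall the standard fact (Ellis--Namakura) that a compact right-topological semigroup $S$ contains an idempotent, and more generally that every closed subsemigroup does. From this one deduces that $S$ has minimal left ideals: the family of closed left ideals of the form $Ss$ (which are closed because $\rho_s$ is continuous and $S$ is compact) is nonempty and closed under finite intersections along a chain, so Zorn's lemma yields a minimal closed left ideal $L$; one checks $L=Ss$ for any $s\in L$, hence $L$ is actually minimal among all left ideals. Each minimal left ideal, being a closed subsemigroup, contains an idempotent. The union of all minimal left ideals is then a bilateral ideal, and it is the smallest one: any bilateral ideal $J$ meets every minimal left ideal $L$ (since $JL\subseteq J\cap L$ and $JL$ is a left ideal contained in the minimal $L$), and conversely minimality forces $L\subseteq J$. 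Thus $\M(S)=\bigcup\{\text{minimal left ideals}\}$ is the kernel. That it contains all minimal idempotents: if $p$ is a minimal idempotent, $Sp$ is a left ideal containing a minimal left ideal $L\ni q$ with $q$ an idempotent; using the partial order on idempotents and minimality of $p$ one shows $p=q\in L\subseteq\M(S)$. Being a simple semigroup (the kernel of a semigroup with no zero is simple, as quoted) that contains an idempotent which is necessarily minimal, $\M(S)$ is completely simple, hence by Theorem~\ref{Rees-theorem} isomorphic to a matrix semigroup $M[G;I,\Lambda;A]$.

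\textbf{Minimal left ideals compact and homeomorphic.} Compactness was already obtained above ($L=Ss$ is closed). For the homeomorphism: pick an idempotent $e$ in a fixed minimal left ideal $L_0$ and let $L$ be any other minimal left ideal. For $s\in L$, right multiplication $\rho_s:L_0\to L_0 s\subseteq L$ is continuous, and $L_0 s$ is a left ideal inside the minimal $L$, so $L_0 s=L$; thus $\rho_s|_{L_0}:L_0\to L$ is a continuous surjection. Choosing $s$ so that composition with $\rho_e|_L:L\to L_0$ recovers the identity on $L_0$ — concretely take $s\in L$ with $es=s$ and note $\rho_e\circ\rho_s=\rho_{se}$, and one arranges $se$ to act as identity on $L_0$ because $L_0$ is a minimal left ideal on which $e$ is a left identity — one gets that $\rho_s|_{L_0}$ is a bijection with continuous inverse $\rho_e|_L$ (continuous since $\rho_e$ is), hence a homeomorphism. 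I expect this matching-up of $s$ with $e$ to be the fiddliest bookkeeping step, but it is exactly the topological shadow of the algebraic isomorphisms (\ref{eq-iso1}), (\ref{eq-iso2}), so it goes through.

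\textbf{$\mathcal H$-classes in a common minimal right ideal are topologically isomorphic.} Algebraically they are isomorphic as groups by \cite[Proposition~2.3.6]{howie1995fundamentals}, and within a minimal right ideal $R$ the isomorphism can be realised by (\ref{eq-iso2}): if $H_{i_0\lambda_0}$ and $H_{i_0\lambda}$ are both in $R=R_{i_0}$, then $x\mapsto xq_\lambda$ (with $q_\lambda$ the idempotent of $H_{i_0\lambda}$) is the group isomorphism, and $y\mapsto yq_{\lambda_0}$ its inverse. Both maps are restrictions of right multiplications $\rho_{q_\lambda}$, $\rho_{q_{\lambda_0}}$, which are continuous on all of $S$; hence the isomorphism is a homeomorphism. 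So it remains only to observe that $\mathcal H$-classes of $\M(S)$ are compact — each is the intersection of a minimal left ideal (closed) with a minimal right ideal, and minimal right ideals are closed since in the Rees picture they are finite unions of minimal left ideals, or directly because they are $\mathcal H$-class fibres over the idempotent set — which makes the continuous group bijection automatically bicontinuous anyway. The main obstacle, as noted, is purely the careful identification in the middle step of which element implements the inverse homeomorphism between two minimal left ideals; everything else is assembling the algebraic Rees-theory facts with the continuity of the maps $\rho_s$.
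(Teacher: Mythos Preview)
The paper does not prove this theorem; it cites \cite[Corollary~2.6 and Theorem~2.11]{hindman} and only remarks that ``the essential input from the compact topology is the existence of an idempotent and the continuity of the map (\ref{eq-iso2}).'' Your proposal follows exactly this hint and the standard Hindman--Strauss line: Ellis--Numakura supplies idempotents, Zorn together with compactness supplies minimal (closed) left ideals, and all the needed homeomorphisms are realised by right multiplications $\rho_s$, continuous by hypothesis. So the overall approach is correct and matches what the paper points to.

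Two corrections are in order. First, in the homeomorphism between minimal left ideals you call $e$ a ``left identity'' on $L_0$; it is a \emph{right} identity ($xe=x$ for $x\in L_0$), which is precisely what is needed for $\rho_{se}=\mathrm{id}_{L_0}$. The tidy way to organise this step is to choose idempotents $e\in L_0$ and $q\in L$ lying in the same minimal right ideal (equivalently $eq=q$ and $qe=e$); then $\rho_q:L_0\to L$ and $\rho_e:L\to L_0$ are mutually inverse continuous bijections.

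Second, and more seriously, your closing remark is false: minimal right ideals are \emph{not} in general closed, $\mathcal H$-classes are \emph{not} in general compact, and a minimal right ideal is not a union of minimal left ideals at all (in the Rees picture $R_i=\{i\}\times G\times\Lambda$ is a union of $|\Lambda|$ many $\mathcal H$-classes, with $\Lambda$ possibly infinite). The paper explicitly warns about exactly this immediately after the statement. Fortunately your main argument for the last assertion does not rely on it: you already observed that both $x\mapsto xq_\lambda$ and its inverse $y\mapsto yq_{\lambda_0}$ are right multiplications and hence continuous, which is all that is required. Simply delete the compactness remark.
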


This theorem is discussed in \cite[Corollary~2.6 and Theorem~2.11]{hindman}. 
The essential input from the compact topology is the existence of an idempotent and the continuity of the map (\ref{eq-iso2}). 
We mention that in general, minimal right ideals are not closed, nor are $\mathcal H$-classes closed, nor are two $\mathcal H$-classes topologically isomorphic which do not belong to the same minimal right ideal. $\M(S)$ is then not topologically isomorphic to a matrix semigroup. 

One consequence of  Theorem \ref{thm-Rees-str-1} will be particularly important below, namely that for any minimal idempotent $p$ of a compact right-topological semigroup $S$, $pSp$ is a group. Indeed, the chain of inclusions
\begin{equation} \label{pEp-group} p\M(S) p\subset pSp = pSp p \subset p\M(S) p. \end{equation} 
shows that $pSp$ is isomorphic to the structure group of the kernel $\M(S)$.

If the multiplication of $S$ is jointly continuous then the topological aspects of  
Theorem \ref{thm-Rees-str-1} can be strengthened. 
We can equip the normalised Rees matrix form $M[G;I,\Lambda;A]$ (w.r.t.\ $e=(i_0,1,\lambda_0)$) of $\M(S)$ with the following topology: We identify $G$ with $H_{i_0\lambda_0}=eSe$, $I$ with the set of idempotents of $L_{\lambda_0}$, $\Lambda$ with the set of idempotents of $R_{i_0}$, and we equip all these subsets of $S$ with the relative topology, and finally $I\times G\times \Lambda$ with the product topology. 
Under the assumption that $S$ is a compact topological semigroup, we have that $G$ is a compact topological group, $I$ and $\Lambda$ compact subsets and the semigroup product on $M[G;I,\Lambda;A]$ is jointly continuous. 
\begin{thm}\label{thm-Rees-str-2}
Let $S$ be a compact topological semigroup. Then  $\M(S)$ is topologically isomorphic to the normalised matrix semigroup $M[G;I,\Lambda;A]$.
\end{thm}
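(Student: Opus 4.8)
The plan is to upgrade the purely algebraic Rees isomorphism $M[G;I,\Lambda;A]\ni(i,g,\lambda)\mapsto r_ig q_\lambda\in\M(S)$, constructed in the sketch following Theorem~\ref{Rees-theorem}, to a homeomorphism, using the joint continuity of the multiplication on the compact semigroup $S$ and the topologies introduced just before the statement. First I would fix a minimal idempotent $e$ and write the normalised Rees matrix form w.r.t.\ $e=(i_0,1,\lambda_0)$, identifying $G$ with $eSe=H_{i_0\lambda_0}$, the set $I$ with the idempotents of the minimal left ideal $L_{\lambda_0}$ (each $i$ corresponding to the unique idempotent $r_i\in H_{i\lambda_0}$), and $\Lambda$ with the idempotents of the minimal right ideal $R_{i_0}$ (each $\lambda$ corresponding to the unique idempotent $q_\lambda\in H_{i_0\lambda}$). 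These are all closed subsets of the compact space $S$: idempotents of $L_{\lambda_0}$ form the set $\{x\in L_{\lambda_0}:x^2=x\}$, and $L_{\lambda_0}=Se$ is closed since right multiplication $\rho_e$ is continuous and $L_{\lambda_0}=\rho_e(S)$ is the continuous image of a compact set; similarly $R_{i_0}=eS$ is closed because, under joint continuity, left multiplication is also continuous. Hence $I$, $\Lambda$ and $G$ are compact, and $G$ is a compact topological group (a compact cancellative topological semigroup is a group).

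Next I would verify that the bijection $\Phi:(i,g,\lambda)\mapsto r_igq_\lambda$ is continuous. Since $I\times G\times\Lambda$ carries the product topology and the multiplication of $S$ is jointly continuous, the map $(r,g,q)\mapsto rgq$ from $S\times S\times S\to S$ is continuous; precomposing with the continuous inclusion $I\times G\times\Lambda\hookrightarrow S\times S\times S$ (which is continuous because each of $i\mapsto r_i$, $g\mapsto g$, $\lambda\mapsto q_\lambda$ is the inclusion of a subspace) gives continuity of $\Phi$. Because $\Phi$ is a continuous bijection from the compact space $I\times G\times\Lambda$ to the Hausdorff space $\M(S)$ (a subspace of the Hausdorff space $S$), it is automatically a homeomorphism. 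One should also check that the semigroup operation that the transported topology induces on $M[G;I,\Lambda;A]$ is the one described before the statement, i.e.\ that with these identifications the product $(i,g,\lambda)(j,h,\mu)=(i,ga_{\lambda j}h,\mu)$ with $a_{\lambda j}=q_\lambda r_j$ is jointly continuous; but this follows at once, since it is the transport of the jointly continuous multiplication on $\M(S)\subseteq S$ through the homeomorphism $\Phi$. Thus $\Phi$ is a topological isomorphism.

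The one genuine point requiring care — the main obstacle — is showing that $I$ (equivalently $\Lambda$) is closed, hence compact, so that $I\times G\times\Lambda$ is compact and the compact-to-Hausdorff argument applies. Under mere right-topology this can fail (as the paper notes, minimal right ideals need not be closed), which is exactly why Theorem~\ref{thm-Rees-str-1} gives only the weaker statement; the hypothesis of joint continuity is what makes $L_{\lambda_0}=Se$ and $R_{i_0}=eS$ closed and the argument go through. I would therefore isolate this as a preliminary lemma: in a compact topological semigroup every principal left ideal $Sa$ and every principal right ideal $aS$ is closed, and consequently the sets of idempotents in $L_{\lambda_0}$ and $R_{i_0}$ are closed. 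Granting that, the rest is the routine compactness-and-Hausdorffness packaging described above, together with the observation that the kernel $\M(S)=SeS$ is itself closed, being the continuous image $m(S\times\{e\}\times S)$ of a compact set under the jointly continuous double multiplication $m(x,y,z)=xyz$.
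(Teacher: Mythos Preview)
Your proof is correct and follows the standard route: show that $G=eSe$, the idempotent sets $I\subset L_{\lambda_0}=Se$ and $\Lambda\subset R_{i_0}=eS$ are closed (hence compact) using joint continuity, then observe that the algebraic Rees isomorphism $(i,g,\lambda)\mapsto r_i g q_\lambda$ is a continuous bijection from a compact space to a Hausdorff space, hence a homeomorphism. The paper does not actually prove this theorem but simply cites \cite[Theorem~3.21]{carruth1983theory}; your argument is essentially the one found there. One small remark: your parenthetical ``a compact cancellative topological semigroup is a group'' is not quite the justification needed, since $eSe$ is already known to be a group algebraically; what you want is the (equally standard) fact that in a compact Hausdorff semigroup with jointly continuous multiplication which happens to be a group, inversion is automatically continuous.
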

A proof of this theorem can be found in \cite[Theorem~3.21]{carruth1983theory}. 
As now also the map (\ref{eq-iso1}) is continuous, all $\mathcal H$-classes of $M(S)$ are closed and topologically isomorphic.

\subsection{Extensions of groups by completely simple semigroups}
We now use the above description of completely simple semigroups to study extensions
$$ K\hookrightarrow S \stackrel{\pi} \twoheadrightarrow Y $$
 where $Y$ is a group with neutral element $y_0$, 
$S$ a semigroup, $\pi$ a semigroup epimorphism and $K$ the kernel of $\pi$,
$$K = \{s\in S: \pi(s)=y_0\}.$$ 
$K$ is a subsemigroup of $S$ which is closed if $S$ and $Y$ are right-topological and $\pi$ continuous.

If $e\in S$ is an idempotent then $\pi(e)$ must be an idempotent, hence equal to $y_0$ so that we obtain a restricted extension
$ eKe\hookrightarrow eSe \stackrel{\pi_e} \twoheadrightarrow Y $ where $\pi_e$ is the restriction of $\pi$ to $eSe$. 
If moreover $e$ is an idempotent in the kernel of $S$ then by  \eqref{pEp-group},$eSe$ is a group, as is  $eKe$, 
so that 
the restricted extension is an extension of groups.

A semigroup $S$ is {\em regular} if for any $s\in S$ there exists $t\in S$ such that $s=sts$.
Clearly, any completely regular semigroup is regular.
\begin{prop}\label{extension-semigroup}
Consider an extension $ K\hookrightarrow S \stackrel{\pi} \twoheadrightarrow Y $ of a group $Y$ by a completely simple semigroup $K$, where $S$ is regular. Then $S$ is completely simple. If $K$ has normalised Rees matrix form $M[G;I,\Lambda;A]$ w.r.t.\ an idempotent $e$ then $S$ has normalised Rees matrix form $M[\mathcal G;I,\Lambda;A]$ 
w.r.t.\ $e$, where $\mathcal G=eSe$ is the extension of $Y$ by $G=eKe$
determined by the exact sequence of groups $eKe \hookrightarrow eSe \stackrel{\pi_e}\twoheadrightarrow Y$.
\end{prop}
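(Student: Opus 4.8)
The plan is to deduce everything from the Rees--Suskevitch theorem (Theorem~\ref{Rees-theorem}) applied to $S$, using the hypothesis that $S$ is regular to upgrade it to completely simple, and then match up the two normalised matrix forms. First I would check that $S$ is simple. Given a bilateral ideal $J\subseteq S$, its image $\pi(J)$ is a bilateral ideal of the group $Y$, hence all of $Y$; in particular there is $s\in J$ with $\pi(s)=y_0$, i.e.\ $s\in K$. Then $J\cap K$ is a nonempty bilateral ideal of $K$ (using $KJK\subseteq K\cap J$ and that $K$ is a subsemigroup), so $J\cap K=K$ since $K$ is simple, whence $K\subseteq J$. But $K$ contains an idempotent $e$ (being completely simple), and for any $t\in S$ we have $t = t e' $ for a suitable product showing $t\in S e S \subseteq S K S \subseteq SJS\subseteq J$; more carefully, regularity gives $t=tut$ for some $u$, and one checks $tut \in SKS$ once one knows $K$ absorbs: since $K$ is a bilateral ideal of $S$? — it need not be, so instead I argue directly that any simple semigroup containing a minimal left ideal of $S$ forces $J=S$. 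The cleaner route: $K$ completely simple means $K$ has a minimal left ideal $L$ of $K$; but a minimal left ideal of $K$ that is also a left ideal of $S$ is automatically a minimal left ideal of $S$, and the union of minimal left ideals of $S$ is the kernel $\M(S)$, which is contained in every bilateral ideal. Hence $\M(S)\subseteq J$, and since $\M(S)\ni e$ with $\pi(e)=y_0$ and $\pi(J)=Y$, multiplying we get $J\supseteq \M(S)\cdot J \cdot \M(S)$ which surjects onto $Y$ and lies in $\M(S)$... I would then invoke that a regular simple semigroup with a minimal idempotent is completely simple (\cite{howie1995fundamentals}): $S$ is simple, contains the idempotent $e$, and is regular, and one shows $e$ is minimal because $\M(S)$ consists of minimal idempotents and $e\in\M(S)$. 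So $S$ is completely simple.

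Next, apply the construction sketched after Theorem~\ref{Rees-theorem} to $S$, but making the \emph{same} choices that were used for $K$. The key point is that the $\mathcal R$-classes and $\mathcal L$-classes of $S$, \emph{restricted to $K$}, are exactly the $\mathcal R$-classes and $\mathcal L$-classes of $K$: indeed $K$ is a completely simple subsemigroup containing all the idempotents $(i,a_{\lambda i}^{-1},\lambda)$, and since in a completely simple semigroup every element is $\mathcal H$-equivalent to a product of idempotents within its $\mathcal H$-class, Green's relations of $S$ and of $K$ agree on $K$; moreover every $\mathcal R$-class (resp.\ $\mathcal L$-class) of $S$ meets $K$ because $K$ contains an idempotent in each. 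Therefore $S$ and $K$ have the \emph{same} index sets $I$ and $\Lambda$. Now pick the \emph{same} $i_0,\lambda_0$, so the structure group of $S$ is $\mathcal G := H^S_{i_0\lambda_0} = eSe$ (by \eqref{pEp-group}), containing $G = eKe = H^K_{i_0\lambda_0}$ as a subgroup, and $\pi_e:eSe\twoheadrightarrow Y$ has kernel $eKe$, giving the stated group extension. For the sandwich matrix: normalising $K$ means choosing $r_i$ = the unique idempotent of $H^K_{i\lambda_0}$ and $q_\lambda$ = the unique idempotent of $H^K_{i_0\lambda}$. Since these $\mathcal H$-classes of $K$ equal the corresponding $\mathcal H$-classes of $S$, the \emph{same} elements $r_i,q_\lambda$ are the unique idempotents of $H^S_{i\lambda_0}$ and $H^S_{i_0\lambda}$, so they furnish the normalisation of $S$ as well, and $a_{\lambda i} = q_\lambda r_i$ is literally the same element. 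Hence $S$ has normalised Rees matrix form $M[\mathcal G;I,\Lambda;A]$ w.r.t.\ $e$, with the \emph{same} $I,\Lambda,A$ as $K$.

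Finally I would verify the compatibility: the isomorphism $M[\mathcal G;I,\Lambda;A]\to S$, $(i,g,\lambda)\mapsto r_i g q_\lambda$, restricts on $M[G;I,\Lambda;A]$ to the analogous isomorphism onto $K$, so the inclusion $K\hookrightarrow S$ corresponds to $M[G;I,\Lambda;A]\hookrightarrow M[\mathcal G;I,\Lambda;A]$ induced by $G\hookrightarrow\mathcal G$, and $\pi$ corresponds to $(i,g,\lambda)\mapsto \pi_e(g)$; this is a routine check using $\pi(r_i)=\pi(q_\lambda)=y_0$. The main obstacle I anticipate is the first paragraph: proving $S$ is simple (equivalently completely simple) from regularity plus the extension data. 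The subtlety is that $K$ is merely a subsemigroup, not an ideal, of $S$, so one cannot directly transport ideals; the right tool is the minimal-ideal/kernel characterisation — a minimal left ideal of the completely simple $K$ is already a minimal left ideal of $S$ (any left ideal of $S$ inside it would be a left ideal of $K$), hence $\M(S)$ exists and $\M(S)\cap K\neq\emptyset$; combined with regularity of $S$ and the classical fact that a regular semigroup equals $\M(S)$ iff every idempotent is minimal, one gets $S=\M(S)$ completely simple. Everything after that is bookkeeping of Green's relations, which is why I would not expand it.
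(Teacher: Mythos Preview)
Your argument for simplicity of $S$ has a genuine gap. You correctly show that any bilateral ideal $J$ of $S$ satisfies $K\subseteq J$ (via $\pi(J)=Y$ and $J\cap K$ being an ideal of the simple $K$). But the step from $K\subseteq J$ to $J=S$ fails as written: a minimal left ideal $L$ of $K$ is \emph{not} a left ideal of $S$ in general, because for $s\in S$ and $l\in L$ one has $\pi(sl)=\pi(s)$, which need not be $y_0$, so $sl$ need not even lie in $K$. Thus your ``cleaner route'' collapses, and the subsequent appeal to $\M(S)$ is circular since you have not yet shown $\M(S)$ exists or contains $e$.

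The missing observation is the one the paper exploits: \emph{every} idempotent of $S$ lies in $K$, since $\pi$ of an idempotent is an idempotent of the group $Y$, hence $y_0$. You brushed against this when writing ``regularity gives $t=tut$'' but did not notice that $ut$ is itself an idempotent, hence $ut\in K\subseteq J$, whence $t=t(ut)\in SJ\subseteq J$. That single line finishes the simplicity proof. For the second half, your plan of matching Green's relations of $S$ and $K$ directly can be made to work, but the paper's route is shorter: take the normalised Rees form $M[\mathcal G;I,\Lambda;A]$ of $S$ w.r.t.\ $e$, observe that the subsemigroup generated by the idempotents lies in $K$, so by Lemma~\ref{lem-lsgroup} the entries of $A$ already lie in $G=eKe$; then $\pi(i,g,\lambda)=\pi_e(g)$ gives $K=M[G;I,\Lambda;A]$, and uniqueness of the normalised form pins down $I,\Lambda,A$. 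This avoids having to verify by hand that the $\mathcal L$- and $\mathcal R$-classes of $S$ restrict to those of $K$.
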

\begin{proof} We first show that $S$ must be completely simple. Let $M\subset S$ be an ideal. Then $M\cap K$ is an ideal of $K$. As $K$ is simple, $M\cap K = K$. Thus $M$ contains all idempotents of $S$. Let $s\in S$ and $t\in S$ such that $s=sts$. Then $ts$ is an idempotent and so we see that $S\subset SK\subset SM\subset M$. Therefore $S$ is completely simple.

Let $M[\mathcal G;I,\Lambda;A]$ be the normalised Rees matrix form of $S$ w.r.t.\ $e$, in particular $\mathcal G=eSe$. Since $K$ contains the subsemigroup generated by the idempotents of $S$,
by Lemma~\ref{lem-lsgroup}, the coefficients of $A$ belong to $G:= eKe = \ker \pi_e$. Hence 
$M[G;I,\Lambda;A]$ is well-defined and, with $e=(i_0,1,\lambda_0)$ and  
$(i_0,1,\lambda_0)(i,g,\lambda)(i_0,1,\lambda_0)=(i_0, g ,\lambda_0)$
we obtain
$$\pi(i,g,\lambda) = \pi(i_0,g,\lambda_0) =  \pi_e(g)$$
so that $K= M[\ker\pi_e;I,\Lambda;A]$.
Since  the normalised Rees matrix form w.r.t.\ $e$ is unique we see that $I,\Lambda$ and $A$ are completely determined by $K$. 
\end{proof}

We will apply Proposition \ref{extension-semigroup} to a situation in which $Y$ is a topological group and $K$ is a topological semigroup, but $S$ is only a right-topological semigroup.  Therefore we cannot  conclude that $S$ is topologically isomorphic to $M[\mathcal G;I,\Lambda;A]$. Indeed, the group $\mathcal G=eSe$ is only closed if  the right ideal containing $e$ is closed, which we cannot expect. The interest in the above construction is therefore only algebraic. 
It is particularly useful if the group extension is split so that $\mathcal G$ is a semidirect product of $G$ with $Y$.

\subsection{Ellis semigroup of a dynamical system}
Given a dynamical system $(X,T,\sigma)$ the family of homeomorphisms
$\{\sigma^t | t\in T\}$ is a subsemigroup of $F(X)$.
Its closure, denoted $E(X,T, \sigma)$,  or simply $E(X)$ if the rest is understood, is still a semigroup, called the   {\em Ellis semigroup} (or {\em enveloping semigroup}) of the dynamical system. 
Since $X$ is compact the set of all functions $X\to X$ is compact in the topology of pointwise convergence and so $E(X,T, \sigma)$ is a compact \rst\ semigroup, by construction. 

The Ellis semigroup is closely related to the {\em proximality} relation. Given a metric $d$ on $X$ which generates the topology,  a pair of points $x,x'$ are  {\em proximal} if  $\inf_{t\in T} d(\sigma^t(x),\sigma^t(x')) = 0$. The proximal relation does not depend on the choice of metric (which generates the topology).  Its relation with the Ellis semigroup is the following:
\begin{thm}\cite[Chapter 3, Proposition 8]{Auslander} 
Let $E(X)$ be the Ellis semigroup of a dynamical system $(X,T, \sigma)$. Two points $x$ and $y$ are proximal if and only if there exists $f\in E(X)$ such that $f(x)=f(y)$.
\end{thm}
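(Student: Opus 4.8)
The statement to prove is the classical characterization of proximality via the Ellis semigroup: $x$ and $y$ are proximal if and only if there is $f \in E(X)$ with $f(x) = f(y)$. The plan is to prove the two implications separately, the harder direction being that proximality produces a single map collapsing the pair.

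\emph{The easy direction.} Suppose $f \in E(X)$ satisfies $f(x) = f(y)$. Since $E(X)$ is the closure of $\{\sigma^t : t \in T\}$ in the topology of pointwise convergence, there is a net $(t_\alpha)$ in $T$ with $\sigma^{t_\alpha} \to f$ pointwise; in particular $\sigma^{t_\alpha}(x) \to f(x)$ and $\sigma^{t_\alpha}(y) \to f(y) = f(x)$. Using the metric $d$ generating the topology and the triangle inequality, $d(\sigma^{t_\alpha}(x), \sigma^{t_\alpha}(y)) \to 0$ along the net, so $\inf_{t \in T} d(\sigma^t(x), \sigma^t(y)) = 0$, i.e.\ $x$ and $y$ are proximal. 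This part is essentially immediate.

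\emph{The harder direction.} Suppose $x$ and $y$ are proximal. Then there is a net $(t_\alpha)$ in $T$ with $d(\sigma^{t_\alpha}(x), \sigma^{t_\alpha}(y)) \to 0$. The key compactness step: the net $(\sigma^{t_\alpha})$ lies in $E(X)$, which is compact in the topology of pointwise convergence (since $X$ is compact, $X^X$ is compact and $E(X)$ is closed in it). Hence $(\sigma^{t_\alpha})$ has a subnet converging pointwise to some $f \in E(X)$. Along this subnet, $\sigma^{t_\alpha}(x) \to f(x)$ and $\sigma^{t_\alpha}(y) \to f(y)$ in $X$, while the distance $d(\sigma^{t_\alpha}(x), \sigma^{t_\alpha}(y))$ still tends to $0$ (a subnet of a net converging to $0$ still converges to $0$). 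By continuity of $d$ on $X \times X$, $d(f(x), f(y)) = 0$, so $f(x) = f(y)$, as required.

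\emph{Main obstacle.} There is no deep obstacle here; the one point demanding care is the passage to a convergent subnet and the bookkeeping that the proximality estimate survives this passage. One must be careful that ``$\inf_t d(\sigma^t(x),\sigma^t(y)) = 0$'' genuinely yields a net (indexed, say, by $\varepsilon \to 0$ or by the directed set underlying the infimum) along which the distance tends to $0$, and then extract from \emph{that} net a subnet along which $(\sigma^{t_\alpha})$ converges pointwise in the compact space $E(X)$; the composition of ``a subnet of a net tending to $0$'' remains valid. Once these nets are handled correctly, continuity of the metric closes the argument. No result beyond compactness of $E(X)$ and the definition of the pointwise-convergence topology is needed.
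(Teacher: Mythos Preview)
Your proof is correct and is the standard argument; it is essentially the proof given in Auslander's book. Note that the paper itself does not prove this statement but merely cites it from \cite[Chapter~3, Proposition~8]{Auslander}, so there is no ``paper's own proof'' to compare against beyond the original reference.
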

In particular we see that, given any idempotent $p\in E(X)$ and $x\in X$, the points $p(x)$ and $x$ are proximal.

\subsection{Complete regularity for $\Z$-actions}\label{Section:comp reg}
In this section we provide a criterion for complete regularity of the Ellis semigroup for $\Z$ actions. We will see below that it is satisfied by the dynamical systems defined by bijective substitutions. 

Since the union of the closure of two sets is the closure of their union we can decompose 
\begin{equation}\label{eq-fb}
E(X) = E(X,\Z^+) \cup E(X,\Z^-)
\end{equation}
where $E(X,\Z^\pm)$ is the closure of $\{\sigma^t | t\in \Z^\pm\}$. This allows us to compute the elements of $E(X)$ by looking independently, forward in ``time", and backwards in ``time". 

We say that two points $x,x'\in X$ are {\em forward proximal} if 
$$\inf_{t\in\Z^+ } d(\sigma^t(x),\sigma^t(x')) = 0$$ 
We say that two points $x,x'\in X$ are
{\em forward asymptotic} if $$\lim_{t\to +\infty} d(\sigma^t(x),\sigma^t(x')) = 0$$
Similarly, we define {\em backward} proximality and asymptoticity using $\sigma^{-1}$ in place of $\sigma$.
Clearly sequences which are forward asymptotic are forward proximal. The following lemma is related to the work of \cite{Blanchard} in which the adherence semigroup of a $\Z^+$-action is defined and analysed.
\begin{lem}\label{lem:prox=asym}
Let $(X,\sigma)$ be a dynamical system for which forward proximality agrees with forward asymptoticity. Then $E(X,\Z^+)$ has a unique minimal left ideal $\M(X,\Z^+)$ and contains besides this ideal only $\Z^+$.
\end{lem}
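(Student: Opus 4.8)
The plan is to identify the minimal left ideal in question with the \emph{adherence semigroup} $A:=\bigcap_{n\ge 1}\overline{\{\sigma^t : t\ge n\}}\subseteq E(X,\Z^+)$, that is, the set of $f\in E(X,\Z^+)$ arising as pointwise limits of nets $\sigma^{t_\alpha}$ with $t_\alpha\to\infty$. Write $S:=E(X,\Z^+)$, a compact \rst\ semigroup. First I would record two facts that do not use the hypothesis. \emph{(i)} $S\setminus A\subseteq\{\sigma^t : t\in\Z^+\}$: if $f\notin A$ then $f\notin\overline{\{\sigma^t : t\ge n\}}$ for some $n$, and since $S=\overline{\{\sigma^t : t< n\}}\cup\overline{\{\sigma^t : t\ge n\}}$ with the first set finite (hence closed), $f=\sigma^t$ for some $t<n$. \emph{(ii)} $A$ is a closed two-sided ideal of $S$: closedness is clear; for the ideal property, write a general element of $A$ as $\lim_\alpha\sigma^{t_\alpha}$ with $t_\alpha\to\infty$ and a general $g\in S$ as $\lim_\beta\sigma^{s_\beta}$, and use continuity of right multiplication together with continuity of each $\sigma^t$ to get $\sigma^{t_\alpha}g=\lim_\beta\sigma^{t_\alpha+s_\beta}$ and $\sigma^{s_\beta}f=\lim_\alpha\sigma^{s_\beta+t_\alpha}$; since the exponents here are eventually $\ge n$ for every $n$, both $fg$ and $gf$ lie in $A$. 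By Theorem~\ref{thm-Rees-str-1}, $S$ has a kernel $\M(S)$ and (compact) minimal left ideals, and since $\M(S)$ is the smallest two-sided ideal, $\M(S)\subseteq A$.

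The heart of the proof, and the only place the hypothesis enters, is the reverse inclusion $A\subseteq\M(S)$. Fix any minimal left ideal $L$ of $S$; then $L\subseteq\M(S)$ (indeed $L=\M(S)\cap L$, as $\M(S)L$ is a nonempty left ideal contained in $L$), and $L$ contains an idempotent $p$ by compactness, so $p\in\M(S)\subseteq A$, say $p=\lim_\gamma\sigma^{u_\gamma}$ with $u_\gamma\to\infty$. For each $x\in X$, from $p(x)=p(p(x))$ together with $\sigma^{u_\gamma}(x)\to p(x)$ and $\sigma^{u_\gamma}(p(x))\to p(x)$ one gets $d(\sigma^{u_\gamma}(x),\sigma^{u_\gamma}(p(x)))\to 0$, so $x$ and $p(x)$ are forward proximal, hence --- by hypothesis --- forward \emph{asymptotic}. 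Now for any $f\in A$, say $f=\lim_\alpha\sigma^{t_\alpha}$ with $t_\alpha\to\infty$: continuity of right multiplication by $p$ gives $(fp)(x)=\lim_\alpha\sigma^{t_\alpha}(p(x))$ for every $x$, while $d(\sigma^{t_\alpha}(x),\sigma^{t_\alpha}(p(x)))\to 0$ (forward asymptoticity plus $t_\alpha\to\infty$) forces $\sigma^{t_\alpha}(p(x))\to f(x)$ as well; hence $fp=f$, so $f=fp\in Sp\subseteq L$. Therefore $A\subseteq L\subseteq\M(S)\subseteq A$, all three coincide, and since $L$ was arbitrary, $S$ has a unique minimal left ideal $\M(X,\Z^+):=A=\M(S)$. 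Combined with \emph{(i)} this gives $S\setminus\M(X,\Z^+)\subseteq\{\sigma^t : t\in\Z^+\}$, which is the second assertion.

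I expect the genuine obstacle to be the identity $fp=f$ for $f\in A$: it is essential that forward proximal pairs be forward \emph{asymptotic} and not merely proximal, since proximality of $x$ and $p(x)$ only yields $\inf_t d(\sigma^t(x),\sigma^t(p(x)))=0$, which is too weak to force $\sigma^{t_\alpha}(x)$ and $\sigma^{t_\alpha}(p(x))$ to share a limit along a net with $t_\alpha\to\infty$. Everything else is routine bookkeeping with nets in $X^X$ and standard facts about compact \rst\ semigroups; this recovers, by a direct argument, the left-simplicity of the adherence semigroup considered in \cite{Blanchard}.
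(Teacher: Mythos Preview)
Your proof is correct and follows essentially the same route as the paper's: both hinge on the identity $fp=f$ for any $f\in E(X,\Z^+)\setminus\Z^+$ and any idempotent $p$, obtained by combining forward proximality of $x$ and $p(x)$ with the hypothesis to upgrade to forward asymptoticity, and then using that $f$ is a limit along a net $\sigma^{t_\alpha}$ with $t_\alpha\to\infty$. Your framing via the adherence semigroup $A=\bigcap_n\overline{\{\sigma^t:t\ge n\}}$ and the explicit verification that $A$ is a closed two-sided ideal is a bit more structured than the paper's argument (which works directly with $E(X,\Z^+)\setminus\Z^+$ and omits the ideal verification), but the substance is the same.
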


\begin{proof} 
An element $f\in E(X,\Z^+)\backslash \Z^+$ is the limit of a generalised sequence $(\sigma^{t_\nu})_{\nu}$, $t_\nu\in\Z^+$ which is not in $\Z^+$. Hence the generalised sequence $(t_\nu)_\nu$ has the property that for any finite $N\in\Z^+$ there exists $\nu_0$ such that 
$t_{\nu}\geq N$ for all $\nu >\nu_0$. In particular, if $x$ and $y$ are forward asymptotic points then $\lim_\nu d(\sigma^\nu(x),\sigma^\nu(y))=0$, and hence $f(x) = f(y)$. 

$E(X,\Z^+)$ is also a compact \rst\ semigroup and hence has minimal left ideals and minimal idempotents. Furthermore, $x,y\in X$ are forward proximal if and only if there exists $f\in  E(X,\Z^+)$ such that $f(x) = f(y)$. 
Let $p\in E(X,\Z^+)$ be any idempotent. For any $x\in X$, $p(x)$ is forward proximal to $x$, and by our assumption therefore forward asymptotic to $x$. This implies that if $f\in E(X,\Z^+)\backslash \Z^+$, then $f(p(x))=f(x)$. Since $x$ was arbitrary we find $f=fp$. 

This identity 
shows that any  $f\in E(X,\Z^+)\backslash \Z^+$ lies in the ideal generated by the idempotent $p$. If $p$ is minimal then this ideal is a minimal left ideal. Since $p$ can be any minimal idempotent there can only be one minimal left ideal.
\end{proof}
Note that the unique minimal left ideal $\M(X,\Z^+)$ of the previous lemma is the kernel of $E(X,\Z^+)$.
\begin{cor}\label{cor:prox=asym}
Let $(X,\sigma)$ be a dynamical system for which forward proximality agrees with forward asymptoticity and backward proximality agrees with backward asymptoticity. Then $E(X)$ is completely regular. If moreover the forward and the backward proximality relations are non-trivial (not diagonal) then $E(X)$ is the disjoint union of its kernel $\M(X)$ with the acting group $\Z$.\end{cor}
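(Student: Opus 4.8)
The plan is to deduce everything from Lemma~\ref{lem:prox=asym} and its time-reversed analogue, together with the decomposition \eqref{eq-fb} and Corollary~\ref{cor:prox=asym}'s first sentence (which gives complete regularity in general).

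First I would apply Lemma~\ref{lem:prox=asym} to both $\sigma$ and $\sigma^{-1}$. Under the hypothesis that forward proximality equals forward asymptoticity, $E(X,\Z^+)$ consists of $\Z^+$ together with its unique minimal left ideal $\M(X,\Z^+)$, which is the kernel of $E(X,\Z^+)$; symmetrically $E(X,\Z^-) = \Z^- \cup \M(X,\Z^-)$. Combined with \eqref{eq-fb} this already gives
\[ E(X) = \Z \cup \M(X,\Z^+)\cup \M(X,\Z^-), \]
since $\Z^+\cup\Z^- = \Z$. So every element of $E(X)$ is either a genuine $\sigma^t$ or else collapses some nondiagonal asymptotic pair.

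The heart of the argument is to show $\M(X,\Z^+)\cup \M(X,\Z^-)$ is exactly the kernel $\M(X)$ of $E(X)$, and in particular that it is a two-sided ideal of $E(X)$ disjoint from $\Z$. Disjointness from $\Z$ is where nontriviality of proximality enters: if the forward proximal relation is nondiagonal, pick $x\neq x'$ forward proximal; any $f\in\M(X,\Z^+)$ satisfies $f(x)=f(x')$ (as in the proof of Lemma~\ref{lem:prox=asym}, since such $f$ is a limit of $\sigma^{t_\nu}$ with $t_\nu\to+\infty$ and is fixed from the right by a minimal idempotent $p$ with $f=fp$), whereas every $\sigma^t$ is injective; hence $\M(X,\Z^+)\cap\Z=\emptyset$, and symmetrically for $\M(X,\Z^-)$. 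For the ideal property: given $f\in\M(X,\Z^+)$ and any $g\in E(X)$, one must check $fg, gf\in \M(X,\Z^+)\cup\M(X,\Z^-)$. The key point is that $fg$ and $gf$ both lie in $E(X)\setminus\Z$ (they kill a nondiagonal pair, using that $f$ does and $g$ is a bijection in the $\Z$ case, and that a composite of non-injective maps is non-injective in general), and an element of $E(X)\setminus\Z$ lies in $\M(X,\Z^+)$ or $\M(X,\Z^-)$ by the displayed decomposition; one then argues it lands in the kernel by noting $E(X)$ is completely regular (first sentence of the corollary, already proved) so its kernel is the union of all minimal left ideals, and a minimal idempotent times anything lands in the kernel. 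Finally, uniqueness of the minimal two-sided ideal identifies this set with $\M(X)$, and $E(X) = \Z \sqcup \M(X)$.

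I expect the main obstacle to be the bookkeeping in showing the set $\M(X,\Z^+)\cup\M(X,\Z^-)$ is genuinely a single two-sided ideal of the \emph{full} $E(X)$ rather than just the union of the kernels of the two halves — i.e.\ controlling products that mix a forward-time element with a backward-time element, and confirming such products never escape back into $\Z$. The clean way around this is probably to avoid analysing mixed products directly and instead argue abstractly: $E(X)$ is a compact right-topological semigroup, hence has a kernel $\M(X)$; every $f\in E(X)\setminus\Z$ kills a nondiagonal pair and (by the fixed-from-one-side argument applied in whichever time direction it comes from) lies in a minimal left ideal, hence in $\M(X)$; conversely $\M(X)\cap\Z=\emptyset$ by injectivity of the $\sigma^t$ versus non-injectivity of kernel elements (a minimal idempotent is non-injective since it is proximal to the identity on a nondiagonal pair). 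This gives $E(X)\setminus\Z = \M(X)$ directly, bypassing the explicit mixed-product computation.
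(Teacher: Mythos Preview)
Your decomposition $E(X)=\Z\cup\M(X,\Z^+)\cup\M(X,\Z^-)$ and the complete-regularity argument match the paper's. The non-injectivity argument for $\M(X,\Z^\pm)\cap\Z=\emptyset$ is also fine (the paper argues it slightly differently, via $fg=\sigma^n \Rightarrow p=\mathrm{id}$, but yours works too).

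The gap is in the step you yourself flag as the main obstacle and then try to sidestep. You want $\M(X,\Z^+)\subset\M(X)$; your ``clean way around'' says that each $f\in\M(X,\Z^+)$ ``lies in a minimal left ideal, hence in $\M(X)$''. But Lemma~\ref{lem:prox=asym} only tells you $\M(X,\Z^+)$ is a minimal left ideal of $E(X,\Z^+)$, not of $E(X)$. Equivalently, you are implicitly assuming that minimal idempotents of $E(X,\Z^+)$ remain minimal in $E(X)$, and you have not shown this: if $q\le p$ in $E(X)$ with $p\in\M(X,\Z^+)$, then $q=qp$ is a mixed product (a priori $q$ could lie in $\M(X,\Z^-)$), and nothing in your abstract argument forces $q$ back into $E(X,\Z^+)$. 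Your non-injectivity observation does establish that $E(X)\setminus\Z$ is a two-sided ideal, which gives $\M(X)\subset E(X)\setminus\Z$; but for the reverse inclusion one still needs $\M(X)$ to meet both $E(X,\Z^+)$ and $E(X,\Z^-)$, and that again reduces to controlling a mixed product.

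The paper closes exactly this gap by the short net computation you wanted to avoid: for $f=\lim_\nu\sigma^{n_\nu}\in E(X)$ and $g=\lim_\mu\sigma^{m_\mu}\in\M(X,\Z^+)$ (so $m_\mu\to+\infty$), one has $\sigma^{n_\nu}g=\lim_\mu\sigma^{n_\nu+m_\mu}\in E(X,\Z^+)$ for each $\nu$, hence $fg\in E(X,\Z^+)$; combined with $fg\notin\Z$ this gives $E(X)\,\M(X,\Z^+)\subset\M(X,\Z^+)$. Thus $\M(X,\Z^+)$ is a left ideal of the full $E(X)$, and the idempotent comparison then shows it is minimal there. This is a two-line argument, and there does not appear to be a genuine shortcut around it.
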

\begin{proof}
Minimal left ideals which contain idempotents are completely simple and hence, by the Rees structure theorem, disjoint unions of groups. 
Therefore Lemma \ref{lem:prox=asym}
implies that $E(X,\Z^+)\backslash \Z^+$ is completely regular. $E(X)$ is thus a union of completely regular sub-semigroups. Hence any element of $E(X)$ has an inverse with which it commutes.

To proof the second statement we first show that $\M(X,\Z^+)$ is a minimal left ideal in $E(X)$. 
Let $f\in E(X)$, $g\in E(X,\Z^+)\backslash \Z^+$. So $f = \lim \sigma^{n_\nu}$ and $g = \lim \sigma^{m_\mu}$, however with $m_\mu\to+\infty$. Then $fg = \lim_\nu \sigma^{n_\nu} g$.
Since $\sigma^{n_\nu} g = \lim_{\mu}  \sigma^{n_\nu+m_\mu} \in E(X,\Z^+)$ and $E(X,\Z^+)$ is closed 
we have $fg\in E(X,\Z^+)$. Suppose that $fg=\sigma^n$ for some $n\in \Z$. As $g\in \M(X,\Z^+)$ we have $gp = g$ for some idempotent $p\in \M(X,\Z^+)$. It follows that 
$1=\sigma^{-n}fg= \sigma^{-n}fgp = p$. This implies that $E(X,\Z^+)$ is a group and thus contradicts the assumption that the forward proximality relation is non-trivial \cite{Auslander}. 
Hence $fg\notin \Z$ so that by Lemma~\ref{lem:prox=asym},
$E(X)\M(X,\Z^+)\subset \M(X,\Z^+)$ and moreover $\M(X,\Z^+)\cap \Z=\emptyset$. 
To show minimality of $\M(X,\Z^+)$ it suffices to show that all idempotents of $\M(X,\Z^+)$ are minimal in $E(X)$. 
Let $q\in \M(X,\Z^+)$ and $p\in E(X)$ be idempotents such that $p\leq q$. This means that $pq = qp = p$. As we just showed, $p=pq\in \M(X,\Z^{+})$. 
But then $p=q$ as $q$ is minimal in $E(\M,\Z^+)$. 

As the kernel of a semigroup is the union of its minimal left ideals we now have shown that
$\M(X)=\M(X,\Z^+)\cup \M(X,\Z^-)$ and that $\M(X)\cap \Z=\emptyset$.
\end{proof}
The arguments in the second part of the proof were adapted from \cite{BargeKellendonk} where it is also shown that, for minimal $\Z$-actions on totally disconnected compact metric spaces, the condition that forward proximality agrees with forward asymptoticity and backward proximality agrees with backward asymptoticity is also necessary for complete regularity.


\subsection{Equicontinuous factors and the structure of $E(X)$}\label{semigroup-of-factor}

In this section $T$ is an abelian group. When we have a factor map between two dynamical systems, the acting group is the same.  A dynamical system $(X, \sigma)$ is called  {\em equicontinous} if the family of homeomorphisms 
$\{\sigma^t, t\in T\}$ is equicontinuous.  If the action is transitive then this is the case if and only if, for any choice of $x_0\in X$ there is an abelian group structure on $X$ (denoted additively) such that $x_0$ is the identity element and $\sigma^t(x) = x+ \sigma^t(x_0)-x_0$. This group structure is topological.

Moreover, for a minimal equicontinuous system and w.r.t.\ the above group structure on $X$,
$ev_{x_0} : E(X) \to X$ is an isomorphism of topological groups, where $ev_{x_0}$ is evaluation at the point $x_0\in X$, $ev_{x_0}(f) = f(x_0)$ \cite[Chap.~3, Theorem~6]{Auslander}.

An {\em equicontinuous} factor is a factor $\pi:(X, \sigma)\to (Y,\delta)$ such that $(Y,\delta)$ is equicontinuous. As with any factor map, $\pi$ induces a continuous semigroup morphism  
$\pi_*: E(X) \to E(Y)$ via $\pi_*(f)(y) = \pi(f(x))$ where $x$ is any pre-image of $y$ under $\pi$. As $(Y,\delta)$ is equicontinuous $ev_{y_0}:E(Y) \to Y$ is a semigroup isomorphism where $y_0$ is the identity element in $Y$. We denote by $\tilde \pi:E(X)\to Y$ the composition $ev_{y_0}\circ \pi_*$,  which is also a continuous surjective semigroup morphism.

\begin{definition}\label{def:Efib} 
Let $\pi:(X,\sigma)\to (Y,\delta)$  be an  equicontinuous factor.
Define $\Ef(X)$ to be the subsemigroup of  $E(X)$ which consists of those elements which preserve the $\pi$-fibres $\pi^{-1}(y)$, $y\in Y$. 
\end{definition}
In other words, $\Ef(X)$ is the kernel of the continuous semigroup morphism $\tilde\pi$ and therefore a closed subsemigroup. We summarize this situation with the exact sequence of right-topological semigroups
\begin{equation}\label{eq-ext}
 \Ef(X) \hookrightarrow E(X) \stackrel{\tilde\pi} \twoheadrightarrow Y 
\end{equation}
in which the involved maps are continuous semigroup morphisms.  While $E(X)$ is only right-topological, $Y$ is topological. As we will see below, under certain circumstances, $\Ef(X)$ is also a topological semigroup. 

\section{The fibre-preserving part $\Ef(X)$}\label{fibre-preserving}
In this section we investigate the fibre-preserving part $\Ef(X)$ of $E(X)$ for dynamical systems which factor onto an equicontinuous system, $\pi:X\to Y$. 
We call a point $y\in Y$ is {\em regular} (for $\pi$) 
if the proximal relation restricted to $\pi^{-1}(y)$ is trivial. 
Otherwise we call the point {\em singular} (for $\pi$). 

\begin{prop}
Suppose that $\pi:(X,\sigma)\to (Y,\delta)$ is an equicontinuous factor map whose fibres $\pi^{-1}(y)$ are all finite. Then $\Ef(X)$ is a compact topological semigroup. 
\end{prop}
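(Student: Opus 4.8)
The plan is to use the identification $f \mapsto \tilde f$ from Section~\ref{compact semigroups} to realize $\Ef(X)$ as a closed subsemigroup of the topological semigroup $\prod_{y\in Y} F(\pi^{-1}(y))$. Recall that since all fibres are finite, each $F(\pi^{-1}(y))$ is a finite (hence discrete, hence topological) semigroup, and therefore the product $\prod_{y\in Y}F(\pi^{-1}(y))$ with the product topology is a topological semigroup, by the remark at the end of Section~\ref{compact semigroups}. The map $f\mapsto \tilde f$ is a topological isomorphism between $F^{fib}(X)$ and this product. So it suffices to show that the image of $\Ef(X)$ under this isomorphism is a \emph{closed} subsemigroup of the product; then it inherits joint continuity of multiplication, and closedness together with compactness of the ambient product (each $F(\pi^{-1}(y))$ is finite, so the product is compact by Tychonoff) gives compactness of $\Ef(X)$.

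First I would observe that $\Ef(X)$ is a closed subsemigroup of $E(X)$: it is the kernel of the continuous semigroup morphism $\tilde\pi : E(X)\to Y$ (being the preimage of the neutral element $y_0$), as noted after Definition~\ref{def:Efib}. Moreover $\Ef(X)\subset F^{fib}(X)$, and $E(X)$ carries the topology of pointwise convergence on $X$, which is exactly the subspace topology inherited from $F(X)=X^X$. The map $f\mapsto \tilde f$ restricted to $F^{fib}(X)$ is the topological isomorphism onto $\prod_{y\in Y}F(\pi^{-1}(y))$, so it carries $\Ef(X)$ to a subsemigroup $\widetilde{\Ef(X)}$ of the product. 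Since $\Ef(X)$ is closed in $F^{fib}(X)$ (it is closed in $E(X)$, which is closed in $F(X)$, hence closed in $F(X)$, and $F^{fib}(X)$ is closed in $F(X)$, so $\Ef(X)$ is closed in $F^{fib}(X)$), its image $\widetilde{\Ef(X)}$ is closed in $\prod_{y\in Y}F(\pi^{-1}(y))$.

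It then remains to assemble the pieces: $\widetilde{\Ef(X)}$ is a closed subset of the compact space $\prod_{y\in Y}F(\pi^{-1}(y))$, hence compact; it is a subsemigroup of a topological semigroup, hence the multiplication restricted to it is jointly continuous; therefore $\widetilde{\Ef(X)}$ is a compact topological semigroup, and via the topological isomorphism $f\mapsto\tilde f$ so is $\Ef(X)$.

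I do not expect a serious obstacle here; the statement is essentially a bookkeeping consequence of the structural remarks already made in Section~\ref{compact semigroups}. The only point requiring a little care is the verification that the pointwise-convergence topology on $\Ef(X)$ matches, under $f\mapsto\tilde f$, the product topology on $\prod_{y\in Y}F(\pi^{-1}(y))$ — but this is precisely the content of the assertion (already stated in the excerpt) that $f\mapsto\tilde f$ is a topological isomorphism $F^{fib}(X)\to\prod_{y\in Y}F(\pi^{-1}(y))$, so one may simply invoke it. If one prefers to argue more directly, joint continuity of multiplication on $\Ef(X)$ can also be checked by hand: if $f_\nu\to f$ and $g_\nu\to g$ pointwise, then for each $x\in X$ the value $g_\nu(x)$ lies in the finite set $\pi^{-1}(\pi(x))$, so $g_\nu(x)=g(x)$ eventually, whence $f_\nu(g_\nu(x))=f_\nu(g(x))\to f(g(x))$; this uses finiteness of the fibres in an essential way and is the reason the hypothesis cannot be dropped.
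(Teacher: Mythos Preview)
Your proposal is correct and follows essentially the same route as the paper: embed $\Ef(X)$ as a closed subsemigroup of $F^{fib}(X)\cong\prod_{y\in Y}F(\pi^{-1}(y))$, observe that finiteness of the fibres makes each factor a topological semigroup so that the product has jointly continuous multiplication, and conclude. Your write-up is more explicit about the closedness bookkeeping and even supplies a direct hands-on verification of joint continuity at the end, but the underlying argument is the paper's.
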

\begin{proof}
$\Ef(X)$ is a compact subsemigroup of $F^{fib}(X)$,  defined in Section \ref{compact semigroups}. By  assumption, all $F(\pi^{-1}(y))$ are (trivially) topological semigroups. Therefore  the semigroup multiplication of $\prod_{y\in Y} F(\pi^{-1}(y))$ is jointly continuous.
As $\Ef(X)$ is a closed subsemigroup of $F^{fib}(X)$ its product is also jointly continuous.
\end{proof} 
We can now apply Theorem \ref{thm-Rees-str-2} to conclude the following.
\begin{cor}\label{cor-ff}
Suppose that $\pi:(X,\sigma)\to (Y,\delta)$ is an equicontinuous factor map whose fibres $\pi^{-1}(y)$ are all finite. Then the kernel $\M^{fib}(X)$ of $\Ef(X)$ is topologically isomorphic to its normalised Rees matrix form. 
\end{cor}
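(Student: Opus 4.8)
The plan is to obtain the statement as an immediate consequence of Theorem~\ref{thm-Rees-str-2}, applied to $S=\Ef(X)$, once we observe that the hypotheses of that theorem are met. Recall that $\Ef(X)$ is a closed subsemigroup of the compact right-topological semigroup $E(X)$, hence is itself compact and right-topological; by Theorem~\ref{thm-Rees-str-1} it therefore admits a kernel, and by construction this kernel is exactly $\M^{fib}(X)$. The content of the corollary is that, under the finiteness assumption on the fibres, this kernel is not merely \emph{algebraically} isomorphic to a Rees matrix semigroup but \emph{topologically} so.

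First I would record that the preceding Proposition already carries out the crucial upgrade: when every fibre $\pi^{-1}(y)$ is finite, each $F(\pi^{-1}(y))$ is a finite, hence topological, semigroup, so the product semigroup $\prod_{y\in Y}F(\pi^{-1}(y))$ has jointly continuous multiplication; since $\Ef(X)$ is, under the identification $f\mapsto\tilde f$, a closed subsemigroup of $F^{fib}(X)\cong\prod_{y\in Y}F(\pi^{-1}(y))$, its multiplication is jointly continuous as well. Thus $\Ef(X)$ is a \emph{compact topological} semigroup, not just a right-topological one.

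Then I would simply invoke Theorem~\ref{thm-Rees-str-2} with $S=\Ef(X)$: it gives that $\M(\Ef(X))=\M^{fib}(X)$ is topologically isomorphic to the normalised matrix semigroup $M[G;I,\Lambda;A]$, where $G=e\,\Ef(X)\,e$ for a chosen minimal idempotent $e$, and $I$, $\Lambda$ are the idempotent sets of the relevant minimal left and right ideals of $\Ef(X)$, all equipped with the relative topology. The hard part is not in this corollary at all — it has already been discharged upstream, in the Proposition (passage from right-topological to topological) and inside Theorem~\ref{thm-Rees-str-2} itself (whose proof needs the continuity of the structural bijections between the $\mathcal H$-classes). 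The only thing left to verify here is the bookkeeping identification that the ``kernel of $\Ef(X)$'' named in the statement is the same object as the $\M(S)$ of Theorem~\ref{thm-Rees-str-2}, which is immediate from the definition of the kernel as the smallest bilateral ideal.
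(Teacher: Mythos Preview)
Your proof is correct and follows exactly the paper's approach: the preceding Proposition establishes that $\Ef(X)$ is a compact topological semigroup when all fibres are finite, and the corollary is then an immediate application of Theorem~\ref{thm-Rees-str-2} to $S=\Ef(X)$. Your additional remarks on why $\M^{fib}(X)$ coincides with $\M(S)$ and on the role of Theorem~\ref{thm-Rees-str-1} are accurate elaborations, but the paper simply says ``apply Theorem~\ref{thm-Rees-str-2}'' and leaves it at that.
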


We now consider more closely the algebraic structure of $\Ef(X)$.
To simplify the notation we drop the reference to $X$ and denote it by $\Ef$.

When identifying $\Ef$ with a subsemigroup of 
$F^{fib}(X)\cong \prod_{y\in Y} F(\pi^{-1}(y))$ 
we observe that it belongs actually to the smaller semigroup 
$\prod_{y\in Y} \Ef_y$ where 
\begin{equation*}\label{eq:Efib restriction}
\Ef_y := \Ef|_{\pi^{-1}(y)},
\end{equation*}
the restriction of $\Ef$ to the fibre $\pi^{-1}(y)$. Indeed, any $f\in \Ef$ corresponds to a function $\tilde f$ on $Y$ whose value $\tilde f(y)$ belongs to $\Ef_y$.  $\Ef_y$ is a compact subsemigroup of $F(\pi^{-1}(y))$.
Moreover, since the elements of $E$ commute with the action $\sigma$ of $T$ the functions $\tilde f$ have to be {\em covariant} in the sense that 
\begin{equation*}\label{eq-cov}
\tilde f(\delta^t(y)) = \sigma^t \tilde f(y)\sigma^{-t}
\end{equation*}
for all $y\in Y$ and $t\in T$. In other words, $\Ef$ is a subsemigroup of 
\begin{equation*}\label{eq:definiton-of-Cf} \Cf  := \{\tilde f\in \prod_{y\in Y} \Ef_y
:  \tilde f \;\mbox{is covariant}\},\end{equation*}
again equipped with the pointwise semigroup multiplication 
$(\tilde f_1\tilde f_2)(y) = \tilde f_1(y)\circ \tilde f_2(y)$.  Equipped with the product topology,
$\Cf$ is compact and the inclusion $\Ef\subset \Cf$ 
is continuous. 

Recall {by \eqref{pEp-group}} that if $p$ is any minimal idempotent then  $pEp$ is a group.
We fix any such minimal idempotent $e$, recalling that  different minimal idempotents define isomorphic groups.
As $\tilde\pi: E \to Y$ is onto, and $\tilde \pi (efe)= \tilde \pi (f)$, the restriction 
$\tilde\pi: eEe \to Y$ is also onto. 
A {\em lift under $\tilde\pi$} is a right inverse $s:Y\to eE e$ to $\tilde\pi:e E e\to Y$, i.e.\ it satisfies $\tilde \pi \circ s = \Id$. A lift always exists  by the axiom of choice.
We do not demand that it is continuous, nor, for the time being, that it preserves the group structure. But we can and do demand that it satisfies
$s(\delta^t(y)) = \sigma^t s(y)$ for all $t\in T$, and also that $s(y)^{-1}=s(-y)$ for each $y\in Y$. We impose the latter condition now although we will not use it until
Proposition \ref{prop-shomo}.

Given a lift $s:Y\to e E e$, we define $\Phi_{y_1}^{y_2}:\Ef\to \Ef$ by
\begin{equation}\label{eq-conj}
\Phi_{y_1}^{y_2} (f)  =  s(y_2-y_1) f \, s(y_2-y_1)^{-1} 
\end{equation}
where $s(z)^{-1}$ is the group inverse to $s(z)$. Although we do not include this in our notation, it must be kept in mind that  $\Phi_{y_1}^{y_2}$ depends on the choice of lift. 
Since $s(\delta^t(y)) = \sigma^t s(y)$ we have 
$\Phi_{y_1}^{\delta^t(y_2)} (f) = \sigma^t \Phi_{y_1}^{y_2} (f) \sigma^{-t}$.

Note that $\Phi_{y_1}^{y_2}$ also defines  a map from $\Ef_{y_1}$ to $\Ef_{y_2} $, namely if $\varphi\in \Ef_{y_1} $ and $f$ is an element of $\Ef$ which restricts to $\varphi$ on $\pi^{-1}(y_1)$, that is $\varphi = \tilde f(y_1)$ in the notation above,
 then $\Phi_{y_1}^{y_2} ( \varphi )$ is defined to be the restriction of 
$\Phi_{y_1}^{y_2} (f) $ to $\pi^{-1}(y_2)$. This does not depend on the choice of $f$, as $s(y_2-y_1)^{-1}$ maps $\pi^{-1}(y_2)$ to $\pi^{-1}(y_1)$.
\begin{lem}\label{structure}
 Let $e\in E(X)$ be a minimal idempotent.
\begin{enumerate}
\item $e\Ef_y e$ is a group.
\item If $y$ is regular then $e\Ef_y e = \Ef_y$. 
\item For any $y_1, y_2$, the restriction $\Phi_{y_1}^{y_2}:e\Ef_{y_1} e\to e\Ef_{y_2} e$
is a group isomorphism.
\end{enumerate}
\end{lem}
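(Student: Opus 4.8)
The plan is to prove the three statements of Lemma~\ref{structure} in order, using the general structure theory for compact right-topological semigroups recalled in Section~\ref{compact semigroups}, applied not to $E(X)$ itself but to the restricted semigroups $\Ef_y$.

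First, for part (1): $\Ef_y$ is a compact right-topological semigroup (indeed, it is a quotient of $\Ef$, which is a closed subsemigroup of the compact right-topological $E(X)$), so by Theorem~\ref{thm-Rees-str-1} it has a kernel which is a completely simple semigroup containing all its minimal idempotents. The key point is to verify that the image $\bar e$ of $e$ under the restriction map $\Ef \to \Ef_y$ is a \emph{minimal} idempotent of $\Ef_y$. Since $e$ is a minimal idempotent of $E(X)$, it lies in the kernel $\M(X)$, and the restriction map $f \mapsto \tilde f(y)$ is a continuous semigroup epimorphism from $\Ef$ onto $\Ef_y$; images of kernels under epimorphisms lie in the kernel of the target (one checks $\M(\Ef_y) = $ image of $\M(\Ef)$, or more directly that $\bar e \Ef_y \bar e$ being a homomorphic image of $e\Ef e$ forces $\bar e$ to be in the kernel). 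Once $\bar e$ is known to be a minimal idempotent of $\Ef_y$, the chain of inclusions \eqref{pEp-group} applied inside $\Ef_y$ gives that $\bar e \Ef_y \bar e = e\Ef_y e$ is a group.

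For part (2): if $y$ is regular, the proximal relation restricted to $\pi^{-1}(y)$ is trivial. By the proximality theorem recalled just before Section~\ref{Section:comp reg}, for the idempotent $e$ and any $x \in \pi^{-1}(y)$, the points $e(x)$ and $x$ are proximal; since they lie in the same fibre $\pi^{-1}(y)$ (as $e$ preserves fibres) and $y$ is regular, we get $e(x) = x$. Thus $e$ restricts to the identity on $\pi^{-1}(y)$, i.e. $e$ acts as the neutral element when we restrict, and the same holds for \emph{every} minimal idempotent. It follows that $\bar e f = f \bar e = f$ for all $f\in \Ef_y$ (because every element of $\Ef_y$ lies in some minimal left ideal times a minimal right ideal — or: $\Ef_y$ has a two-sided identity given by any minimal idempotent, hence $\Ef_y$ itself is a group and $e\Ef_y e = \Ef_y$). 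The cleanest route is: $\Ef_y$ has the idempotent $\bar e = \Id|_{\pi^{-1}(y)}$, which is its unique idempotent (any idempotent $p$ of $\Ef_y$ also satisfies $p(x)$ proximal to $x$ hence $p = \Id$), so by Theorem~\ref{thm-Rees-str-1} the matrix semigroup has a single idempotent, forcing $|I| = |\Lambda| = 1$, so $\M(\Ef_y)$ is a group; but then the kernel of $\Ef_y$ equals $\Ef_y$ (a semigroup with a two-sided identity that is also in its kernel is a group), so $\Ef_y = e\Ef_y e$.

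For part (3): the map $\Phi_{y_1}^{y_2}(f) = s(y_2-y_1)\, f\, s(y_2-y_1)^{-1}$ is conjugation by a fixed element $s(y_2-y_1) \in eE e$ and its group inverse. Since $s(y_2-y_1) \in eEe$ and $e$ is a two-sided identity of the group $eEe$, conjugation by it fixes $e$, hence maps $e\Ef e$ into itself and, restricted to the fibres, sends $e\Ef_{y_1}e$ to $e\Ef_{y_2}e$ (as noted in the text, $s(y_2-y_1)^{-1}$ carries $\pi^{-1}(y_2)$ to $\pi^{-1}(y_1)$, so the conjugate of a fibre-preserving, $y_1$-adapted map is $y_2$-adapted). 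It is a semigroup homomorphism because $s(y_2-y_1)^{-1}s(y_2-y_1) = e$ acts as the identity on the relevant fibres inside the group; it is invertible with inverse $\Phi_{y_2}^{y_1}$, using $s(y_1-y_2) = s(-(y_2-y_1)) = s(y_2-y_1)^{-1}$ (our chosen lift satisfies $s(-z) = s(z)^{-1}$), together with the fact that $s(y_2-y_1)s(y_2-y_1)^{-1} = e$ restricts to the identity on $\pi^{-1}(y_2)$ — this last point is exactly where part (2)-type reasoning or the group property of $e\Ef_{y_2}e$ from part (1) is used, since we need $e$ to be neutral in $e\Ef_{y_2}e$. Hence $\Phi_{y_1}^{y_2}$ is a group isomorphism $e\Ef_{y_1}e \to e\Ef_{y_2}e$.

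The main obstacle is part (1): establishing that the restriction of the minimal idempotent $e$ remains minimal in $\Ef_y$, i.e. controlling how kernels behave under the restriction epimorphism $\Ef \to \Ef_y$. The cleanest argument is that a surjective semigroup homomorphism maps the kernel onto the kernel (true for the kernel = minimal two-sided ideal of a compact right-topological semigroup, since the image of an ideal is an ideal and the image of the minimal one is therefore contained in the minimal one, while surjectivity plus simplicity of the kernel forces equality); once this is in hand, minimality of $e$ in the kernel of $\Ef$ transfers to $\bar e$ in the kernel of $\Ef_y$ because minimal idempotents of a completely simple semigroup are exactly the idempotents lying in it, and $\bar e$ is then automatically minimal. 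Everything else is formal manipulation with \eqref{pEp-group}, the proximality theorem, and the covariance/inverse properties of the lift $s$.
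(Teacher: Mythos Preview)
Your proof is correct. For parts (2) and (3) it is essentially the paper's argument, only written out at greater length; in particular, once you have shown in (2) that $e$ restricts to the identity on $\pi^{-1}(y)$, the equality $e\Ef_y e = \Ef_y$ is immediate, and the subsequent discussion of unique idempotents and $|I|=|\Lambda|=1$ is correct but unnecessary.

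For part (1) you take a genuinely different route. The paper simply observes that $e\Ef e$ is already a group---it is the kernel of the group homomorphism $\tilde\pi:eEe\to Y$, equivalently a subsemigroup of the group $eEe$ which is closed under inverses because $\tilde\pi$ is multiplicative---and that $e\Ef_y e$ is its image under the restriction homomorphism, hence a group. You instead argue that the restriction epimorphism $\Ef\to\Ef_y$ carries the kernel onto the kernel, deduce that $\bar e$ is a minimal idempotent of $\Ef_y$, and then invoke \eqref{pEp-group} inside $\Ef_y$. This works, but it is heavier machinery: it requires the (true) fact that surjective semigroup morphisms send minimal two-sided ideals to minimal two-sided ideals, and it also needs a small bridging step you gloss over, namely that $e$ lies in $\M(\Ef)$ rather than just in $\M(X)$ (the point being that $e$, minimal in $E(X)$, is a fortiori minimal among the idempotents of the subsemigroup $\Ef$, so $e\in\M(\Ef)$ by Theorem~\ref{thm-Rees-str-1}). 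Your parenthetical alternative, ``$\bar e\Ef_y\bar e$ being a homomorphic image of $e\Ef e$'', is in fact exactly the paper's argument, but you phrase it as a route to minimality of $\bar e$ rather than as directly yielding the group property.
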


\begin{proof}
$e\Ef_y e$ is entirely determined by the action of $e\Ef e$ on $e\pi^{-1}(y)$. It is hence the homomorphic image of a group.   

Idempotents must act like the identity on a regular fibre, as can be seen as follows:
The points $e(x)$ and $x$ are proximal. In a regular fibre this can only be the case if $e(x)=x$. Hence $e\Ef_y e = \Ef_y$ if $y$ is regular.

Let $s:Y\to e E e$ be a right inverse to $\tilde\pi:eE e\to Y$; 
$s(z)$ restricts to a map  $e\pi^{-1}(y)\to e\pi^{-1}(y+z)$ whose inverse is the restriction of $s(z)^{-1}$, as $s(z)^{-1}s(z) = s(z)s(z)^{-1} = e$. Hence 
$\Phi_{y_1}^{y_2}$ is conjugation with a bijection.
\end{proof}
\begin{definition}\label{def:structure group} We call the group determined up to isomorphism by Lemma~\ref{structure} 
the {\em structure group} of the factor system $(X,\sigma)\stackrel{\pi}\to (Y,\delta)$ and denote it by $G_\pi$.  
\end{definition}

\begin{definition} {We say that the system $(X,\sigma)$ is a {\em unique singular orbit system} if it admits an equicontinuous factor which has a single orbit of singular points.}
\end{definition}
We now specialize to the context of unique singular orbit systems and fix a singular point $y_0\in Y$. Define $\Et\subset \Ef$ to be
\begin{equation}\label{eq:def T}
\Et=\{ f\in \Ef: f(x)= x \mbox{ for all $x$ in a regular fibre}\}.
\end{equation}
Since idempotents can only project proximal points, and regular fibres contain no proximal pairs, so idempotents belong to $\Et$.
 However $\Et$  may be larger. 
Given the minimal idempotent $e$, $e\Et e$ is a subsemigroup of $e\Ef e$. We claim that it is even a normal subgroup. Indeed, if $f\in e\Et e$ then  its inverse in $e\Ef e$ also acts trivially on regular fibres and so belongs to $e\Et e$. Furthermore, an element $g\in e\Ef e$ acts bijectively on regular fibres and hence $gfg^{-1}$ acts as $g g^{-1}=\Id$ on them. 

Let $\Et_{y_0}$ be the restriction of $\Et $ to $\pi^{-1}(y_0)$; it is a subsemigroup of $\Ef_{y_0}$. 
Then $e\Et_{y_0} e$ is the restriction of $e\Et e$ to $e\pi^{-1}(y_0)$; it is a normal subgroup of $e\Ef_{y_0} e$. We now use the maps $\Phi_{y_0}^{y}$ from (\ref{eq-conj}) to transport the group $e\Et_{y_0} e$ along $Y$ and define the subsemigroup of $\Cf$
\begin{equation}  \label{eq:def Ct} 
\Ct := 
\{ \tilde f\in \Cf :    \tilde f (y_0) \in \Et_{y_0}, \mbox{ and } \tilde f (y) \in \Phi_{y_0}^{y}(e\Et_{y_0} e) \mbox{ for all $y$ regular} \}.
\end{equation}
Although the map $\Phi_{y_0}^{y}$ depends on the choice of a lift $s:Y\to eEe$ for $\tilde \pi$, the space $\Ct$ does not. 
Indeed, if we take another lift to obtain a map ${\Phi'}_{y_0}^{y}$ then $\Phi_{y_0}^{y}(\tilde f)(y_0)$ will differ from ${\Phi'}_{y_0}^{y}(\tilde f)(y_0)$ by a conjugation with an element $h\in e \Ef_{y_0} e$, which does not matter as $e\Et_{y_0} e$ is a normal subgroup of 
$e\Ef_{y_0} e$. 
By covariance $\Ct$ does not depend on the choice of $y_0$ in the unique orbit of singular points. 
Finally, the dependence of $\Ct$ on the choice of minimal idempotent $e$ can be controlled with the isomorphisms (\ref{eq-iso1},\ref{eq-iso2}). If the singular fibre $\pi^{-1}(y_0)$ is finite then the isomorphisms are bicontinuous by Theorem ~\ref{thm-Rees-str-2} and Corollary ~\ref{cor-ff}.


\begin{thm} \label{thm-main} 
Let $(X,\sigma)$ be a minimal unique singular orbit system.  $\Ct$ is a subsemigroup of $\Ef$.
\end{thm}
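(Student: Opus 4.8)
The plan is to show that $\Ct$ is closed under the pointwise multiplication $(\tilde f_1 \tilde f_2)(y) = \tilde f_1(y)\circ\tilde f_2(y)$ and that $\Ct\subseteq \Ef$. Since $\Ct$ is by definition a subset of $\Cf$, and $\Cf$ already carries the covariance constraint, the content is: (i) products of elements satisfying the two defining conditions of $\Ct$ again satisfy them, and (ii) every covariant family $\tilde f$ with $\tilde f(y_0)\in\Et_{y_0}$ and $\tilde f(y)\in\Phi_{y_0}^{y}(e\Et_{y_0}e)$ for regular $y$ actually comes from an element of $E(X)$ that preserves fibres, i.e.\ lies in $\Ef$. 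For (i) the key observation is that each $\Phi_{y_0}^{y}$ is conjugation by a fixed bijection $s(y-y_0)$ (restricted appropriately), hence a group homomorphism on the relevant groups; so $\Phi_{y_0}^{y}(e\Et_{y_0}e)$ is a subgroup of $\Ef_y$ and is in particular closed under composition. At the singular fibre $y_0$, $\Et_{y_0}$ is a subsemigroup of $\Ef_{y_0}$ by construction (as noted just before the statement), so $\tilde f_1(y_0)\circ\tilde f_2(y_0)\in\Et_{y_0}$. At a regular $y$, $\tilde f_1(y)\circ\tilde f_2(y)$ lies in the subgroup $\Phi_{y_0}^{y}(e\Et_{y_0}e)$. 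That handles closure under multiplication, modulo checking the product is still covariant — but covariance of a composition of covariant maps is immediate, so the product stays in $\Cf$.

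The real work is (ii): identifying $\Ct$ as a subset of $\Ef$ rather than just of $\Cf$. Here I would argue that $\Et\subseteq\Ef$ trivially (it is defined as a subset of $\Ef$), so the candidate elements at $y_0$ come from genuine fibre-preserving maps; the issue is whether an \emph{arbitrary} covariant family with values in the prescribed subgroups over regular fibres is realised by a single element of $E(X)$. The natural strategy is to use the minimal idempotent $e$ and the group structure of $e\Ef e$: given $\tilde f\in\Ct$, write $\varphi_0 = \tilde f(y_0)\in\Et_{y_0}$, pick $F\in\Et$ with $\tilde F(y_0)=\varphi_0$ (possible since $\Et_{y_0}$ is by definition the restriction of $\Et$), and then compare $\tilde F$ with $\tilde f$ over the regular fibres. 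Over a regular fibre $y$, both $\tilde F(y)$ and $\tilde f(y)$ lie in $\Phi_{y_0}^{y}(e\Et_{y_0}e)$; by covariance and the transport formula $\Phi_{y_0}^{\delta^t(y)}(f)=\sigma^t\Phi_{y_0}^{y}(f)\sigma^{-t}$, the two families are determined on a whole orbit by their value at one point, and since regular fibres are singletons (the proximal relation there is trivial and — in the minimal case — this forces $|\pi^{-1}(y)|=1$ wherever $y$ is regular) there is in fact \emph{no freedom at all} over regular fibres: $\tilde F(y)=\tilde f(y)=\Id$ there automatically. Hence $\tilde f=\tilde F$, so $\tilde f\in\Ef$.

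I expect the main obstacle to be pinning down exactly why a regular fibre is a singleton (or, more generally, why the group $e\Ef_y e$ acting on a regular fibre is trivial) in the setting at hand — this is where minimality is used, via the fact that idempotents act as the identity on regular fibres (Lemma \ref{structure}(2)) together with transitivity arguments propagating triviality along orbits and using density. If regular fibres need not be singletons in the intended generality, the argument must instead be: over regular fibres the values of any $\tilde f\in\Ct$ are constrained to lie in $\Phi_{y_0}^{y}(e\Et_{y_0}e)$, and one shows this subgroup is exactly what is realised by restricting elements of $\Et$, using that $e\Et e$ is a normal subgroup of $e\Ef e$ and that covariance forces the extension of a single fibre value to a consistent global one. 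Either way the skeleton is: closure under the product is a one-line consequence of $\Phi$ being a conjugation (hence multiplicative) plus $\Et_{y_0}$ being a subsemigroup; membership in $\Ef$ is the substantive point and is obtained by realising the prescribed data by a concrete element of $\Et\subseteq\Ef$ and invoking covariance to see the realisation is unique, hence equal to $\tilde f$.
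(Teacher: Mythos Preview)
Your handling of part (i), closure under multiplication, is fine and matches the paper's implicit reasoning. The problem is part (ii), the inclusion $\Ct\subseteq\Ef$, where your argument has a real gap.

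First, your claim that regular fibres are singletons is false in the intended setting. ``Regular'' means the proximal relation restricted to the fibre is trivial, i.e.\ all pairs are distal; it does \emph{not} mean the fibre has one point. In the paper's main examples (bijective substitutions), regular fibres have exactly $|\Aa|$ elements. So there is genuine freedom over regular fibres, and your conclusion ``$\tilde F(y)=\tilde f(y)=\Id$ there automatically'' collapses.

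Your fallback is also not viable. You propose to realise a given $\tilde f\in\Ct$ by a single $F\in\Et$ with $\tilde F(y_0)=\tilde f(y_0)$, and then argue the regular-fibre values are forced. But elements of $\Et$ are, by definition, the identity on every regular fibre, so $\tilde F(y)=\Id$ for regular $y$, while $\tilde f(y)$ may be any element of $\Phi_{y_0}^{y}(e\Et_{y_0}e)$. Covariance determines values along a single $T$-orbit from one value, but it says nothing across distinct orbits; a generic $\tilde f\in\Ct$ has independently chosen values on uncountably many regular orbits, and no single element of $\Et$ (or finite product) can match them all.

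The missing idea is an approximation argument. For $g\in\Et$ and a regular point $y$, the conjugate $\Phi_{y_0}^{y}(g)=s(y-y_0)\,g\,s(y-y_0)^{-1}$ lies in $\Ef$ (it is a product of elements of $E(X)$ and preserves fibres), and it is supported on the single orbit of $y$: it equals $\Id$ on every other orbit, while on the orbit of $y$ it realises the prescribed value $\Phi_{y_0}^{y}(ege)$. Finite products of such conjugates give elements of $\Ct\cap\Ef$ that match an arbitrary $\tilde f\in\Ct$ on any finite set of orbits. Since the topology on $\Cf$ is pointwise convergence, these finitely-supported elements are dense in $\Ct$. Finally, $\Ef$ is closed (it is the kernel of the continuous map $\tilde\pi$), so $\Ct\subseteq\overline{\Ct\cap\Ef}\subseteq\Ef$. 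This is exactly the paper's route, and it is the step your proposal is missing.
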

\begin{proof}
Let $g \in \Et$ and $y\in Y$. Then  $f:= \Phi_{y_0}^{y} (g)$ belongs to 
$\Ct\cap \Ef$. Indeed,  $\Phi_{y_0}^{y} (g)=\Phi_{y_0}^{y} (ege)\in \Phi_{y_0}^{y} (e\Et_{y_0}e)$.

By definition $g$ acts non-trivially only on the fibres of the $T$-orbit of $y_0$. Hence
$f:= \Phi_{y_0}^{y} (g)$ acts non-trivially only on the fibres of the $T$-orbit of $y$. 
As $(\Phi_{y_0}^{y})^{-1}(\tilde f(y))$ is the restriction of  $ege$ to $e\pi^{-1}(y_0)$
we find that, given any regular point $y$ and any $g\in e\Et_{y_0} e$, $\Ct\cap \Ef$ contains the function $f$ which satisfies $(\Phi_{y_0}^{y})^{-1}(\tilde f(y)) = g$ and $\tilde f(y') = \Id$ for any point $y'$ in another orbit. By taking finite products of such functions we see that
$\Ct\cap \Ef$ contains, for any choice of  $k$ points $y_1,\cdots,y_k$ in distinct regular orbits 
and any choice of $k+1$ elements $g_i\in \Et_{y_0}$, $i=0,\cdots,k$ a function $f$ such that 
$(\Phi_{y_0}^{y_i})^{-1}(\tilde f(y_i)) = eg_ie$, $i\geq 1$, $\tilde f(y_0) = g_0$, and $\tilde f(y') = \Id$ for a point $y'$ in another orbit.
By definition of the topology of pointwise convergence and since covariance is a closed relation, the set of these elements is dense in $\Ct$. Since $\Ef$ is the kernel of a continuous map it is closed; it hence contains $\Ct$. 
\end{proof}
\begin{cor} \label{cor-main} 
Let $(X,\sigma)$ be a minimal unique singular orbit system. If $\Ef_{y_0}=\Et_{y_0}$ then $\Ef$ is  topologically isomorphic to $\Cf$. 
\end{cor}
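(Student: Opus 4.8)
The plan is to prove the stronger statement $\Ct=\Ef=\Cf$; since all three are subsemigroups of $\prod_{y\in Y}F(\pi^{-1}(y))$ equipped with the product (pointwise-convergence) topology, and $\Ef$ carries exactly the subspace topology inherited from $\Cf$, this equality of underlying sets immediately yields the asserted topological isomorphism. By Theorem~\ref{thm-main} we already have $\Ct\subseteq\Ef$, while by construction $\Ef\subseteq\Cf$ and $\Ct\subseteq\Cf$. Hence it suffices to establish the reverse inclusion $\Cf\subseteq\Ct$.

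So let $\tilde f\in\Cf$ and check the two membership conditions in the definition \eqref{eq:def Ct} of $\Ct$. The first, $\tilde f(y_0)\in\Et_{y_0}$, is immediate: $\tilde f(y_0)\in\Ef_{y_0}$ because $\tilde f\in\Cf$, and $\Ef_{y_0}=\Et_{y_0}$ by hypothesis. For the second, fix a regular point $y$. Conjugating the hypothesis $\Ef_{y_0}=\Et_{y_0}$ by the fibre-preserving bijection $e|_{\pi^{-1}(y_0)}$ gives $e\Ef_{y_0}e=e\Et_{y_0}e$. By Lemma~\ref{structure}(3) the map $\Phi_{y_0}^{y}\colon e\Ef_{y_0}e\to e\Ef_{y}e$ is a group isomorphism, in particular surjective, and by Lemma~\ref{structure}(2) its target equals $\Ef_y$ because $y$ is regular. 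Therefore
\[
\Phi_{y_0}^{y}\!\left(e\Et_{y_0}e\right)=\Phi_{y_0}^{y}\!\left(e\Ef_{y_0}e\right)=e\Ef_ye=\Ef_y,
\]
and $\tilde f(y)\in\Ef_y$ since $\tilde f\in\Cf$; thus the second condition holds as well. Hence $\tilde f\in\Ct$, proving $\Cf\subseteq\Ct$ and consequently $\Ct=\Ef=\Cf$.

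I do not expect a genuinely hard step here: once Theorem~\ref{thm-main} is available, the argument is just a matter of unwinding the definition of $\Ct$ under the extra hypothesis. The only point that warrants care is that $\Phi_{y_0}^{y}$ must be exploited as a \emph{surjection} onto $e\Ef_y e$, not merely as an injective conjugation, which is precisely what Lemma~\ref{structure}(3) supplies, together with the identification $e\Ef_y e=\Ef_y$ at regular $y$ coming from Lemma~\ref{structure}(2). It is also worth recording at the outset that the inclusion $\Ef\hookrightarrow\Cf$ is a homeomorphism onto its image (both sides carry the topology of pointwise convergence), so that equality of sets upgrades automatically to a topological isomorphism of the right-topological semigroups in question.
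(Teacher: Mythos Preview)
Your proof is correct and follows essentially the same approach as the paper: both arguments verify that the hypothesis $\Ef_{y_0}=\Et_{y_0}$ forces $\Phi_{y_0}^{y}(e\Et_{y_0}e)=\Ef_y$ for regular $y$, so that the defining conditions of $\Ct$ become vacuous and $\Ct=\Cf$, whence Theorem~\ref{thm-main} together with the standing inclusion $\Ef\subseteq\Cf$ yields equality. The only cosmetic difference is that the paper closes the topological statement by appealing to compactness of $\Cf$ (a continuous bijection from a compact space is a homeomorphism), whereas you observe directly that $\Ef$ and $\Cf$ both carry the subspace topology from $\prod_{y}F(\pi^{-1}(y))$; either justification works.
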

\begin{proof}
If $\Ef_{y_0}=\Et_{y_0}$ then $\Phi_{y_0}^{y}(e\Et_{y_0}e)=\Ef_y$, for regular $y$, so that the condition $\tilde f(y)\in \Phi_{y_0}^{y}(e\Et_{y_0}e)$ is trivially satisfied as is $\tilde f(y_0)\in \Et_{y_0}$. Hence $\Ct=\Cf$.  {Thus by Theorem \ref{thm-main} we have  $\Cf \subseteq \Ef$. On the other hand,
we saw that $\Ef$ is a subsemigroup of $\Cf$ and that the inclusion is continuous. 
Since $\Cf$ is compact this gives the result.}
\end{proof}

We end this section by establishing a criterion which implies the condition of the last corollary, namely that $\Ef_{y_0}=\Et_{y_0}$.
\begin{definition}\label{coincidence-definition}
 Let $\pi:(X,\sigma)\to (Y,\delta)$ be an equicontinuous factor. 
The {\em minimal rank} $r_\pi$ of the factor $\pi$ is the smallest possible cardinality $|\pi^{-1}(y)|$ of a fibre, $y\in Y$. 
The {\em coincidence rank} $cr_\pi(y)$ of the fibre $y\in Y$ is the largest possible cardinality a subset of $\pi^{-1}(y)$ can have, which contains only pairwise non-proximal elements. \end{definition}

If the system $(X,\sigma)$ is minimal, then the coincidence rank of
an equicontinuous factor can be shown to be independent of $y$ and so $cr_\pi=cr_\pi(y)$ is the coincidence rank of the factor $\pi:(X,\sigma)\to (Y,\delta)$. If the factor is not specified then the coincidence rank is meant to be the coincidence rank of the maximal equicontinuous factor. 
See \cite{Aujogue-Barge-Kellendonk-Lenz} for details and a context. 
Not every system contains regular fibres. It can be shown that for minimal systems with finite coincidence rank for the maximal equicontinuous factor, the maximal equicontinuous factor contains a regular fibre 
if and only if the system is {\em point distal } i.e.\ contains a point $x$ that is proximal only to itself \cite{Aujogue-Barge-Kellendonk-Lenz}, and if that is the case, any other equicontinuous factor must also contain regular fibres.
(Since this is  a side remark we don't include a proof.)
\begin{lem} If the minimal rank $r_\pi$ of
the equicontinuous factor $Y$ of a minimal system is finite and the factor contains some regular fibre then $y\in Y$ is regular if and only if $|\pi^{-1}(y)|=r_\pi$. 
\end{lem}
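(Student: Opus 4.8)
The plan is to prove both implications using the structural facts already established about minimal systems with an equicontinuous factor. Recall that since $(X,\sigma)$ is minimal and $\pi$ is an equicontinuous factor, the coincidence rank $cr_\pi(y)$ is independent of $y$; write $cr_\pi$ for this common value. The two ingredients I would isolate first are: (i) for every $y\in Y$, a subset of $\pi^{-1}(y)$ consisting of pairwise non-proximal points has size at most $cr_\pi$, and there is one of size exactly $cr_\pi$; (ii) $y$ is regular precisely when the proximal relation on $\pi^{-1}(y)$ is trivial, i.e.\ when $|\pi^{-1}(y)| = cr_\pi(y) = cr_\pi$. So the claim will follow once I show that for \emph{any} $y$, $|\pi^{-1}(y)| \geq r_\pi$ with equality on regular fibres, and that a regular fibre realizes the minimal possible size.

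The forward direction is the easy half: if $y$ is regular then every pair of distinct points in $\pi^{-1}(y)$ is non-proximal, so $cr_\pi = cr_\pi(y) = |\pi^{-1}(y)|$. Since by hypothesis \emph{some} regular fibre exists, call it $\pi^{-1}(y^\ast)$, we have $|\pi^{-1}(y^\ast)| = cr_\pi$. On the other hand, for any $z\in Y$, $\pi^{-1}(z)$ contains a non-proximal subset of size $cr_\pi$, hence $|\pi^{-1}(z)| \geq cr_\pi = |\pi^{-1}(y^\ast)| \geq r_\pi$, where the last inequality is the definition of $r_\pi$. Combining, $|\pi^{-1}(y^\ast)| = r_\pi$, and therefore \emph{every} regular fibre has cardinality $r_\pi$; in particular if $y$ is regular then $|\pi^{-1}(y)| = r_\pi$.

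For the converse, suppose $|\pi^{-1}(y)| = r_\pi$. By the previous paragraph $r_\pi = cr_\pi$, so $\pi^{-1}(y)$ has exactly $cr_\pi$ points. But $\pi^{-1}(y)$ also contains a non-proximal subset of size $cr_\pi(y) = cr_\pi$; since this subset already exhausts $\pi^{-1}(y)$, \emph{all} points of $\pi^{-1}(y)$ are pairwise non-proximal, which is exactly the statement that $y$ is regular. The only subtle point, and the one place I would be careful, is the appeal to the equality $cr_\pi(y) = cr_\pi$ for all $y$ (independence of the coincidence rank for minimal systems) and to the existence of a maximal non-proximal subset of size exactly $cr_\pi$ inside each fibre — these are the facts cited from \cite{Aujogue-Barge-Kellendonk-Lenz}, and the whole argument is really just an elementary counting argument built on top of them. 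I expect no genuine obstacle beyond making sure the finiteness of $r_\pi$ is used to guarantee these cardinalities are honest integers so that "a non-proximal subset of size $cr_\pi$ that fills the fibre forces the whole fibre to be non-proximal" is legitimate.
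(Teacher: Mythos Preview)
Your proof is correct and follows essentially the same approach as the paper: both arguments establish $cr_\pi = r_\pi$ by combining the inequality $cr_\pi \leq |\pi^{-1}(y)|$ for all $y$ with the fact that a regular fibre has exactly $cr_\pi$ elements, and then deduce both directions of the equivalence from this equality. The paper's version is slightly terser (it picks a fibre realising $r_\pi$ rather than arguing via the uniform lower bound $|\pi^{-1}(z)|\geq cr_\pi$), but the logic is the same.
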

\begin{proof} 
Let $y_0$ be a regular point. Then $cr_\pi=|\pi^{-1}(y_0)|$. 
It follows that $cr_\pi \geq r_\pi$. On the other hand, since $r_\pi$ is finite there exists a point $y_1$ for which  $r_\pi=|\pi^{-1}(y_1)|$. Clearly $cr_\pi(y_1)\leq |\pi^{-1}(y_1)|$. Hence 
$cr_\pi=r_\pi$. Thus all points of a regular fibre must be pairwise non-proximal, and moreover, a fibre cannot contain more than $r_\pi$ pairwise non-proximal points.
\end{proof}

\begin{lem} \label{lem-idp} Let $\pi:(X,\sigma)\to (Y,\delta)$ be an equicontinuous factor with finite minimal rank.
Let $f\in \Ef$ be an element which acts on the singular fibres as an idempotent. 
Then for some $N$,  $f^N = f$  on the singular fibres and  $f^N=\Id$ on the regular fibres. 
\end{lem}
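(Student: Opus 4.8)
The plan is to exploit the fact that the structure group is finite in order to turn the statement into a purely finite-group-theoretic observation about what powers of a single element do on the fibres. First I would observe that, since the minimal rank $r_\pi$ is finite, every regular fibre has cardinality $r_\pi$, hence $\Ef$ acts on each regular fibre $\pi^{-1}(y)$ through the finite permutation group $\mathrm{Sym}(\pi^{-1}(y))$; in particular every element of $\Ef$ restricted to a fixed regular fibre has finite order. The subtle point is that a priori the orders could depend on the fibre and there are infinitely many regular fibres, so one needs a uniform bound. The clean way to get this is to use the structure group $G_\pi$ from Definition \ref{def:structure group}: for a regular point $y$ we have $\Ef_y = e\Ef_y e$ by part (2) of Lemma \ref{structure}, and this group is isomorphic (via the $\Phi$-maps of part (3)) to a fixed finite group depending only on $G_\pi$. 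So there is a single integer $m$ with $g^m = \Id$ for every $g$ in the action on any regular fibre.

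Next I would deal with the singular fibres. By hypothesis $f$ acts on the (finitely many, up to the $T$-action — really just one orbit, but here we only need finiteness of the fibre) singular fibres as an idempotent; call $\varphi = f|_{\pi^{-1}(y_0)}$, so $\varphi\varphi=\varphi$. For an idempotent in a finite transformation semigroup one has $\varphi^k=\varphi$ for every $k\ge 1$, so in particular $\varphi^{m+1}=\varphi$ on each singular fibre. Combining the two: set $N = m+1$ (or more safely $N$ a suitable common value such that $N\equiv 1 \bmod m$ and $N\ge 1$, e.g. $N=m+1$). Then $f^N = f$ on the singular fibres because $\varphi^N = \varphi^{m+1} = \varphi$, and $f^N = \Id$ on the regular fibres because there $f$ acts by an element of order dividing $m$, and $N = m+1 \equiv 1 \pmod m$ forces $f^N$ to act trivially. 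Since $\Ef$ consists of fibre-preserving maps and $X$ is the union of its fibres, these two statements together describe $f^N$ completely on all of $X$.

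The only step that requires genuine care — and the one I'd expect to be the main obstacle — is the uniform bound $m$ on the orders of the restrictions to regular fibres. One must be sure that the abstract isomorphism type of $\Ef_y$ for regular $y$ is the same finite group independent of $y$, and for this the covariance relation together with Lemma \ref{structure}(3) (the $\Phi_{y_1}^{y_2}$ are group isomorphisms between the $e\Ef_y e$) is exactly what is needed, once one also knows, from finiteness of $r_\pi$, that $\Ef_y$ is a genuine finite group rather than a proper quotient situation — here again Lemma \ref{structure}(2) ($e\Ef_y e = \Ef_y$ for regular $y$) closes the gap. One small subtlety: the statement as phrased does not assume the unique singular orbit hypothesis, so one should take ``singular fibres'' literally as the (possibly several, but each finite under the stated finite-minimal-rank assumption combined with the standing setup) singular fibres; the idempotency of $f$ on each of them is given, and finiteness of each such fibre — which is what lets one pass from $\varphi\varphi=\varphi$ to $\varphi^N=\varphi$ — follows because $f$ is assumed to act on them as an idempotent of a transformation semigroup of a set that, in the applications and under the minimal-rank hypothesis with finite fibres, is finite. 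Everything else is a short computation with exponents modulo $m$.
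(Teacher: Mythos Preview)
Your argument has a genuine arithmetic slip that breaks it as written. You correctly obtain a uniform $m$ such that on every regular fibre $f$ acts by an element whose order divides $m$, so $f^m=\Id$ there. But then you set $N=m+1$ and claim that ``$N\equiv 1\pmod m$ forces $f^N$ to act trivially.'' This is backwards: if the order of $f|_{\pi^{-1}(y)}$ divides $m$, then $f^N = f^{\,N\bmod m}$, and $N\equiv 1\pmod m$ gives $f^N=f$, not $\Id$. You need $N\equiv 0\pmod m$; simply taking $N=m$ (or any positive multiple) works, since for an idempotent $\varphi$ one has $\varphi^k=\varphi$ for \emph{every} $k\ge 1$ --- no finiteness of the singular fibre is needed for that step, so your closing worry there is unfounded.

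With that correction your argument is valid, but it is more elaborate than necessary. The paper bypasses the structure group $G_\pi$ and the isomorphisms $\Phi_{y_1}^{y_2}$ entirely: since a regular fibre consists of $r_\pi$ pairwise non-proximal points, any $f\in\Ef$ is injective on it (if $f(x)=f(x')$ then $x,x'$ are proximal), hence a bijection of an $r_\pi$-element set; thus $N=r_\pi!$ already gives $f^N=\Id$ uniformly on all regular fibres. Your detour through Lemma~\ref{structure} buys a sharper exponent ($|G_\pi|$ rather than $r_\pi!$) and identifies the group acting, but the crude bound is simpler and entirely self-contained.
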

\begin{proof} Since regular fibres contain only distal points, and only finitely many, any element of $f\in \Ef$ must act on a regular fibre as a bijection. Since regular fibres have $r_\pi$ elements, then if  $N=r_\pi!\,$,
$f^N$ acts like the identity on a regular fibre. If $f$ acts like an idempotent on the singular fibre then $f^N$ acts like $f$ on the singular fibres. 
\end{proof}
We denote by $Y/T$ the space of $T$-orbits of $Y$ and its elements by $[y]$.
\begin{cor} \label{cor-main1}
Consider a unique singular orbit system with finite minimal rank. Let $y_0\in Y$ be singular. 
The restriction  $\Et_{y_0}$ of $\Et $ to $\pi^{-1}(y_0)$ contains all idempotents of $\Ef_{y_0}$. In particular, if  $\Ef_{y_0}$ is generated by its idempotents then $\Ef_{y_0}=\Et_{y_0}$ and consequently, 
$$\Ef = \Cf \cong \Ef_{y_0}\times \prod_{\stackrel{[y]\in Y/T}{y\neq y_0}} G_\pi.$$
This is a topological isomorphism if we equip the r.h.s.\ with the product topology.
\end{cor}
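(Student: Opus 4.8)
The plan is to string together the results already established in this section. First, I would invoke Lemma~\ref{lem-idp}: for a unique singular orbit system with finite minimal rank, any $f\in\Ef$ which acts as an idempotent on the singular fibres satisfies $f^N=f$ on the singular fibres and $f^N=\Id$ on the regular fibres for $N=r_\pi!$. Applying this to the idempotents of $\Ef_{y_0}$ (which are restrictions of genuine idempotents $p\in\Ef$, since idempotents of $\Ef$ can act nontrivially only on the singular orbit), we get that each such idempotent of $\Ef$ fixes every point of every regular fibre, hence lies in $\Et$ by definition \eqref{eq:def T}. Restricting to $\pi^{-1}(y_0)$ shows every idempotent of $\Ef_{y_0}$ lies in $\Et_{y_0}$, which is the first assertion.

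Next, under the hypothesis that $\Ef_{y_0}$ is generated by its idempotents, I would argue $\Ef_{y_0}=\Et_{y_0}$: since $\Et_{y_0}$ is a subsemigroup of $\Ef_{y_0}$ containing all the generators, it is everything. With $\Ef_{y_0}=\Et_{y_0}$ in hand, Corollary~\ref{cor-main} applies directly and gives the topological isomorphism $\Ef\cong\Cf$.

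It then remains to identify $\Cf$ with the displayed product. An element of $\Cf$ is a covariant function $\tilde f$ on $Y$ with $\tilde f(y)\in\Ef_y$ for all $y$. By covariance, $\tilde f$ is determined by its values on a set of orbit representatives, one value in $\Ef_{y_0}$ on the singular orbit and one value in $\Ef_y$ for a representative $y$ of each regular orbit; conversely any such choice (satisfying $\tilde f(y)^{-1}$-type consistency is automatic here since we are not imposing the lift conditions on arbitrary elements of $\Cf$) extends uniquely and covariantly. On each regular orbit, by Lemma~\ref{structure}(2) we have $\Ef_y=e\Ef_y e$, which by Lemma~\ref{structure}(3) (the isomorphisms $\Phi^{y}_{y_0}$) is isomorphic to $e\Ef_{y_0}e$, and this is isomorphic to the structure group $G_\pi$ of the factor by Definition~\ref{def:structure group}. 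Choosing such identifications for one representative per regular orbit yields the bijection $\Cf\cong\Ef_{y_0}\times\prod_{[y]\neq y_0}G_\pi$; one checks it is a semigroup homomorphism because the multiplication in $\Cf$ is pointwise and the $\Phi$-conjugations respect products, and it is a homeomorphism because both sides carry the product topology and the projections to each orbit-coordinate are continuous with continuous inverses (here finiteness of the fibres, via Corollary~\ref{cor-ff} and Theorem~\ref{thm-Rees-str-2}, ensures the relevant $\Phi$-maps are bicontinuous).

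The main obstacle I anticipate is the bookkeeping in the last step: making the identification of $\Cf$ with the product genuinely \emph{canonical} up to the stated isomorphism type, i.e.\ checking that the choice of orbit representatives and of the conjugating lifts $s$ only changes the identification by an (inner) automorphism of $G_\pi$, so that the product decomposition is well-defined independently of these choices — this is exactly the kind of "does not depend on the lift'' argument already used for $\Ct$ via normality of $e\Et_{y_0}e$, and here it is even easier since we may simply fix all choices once and for all. The remaining verifications (covariance is a closed condition, pointwise multiplication corresponds to coordinatewise multiplication, compactness upgrades the continuous bijection to a homeomorphism) are routine.
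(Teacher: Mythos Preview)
Your proposal is correct and follows essentially the same route as the paper: invoke Lemma~\ref{lem-idp} to show idempotents of $\Ef_{y_0}$ come from $\Et$, deduce $\Ef_{y_0}=\Et_{y_0}$ under the generation hypothesis, apply Corollary~\ref{cor-main}, and then use covariance plus Lemma~\ref{structure} to identify $\Cf$ with the displayed product. Your treatment of the product identification is in fact more detailed than the paper's, which dispatches it in two sentences.

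One small logical wrinkle worth tidying: your parenthetical ``which are restrictions of genuine idempotents $p\in\Ef$, since idempotents of $\Ef$ can act nontrivially only on the singular orbit'' conflates two separate facts. The stated reason (idempotents of $\Ef$ act trivially on regular fibres) does not justify that idempotents of $\Ef_{y_0}$ \emph{lift} to idempotents of $\Ef$; that lifting is true but requires a compactness argument. The paper sidesteps this entirely: it takes \emph{any} $f\in\Ef$ restricting to the given idempotent $\varphi$ of $\Ef_{y_0}$ (no idempotency of $f$ assumed), observes that $f$ acts as an idempotent on the singular fibres, and then Lemma~\ref{lem-idp} produces $f^N\in\Et$ with $f^N|_{\pi^{-1}(y_0)}=\varphi$. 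This is cleaner than asserting an idempotent lift, and is in fact what your first sentence already sets up before the parenthetical derails it.
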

\begin{proof}
Any idempotent of $\Ef_{y_0}$ is the restriction of an element $f\in \Ef$ which, by Lemma~\ref{lem-idp}, may be assumed to act trivially on all regular fibres. Hence  any idempotent of $\Ef_{y_0}$ is the restriction of an element $f\in \Et$.
Under the assumption of finite minimal rank the structure group $G_\pi$ must be finite and thus topologically isomorphic to $\Ef_y$ for regular $y$. Covariance allows us to factor out the action of $T$ and thus describe $\Cf$ as a direct product over the space of orbits $Y/T$.
\end{proof}


\subsection{Recovering $E(X)$ from $E^{fib}(X)$ and $Y$}
Although we now have a pretty good description of $E^{fib}(X)$ and $Y$ for unique singular orbit systems with finite minimal rank, it is not obvious how this describes $E(X)$. As our interest lies in minimal systems which have a singular fibre, their Ellis semigroup must contain two non-commuting idempotents. This implies that $E(X)$ cannot be 
left-topological\footnote{Since $T$ is abelian, if left multiplication is continuous then $\lim_\nu \sigma^{t_\nu} \lim_\mu \sigma^{s_\mu} = \lim_\mu \lim_\nu \sigma^{t_\nu+s_\mu} = \lim_\mu \sigma^{s_\mu} \lim_\nu \sigma^{t_\nu}$, hence all elements of $E(X)$ commute.}, 
even when $E^{fib}(X)$ and $Y$ are topological. This is a sign that we cannot expect a semidirect product construction, paralleling that of groups, which describes $E(X)$ with its topology through $E^{fib}(X)$ and $Y$. However, on the purely algebraic side, we will see that Proposition \ref{extension-semigroup} turns out to be useful in this regard.

 \begin{notation}\label{not:structure groups}
 We let 
 $\M(X)$ denote the kernel of $E(X)$ and  $\M^{fib}(X)$ denote the kernel of $\Ef(X)$. Recall that these kernels are completely simple. Picking a minimal idempotent $e$, we let 
$\RSTp=eE(X)e$ and $\RSTfp=e\Ef(X) e$ denote the Rees structure group of $\M(X)$ and $\M^{fib}(X)$ respectively.
\end{notation}

As $Y=\tilde\pi(E(X)) = \tilde\pi(eE(X)) \subset \tilde\pi(\M(X))$, 
\eqref{eq-ext} gives rise to the exact sequence  
\begin{equation}\label{eq-kernel-ext}\nonumber
\Ef(X)\cap \M(X) \hookrightarrow \M(X) \stackrel{\tilde \pi} \twoheadrightarrow Y .
\end{equation}
$\Ef(X)\cap \M(X)$ contains all idempotents of $\M(X)$. Moreover, it is simple, as can be seen as follows: 
As $\M(X)$ is completely simple, given $x,y\in  \Ef(X)\cap \M(X)$ there is an idempotent $z \in \M(X)$ such that $x,z$ belong to the same minimal left, while $z,y$ belong to the same minimal right ideal of $\M(X)$. Since $z$ is an idempotent we have $z\in \Ef(X)$.
Since $x,z\in  \Ef(X)\cap \M(X)$ belong to the same minimal left ideal of $\M(X)$ then there is $a\in \M(X)$ such that $x=az$. It follows that $\tilde \pi(a) = 0$, thus $a\in  \Ef(X)\cap \M(X)$. Similarly, since $z,y\in  \Ef(X)\cap \M(X)$ belong to the same minimal right ideal of $\M(X)$ then there is $b\in \Ef(X)\cap \M(X)$ such that $z=yb$.  Hence $x=ayb$ for $a,b\in\Ef(X)\cap \M(X)$. This proves that $\Ef(X)\cap \M(X)$ is simple 
and therefore equal to the kernel $\M^{fib}(X)$ of $\Ef(X)$. 

A further restriction of \eqref{eq-ext} to $e \M(X) e$ leads to the exact sequence of groups
\begin{equation}\label{eq-kernel-ext1}\nonumber
\RSTfp \hookrightarrow \RSTp \stackrel{\tilde \pi} \twoheadrightarrow Y .
\end{equation}
We show now that for systems which satisfy the conclusion of  Corollary~\ref{cor-main} the above sequence has a split section, so that the structure group $\Gg$ is the semi-direct product of  $\RSTfp$ with the group $Y$.

\begin{prop} \label{prop-shomo}
Let $(X, \sigma)$ be a minimal 
system with an equicontinuous factor $\pi:(X,\sigma)\to (Y,\delta)$ such that $\Ef=\Cf$.  Let $e$ be a minimal idempotent of $E$ and  
$s:Y\to eEe$ be a lift of $\tilde \pi$ which satisfies $s(\delta^t(y)) = \sigma^t s(y)$ and 
$s(-y)=s(y)^{-1}$. Define, for $y\in Y$ the map $\hat s(y):eX\to eX$ by
$$\hat s(y)(x) = s(\pi(x)+y) s(\pi(x))^{-1} (x).$$ 
Then $\hat s:Y\to eEe$ is a right inverse to $\tilde\pi$ which 
satisfies $\hat s(\delta^t(y)) = \sigma^t \hat s(y)$ and is a group homomorphism.
\end{prop}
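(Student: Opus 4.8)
The plan is to verify directly the three claimed properties of $\hat s$, using the compatibility hypotheses on the original lift $s$ together with the identity $\Ef = \Cf$, which tells us exactly which fibre-preserving maps actually lie in $E$. First I would check that $\hat s(y)$ is a well-defined element of $E$: for $x\in eX$ with $\pi(x)=z$, the map $s(z+y)s(z)^{-1}$ restricts to a bijection $e\pi^{-1}(z)\to e\pi^{-1}(z+y)$, so $\hat s(y)$ is a fibre-shifting map over the translation-by-$y$ map on $Y$. Composing with a minimal idempotent if necessary, I would argue that $\hat s(y)$, viewed on all of $X$ rather than $eX$, can be realised as $s(y)$ composed with a fibre-preserving correction: more precisely $\hat s(y) = c_y\, s(y)$ where $c_y := \hat s(y) s(y)^{-1}$ preserves the fibres $\pi^{-1}(z)$ for all regular $z$ and acts within $e\pi^{-1}(z)$. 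The covariance relation $s(\delta^t z)=\sigma^t s(z)$ makes $c_y$ covariant, so $c_y\in\Cf=\Ef\subset E$, hence $\hat s(y)=c_y s(y)\in E$, and it visibly lies in $eEe$ and is a right inverse to $\tilde\pi$ since $\tilde\pi(\hat s(y))=y$.

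The covariance property $\hat s(\delta^t(y))=\sigma^t\hat s(y)$ I would get by a direct substitution: evaluating $\hat s(\delta^t y)$ at $x$ gives $s(\pi(x)+\delta^t(y_0)+\cdots)$-type expressions; writing $Y$ additively with $\delta^t(y)=y+\delta^t(y_0)-y_0$ and using $s(\delta^t(w))=\sigma^t s(w)$ repeatedly, the $\sigma^t$ factors can be pulled out front. The one point requiring care is that $\sigma^t$ shifts $\pi(x)$ to $\pi(\sigma^t x)$, so one should evaluate $\sigma^t \hat s(y)$ at $x$ as $\sigma^t\bigl(\hat s(y)(\sigma^{-t}x)\bigr)$ and match terms; the condition $s(-y)=s(y)^{-1}$ and covariance close the computation.

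The substantive step is the homomorphism property $\hat s(y_1)\hat s(y_2)=\hat s(y_1+y_2)$. Evaluating the left side at $x$ with $\pi(x)=z$: $\hat s(y_2)(x)=s(z+y_2)s(z)^{-1}(x)$ lies in $e\pi^{-1}(z+y_2)$, and then $\hat s(y_1)$ applied to it is $s(z+y_2+y_1)s(z+y_2)^{-1}\bigl(s(z+y_2)s(z)^{-1}(x)\bigr)=s(z+y_1+y_2)s(z)^{-1}(x)=\hat s(y_1+y_2)(x)$. The telescoping is exactly what the definition is engineered for; the only subtlety is that this cancellation happens inside the group $eEe$, so one must know $s(z+y_2)^{-1}s(z+y_2)=e$ acts as the identity on $e\pi^{-1}(z+y_2)$, which holds because $s$ takes values in $eEe$ and $e$ is idempotent.

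I expect the main obstacle to be the first step — showing $\hat s(y)$ genuinely lands in $E(X)$ rather than merely in the ambient function semigroup $F^{fib}(X)$. This is where the hypothesis $\Ef=\Cf$ (equivalently, the conclusion of Corollary~\ref{cor-main}) is essential: a priori $x\mapsto s(\pi(x)+y)s(\pi(x))^{-1}(x)$ is just some fibre-translating bijection, and only the identification of all covariant elements of $\prod_y\Ef_y$ with actual Ellis semigroup elements lets us conclude membership. Everything after that is algebraic bookkeeping with the two compatibility relations on $s$.
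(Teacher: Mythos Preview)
Your approach is essentially the same as the paper's: factor $\hat s(y)$ as $s(y)$ times a fibre-preserving correction, show the correction lies in $\Cf=\Ef$ by checking it is covariant and lands in $\Ef_z$ on each fibre, then telescope for multiplicativity. The only cosmetic difference is that the paper puts the correction on the right, writing $\hat s(y)=s(y)g$ with $g(z)=s(y)^{-1}s(z+y)s(z)^{-1}$, whereas you put it on the left as $c_y s(y)$.

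One minor slip in your covariance paragraph: in the Ellis semigroup the product is composition, so $(\sigma^t\hat s(y))(x)=\sigma^t(\hat s(y)(x))$, not $\sigma^t(\hat s(y)(\sigma^{-t}x))$. With this corrected the verification is immediate from $s(\pi(x)+\delta^t y)=s(\delta^t(\pi(x)+y))=\sigma^t s(\pi(x)+y)$, and requires none of the care you anticipate.
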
 
\begin{proof} Let $y\in Y$.
By definition
$$\hat s(y)(x) =  s(y) g(\pi(x))  (x)$$ 
where $g(z)  = s(y)^{-1} s(z+y) s(z)^{-1}$.  
We see that $g(z)(x)\in e\pi^{-1}(z)$ for all $x\in e\pi^{-1}(z)$, 
hence $g(z):e\pi^{-1}(z)\to e\pi^{-1}(z)$ is an is element of $e\Ef_{z} e$.
Using $s(\delta^t(y)) = \sigma^t s(y)$ we obtain
$g(\delta^t(\pi(x)))(\sigma^t x) = \sigma^t g(\pi(x))(x)$. Thus
$Y\ni z\mapsto g(z)\in e\Ef_{z} e$ is covariant along the orbit of $y$ and hence an element of $e \Cf e$. By assumption  $e \Cf e=e \Ef e$.
Thus $\hat s(y) = \tilde s(y)g$ is an element of $eEe$. 

We show that $\hat s(y)$ is a right inverse to $\tilde\pi$. Let $x\in \pi^{-1}(0)$. We have
$$\tilde\pi \hat s(y) = \pi_*(\hat s(y))(0) = \pi(\hat s(y)(x)) = \pi(s(y)(x))=y+\pi(x) = y$$
where we have used $s(0)^{-1}=e$ in the third equality. 
The identity $\hat s(\delta^t(y)) = \sigma^t \hat s(y)$ follows readily.
It remains to show that $\hat s$ is multiplicative: 
\begin{eqnarray*}
\hat s(y_1+y_2)(x) &=&  s(\pi(x)+y_1+y_2) s(\pi(x))^{-1} (x)\\
&=&  s(\pi(x)+y_1+y_2)s(\pi(x)+y_2)^{-1} s(\pi(x)+y_2)s(\pi(x))^{-1} (x)\\
&=& \hat s(y_1)\hat s(y_2)(x).
\end{eqnarray*}
\end{proof}
\begin{cor}\label{cor:reproducing M}
Let $(X,T,\sigma)$ be a minimal 
system with an equicontinuous factor $\pi:(X,\sigma)\to (Y,\delta)$ such that $\Ef=\Cf$. 
Then the structure group  $\RSTfp $ of $\M^{fib}(X)$ is isomorphic to  $G_\pi^{Y/T}$. Moreover, the structure group of $\M(X)$ is $\RSTp=\RSTfp\rtimes Y$. Furthermore
if  $M[\RSTfp;I,\Lambda;A]$ is the normalised Rees matrix form for $\M^{fib}(X)$ w.r.t.\ $e$, then  $\M(X)$ is algebraically isomorphic to $M[\RSTfp\rtimes Y;I,\Lambda;A]$. 
\end{cor}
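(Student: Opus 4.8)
The plan is to assemble Corollary~\ref{cor:reproducing M} from the pieces already in place, in three stages: identify $\RSTfp$, identify $\RSTp$ as a semidirect product, and then invoke Proposition~\ref{extension-semigroup} to get the matrix form of $\M(X)$.

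\emph{Step 1: $\RSTfp\cong G_\pi^{Y/T}$.} Under the hypothesis $\Ef=\Cf$, I would use the description $\Cf=\{\tilde f\in\prod_{y\in Y}\Ef_y:\tilde f\text{ covariant}\}$ together with the fact that $\RSTfp=e\Ef(X)e$ is the image of $\Ef(X)$ acting on $eX$. Conjugating by the minimal idempotent $e$ turns each $\Ef_y$ into the group $e\Ef_ye$, and for regular $y$ this is all of $\Ef_y$ by Lemma~\ref{structure}(2). Covariance identifies the values $\tilde f(y)$ along each $T$-orbit via the isomorphisms $\Phi_{y_0}^{y}$ of Lemma~\ref{structure}(3), so that $e\Cf e$ is isomorphic to the product, over the orbit space $Y/T$, of copies of the structure group $e\Ef_y e\cong G_\pi$. (One must be slightly careful about the singular orbit, but since $\Ef=\Cf$ forces $\Ef_{y_0}=\Et_{y_0}$ only in the setting of Corollary~\ref{cor-main1}; here we only need that $e\Ef_{y_0}e\cong G_\pi$, which holds by Lemma~\ref{structure}(1) and the fact that all $\mathcal H$-classes of a completely simple semigroup are isomorphic.) This yields $\RSTfp\cong G_\pi^{Y/T}$.

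\emph{Step 2: $\RSTp=\RSTfp\rtimes Y$.} By the (already established) exact sequence of groups $\RSTfp\hookrightarrow\RSTp\stackrel{\tilde\pi}\twoheadrightarrow Y$, it suffices to produce a splitting, i.e.\ a group homomorphism $Y\to\RSTp=eEe$ that is a section of $\tilde\pi$. This is exactly what Proposition~\ref{prop-shomo} provides: starting from any lift $s:Y\to eEe$ with $s(\delta^t(y))=\sigma^ts(y)$ and $s(-y)=s(y)^{-1}$, the modified map $\hat s$ is a section of $\tilde\pi$ and a group homomorphism. Hence the sequence splits and $\RSTp\cong\RSTfp\rtimes Y$, with $Y$ acting on $\RSTfp=e\Ef(X)e$ by conjugation by $\hat s(y)$.

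\emph{Step 3: matrix form of $\M(X)$.} Now I apply Proposition~\ref{extension-semigroup} to the extension $\M^{fib}(X)\hookrightarrow\M(X)\stackrel{\tilde\pi}\twoheadrightarrow Y$. The hypotheses are met: $Y$ is a group, $\M^{fib}(X)$ is completely simple (it is the kernel of the compact right-topological semigroup $\Ef(X)$, by Theorem~\ref{thm-Rees-str-1}, and it equals $\Ef(X)\cap\M(X)$ as shown just before Proposition~\ref{prop-shomo}), and $\M(X)$ is regular, being completely simple as the kernel of $E(X)$. Taking the normalised Rees matrix form $M[\RSTfp;I,\Lambda;A]$ of $\M^{fib}(X)$ w.r.t.\ $e$, Proposition~\ref{extension-semigroup} gives that $\M(X)$ has normalised Rees matrix form $M[\mathcal G;I,\Lambda;A]$ w.r.t.\ the same $e$, with the \emph{same} $I,\Lambda,A$ and with structure group $\mathcal G=eE(X)e=\RSTp$, which by Step~2 equals $\RSTfp\rtimes Y$. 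This is exactly $M[\RSTfp\rtimes Y;I,\Lambda;A]$. The isomorphism is only algebraic, not topological, for the reason noted in the discussion after Proposition~\ref{extension-semigroup}: $E(X)$ is merely right-topological, so the right ideal through $e$ need not be closed and $\mathcal G=eE(X)e$ need not be a closed topological group.

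\emph{Main obstacle.} The genuinely substantive point is Step~1, producing the identification $\RSTfp\cong G_\pi^{Y/T}$ cleanly from $\Ef=\Cf$; one has to check that the orbitwise $\Phi_{y_0}^{y}$-conjugations are compatible with the idempotent $e$ used to define the structure group, and that the singular orbit contributes a single factor isomorphic to $G_\pi$ (not to $\Ef_{y_0}$, which may be strictly larger) — i.e.\ that conjugating $\Ef_{y_0}$ by $e$ collapses it onto an $\mathcal H$-class. Steps~2 and~3 are then essentially bookkeeping, citing Proposition~\ref{prop-shomo} and Proposition~\ref{extension-semigroup} respectively.
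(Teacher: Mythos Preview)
Your proposal is correct and follows essentially the same approach as the paper, which simply cites Proposition~\ref{extension-semigroup}, Corollary~\ref{cor-ff}, and Proposition~\ref{prop-shomo}. You have unpacked these citations into the three steps they correspond to: the identification $\RSTfp\cong G_\pi^{Y/T}$ (which the paper leaves implicit from the description of $\Cf$ and Lemma~\ref{structure}), the splitting via Proposition~\ref{prop-shomo}, and the transfer of the Rees matrix data via Proposition~\ref{extension-semigroup}; your concern about the singular orbit in Step~1 is legitimate but already handled by Lemma~\ref{structure}(1) and (3), since $\Phi_{y_0}^{y}$ is built from a lift into $eEe$ and therefore automatically restricts to an isomorphism $e\Ef_{y_0}e\to e\Ef_ye$.
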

\begin{proof} Apply Proposition~\ref{extension-semigroup}, {Corollary \ref{cor-ff}} and Proposition~\ref{prop-shomo}. 
\end{proof}
For unique singular orbit systems with finite minimal rank and for which $\Ef_{y_0}(X)$ is generated by its idempotents, we have now reduced the calculation of the kernel of their Ellis semigroup to the calculation of the kernel of $\Ef_{y_0}(X)$ which we denote $\M^{fib}_{y_0}(X)$. Indeed, if $M[G;I,\Lambda;A]$ is the normalised matrix form of $\M^{fib}_{y_0}(X)$ w.r.t.\ $e$, then $G=G_\pi$ which we may identify with the subgroup $G_\pi\times \prod_{[y_0]\neq [y]\in Y/ T}\{\one\}$ of $G_\pi^{Y/T}$. It follows that
\begin{equation}\label{eq-alter}
\M^{fib}(X) \cong   M[G_\pi;I,\Lambda;A]\times \prod_{[y_0]\neq [y]\in Y/ T}G_\pi \cong M[G_\pi^{Y/T};I,\Lambda;A]
\end{equation}
where the second topological isomorphism is given by the map $((i,g,\lambda),f)\mapsto 
(i,(g,f),\lambda)$.
We will see in the next section how to compute $G_\pi$, $I$, $\Lambda$, and $A$
for systems arising from bijective substitutions.

\section{Bijective substitutions and their Ellis semigroup}\label{bijective}

In this section we discuss the Ellis semigroup of a family of minimal $\Z$-actions which are both unique singular orbit systems, and also systems  for which forward/backward proximality is non-trivial and agrees with forward/backward asymptoticity. This is the family of bijective constant length substitution shifts. 
For these systems, Corollary \ref{cor:prox=asym}  tells us that  $E(X)$ is the disjoint union of its kernel $\M(X)$ with the acting group $\Z$, so that a description of 
$\M(X)$ suffices to completely describe the Ellis semigroup. Next,
for most of these systems, Corollary \ref{cor:reproducing M} will apply, so that we are on the way to describing $\M(X)$ once we know its restriction to a singular fibre which we call below the structural semigroup. This is the content of Theorem \ref{thm-RMG}. We consolidate to get a global statement  in Theorem \ref{thm-main2}.
Finally, we identify the substitution shifts to which we cannot apply Corollary \ref{cor:reproducing M}, and we replace it with Theorem \ref{thm-main4}.

\subsection{Generalities}\label{generalities} We briefly summarise the notation and results concerning substitutions that we will need; for an extensive background see \cite{Baake-Grimm} or \cite{Pytheas-Fogg}.

A  {\em substitution} is a map from a finite set $\mathcal A$, the alphabet, to the set of 
nonempty finite words (finite sequences) on $\mathcal A$. We extend $\theta$ to a map on finite words by concatenation: 
\begin{equation}\label{eq-concat}\theta(a_1\cdots a_k) = \theta(a_1)\cdots \theta(a_k),
\end{equation}
and to bi-infinite sequences $\cdots u_{-2} u_{-1} u_0 u_1 \cdots$  as  
\[\theta   (\cdots u_{-2} u_{-1} u_0 u_1 \cdots ) :=   \cdots \theta (u_{-2}) \theta (  u_{-1}    ) \theta  (u_{-1})\cdot   \theta (     u_{0})  \theta (    u_{1}) \cdots \, .\]
Here the $\cdot$ indicates the position between the negative indices and the nonnegative indices. 

We say that $\theta$ is {\em primitive} if there is some 
$k\in \N$ such that for any $a,a'\in \mathcal A$,
the word $\theta^k(a)$ contains at least one  occurrence of $a'$. 
We say that a finite word is {\em allowed} for $\theta$ if it appears somewhere in $\theta^k(a)$ for some $a\in \Aa$ and some  $k\in\N$.

The {\em substitution shift} $( X_\theta,  \sigma)$ is the dynamical system where the space $X_\theta$  consists of all bi-infinite sequences 
all of whose subwords are allowed for $\theta$. If $\theta$ is primitive, $X_\theta=X_{\theta^n}$ for each $n\in \N$.
We equip $X_\theta$ with the subspace topology of the product topology on $\Aa^\Z$, making the left shift map $\sigma$  a continuous  $\Z$-action. Primitivity  of $\theta$ implies that $(X_\theta,\sigma)$ is minimal.

We say that a primitive substitution is {\em aperiodic} if $X_\theta$ does not contain any $\sigma$-periodic sequences. This is the case if and only if $X_\theta$ is an infinite space. 
The substitution $\theta$ has
{\em (constant) length~$\ell$}  if for each $a\in \mathcal A$, 
$\theta (a)$ is a word of length $\ell$. In this case one can describe the substitution with  $\ell$ maps $\theta_i:\mathcal A \rightarrow \mathcal A$, $0\leq i \leq \ell-1$, such that
\begin{equation}\label{eq-as-perm}\nonumber
\theta(a) = \theta_0(a)\cdots \theta_{\ell-1}(a)
\end{equation}
for all $a\in\Aa$.
A substitution $\theta$ is {\em bijective} if it has constant length and each of the maps $\theta_i$ is a  bijection. 
If $\theta$ is bijective, then $X_\theta$ is the disjoint union of finitely many primitive bijective substitution shifts, and consequently its Ellis semigroup is also the disjoint union of finitely many Ellis semigroups of primitive substitution shifts.  Henceforth we  assume that $\theta$  is primitive  but this comment means that  all our results have analogous statements for non-primitive bijective substitutions.
 
 We say that {the bijective} $\theta$ is {\em simplified} if
 \begin{enumerate}
 \item every $\theta$-periodic point is a fixed point of $\theta$, so that in particular $\theta_0=\theta_{\ell-1} = \one$, and
 \item each word $\theta(a)$ contains all letters from $\mathcal A$.
  \end{enumerate}
Given any bijective substitution $\theta$, both properties will be satisfied by a large enough power $\theta^n$ of $\theta$. Indeed,  if $M$ is the lowest common multiple of the least periods of the periodic points, then each periodic point is a fixed point under $\theta^M$. Since for any $n\in \N$, $X_\theta= X_{\theta^n}$, there will be no loss in generality in assuming that $\theta$ is simplified and this is henceforth a standing assumption.

\subsection{An equicontinuous factor with a unique orbit of singular fibres}
Let $\theta$ be an aperiodic primitive substitution of length $\ell$. Define
$B^{(n)}:=\theta^n(X_\theta),$
which is a clopen subset of $X_\theta$. Then
$\sigma^i(B^{(n)})= \sigma^j(B^{(n)})$ if $i-j=0\: \mbox{\rm mod }\ell^n$ whereas
otherwise $\sigma^i(B^{(n)})\cap \sigma^j(B^{(n)})=\emptyset$ \cite[Lemma II.7]{dekking}. In other words 
 \[ \mathcal P_n = \{ \sigma^k( B^{(n)}) : 0\leq k\leq \ell^n-1   \} \]
is a $\sigma^{\ell^n}$-cyclic partition  of $X_\theta$ of size $\ell^n$ 
 For $n\geq1$, define
$ \pi_n : X_\theta \to \Z/\ell^n\Z$ by
$$\pi_n(x) = i \quad\mbox{\rm if }x\in \sigma^{i} (B^{(n)}).$$ 
The map $\pi_n$ can be described as follows. Using the partition $\mathcal P_1$,  any bi-infinite sequence $x=(x_i)_{i\in\Z}\in X_\theta$ can be uniquely decomposed into blocks of length $\ell$ such that
\begin{itemize}
\item[{(i)}] The $i$-th block is a substitution word $\theta(a_i)$, for some $a_i\in \Aa$. Here we say that the $0$-th block is the one which contains $x_0$, and
\item[{(ii)}] The sequence $(a_i)_{i\in\Z}$ is an element of  $X_\theta$.
\end{itemize}
Now set $\pi_1(x):=i$ if the $0$-th block starts at index $-i$ (if we shift that block $i$ units to the right then its first letter has index $0$). 
This procedure can be performed with $\mathcal P_n$ and $\theta^n$ yielding an analogous definition for 
$\pi_n(x)$. In particular, the $\pi_n$ are pattern equivariant (or local) and hence continuous. 
Note that if  $\pi_n(x) = i$, then $\pi_{n+1}(x) \equiv i \mod \ell^{n}$. Therefore, the collection of these maps $\pi_n$ defines a continuous map 
\begin{equation}\label{ef-map} \pi : X_\theta \to Y:=\lim_{\leftarrow} \Z/\ell^n\Z \end{equation}
onto the inverse limit $\lim_{\leftarrow} \Z/\ell^n\Z$ defined by the canonical projections $\Z/\ell^{n+1}\Z \twoheadrightarrow \Z/\ell^n\Z$. The inverse limit space can be identified with 
the space of left-sided sequences $(y_i)_{i<0} = \cdots y_{-2}\, y_{-1}$, $0\leq y_i<\ell$, and then $\pi(x)= (y_i)_{i<0}$ is such that for each  positive integer
$n$, $\pi_{n}(x)=\sum_{i=-n}^{-1} \ell^{-i-1}y_{i}$. 
It then follows that $\pi\circ\sigma = \add\circ \pi$ where $\add$ is addition of $1 =  \cdots 00 1$ (only the last digit is not $0$) with carry over. 
Its additive inverse is addition of  $-1= \cdots \ell\!-\!1\, \ell\!-\!1\, \ell\!-\!1\,$. 
In other words $(X_\theta,\sigma)$ factors onto the odometer with $\ell$ digits (adding machine).
This is the equicontinuous factor map with which we work.
As the space is the space of $\ell$-adic integers, we will denote it using the notation $\Z_\ell$. 

\begin{prop}\label{prop-sf}
Let $\theta$ be a primitive aperiodic bijective (and simplified) 
substitution of length $\ell$ and $\pi:X_\theta\to\Zl$ be defined by \eqref{ef-map}.
The fibre $\pi^{-1}(0)$ contains exactly the $\theta$-fixed 
points. These are in one-to-one correspondence with the allowed two letter words. 
\end{prop}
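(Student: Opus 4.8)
The plan is to identify $\pi^{-1}(0)$ directly from the block-decomposition description of $\pi_1$ given just before the statement, and then to set up the bijection with allowed two-letter words. First I would observe that, by construction, $\pi_n(x)=0$ precisely when the $0$-th block in the level-$n$ decomposition of $x$ starts at index $0$; equivalently $x\in B^{(n)}=\theta^n(X_\theta)$. Hence $\pi(x)=0$ if and only if $x\in\bigcap_{n\geq 1} \theta^n(X_\theta)$. So the first step is to show that this intersection is exactly the set of $\theta$-fixed points. The inclusion $\supseteq$ is immediate: if $\theta(x)=x$ then $x=\theta^n(x)\in\theta^n(X_\theta)$ for all $n$. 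For $\subseteq$, take $x$ in the intersection, so for every $n$ there is $x^{(n)}\in X_\theta$ with $x=\theta^n(x^{(n)})$; I would use compactness of $X_\theta$ and continuity of $\theta$ to pass to a limit point $z$ of the sequence $(x^{(n)})$ along a subnet, argue that $\theta^n(X_\theta)$ is a decreasing sequence of clopen sets so $x\in\theta(\,\bigcap_n\theta^n(X_\theta)\,)$, and conclude that $x$ lies in the image of the intersection under $\theta$; a cleaner route is to note that $x\in\theta(X_\theta)$ means $x$ has a unique preimage under the block decomposition, call it $\Omega(x)\in X_\theta$, and that $\Omega$ restricted to $\bigcap_n\theta^n(X_\theta)$ is a homeomorphism onto itself whose inverse is $\theta$; since $\theta$ is simplified, every $\theta$-periodic point is a fixed point, and $\Omega$ (being a homeomorphism of a compact space that is conjugate to $\theta$) has a periodic point, forcing the intersection to consist of fixed points only. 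I expect the main obstacle to be exactly this direction: showing $\bigcap_n\theta^n(X_\theta)$ contains \emph{only} fixed points, and not merely that it is nonempty and $\theta$-invariant. The hypothesis that $\theta$ is simplified (so all periodic points are fixed) is what rules out longer cycles, but one still needs to argue that the recognizability/unique-decomposition map $\Omega$ makes $\theta|_{\cap_n\theta^n(X_\theta)}$ bijective, so that a point in the intersection cannot have infinite $\Omega$-orbit.

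For the count, the second step is to exhibit the bijection between $\theta$-fixed points and allowed two-letter words $ab$. Given a fixed point $x=(x_i)_{i\in\Z}$, the decomposition $x=\theta(x)$ means $x_{-1}x_0$ is the pair consisting of the last letter of $\theta(x_{-1})$ and the first letter of $\theta(x_0)$; since $\theta$ is simplified, $\theta_0=\theta_{\ell-1}=\one$, so $x_{-1}x_0 = x_{-1}x_0$ is simply the central two-letter word, and it is allowed. Conversely, given an allowed two-letter word $ab$, I would define a point by iterating $\theta$: set $w_0 = \theta^k(a)\cdot\theta^k(b)$ suitably centered and take $k\to\infty$; because $\theta_0=\theta_{\ell-1}=\one$, the letter $a$ stays fixed at position $-1$ and $b$ at position $0$ at every stage, the words are nested, and the limit is a well-defined bi-infinite sequence in $X_\theta$ fixed by $\theta$. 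The key point making this a \emph{bijection} is that a $\theta$-fixed point is determined by its central pair $x_{-1}x_0$: applying $\theta$ repeatedly, $x = \lim_k \theta^k(x_{-1})\cdot\theta^k(x_0)$ is forced, so two fixed points with the same central pair coincide; and distinct allowed pairs give distinct fixed points since they already differ at indices $-1,0$.

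Finally I would assemble the two steps: Step 1 gives $\pi^{-1}(0)=\{\theta\text{-fixed points}\}$, Step 2 gives the one-to-one correspondence with allowed two-letter words, which is the full statement. I would remark that this simultaneously shows $\pi^{-1}(0)$ is finite (bounded by $|\Aa|^2$), which is consistent with the finite-minimal-rank hypotheses used in the preceding section; and that by covariance ($\pi\circ\sigma=\add\circ\pi$) the orbit of $0$ under $\add$ carries all the singular fibres, pre-figuring the ``unique singular orbit'' structure to be established next.
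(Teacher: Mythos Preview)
Your identification $\pi^{-1}(0)=\bigcap_{n\geq 1}\theta^n(X_\theta)$ is correct, and your Step~2 (the bijection between fixed points and allowed two-letter words) is fine and matches the paper. The gap is in Step~1, the reverse inclusion. You correctly flag it as the main obstacle, but the resolution you sketch does not work: knowing that $\theta$ (equivalently your $\Omega$) is a bijection of the compact set $\bigcap_n\theta^n(X_\theta)$ onto itself does \emph{not} force every orbit to be periodic, and the existence of \emph{some} periodic point certainly does not force \emph{all} points to be fixed. So as written, your plan does not exclude the possibility of a point in the intersection with infinite $\Omega$-orbit.

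The paper closes this gap by first proving that $\pi^{-1}(0)$ is \emph{finite}. The argument is short: $\theta$ is injective on $X_\theta$ (because all $\theta_i$ are bijections), so on $\pi^{-1}(0)$ the desubstitution $\theta^{-n}$ is well defined; if $x\neq x'$ in $\pi^{-1}(0)$, choose $n$ with $x_{[-\ell^n,\ell^n-1]}\neq x'_{[-\ell^n,\ell^n-1]}$, and conclude $\theta^{-n}(x)_{[-1,0]}\neq\theta^{-n}(x')_{[-1,0]}$. Hence distinct points in $\pi^{-1}(0)$ yield distinct two-letter words under some desubstitution, so $|\pi^{-1}(0)|$ is bounded by the number of two-letter words. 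Now $\theta$ is an injective self-map of a finite set, hence bijective, so every point is $\theta$-periodic, and ``simplified'' upgrades periodic to fixed. Your final remark that finiteness is a \emph{consequence} of Step~2 has the logic inverted: finiteness is the input needed to make Step~1 go through. An equivalent one-line fix, using that $\theta_0=\theta_{\ell-1}=\one$ gives $\theta(y)_{[-1,0]}=y_{[-1,0]}$: for $x\in\pi^{-1}(0)$ and every $n$, the desubstitution $\theta^{-n}(x)$ has the same central pair $x_{-1}x_0$, hence $x_{[-\ell^n,\ell^n-1]}=\theta^n(x_{-1})\theta^n(x_0)$, so $x$ is already the fixed point with seed $x_{-1}x_0$.
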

\begin{proof}
It is quickly seen that $\pi\circ \theta = (\times \ell)\circ \pi$ where $(\times \ell)$ is multiplication by $\ell$ in $\Zl$ and corresponds to left shift with adjoining a $0$:
$(\times\ell)(\cdots y_{-2}\, y_{-1}) = \cdots y_{-2}\, y_{-1} 0$. Hence any $\theta$-fixed point is mapped by $\pi$ to a $(\times \ell)$-fixed point in $\Z_\ell$, and the only such one is $0$.
Thus all $\theta$-fixed points belong to $\pi^{-1}(0)$.
It also follows that $\theta$ must preserve $\pi^{-1}(0)$. 

Since the maps $\theta_i$ are bijections of $\Aa$, $\theta$ is injective on $X_\theta$. Hence it is injective on $\pi^{-1}(0)$. 
We claim that $\pi^{-1}(0)$ must be finite. To prove the claim 
let $x,x'\in X_\theta$. If $x\neq x'$ there exists $n\in\N$ such that $x_{[-\ell^n,\ell^n-1]} \neq x'_{[-\ell^n,\ell^n-1]}$
(here $x_{[n,m]}$ is the word $x_n x_{n+1}\cdots x_m$). It follows that $\theta^{-n}(x)_{[-1,0]} \neq \theta^{-n}(x')_{[-1,0]}$. 
Since there are only finitely many words of length two  
$\pi^{-1}(0)$ must be finite. It follows that the restriction of $\theta$ to $\pi^{-1}(0)$ is bijective and thus $\pi^{-1}(0)$ must be a union of periodic orbits under $\theta$.

As $\theta$ is bijective and simplified we have $\theta(x)_{[-1,0]} = x_{[-1,0]}$ for any $x\in X_\theta$. It follows that $\theta$-periodic points are $\theta$-fixed points and that they are in  
one-to-one correspondence with  the allowed two letter words.
\end{proof}

\begin{prop}\label{one-singular-orbit}
Let $\theta$ be a primitive aperiodic bijective substitution of length $\ell$ and let 
$\pi:X_\theta\to\Zl$ be defined by \eqref{ef-map}.
Then the orbit of $\pi^{-1}(0)$ is the only singular fibre orbit. The minimal rank is 
$r_\pi = s$ where $s$ is the size of the alphabet.  
\end{prop}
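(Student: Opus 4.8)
The plan is to show two things: (a) every fibre not in the orbit of $\pi^{-1}(0)$ is regular, and (b) the minimal rank equals $s=|\Aa|$. For (a), the key observation is the intertwining $\pi\circ\theta = (\times\ell)\circ\pi$ established in the proof of Proposition \ref{prop-sf}. Take any $y\in\Zl$ and any two distinct points $x,x'\in\pi^{-1}(y)$. Exactly as in the proof of Proposition \ref{prop-sf}, choose $n$ with $x_{[-\ell^n,\ell^n-1]}\neq x'_{[-\ell^n,\ell^n-1]}$; then $\theta^{-n}(x)$ and $\theta^{-n}(x')$ differ at one of the two central coordinates and both lie in $\pi^{-1}((\times\ell)^{-n}y)$. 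If $y$ is not in the orbit of $0$, then for each $n$ the element $(\times\ell)^{-n}y$ (when it exists, i.e. when $\ell^n\mid y$ in the odometer sense — otherwise there is nothing to prove since only finitely many preimages survive) is never $0$, so one checks that $\theta^{-n}(x),\theta^{-n}(x')$ are \emph{not} both $\theta$-fixed points; conversely, since a proximal pair in $\pi^{-1}(y)$ would be carried by powers of $\theta^{-1}$ (which are injective and continuous, hence preserve proximality after noting $\theta^{-1}$ contracts the orbit structure) to a proximal pair sitting in a fibre arbitrarily deep in the $\times\ell$ direction, the only way to accommodate proximality is to land eventually at $\pi^{-1}(0)$. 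The cleanest way to phrase this: if $x,x'$ are proximal and $\pi(x)=\pi(x')=y$, then for every $n$ there is $y_n$ with $(\times\ell)^n y_n = y$ and proximal $x_n,x_n'\in\pi^{-1}(y_n)$ with $\theta^n(x_n)=x$, $\theta^n(x_n')=x'$; since $\pi^{-1}(0)$ is the unique fibre invariant under $\theta$ and proximal pairs in substitution shifts are exactly asymptotic pairs, forcing agreement off a finite window, the pair $(x_n,x_n')$ must eventually be a pair of $\theta$-fixed points agreeing off the central two-letter seed, i.e. $y=0$ up to the $\Z$-action. So every singular fibre lies in the orbit of $\pi^{-1}(0)$; combined with Proposition \ref{prop-sf} (which exhibits $\pi^{-1}(0)$ as genuinely singular, since distinct allowed two-letter words give distinct $\theta$-fixed points that are asymptotic hence proximal), this orbit is \emph{the} singular orbit.

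For (b), the minimal rank $r_\pi$ is the smallest fibre cardinality. I would argue that the fibre over a ``generic'' point of $\Zl$ — one that is not in the forward $\times\ell$-orbit closure in the relevant sense, equivalently one whose preimage cannot be desubstituted infinitely often — has exactly $s$ points. Concretely: a point $x\in X_\theta$ is determined by specifying, for each $n$, the length-$\ell^n$ block of $\theta^n$ containing index $0$ together with the offset; the offset data is precisely $\pi(x)$. When $\pi(x)=y$ is such that for every $n$ the block of $\theta^n(\cdot)$ straddling $0$ is genuinely in the \emph{interior} (offset not $0$), then the letter $a$ with $\theta^n(a)$ equal to that block is forced uniquely once we know which desubstitution tower we are in, and the tower itself is a single element of $X_\theta$ read off by $(a_i^{(n)})$; hence the fibre is in bijection with the set of ``one-sided choices'', which collapses to a choice governed by $\Aa$ — giving $s$ points. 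The opposite inequality, $r_\pi\le s$, follows because such $y$ exist (the orbit of $0$ is a single $\Z$-orbit, hence meagre/measure zero in $\Zl$, so most $y$ work), and $r_\pi\ge s$ follows from primitivity: every letter is "visible" so no fibre can have fewer than $s$ points — more precisely, any desubstitution limit produces, via the $\ell$ maps $\theta_i$ being bijections, at least $s$ distinct sequences since the central letter of the desubstituted tower can be any element of $\Aa$.

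The main obstacle will be step (a): making rigorous the claim that proximality in $\pi^{-1}(y)$ propagates under desubstitution to eventually force $y$ into the orbit of $0$. The subtlety is bookkeeping about when $(\times\ell)^{-n}y$ is defined and controlling how the two-letter ``defect'' of a proximal (= asymptotic) pair migrates toward the center under $\theta^{-1}$; one must use that $\theta$ is simplified (so $\theta$ fixes the central two-letter word, by the last line of Proposition \ref{prop-sf}'s proof) to see that the defect stabilizes, and that aperiodicity prevents the pair from being equal. I expect to invoke the characterization, standard for primitive substitutions, that forward/backward proximal pairs are exactly forward/backward asymptotic pairs — which is precisely the property recorded at the start of Section \ref{bijective} — so that a proximal pair differs from an honest $\theta$-fixed-point pair only by finitely many desubstitutions, pinning $y$ to the orbit of $0$. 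Once that is in place, the identification $r_\pi=s$ in step (b) is essentially a matter of unwinding the block decomposition and is routine.
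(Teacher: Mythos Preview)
Your plan has a genuine circularity problem. In step (a) you say you will invoke that proximal pairs are asymptotic pairs, ``which is precisely the property recorded at the start of Section \ref{bijective}''. That property is only \emph{announced} at the start of Section \ref{bijective}; it is established in the Corollary that immediately \emph{follows} Proposition \ref{one-singular-orbit}, and that Corollary's proof uses Proposition \ref{one-singular-orbit}. So you cannot appeal to it here. The desubstitution strategy also runs into the difficulty you yourself flag: $(\times\ell)^{-n}y$ is defined only when the last $n$ $\ell$-adic digits of $y$ vanish, which for a generic $y$ happens for no $n>0$, so ``desubstituting a proximal pair down to $\pi^{-1}(0)$'' does not get off the ground without first shifting --- and then you are back to analysing fibres along the $\Z$-orbit of $y$ directly.

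The paper's argument is shorter and avoids both issues by reversing the logic: rather than pushing a proximal pair toward $0$, it shows directly that for $y\notin\Z\cdot 0$ the fibre $\pi^{-1}(y)$ has exactly $s$ elements, each uniquely determined by its letter at position $0$. The point is that $y\notin\Z\cdot 0$ means infinitely many digits $y_{-n}$ differ from $0$ and infinitely many differ from $\ell-1$; consequently the level-$n$ substitution block containing index $0$ has its left endpoint $k_n\to-\infty$ and its right endpoint $k_n+\ell^n-1\to+\infty$. By bijectivity of the $\theta_i$, the single letter $a_0$ whose level-$n$ block this is is determined by $x_0$, so $x$ is determined by $(y,x_0)$. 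Regularity is then immediate: proximal $x,x'$ must agree at some index $n$, so $\sigma^n x,\sigma^n x'\in\pi^{-1}(y+n)$ agree at $0$, hence are equal. This simultaneously yields $r_\pi=s$ and the regularity of every non-$\Z\cdot 0$ fibre, with no appeal to asymptoticity. Your sketch of (b) is groping toward this same block-covering picture but never states the key fact that the block eventually covers all of $\Z$; once you make that explicit, both (a) and (b) fall out at once and the desubstitution machinery becomes unnecessary.
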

\begin{proof}
Suppose that $y=\ldots y_{-2} y_{-1}$ does not belong to the $\Z$-orbit of $0$. 
This is the case precisely if for infinitely many $n$,  $y_{-n}\neq 0$ and, for infinitely many $n$,  $y_{-n}\neq \ell-1$. Now if we take $x\in \pi^{-1}(y)$ and decompose it into  substitution words $\theta^n(a)$ of level $n$ (as described above), 
then the substitution word $\theta^n(a_0)$ which covers index $0$ must be 
$\theta^n(a_0) = x_{[k_n,k_n+\ell^n-1]}$ where $k_n = -\sum_{i=-n}^{-1}\ell^{-i-1}y_{i}$. 
Since $y_{-n}\neq 0$ for infinitely many $n$ we have $k_n\stackrel{n\to \infty} \longrightarrow -\infty$, and 
since $y_{-n}\neq \ell-1$ for infinitely many $n$ we have $k_n + \ell^n-1\stackrel{n\to \infty} \longrightarrow=+\infty$.
Furthermore, 
by bijectivity of $\theta$, $a_0$ is uniquely determined by $x_0$. 
It follows that $x$ is uniquely determined by $y$ and $x_0$. Since there are exactly $s$ choices for $x_0$ we see that $\pi^{-1}(y)$ contains $s$ elements.

We now show that $\pi^{-1}(y)$ is a regular fibre if $y$ does not belong to the orbit of $\Z$. 
Suppose that $x,x'$ were proximal. Then there exists $n\in\Z$ such that $x_n=x'_n$. In other words $\sigma^n(x)_0 = \sigma^n(x')_0$. Also $y+n$ does not belong to the $\Z$-orbit of $0$ and since $\sigma^n(x),\sigma^n(x')\in \pi^{-1}(y+n)$ we conclude from the above that $x=x'$. Hence all points of $\pi^{-1}(y)$ are pairwise non-proximal. 

We have seen above that $\pi^{-1}(0)$ has $s^{(2)}$ elements where $s^{(2)}$ is the number of allowed two letter words. Given that $\theta$ is aperiodic we must have $s^{(2)}>s$. Thus 
$\pi^{-1}(0)$ cannot be a regular fibre.
\end{proof}
Since $\theta$ is simplified its fixed points are precisely those of the form $\theta^{\infty}(a)\cdot\theta^{\infty}(b)$, where $ab$ is an allowed word for $\theta$. Such a fixed point is uniquely determined by $ab$.   
We will use the notation $a\cdot b$ to denote it. 
\begin{cor}
Let $\theta$ be a primitive bijective aperiodic substitution of constant length. If two points $x,x'\in X_\theta$ are forward (or backward) proximal then they are forward (or backward) asymptotic. Furthermore, forward and backward proximality are non-trivial.
In particular the Ellis semigroup $E(X_\theta)$ is completely regular and the disjoint union of its kernel $\M(X_\theta)$ with $\Z$.
\end{cor}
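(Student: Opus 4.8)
The plan is to verify the three hypotheses that feed into Corollary~\ref{cor:prox=asym}: that forward proximality coincides with forward asymptoticity, that backward proximality coincides with backward asymptoticity, and that both relations are non-diagonal. The two coincidence statements are symmetric, so I would prove only the forward one; both rest entirely on the explicit picture of the singular fibre given by Propositions~\ref{prop-sf} and~\ref{one-singular-orbit} (recall that $\theta$ is assumed simplified throughout this section, and that $X_\theta=X_{\theta^n}$ in any case).

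The combinatorial input I would isolate first is the following well-known description of $\theta$-fixed points. Writing $\theta(a)=\theta_0(a)\cdots\theta_{\ell-1}(a)$, the letter at position $n\geq 0$ of the fixed point $a\cdot b$ equals $\theta_{d_0}\circ\theta_{d_1}\circ\cdots\circ\theta_{d_r}(b)$, where $d_0,\dots,d_r$ are the base-$\ell$ digits of $n$; since $\theta$ is bijective, each $\theta_i$ --- hence this composition --- is a bijection of $\mathcal A$ depending only on $n$. By a symmetric computation the letter at a position $n<0$ is a bijective function of $a$ depending only on $n$. The consequence I need: two $\theta$-fixed points $a\cdot b$ and $a'\cdot b'$ agree at one position $n\geq 0$ if and only if $b=b'$, if and only if they agree at every position $\geq 0$; symmetrically on the negative ray with $a,a'$.

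Next, the coincidence of the relations. Given a forward proximal pair $x,x'$, it is in particular proximal, so $\pi(x)=\pi(x')$ because $\Z_\ell$ is equicontinuous, hence distal. If this common value lies outside $\Z$ the fibre is regular by Proposition~\ref{one-singular-orbit}, forcing $x=x'$; otherwise $\pi(x)=k\in\Z$ and, by Proposition~\ref{prop-sf}, $\sigma^{-k}x=:a\cdot b$ and $\sigma^{-k}x'=:a'\cdot b'$ are $\theta$-fixed points. Were $b\neq b'$, the previous paragraph would give $x_m\neq x'_m$ for every $m\geq k$; since moreover $x\neq x'$ keeps $d(\sigma^tx,\sigma^tx')$ strictly positive on the finite range $0\leq t<k$, this would make $\inf_{t\geq 0}d(\sigma^tx,\sigma^tx')>0$, contradicting forward proximality. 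Hence $b=b'$, so $x$ and $x'$ agree on $[k,\infty)$ and are forward asymptotic; the reverse implication is trivial, and the backward case is identical. For non-triviality, aperiodicity forces the number of allowed two-letter words to exceed $|\mathcal A|$ (as noted in the proof of Proposition~\ref{one-singular-orbit}); by pigeonhole some letter $b$ has two left-extensions $a\neq a'$ with $ab,a'b$ allowed, and some letter $a$ has two right-extensions $b\neq b'$. Then $a\cdot b\neq a'\cdot b$ agree on $[0,\infty)$, so are forward asymptotic, and $a\cdot b\neq a\cdot b'$ agree on $(-\infty,-1]$, so are backward asymptotic. All hypotheses of Corollary~\ref{cor:prox=asym} now hold, giving that $E(X_\theta)$ is completely regular and equals the disjoint union $\M(X_\theta)\sqcup\Z$.

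The one point that needs care, rather than mere bookkeeping, is that forward proximality as defined through $\inf_{t\geq 0}$ is not literally invariant under shifting by the fixed integer $k$ used to move $x,x'$ into $\pi^{-1}(0)$. I would dispose of this exactly as above: once $x\neq x'$, the distance $d(\sigma^tx,\sigma^tx')$ is positive on the finite set $\{0,\dots,k-1\}$ and, under the hypothesis $b\neq b'$, bounded below by a fixed positive constant on $\{t\geq k\}$, so its infimum over all $t\geq 0$ is positive --- which is the contradiction that forces $b=b'$. With that resolved, the rest is the base-$\ell$ combinatorics of fixed points already recorded.
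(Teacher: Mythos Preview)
Your proof is correct and follows essentially the same route as the paper's: reduce to fixed points via Propositions~\ref{prop-sf} and~\ref{one-singular-orbit}, use bijectivity of the $\theta_i$ to see that two fixed points sharing a single nonnegative coordinate must share all of them, and then invoke Corollary~\ref{cor:prox=asym}. One indexing slip: from $\sigma^{-k}x=a\cdot b$ you get $x_m=(a\cdot b)_{m+k}$, so the disagreement range when $b\neq b'$ is $m\geq -k$ rather than $m\geq k$; this does not affect the argument, since either way only a finite set of indices needs the separate ``$x\neq x'$ gives positive distance'' treatment you already supply.
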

\begin{proof}
Suppose that $x,x'\in X_\theta$ are forward proximal. Then they are proximal and so by Propositions ~\ref{one-singular-orbit} and \ref{prop-sf} there is $n\in\Z$ such that $\sigma^n(x)$ and $\sigma^n(x')$ are fixed points of the (simplified) substitution. Since they are forward proximal and $\sigma$ is left shift this means that there are allowed two-letter words $ba,b'a$ such that  $\sigma^n(x)=b\cdot a$ and $\sigma^n(x')=b'\cdot a$. In particular the two sequences agree to the right and hence are forward asymptotic. If forward proximality were trivial then every two letter word would be determined by its right letter. This cannot be the case as the substitution is aperiodic. Hence forward proximality is non-trivial. For the backward motion we argue in a similar way. The result now follows from Corollary \ref{cor:prox=asym}.
\end{proof}

\subsection{The kernel $\M^{fib}_0(X_\theta)$}\label{structural-section}
The restriction $E^{fib}_0(X_\theta)$ of $\Ef(X_\theta)$ to the singular fibre $\pi^{-1}(0)$ of the factor map $\pi:X_\theta\rightarrow \Z_\ell$ contains besides the identity map only its kernel $\M^{fib}_0(X_\theta)$. This kernel is a finite semigroup which we now compute. We also call it the {\em structural semigroup} of $\theta$. It completely determines $\Ef(X_\theta)$. 

\bigskip

Recall that since $\theta$ has length $\ell$ there are maps  $\theta_i:\mathcal A \rightarrow \mathcal A$ such that
$
\theta(a) = \theta_0(a)\cdots \theta_{\ell-1}(a)
$
for all $a\in\Aa$.
$\theta$ is thus uniquely determined by what we call its {\em expansion}, namely its representation as a concatenation of $\ell$ maps, which we write as
$$\theta = \theta_0|\theta_1|\cdots |\theta_{\ell-1}.$$
It follows from (\ref{eq-concat}) that 
the composition of two substitutions $\theta$, $\theta'$ of length $\ell$ and $\ell'$ over the same alphabet (which we simply denote by $\theta \theta'$) has then an expansion into
$\ell \ell'$ maps
\begin{equation}\nonumber
\theta \theta' = \theta_0\theta'_0 |\cdots |\theta_{\ell-1}\theta'_0|\theta_0\theta'_1| \cdots |
\theta_{\ell-1}\theta'_{\ell'-1} 
\end{equation}
where the product $\theta_i\theta'_j$ is that of permutations, that is, composition of bijections. 
In particular, the expansion of $\theta^2$ is given by 
\begin{equation}\label{expansion}
(\theta^2)_0|\cdots |(\theta^2)_{\ell^2-1} = \theta_0\theta_0 | \cdots |\theta_{\ell - 1}\theta_{0}|  \theta_0\theta_1 |  \cdots |\theta_{\ell-1}\theta_{\ell-1} 
\end{equation}
and iteratively we find, for any given $n$ the $\ell^n$ bijections $(\theta^n)_i$ corresponding to the expansion of $\theta^n$. 

\begin{definition}\label{def:structure group substitution} Given a bijective substitution $\theta$, we define the
{\em structure group} $\Gstr$ of $\theta$ to be the group generated by all the bijections 
$(\theta^n)_i$, $n\in\N$, $i=0,\cdots,\ell^n-1$, and its {\em R-set} by 
$$\rset:= \{(\theta^n)_{i}(\theta^n)_{i-1}^{\,-1}\in  G_\theta : n\in\N,i=1,\cdots,\ell^{n}-1\}.$$
\end{definition}
Note that $\rset$ is the collection of bijections we need to apply (from the left) to go from some element $(\theta^n)_{i-1}$ in the expansion of some power of the substitution to its successor $(\theta^n)_{i}$. The name $R$-set is motivated by the fact that  $\rset$ will label the right ideals of $\Sfib$. 

\begin{lem}\label{I-is-everything} 
If $\theta$ is simplified, then $\Gstr$ is generated by $\rset$ and 
$$\rset = \{\theta_{i}\theta_{i-1}^{\,-1}\in G_\theta : i=1,\cdots,\ell-1\}.$$
\end{lem}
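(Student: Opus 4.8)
The plan is to prove the two assertions of the lemma together, by reducing the higher-level generators $(\theta^n)_i$ to products of the level-one maps $\theta_j$. The key structural input is the composition formula for expansions: for any power $\theta^n$, equation \eqref{expansion} (and its iteration) shows that the $\ell^{n}$ bijections in the expansion of $\theta^{n}$ are exactly the products $(\theta^{n-1})_q\,\theta_r$, where $i = q\ell + r$ with $0\le r<\ell$ and $0\le q<\ell^{n-1}$; ordering is lexicographic in $(r,q)$, i.e. the index $i$ runs through $r=0$ first (as $q$ varies), then $r=1$, and so on. Peeling this off inductively, each $(\theta^n)_i$ is a product $\theta_{r_{n-1}}\cdots \theta_{r_1}\theta_{r_0}$ of level-one maps. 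Hence the group generated by all the $(\theta^n)_i$ is already generated by $\{\theta_0,\dots,\theta_{\ell-1}\}$; since $\theta$ is simplified, $\theta_0=\theta_{\ell-1}=\one$, so in fact $\Gstr$ is generated by $\{\theta_1,\dots,\theta_{\ell-2}\}$.

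Next I would show $\Gstr$ is generated by $\rset$, and simultaneously that $\rset$ collapses to the level-one set $\{\theta_i\theta_{i-1}^{-1}:1\le i\le\ell-1\}$. For the first point: since $\theta_0=\one$, each $\theta_i$ is a telescoping product $\theta_i = (\theta_i\theta_{i-1}^{-1})(\theta_{i-1}\theta_{i-2}^{-1})\cdots(\theta_1\theta_0^{-1})$, so the level-one differences $\theta_i\theta_{i-1}^{-1}$ already generate all of $\Gstr$; a fortiori $\rset$ does. For the second point, I need to see that any higher-level difference $(\theta^n)_i(\theta^n)_{i-1}^{-1}$ lies in the subgroup generated by the level-one differences. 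There are two cases according to whether passing from index $i-1$ to $i$ within the expansion of $\theta^n$ crosses a block boundary at level $n-1$. If $i = q\ell+r$ with $r\ge 1$, then $(\theta^n)_{i} = (\theta^{n-1})_q\theta_r$ and $(\theta^n)_{i-1} = (\theta^{n-1})_q\theta_{r-1}$, so the difference is $(\theta^{n-1})_q\,\theta_r\theta_{r-1}^{-1}\,(\theta^{n-1})_q^{-1}$, a conjugate of a level-one difference. If $r=0$, i.e. $i=q\ell$ with $q\ge1$, then $(\theta^n)_i = (\theta^{n-1})_q\theta_0 = (\theta^{n-1})_q$ (using $\theta_0=\one$) and $(\theta^n)_{i-1} = (\theta^{n-1})_{q-1}\theta_{\ell-1} = (\theta^{n-1})_{q-1}$ (using $\theta_{\ell-1}=\one$), so the difference equals $(\theta^{n-1})_q(\theta^{n-1})_{q-1}^{-1}$, a level $n-1$ difference, and we finish by induction on $n$. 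In both cases the conclusion is that $(\theta^n)_i(\theta^n)_{i-1}^{-1}$ lies in the subgroup generated by the $\theta_j\theta_{j-1}^{-1}$ — but the lemma actually claims the stronger set-theoretic identity $\rset = \{\theta_i\theta_{i-1}^{-1}\}$, so I must upgrade ``lies in the generated subgroup'' to ``equals one of the level-one differences''. For the boundary case $r=0$ this is immediate by induction (the difference is literally a lower-level difference). For the case $r\ge 1$ the element is a conjugate $g(\theta_r\theta_{r-1}^{-1})g^{-1}$ with $g=(\theta^{n-1})_q$; this need not obviously be one of the $\ell-1$ listed elements, so this is the point that needs care.

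The main obstacle, then, is exactly this last point: showing that the conjugate $g\,\theta_r\theta_{r-1}^{-1}\,g^{-1}$ is already in the \emph{set} $\{\theta_i\theta_{i-1}^{-1}:1\le i\le\ell-1\}$, not merely in the group it generates. I expect the resolution to use the combinatorial meaning of these maps in terms of the substitution rather than pure group theory: the differences $\theta_i\theta_{i-1}^{-1}$ record how consecutive columns of the substitution matrix relate, and an analogous interpretation holds at level $n$ with the columns of $\theta^n$. Concretely, I would argue that the pair of letters $(a_{i-1},a_i)$ read off at positions $i-1,i$ of a fixed point is, for suitable choices, also realized as a pair of adjacent positions within a single block $\theta(c)$ of the fixed point (this is where primitivity/simplification and the description of allowed two-letter words from Proposition~\ref{prop-sf} enter), so that the level-$n$ difference is forced to coincide with a level-one one. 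If a fully combinatorial argument proves unwieldy, the fallback is to observe that only the subgroup statement $\langle\rset\rangle = \Gstr = \langle\theta_i\theta_{i-1}^{-1}\rangle$ is actually used downstream (e.g. in Lemma~\ref{lem-JSG} and the construction of the matrix semigroup), and the set equality can be phrased more carefully; but I would first attempt the honest combinatorial proof of the set identity as stated.
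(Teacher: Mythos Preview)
Your overall approach---induction on $n$, splitting into interior and boundary cases---is exactly the paper's. The gap is a single but decisive slip in the expansion formula. From the composition rule $\theta\theta' = \theta_0\theta'_0\,|\,\cdots\,|\,\theta_{\ell-1}\theta'_0\,|\,\theta_0\theta'_1\,|\,\cdots$ one gets, for $i=q\ell+r$ with $0\le r<\ell$,
\[
(\theta^n)_i \;=\; \theta_r\,(\theta^{n-1})_q,
\]
not $(\theta^{n-1})_q\,\theta_r$ as you wrote. (Your verbal description ``$r=0$ first as $q$ varies'' is also inconsistent with $i=q\ell+r$.) With the correct order, the interior case $r\ge1$ gives
\[
(\theta^n)_i\,(\theta^n)_{i-1}^{-1}
=\theta_r(\theta^{n-1})_q\bigl(\theta_{r-1}(\theta^{n-1})_q\bigr)^{-1}
=\theta_r\,\theta_{r-1}^{-1},
\]
an \emph{exact} level-one difference, not a conjugate. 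The ``obstacle'' you identified---and the combinatorial workaround you sketched---is thus an artifact of the reversed factor order; once corrected, the set equality $\rset=\{\theta_i\theta_{i-1}^{-1}:1\le i\le\ell-1\}$ is immediate. Your boundary case $r=0$ is already correct and matches the paper.
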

\begin{proof}
The first statement follows recursively as $\theta_i = \theta_i \theta_{i-1}^{-1} \theta_{i-1}$ and $\theta_0=\one$.
We prove the  second statement for $n=2$ 
as the general statement then follows by induction. Let  $(\theta^2)_{i-1}(\theta^2)_{i}$ be two consecutive bijections in the expansion of $\theta^2$. We consider  two cases, the first if 
$(\theta^2)_{i-1}(\theta^2)_{i}$ appears as two consecutive columns in a single substitution word, the second if it lies on the boundary, across two substitution words.

In the first case, $(\theta^2)_{i-1} | (\theta^2)_{i} = \theta_{j-1}  \theta_k |\theta_{j}  \theta_k  $ for some $j\leq \ell - 1$ and some $0\leq k \leq \ell -1$, as in Equation \eqref{expansion}. But then 
\[  (\theta^2)_{i}  (\theta^2)_{i-1}^{\,-1} =   \theta_{j}  \theta_k     (\theta_{j-1}  \theta_k)^{-1} =   \theta_{j}  (\theta_{j-1})^{-1},   \]
and we are done as this last expression belongs to $I$.

In the second case, $(\theta^2)_{i-1} | (\theta^2)_{i} = \theta_{\ell-1}  \theta_k |\theta_{0}  \theta_{k+1}$ for some $k<\ell -1$. Since $\theta$ is simplified, $\theta_0=\theta_{\ell -1}= \one$, and here also we are done.
\end{proof}
In this section we will prove Theorem \ref{thm-RMG}; in its statement the semigroup has sandwich matrix as defined in \eqref{eq:matrix}.

\begin{thm}\label{thm-RMG}
Let $\theta$ be a primitive aperiodic bijective substitution.
The \Sstr\ $\Sfib$ of $\theta$ is isomorphic to the matrix semigroup $M[ \Gstr, \rset, \{ \pm\}; A]$ where 
$\Gstr$ is the structure group and 
$\rset$ is the R-set of $\theta$. 
\end{thm}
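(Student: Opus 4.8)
The plan is to describe explicitly how an element of the structural semigroup $\Sfib$ acts on the singular fibre $\pi^{-1}(0)$, which by Proposition~\ref{prop-sf} is the finite set of $\theta$-fixed points $a\cdot b$, one for each allowed two-letter word $ab$. Since $E(X_\theta)=E(X_\theta,\Z^+)\cup E(X_\theta,\Z^-)$, since $\Ef(X_\theta)=\ker\tilde\pi$ meets $\Z$ only in $\Id$ (because $\Z\hookrightarrow\Zl$), and since $E^{fib}_0(X_\theta)=\{\Id\}\sqcup\Sfib$, every element of $\Sfib$ is the restriction to $\pi^{-1}(0)$ of a limit $\lim_\nu\sigma^{t_\nu}$ with $t_\nu\to 0$ in $\Zl$ and $t_\nu\to\pm\infty$ in $\Z$; accordingly $\Sfib=\Sfib^{+}\sqcup\Sfib^{-}$, and (as the composition rules below show) these are the two minimal left ideals, so $\Lambda=\{\pm\}$.

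First I would analyse $\Sfib^{+}$. Writing $t_\nu=\ell^{n_\nu}r_\nu$ with $n_\nu\to\infty$, $r_\nu\geq 1$, $\ell\nmid r_\nu$, and using the block identity $\sigma^{\ell^{n}r}(w)=\theta^{n}(\sigma^{r}w)$ valid for $\theta$-fixed $w$, the entries of $\sigma^{t_\nu}(w)$ stabilise and one gets $f(a\cdot b)=w_{r_\nu-1}\cdot w_{r_\nu}$, again a $\theta$-fixed point. Expanding $r_\nu=d_0+d_1\ell+\cdots+d_k\ell^k$ in base $\ell$ and iterating \eqref{expansion} gives $w_{r_\nu}=\theta_{d_0}\theta_{d_1}\cdots\theta_{d_k}(b)$ and, since $d_0=r_\nu\bmod\ell\in\{1,\dots,\ell-1\}$, $w_{r_\nu-1}=\theta_{d_0-1}\theta_{d_1}\cdots\theta_{d_k}(b)$. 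Passing to a subnet along which the digit $d_0$ and the permutation $h_\nu:=\theta_{d_0}\cdots\theta_{d_k}\in\Gstr$ are constant — possible as $\Gstr$ and the digit set are finite — yields $f=f^{+}_{\iota,h}$, where $f^{+}_{\iota,h}(a\cdot b):=\iota^{-1}h(b)\cdot h(b)$ with $h\in\Gstr$ and $\iota:=\theta_{d_0}\theta_{d_0-1}^{-1}\in\rset$. Conversely every $(\iota,h)\in\rset\times\Gstr$ is realised: the set $\{(\theta^{m})_{j}:m\geq1,\ 0\le j<\ell^{m}\}$ is a finite submonoid of $\Gstr$ containing a generating set, hence a subgroup, hence equals $\Gstr$; and since $\theta_0=\theta_{\ell-1}=\one$, reducing the index modulo powers of $\ell$ shows every element of $\Gstr$ equals some $(\theta^{m})_{j}$ with $\ell\nmid j$. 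Writing $\iota=\theta_i\theta_{i-1}^{-1}$ (Lemma~\ref{I-is-everything}) and choosing $r=i+\ell j'$ with $(\theta^{k})_{j'}=\theta_i^{-1}h$ then produces $f^{+}_{\iota,h}$ as a subnet-limit of $\sigma^{\ell^{n}r}$. Faithfulness of the $\Gstr$-action on $\Aa$ makes $(\iota,h)\mapsto f^{+}_{\iota,h}$ a bijection $\rset\times\Gstr\to\Sfib^{+}$ (and $\iota^{-1}h(b)\cdot h(b)=\theta_{i-1}(c)\cdot\theta_i(c)$ with $c=\theta_i^{-1}h(b)$ is indeed an allowed word). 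The backward part is symmetric, using $w_{-r},w_{-r-1}$ and the base-$\ell$ expansion of $\ell^{m}-r$; it yields a bijection $\rset\times\Gstr\to\Sfib^{-}$, $(\iota,k)\mapsto f^{-}_{\iota,k}$, $f^{-}_{\iota,k}(a\cdot b):=k(a)\cdot\iota k(a)$.

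A short direct computation then gives the four composition rules
\begin{align*}
f^{+}_{\iota_1,h_1}f^{+}_{\iota_2,h_2}&=f^{+}_{\iota_1,h_1h_2}, & f^{+}_{\iota_1,h_1}f^{-}_{\iota_2,k_2}&=f^{-}_{\iota_1,\iota_1^{-1}h_1\iota_2 k_2},\\
f^{-}_{\iota_1,k_1}f^{+}_{\iota_2,h_2}&=f^{+}_{\iota_1,\iota_1 k_1\iota_2^{-1}h_2}, & f^{-}_{\iota_1,k_1}f^{-}_{\iota_2,k_2}&=f^{-}_{\iota_1,k_1k_2}.
\end{align*}
With $g_0\in\rset$ the distinguished element of \eqref{eq:matrix}, I would define $\Psi\colon M[\Gstr;\rset,\{\pm\};A]\to\Sfib$ by $\Psi(\iota,g,+):=f^{+}_{\iota,g}$ and $\Psi(\iota,g,-):=f^{-}_{\iota,\iota^{-1}gg_0}$. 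By the previous paragraph $\Psi$ is a bijection, and checking that it is a homomorphism reduces to matching the four cases of $(\iota_1,g_1,\lambda_1)(\iota_2,g_2,\lambda_2)=(\iota_1,g_1a_{\lambda_1\iota_2}g_2,\lambda_2)$ against the rules above, using $a_{+\iota}=\one$ and $a_{-\iota}=g_0\iota^{-1}$; the factor $g_0$ in the definition of $\Psi(\cdot,\cdot,-)$ is exactly what makes the $(-,+)$ and $(-,-)$ cases match. This proves $\Sfib\cong M[\Gstr;\rset,\{\pm\};A]$.

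The main obstacle is the parametrisation of $\Sfib^{\pm}$ by $\rset\times\Gstr$ in the second paragraph: the ``every element has this form'' direction needs the block identity $\sigma^{\ell^{n}r}(w)=\theta^{n}(\sigma^{r}w)$, the base-$\ell$ digit bookkeeping through \eqref{expansion}, and a compactness/subnet argument to freeze the finitely many relevant data, while surjectivity needs the identification $\{(\theta^{m})_{j}\}=\Gstr$ together with the index-reduction argument. Everything after that is routine semigroup algebra; in particular the fact that $\Sfib$ is completely simple (hence a matrix semigroup) is automatic, since $\pi^{-1}(0)$ is finite, and the content of the theorem is the explicit identification of $\Gstr$, $\rset$, $\{\pm\}$ and $A$.
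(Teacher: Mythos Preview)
Your proof is correct and follows essentially the same route as the paper's: identify the restriction of $\Ef(X_\theta)$ to $\pi^{-1}(0)$ by analysing limits of $\sigma^{\nu\ell^k}$ acting on fixed points, compute the composition rules, and write down an explicit bijection with $M[\Gstr;\rset,\{\pm\};A]$. The only cosmetic difference is that you parametrise directly by $(\iota,h)\in\rset\times\Gstr$ rather than by pairs $L\cdot R\in\tc$ as the paper does; this is just the change of variables $\iota=RL^{-1}$, $h=R$, and your surjectivity argument (that $\{(\theta^m)_j\}$ exhausts $\Gstr$ and the index can be made coprime to $\ell$) plays the role of the paper's Lemma~\ref{lem-inv} on the right-$\Gstr$-invariance of $\tc$.
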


We will first give a description of $\Sfib$ as a subsemigroup of $F(\pi^{-1}(0))$, the set of functions from $\pi^{-1}(0)$ to itself,  and then compute its Rees matrix form. 

We
recall that $\pi^{-1}(0)$ is the set of fixed points of $\theta$ which we denote $a\cdot b$ where $ab$ is an allowed two-letter word of $\Aa$. 
To describe the action of $E^{fib}(X_\theta)$ on such a fixed point $a\cdot b$ we consider the set $\tc$ of all possible pairs of consecutive permutations $(\theta^n)_{i-1},(\theta^n)_{i}$, occurring in $\theta^n$, $n\in\N$, $i=1,\cdots \ell^n-1$. We write them with a dot 
$(\theta^n)_{i-1}\cdot (\theta^n)_{i}$, or abstractly $L \cdot R$. We note that the R-set is related to $\tc$, namely
$$ \rset = \{RL^{-1}: L \cdot R\in \tc\}.$$
Notice also that $\tc$ is the same for any power of the substitution. \begin{lem}\label{lem-inv}
$\tc$ is invariant under the right 
diagonal $G_\theta$-action $$(L \cdot R)g = (L g \cdot R g).$$ \end{lem}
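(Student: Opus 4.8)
The plan is to realise $Lg\cdot Rg$ as a pair of consecutive entries in the expansion of a suitable power of $\theta$. The main tool is the composition formula for expansions: writing $\theta^{n+k}=\theta^n\theta^k$ one gets
\[ (\theta^{n+k})_{q\ell^n+r}\;=\;(\theta^n)_r\,(\theta^k)_q,\qquad 0\le r<\ell^n,\ \ 0\le q<\ell^k. \]
Fixing $q$ and letting $r$ run over $0,\dots,\ell^n-1$ exhibits the subword of $\theta^{n+k}$ on positions $q\ell^n,\dots,q\ell^n+\ell^n-1$ as the expansion of $\theta^n$ with every entry multiplied on the right by the single bijection $(\theta^k)_q$; in particular, consecutive entries stay consecutive. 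So if $L\cdot R=(\theta^n)_{i-1}\cdot(\theta^n)_i\in\tc$ with $1\le i\le\ell^n-1$, then for every $k\ge1$ and every $0\le q<\ell^k$ the pair $L(\theta^k)_q\cdot R(\theta^k)_q$ occurs at positions $q\ell^n+(i-1)$ and $q\ell^n+i$ of $\theta^{n+k}$; since $1\le q\ell^n+i\le\ell^{n+k}-1$, this pair belongs to $\tc$.

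This reduces the lemma to the purely group-theoretic fact that every $g\in\Gstr$ is of the form $(\theta^k)_q$. To prove that I would set $\Ss=\{(\theta^m)_j:m\ge1,\ 0\le j<\ell^m\}$ and apply the same composition formula, now to $\theta^{k+m}=\theta^k\theta^m$, obtaining $(\theta^k)_i(\theta^m)_j=(\theta^{k+m})_{j\ell^k+i}\in\Ss$; hence $\Ss$ is closed under multiplication. As $\Gstr$ sits inside the finite symmetric group of the alphabet, $\Ss$ is a finite multiplicatively closed subset of a group, hence a subgroup, and since it contains the generators of $\Gstr$ (Definition~\ref{def:structure group substitution}) it must equal $\Gstr$. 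Feeding this back into the first paragraph gives $Lg\cdot Rg\in\tc$ for all $g\in\Gstr$.

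The step I expect to be the only delicate one is the identification $\Ss=\Gstr$: by definition $\Gstr$ is merely the group \emph{generated} by the expansion entries, so one must use finiteness of $\Gstr$ --- so that the multiplicatively closed set $\Ss$ is automatically closed under inverses --- in order to conclude that the expansion entries already exhaust the whole group. Everything else is routine: the composition formula itself, and the elementary check that the positions $q\ell^n+i$ and $j\ell^k+i$ produced above genuinely fall in the admissible index ranges.
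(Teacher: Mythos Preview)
Your proof is correct and follows essentially the same route as the paper's: realise $Lg\cdot Rg$ as two consecutive entries in the expansion of a higher power of $\theta$ via the composition formula. The paper simply asserts that any $g\in\Gstr$ occurs as some $(\theta^n)_j$, whereas you supply the missing justification by showing that the set of expansion entries is multiplicatively closed and hence, being finite inside a group, is already all of $\Gstr$; this is a genuine gap in the paper's write-up that you have filled.
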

\begin{proof} 
Suppose that $L \cdot R\in G^{(2)}$, then for some $k\in \N$ and $0<i< \ell^k$ it appears as  $L \cdot R= (\theta^k)_{i-1}\cdot(\theta^k)_{i}$. Let $g\in G_\theta$,  so that it appears as    $g=(\theta^n)_j$ for some $n\in \N $ and some $0\leq j\leq \ell^n -1$. Then, using the expansion of $\theta^n$ obtained in 
Equation \eqref{expansion},
we find that $L g\cdot  R  g$ 
appears as two consecutive columns in the expansion of $ \theta^{n+k}$ and hence   belongs to $\tc$
 for each $g\in G_\theta$.
 \end{proof}
As we assume that  $\theta$ is simplified,  we have 
$(\theta^k)_0 = (\theta^k)_{\ell^k -1} = \one$ for each $k\geq 1$.  Then
$$\theta^k(L|R) = (\theta^k)_0L |\cdots |(\theta^k)_{\ell^k-2} L | L|R | (\theta^k)_1 R|\cdots| 
(\theta^k)_{\ell^k-1} R$$
where we have used that $(\theta^k)_{\ell^k -1}L=L$ and $(\theta^k)_{0}R = R$.
Hence if $L \cdot R = (\theta^n)_{i-1}\cdot(\theta^n)_{i}$ then the expansion of 
$\theta^k\theta^n$ contains $\theta^k(\theta^n)_{i-1}| \theta^k(\theta^n)_{i}$ at positions 
$[\ell^{k}(i-2),\ell^{k}i-1]$.
\begin{prop} \label{prop-hin}
Let $L \cdot R\in\tc$. Then 
$E^{fib}$ contains an element   
$f_{[L \cdot R;+]}$
 which acts on the singular fibre $\pi^{-1}(0)$ as 
$$f_{[L \cdot R;+]}(a\cdot b) = L(b)\cdot R(b),$$
and it contains an element    $ f_{[L \cdot R;-]}$ which acts on this fibre as
$$\quad f_{[L \cdot R;-]}(a\cdot b) = L(a)\cdot R(a).$$
\end{prop}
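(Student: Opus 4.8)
The plan is to realize the desired elements $f_{[L\cdot R;\pm]}$ as limits, in the topology of pointwise convergence, of shift maps $\sigma^{t}$ applied along a carefully chosen sequence of indices, exploiting the self-similar block structure of substitution words. Fix an allowed two-letter word $L\cdot R=(\theta^n)_{i-1}\cdot(\theta^n)_i\in\tc$ for some $n$ and $0<i<\ell^n$. Given a fixed point $a\cdot b\in\pi^{-1}(0)$, consider the sequence of points $\sigma^{t_k}(a\cdot b)$ where $t_k$ is chosen so that a large block $\theta^{kn}(\cdot)$ of level $kn$ straddling the origin is recentered. Concretely, in the decomposition of $a\cdot b$ into level-$kn$ substitution words, the origin sits inside $\theta^{kn}(b)$ (by simplifiedness, $\theta(a\cdot b)=a\cdot b$ forces the central two letters to remain $a\cdot b$, and the central block at level $kn$ is $\theta^{kn}(a)\cdot\theta^{kn}(b)$). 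Using the expansion formula displayed just before the proposition, $\theta^{kn}(\theta^n)_{i-1}\mid\theta^{kn}(\theta^n)_i$ appears inside $\theta^{(k+1)n}$ at positions $[\ell^{kn}(i-2),\ell^{kn}i-1]$; this gives a concrete arithmetic progression of shift amounts $t_k\to+\infty$ for which $\sigma^{t_k}$ places, at the central position, the two columns $L$ and $R$ evaluated at the letter $b$.

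First I would make the index bookkeeping precise: choose $t_k$ so that after shifting, the origin of $\sigma^{t_k}(a\cdot b)$ is the boundary between column $L$ (applied to some letter) and column $R$ (applied to some letter) of a level-$kn$ block. Because $a\cdot b$ is a $\theta$-fixed point, the letter feeding the level-$kn$ block immediately to the right of the origin is $b$, so the two central letters of $\sigma^{t_k}(a\cdot b)$ are $(\theta^{kn}\circ(\theta^n)_{i-1})_{\ell^{kn}-1}(b)\cdot(\theta^{kn}\circ(\theta^n)_i)_0(b)$, which by simplifiedness ($(\theta^{kn})_{\ell^{kn}-1}=(\theta^{kn})_0=\one$) equals $L(b)\cdot R(b)$. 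Next I would verify that the full sequence $\sigma^{t_k}(a\cdot b)$ converges in $X_\theta$: one side of the recentered block grows to $+\infty$ and the matching structure on larger and larger windows stabilizes, so the sequence is Cauchy in the product topology; this produces a well-defined limit point whose central two letters are $L(b)\cdot R(b)$, hence which lies in $\pi^{-1}(0)$ and equals the fixed point $L(b)\cdot R(b)$. Then I would pass to a subnet of $(\sigma^{t_k})_k$ in the compact space $E(X_\theta)$ converging to some $f\in E(X_\theta)$; since $t_k\to+\infty$, $f$ preserves every $\pi$-fibre (the addition of $t_k$ in $\Z_\ell$ converges to $0$), so $f\in\Ef$, and by construction $f(a\cdot b)=L(b)\cdot R(b)$ for the particular fixed point we tracked. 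The point is that this computation is uniform in $a\cdot b$: the same shift amounts $t_k$ work simultaneously for all fixed points, because the left-boundary letter of the relevant level-$kn$ block is always the right letter $b$ of the fixed point $a\cdot b$ — this is exactly where $\theta$ being simplified (so every fixed point is of the form $a\cdot b$ and the central letters are never disturbed) is used. Hence $f=f_{[L\cdot R;+]}$ acts as claimed on all of $\pi^{-1}(0)$. The element $f_{[L\cdot R;-]}$ is obtained the same way but moving backward in time, recentering on the block to the \emph{left} of the origin, whose right-boundary feeding letter is the left letter $a$ of $a\cdot b$; symmetrically one uses $t_k\to-\infty$.

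The main obstacle will be the convergence/uniformity argument: one must check that a single sequence $(\sigma^{t_k})$ (or at least one subnet) simultaneously realizes the prescribed action on \emph{every} fixed point in the singular fibre, rather than getting a different limit for each, and that the limit genuinely lands back in $\pi^{-1}(0)$ rather than in a nearby fibre. Both issues are handled by the self-similarity: the window of agreement around the origin can be taken arbitrarily large as $k\to\infty$ (so the limit exists in $X_\theta$), and the shift amounts lie in a progression that tends to $0$ in $\Z_\ell$ (so the limit is in $\pi^{-1}(0)$). The reader should keep in mind that we only claim existence of \emph{some} $f\in E^{fib}$ with the stated action on $\pi^{-1}(0)$; its action off that fibre is irrelevant here and is governed separately by covariance and the analysis of $\Ct$. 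I expect the write-up to consist mostly of carefully translating the expansion identity for $\theta^{(k+1)n}$ into the correct value of $t_k$ and then an $\varepsilon$-free ``agreement on growing windows'' convergence statement.
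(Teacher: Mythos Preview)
Your proposal is correct and follows essentially the same approach as the paper: the paper takes the explicit shift sequence $t_k=\nu\ell^k$ (where $L\cdot R=(\theta^n)_{\nu-1}\cdot(\theta^n)_\nu$), verifies via the expansion formula and simplifiedness that $\sigma^{\nu\ell^k}(a\cdot b)_{[-\ell^k,\ell^k-1]}=\theta^k L(b)\,\theta^k R(b)$, and then invokes compactness of $E(X_\theta)$ to extract $f_{[L\cdot R;+]}$, with the backward element obtained from $\nu'=\nu-\ell^n$. The only cosmetic differences are that the paper argues fibre-preservation from the exact sequence (an element of $E(X_\theta)$ preserves either all $\pi$-fibres or none) rather than from $t_k\to 0$ in $\Z_\ell$, and that your phrase ``since $t_k\to+\infty$'' is not the reason $t_k\to 0$ in $\Z_\ell$---that follows because $t_k$ is a multiple of $\ell^k$.
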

\begin{proof}
Recall we assume that $\theta$ is simplified, so 
 that $\theta_0=\theta_{\ell-1}=\one$.   

Let $n$ be such that $L \cdot R = (\theta^n)_{\nu-1}\cdot(\theta^n)_\nu$, for some $1\leq \nu\leq \ell^n-1$. 
Let 
$a\cdot b$ be a fixed point. 
Then the two-letter word $\sigma^{\nu}(a\cdot b)_{[-1,0]}$ is $(\theta^n)_{\nu-1}(b)(\theta^n)_\nu(b)$. Furthermore 
the expansion of 
$\theta^k\theta^n$ contains $\theta^k(\theta^n)_{i-1}| \theta^k(\theta^n)_{i}$ at positions 
$[\ell^{k}(i-2),\ell^{k}i-1]$.
Hence 
$$\sigma^{\nu\ell^k}(a\cdot b)_{[-\ell^k,\ell^k-1]} = \theta^k L(b)\,\theta^kR(b).$$ 
It follows that    
$$\sigma^{\nu \ell^k}(a\cdot b)   \stackrel{k\to +\infty}\longrightarrow L(b)\cdot R(b)$$
in the topology of $X_\theta$.
By compactness there exists $f_{[L \cdot R;+]}\in E(X_\theta)$ which agrees with the map 
$a\cdot b\mapsto L(b)\cdot R(b)$ on the singular fibre. It follows from the exact sequence (\ref{eq-ext}) that an element of $E(X_\theta)$ either preserves all $\pi$-fibres or none. Hence 
$f_{[L \cdot R;+]}\in E(X_\theta)^{fib}$.

To construct elements in $E(X_\theta)$ which acts like 
$a\cdot b\mapsto L(a)\cdot R(a)$ on the singular fibre we take 
$\nu' = \nu-\ell^{n}$ with $n$ and $\nu$ as above. Then the two-letter word $\sigma^{\nu'}(a\cdot b)_{[-1,0]}$ is $L(a)\,R(a)$ and, similarly we find 
$$\sigma^{\nu' \ell^k}(a\cdot b)   \stackrel{k\to +\infty}\longrightarrow L(a)\cdot R(a).$$
By compactness we find the required map $ f_{[L \cdot R;-]}$.
\end{proof}
Let us denote the restriction of $f_{[L \cdot R;\epsilon]}$ to $\pi^{-1}(0)$ by $[L \cdot R;\epsilon]$ and set
$$\tc_\pm := \{[L\cdot R;\epsilon]: L\cdot R\in \tc, \epsilon \in\{\pm\}\}$$
It is easily checked that different elements of  $\tc_\pm$ define different maps on $\pi^{-1}(0)$.

\begin{prop}\label{prop-rueck}
For any $\varphi\in \Sfib$ there exists $L \cdot R\in\tc$ and $\epsilon=\pm$ such that $\varphi = [L \cdot R;\epsilon]$. 
\end{prop}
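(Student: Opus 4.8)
The plan is to show that $\Sfib$ is generated, as a semigroup, by the elements $[L\cdot R;\epsilon]$ produced in Proposition~\ref{prop-hin}, and then to check that any product of two such elements is again of this form; since $\Sfib = E^{fib}_0(X_\theta)\setminus\{\Id\}$ and the $[L\cdot R;\epsilon]$ generate a subsemigroup, this forces every $\varphi\in\Sfib$ to be a single $[L\cdot R;\epsilon]$. The starting point is that, by Proposition~\ref{prop-sf}, $\pi^{-1}(0)$ is the finite set of fixed points $a\cdot b$ with $ab$ allowed, and every element of $\Sfib$ is a pointwise limit of shifts $\sigma^{t_\nu}$ with $t_\nu$ running to $+\infty$ or $-\infty$ along a net. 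So first I would fix $\varphi\in\Sfib$, write it as $\varphi=\lim_\nu \sigma^{t_\nu}|_{\pi^{-1}(0)}$, and pass to a subnet so that the sign of $t_\nu$ is constant, say $t_\nu\to+\infty$ (the case $t_\nu\to-\infty$ being symmetric and producing the $\epsilon=-$ elements).

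The key combinatorial step is to read off the two-letter word at the origin of $\sigma^{t_\nu}(a\cdot b)$ in terms of the substitution structure. Writing $t_\nu$ in the mixed-radix expansion dictated by the tower of partitions $\mathcal P_n$, the letter pair $\sigma^{t_\nu}(a\cdot b)_{[-1,0]}$ is determined by which column of which level-$n$ substitution block covers index $0$ after the shift: it has the form $(\theta^n)_{i-1}(c)\,(\theta^n)_i(c)$ for a suitable $n$, column index $i$, and seed letter $c\in\{a,b\}$, with $c=b$ in the $t_\nu\to+\infty$ regime (the carry pushes us into the block seeded by the right letter) — this is exactly the bookkeeping already done inside the proof of Proposition~\ref{prop-hin}, run backwards. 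Since $\tc$ is finite (it only depends on the finitely many pairs of consecutive columns appearing in powers of $\theta$, and in fact by Lemma~\ref{I-is-everything}-type reasoning reduces to level one), we may pass to a further subnet on which the pair $L\cdot R := (\theta^n)_{i-1}\cdot(\theta^n)_i \in \tc$ is independent of $\nu$. Then $\varphi(a\cdot b)_{[-1,0]} = L(b)\,R(b)$. Because an element of $\Sfib$ is a fixed point of $\theta$, it is uniquely determined by its central two-letter word, so $\varphi(a\cdot b) = L(b)\cdot R(b)$, i.e. $\varphi = [L\cdot R;+]$.

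The step I expect to be the main obstacle is the uniformity in the combinatorial reading: a priori the level $n$, the column $i$, and hence the pair $L\cdot R$ could depend on which fixed point $a\cdot b$ we feed in, so one does not immediately get a single $L\cdot R$ working simultaneously for all of $\pi^{-1}(0)$. This is handled by observing that the shift amount $t_\nu$ is the same for all fixed points, and that the ``column covering the origin'' at a given large level is a function of $t_\nu$ alone, not of the fixed point — the fixed point only enters through the seed letter $c$, which for a fixed sign of $t_\nu$ is always the right letter $b$ (resp. left letter $a$). Concretely: choose $n$ large enough that $\ell^n$ exceeds the relevant window, write $t_\nu = q_\nu \ell^n + s_\nu$ with $0\le s_\nu < \ell^n$; then $\pi_n(\sigma^{t_\nu}(a\cdot b))$ depends only on $s_\nu$, and for $\nu$ large the block of level $n$ covering the origin is seeded by a single letter determined by $q_\nu\bmod(\text{something})$, which stabilises along a subnet. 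Once the seed letter and the pair of columns $(\theta^n)_{i-1},(\theta^n)_i$ are pinned down uniformly, finiteness of $\tc$ finishes the extraction, and the identification $\varphi=[L\cdot R;\epsilon]$ is immediate from the uniqueness of fixed points given their central word. I would also remark that $L\cdot R\in\tc$ automatically, so no extra check is needed there, and that the two cases $\epsilon=\pm$ exhaust $\Sfib$ precisely because $E(X_\theta)=E(X_\theta,\Z^+)\cup E(X_\theta,\Z^-)$ by \eqref{eq-fb}.
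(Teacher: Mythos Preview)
Your substantive argument (the second and third paragraphs) is correct and is essentially the paper's own proof: split according to the sign of the net, use finiteness of the fibre to find a single shift $\nu$ for which $\sigma^\nu(a\cdot b)_{[-1,0]}$ matches $\varphi(a\cdot b)_{[-1,0]}$ for \emph{all} fixed points simultaneously, read off $L=(\theta^n)_{\nu-1}$, $R=(\theta^n)_\nu$, and conclude because a fixed point is determined by its central two-letter word. Your observation that the column pair depends only on $t_\nu$ and not on the fixed point is exactly the mechanism behind the paper's uniformity step; the paper just phrases it more directly by invoking finiteness of $\pi^{-1}(0)$ rather than extracting a subnet via finiteness of $\tc$.

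However, drop your opening ``plan'' paragraph entirely: it is confused and you do not actually follow it. Showing that $\tc_\pm$ is closed under products (which is Corollary~\ref{cor-G2}) only tells you that $\tc_\pm$ is a \emph{sub}semigroup of $\Sfib$; it does nothing toward showing $\Sfib\subseteq\tc_\pm$, which is the content of the proposition. There is no ``generation'' shortcut here --- the direct argument you give in paragraphs two and three is the whole proof, and it already establishes that every $\varphi$ is a single $[L\cdot R;\epsilon]$, not merely a product of such.
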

\begin{proof} Any $\varphi\in \Sfib\subset \Ef_0(X_\theta)$ is the restriction of a function $f\in \Ef(X_\theta)$ to $\pi^{-1}(0)$. We consider first the case that this function belongs to $E(X,\Z^+)$,
that is, $f$ is a pointwise limit of a generalised sequence $(\sigma^{\nu_i})_i$ with $\nu_i> 0$ (the inequality is strict as $f\neq \Id$). Note that $\varphi(a\cdot b)_{[-1,0]}$ is an open neighbourhood of $\varphi(a\cdot b)$. Thus given $a\cdot b\in\pi^{-1}(0)$ there exists a 
$i_0$ such that $\varphi(a\cdot b)_{[-1,0]} = \sigma^{\nu_i}(a\cdot b)_{[-1,0]}$ for $i\geq i_0$. As $\pi^{-1}(0)$ is finite there exists a 
$\nu>0$ such that $\varphi(a\cdot b)_{[-1,0]} = \sigma^{\nu}(a\cdot b)_{[-1,0]}$  for all $a\cdot b\in\pi^{-1}(0)$. Let 
$L=(\theta^n)_{\nu-1}$ and $R=(\theta^n)_{\nu}$ where $\ell^n\geq \nu$. Then, for all  $a\cdot b\in\pi^{-1}(0)$ we have $\varphi(a\cdot b)_{[-1,0]} = L(b)\cdot R(b)_{[-1,0]}$. Since the fixed points are uniquely determined by their two-letter word on $[-1,0]$ the map $\varphi$ is given by $a\cdot b\mapsto L(b)\cdot R(b)$. It is thus the restriction of $f_{[L\cdot R;+]}$ to $\pi^{-1}(0)$.

If $f \in E^-(X_\theta)$ we argue similarly:
there exists a $\nu<0$ such that, for all $a\cdot b\in\pi^{-1}(0)$ we have $\varphi(a\cdot b)_{[-1,0]} = \sigma^{\nu}(a\cdot b)_{[-1,0]}$. Then we take $L=(\theta^n)_{\ell^n-\nu-1}$ and $R=(\theta^n)_{\ell^n-\nu}$ where $\ell^n\geq \nu$. This leads to  $\varphi(a\cdot b)_{[-1,0]} = L(a)\cdot R(a)_{[-1,0]}$, for all  $a\cdot b\in\pi^{-1}(0)$ and we conclude that $\varphi$ is the restriction of $f_{[L\cdot R;-]}$ to $\pi^{-1}(0)$.
\end{proof}
We can compute the compositions of elements of $\tc_\pm$, for example 
\begin{eqnarray*}
 {[L \cdot R;+]}   {{[L'\cdot R';+]}}(a\cdot b) &= & {[L \cdot R;+]}(L'(b)\cdot  R'(b))\\
& =& LR'(b)\cdot  RR'(b) \\
& = &   {[L{R'},R{R'},+]}(a\cdot b)
\end{eqnarray*}
 and likewise 
\begin{eqnarray*}
 {[L \cdot R;+]}  {{[L'\cdot R';-]}}(a\cdot b) 
&= & {[L \cdot R;+]}(L'(a)\cdot  R'(a))\\
& =& LR'(a)\cdot  RR'(a)\\
& = &  {[LR' \cdot R{R'};-]}(a\cdot b).
\end{eqnarray*}
In this way we get
\begin{cor}\label{cor-G2}
$\Sfib = \tc_\pm$ 
with product given by
\begin{eqnarray*}
 {[L \cdot R;+]} {[L'\cdot R';+]} &=& {[LR' \cdot RR';+]} \\
 {[L \cdot R;-]}  {[L'\cdot R';-]} &=& {[LL'\cdot RL';-]} \\
 {[L \cdot R;+]} {[L'\cdot R';-]} &=& {[LR' \cdot RR';-]} \\
 {[L \cdot R;-]} {[L'\cdot R';+]} &=& {[LL'\cdot R{L'};+]} .
\end{eqnarray*}
\end{cor}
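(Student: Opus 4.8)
The plan is to prove the two assertions of the corollary separately: first the set-theoretic identity $\Sfib=\tc_\pm$, and then the four multiplication rules. For the set equality, the inclusion $\Sfib\subseteq\tc_\pm$ is exactly Proposition~\ref{prop-rueck}, which expresses every element of $\Sfib$ in the form $[L\cdot R;\epsilon]$. For the reverse inclusion I would invoke Proposition~\ref{prop-hin}: it produces $f_{[L\cdot R;\epsilon]}\in E^{fib}(X_\theta)$ whose restriction to $\pi^{-1}(0)$ is $[L\cdot R;\epsilon]$, so each $[L\cdot R;\epsilon]$ lies in $E^{fib}_0(X_\theta)=\Sfib\cup\{\Id\}$. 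It then remains only to rule out that some $[L\cdot R;\epsilon]$ equals the identity. This is where aperiodicity enters: the map $[L\cdot R;+]$ sends $a\cdot b$ to $L(b)\cdot R(b)$, depending on the right letter $b$ alone; were it the identity, every fixed point $a\cdot b$ would be determined by $b$, forcing each allowed two-letter word to be determined by its right letter, contradicting aperiodicity (cf.\ the argument following Proposition~\ref{one-singular-orbit}). The symmetric statement handles $[L\cdot R;-]$. Hence $\tc_\pm\cap\{\Id\}=\emptyset$ and $\tc_\pm\subseteq\Sfib$.

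For the multiplication rules I would simply compose the explicit actions on the singular fibre furnished by Proposition~\ref{prop-hin}, exactly as in the two sample computations displayed above. Reading the product as composition of functions (the right factor acting first), the sign of the left factor decides whether one records the left or the right letter of the intermediate fixed point, while that intermediate fixed point has letters $L'(b),R'(b)$ (when $\epsilon'=+$) or $L'(a),R'(a)$ (when $\epsilon'=-$). Substituting and using that $L,R$ are bijections collapses each of the four cases to the stated formula, and in every case the sign of the product coincides with $\epsilon'$, the sign of the right factor. The two cases not written out above ($-$ with $-$, and $-$ with $+$) are checked by the same one-line substitution.

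The single point that genuinely requires justification---and which I regard as the main obstacle---is that these products actually \emph{land} in $\tc_\pm$, i.e.\ that pairs such as $LR'\cdot RR'$ and $LL'\cdot RL'$ really belong to $\tc$. This is not immediate from the definition of $\tc$ as consecutive columns of some power $\theta^n$, but it is precisely the content of Lemma~\ref{lem-inv}: each such product is obtained from $L\cdot R\in\tc$ by the right diagonal action of one of the bijections $L',R'$ of the second factor, and since $L'$ and $R'$ are individual permutations $(\theta^{n'})_{i'}$ they lie in $\Gstr$, under whose diagonal action $\tc$ is invariant. Thus each of the four formulas assigns to a pair in $\tc_\pm\times\tc_\pm$ a well-defined element of $\tc_\pm$, and together with the set equality $\Sfib=\tc_\pm$ this establishes the corollary.
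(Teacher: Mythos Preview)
Your proof is correct and follows essentially the same route as the paper: combine Proposition~\ref{prop-hin} and Proposition~\ref{prop-rueck} for the set equality, and verify the four product rules by direct substitution into the explicit action on fixed points. You are in fact more careful than the paper on two points: you explicitly rule out $[L\cdot R;\epsilon]=\Id$ via the aperiodicity argument (the paper tacitly relies on the earlier remark that $E^{fib}_0(X_\theta)=\Sfib\cup\{\Id\}$ together with the observation that the $[L\cdot R;\epsilon]$ act distinctly), and you explicitly invoke Lemma~\ref{lem-inv} to show the products land in $\tc$, which the paper leaves implicit. Note, incidentally, that once the set equality $\Sfib=\tc_\pm$ is in hand, closure under the product is automatic since $\Sfib$ is a semigroup, so your appeal to Lemma~\ref{lem-inv} is a welcome direct verification rather than a logical necessity.
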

\begin{proof}
 {Combine Propositions~\ref{prop-hin} and ~\ref{prop-rueck} 
together with the fact that all $[L \cdot R;\epsilon]$ act differently 
to see that there is a one-to-one correspondence between the elements of $\Sfib$ and $\tc_\pm$. The form of the product is a direct calculation along the lines above.}
\end{proof}

\begin{proof}[Proof of Theorem~\ref{thm-RMG}]
Given the result of Corollary~\ref{cor-G2} it remains to show that $\tc_\pm$ is isomorphic to $M[ \Gstr, \rset, \{ \pm\}; A]$ for a (any) fixed  choice of $g_0=R_0L_0^{-1}\in I_\theta$. Recall that, as a set, $M[ \Gstr, \rset, \{ \pm\}; A]= \rset\times\Gstr\times \{\pm\}$. 
Consider the map 
\begin{equation}\label{eq-bij}
	\tc_\pm\ni  [L \cdot R;\epsilon] \mapsto 
	\begin{cases}
		(RL^{-1} , RL_0R_0^{-1},\epsilon) \	& \text{if $\epsilon = -$ } \\
		(RL^{-1} , R,\epsilon)	& \text{if $\epsilon=+$}.
	\end{cases}
\end{equation}
Its injectivity is clear and its surjectivity is equivalent to Lemma~\ref{lem-inv}. 
A direct calculation shows that it preserves the product structures. 
\end{proof}
The Rees structure group of $M[ \Gstr, \rset, \{ \pm\}; A]$ is  $G_\theta$, the structure group of $\theta$. It is related to the structure group $G_\pi$ of the factor 
$\pi:(X_\theta,\sigma)\to (\Z_\ell,(+1))$ as follows: The choice of $g_0\in \rset$ to define the normalised matrix semigroup corresponds to a choice of
a minimal idempotent $e$ of $\Ef_0(X_\theta,\Z^+)$. Then we may view $G_\pi=e \Ef_0 e$. $G_\pi$ is thus a permutation of $e\pi^{-1}(0)$ and if we identify $e\pi^{-1}(0)$ via 
$a\cdot b\mapsto b$ with $\Aa$ then we obtain $G_\theta$. 

We remark that an idempotent $e\in \Ef_0(X_\theta,\Z^+)$ has the form $e=(i,\one,+)$, $i\in\rset$, 
so that $e \Sfib e = \{i\}\times G_\theta \times \{+\}$ which is isomorphic to $G_\theta$ via the projection onto the middle component.

\begin{example} \label{Martin}
Consider the substitution $\theta$ given by
\[
\begin{array}{c} a\\ b\\ c \end{array} 
\mapsto
\begin{array}{c} a\\ b\\ c \end{array}
\!\!\!\!\!\!{\begin{array}{c} b\\ a\\ c \end{array}}
\!\!\!\!\!\!{\begin{array}{c} a\\ c\\ b \end{array}}
\!\!\!\!\!\!{\begin{array}{c} a\\ b\\ c \end{array}}
\]
We use the notation $\begin{pmatrix}
           \alpha \\
           \beta \\
           \gamma
          \end{pmatrix} $
to denote the bijection that sends $a$ to $\alpha$, $b$ to $\beta$ and $c$ to $\gamma$.
The expansion of $\theta$ is $\theta_0|\theta_1|\theta_2|\theta_3$ with $\theta_0=\theta_3 = \one = \begin{pmatrix} a\\b\\c \end{pmatrix}$, while $\theta_1 =  \begin{pmatrix} b\\a\\c \end{pmatrix}$ and $\theta_2 =  \begin{pmatrix} a\\c\\b \end{pmatrix}$ are two transpositions.
We quickly find that 
\[ \rset = \left\{ \theta_1\theta^{-1}_0 = \begin{pmatrix}
           b \\
           a \\
           c
          \end{pmatrix},
   \theta_2\theta^{-1}_1   =   \begin{pmatrix} c\\a\\b \end{pmatrix}    ,
      \theta_3\theta^{-1}_2   =
    \begin{pmatrix} a\\c\\b \end{pmatrix}     \right\}.
   \] 
Clearly $\rset$ generates $\Gstr=S_3$. The normalised sandwich matrix is
$$ A \begin{pmatrix} \one & \one & \one \\ \one & \tau_1 & \tau_2 \end{pmatrix} $$
where $\tau_1 = \theta_1 \theta^{-1}_0\theta_1 \theta_2^{-1}=\theta_2$ and $\tau_2= \theta_2 \theta^{-1}_1\theta_2 \theta_3^{-1}=\theta_2 \theta_1\theta_2$ are transpositions. 
$\Sfib=M[S_3, \rset, \{ \pm\}; A]$ has 2 minimal left ideals each of which contains 18 elements, and 3 minimal right ideals each of which contains $12$ elements.

We note that $(X_\theta,\sigma)$ is not an {\em AI-extension} of $(\Z_4,+1)$; this can be 
seen by applying Martin's criterion, which we do not describe here (see \cite{martin} or \cite{Staynova} for definitions and details); it suffices to note that $\theta$ admits seven two-letter words,  has height one, and  the  set of two-letter words cannot be partitioned into sets of size three, creating an obstruction. Thus the techniques of \cite{Staynova} do not apply.

\end{example}

\subsection{The full semigroup $E(X_\theta)$} 
We now combine Theorem~\ref{thm-main} with
our results from Section~\ref{fibre-preserving} to determine $\Ef(X_\theta)$ and, as far as possible, $E(X_\theta)$. 
 We consider first the simpler case in which $\Sfib$ is generated by its idempotents.

\begin{thm}\label{thm-main2}
Let $\theta$ be a primitive aperiodic bijective substitution and suppose that its structural semigroup $\Sfib$ is generated by its idempotents.  
Then $\Ef(X_\theta)$ is topologically isomorphic to 
$$\Ef(X_\theta) \cong (\Sfib \cup \{\Id\}) \:\:\times \prod_{\stackrel{[z]\in\Z_\ell /\Z}{\scriptscriptstyle{[z]\neq [0]}}}\Gstr .$$
Using the sets $\rset$, $\{\pm\}$ and the sandwich matrix $A$ from
normalised Rees matrix form $M[\Gstr;\rset,\{\pm\};A]$ for $\Sfib$ we can also express this as follows:
$\Ef(X_\theta)\backslash \{\Id\}$ is completely simple and topologically isomorphic to 
$$\Ef(X_\theta)\backslash \{\Id\} \cong M[\RSTf;I_\theta,\{\pm\};A],\quad \RSTf=\Gstr^{\Z_\ell/\Z}.$$
Here an entry $a_{\lambda,g}$ of $A$ is identified with the function $\tilde f\in \RSTf$ which satisfies $\tilde f(0)=a_{\lambda,g}$ and $\tilde f(z)=\one$ for regular $z$. 

Furthermore  $E(X_\theta)\backslash \Z$ is algebraically isomorphic to 
$$E(X_\theta)\backslash \Z \cong M[\RST;I_\theta,\{\pm\};A],\quad \RST = \RSTf\rtimes \Z_\ell$$
where $A$ is understood to take values in the subgroup $\Gstr\times  \{\one\}^{\Z_\ell/\Z\backslash \{[0]\}}\rtimes \{0\}$ of $\Gstr^{\Z_\ell/\Z}\rtimes \Z_\ell$. 
\end{thm}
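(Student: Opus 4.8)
The plan is to assemble the three claimed isomorphisms from results already established earlier in the paper, adding only the verification of the hypotheses. For the first isomorphism, $\Ef(X_\theta)\cong(\Sfib\cup\{\Id\})\times\prod_{[z]\neq[0]}\Gstr$, I would apply Corollary~\ref{cor-main1}. Its hypotheses are: a unique singular orbit system with finite minimal rank, and $\Ef_{y_0}$ generated by its idempotents. By Propositions~\ref{one-singular-orbit} and~\ref{prop-sf} the system $(X_\theta,\sigma)\to(\Z_\ell,(+1))$ is a unique singular orbit system with $r_\pi=s<\infty$, with singular point $y_0=0$. By Theorem~\ref{thm-RMG}, $\Ef_0(X_\theta)\setminus\{\Id\}=\M^{fib}_0(X_\theta)=\Sfib\cong M[\Gstr;\rset,\{\pm\};A]$, so $\Ef_{y_0}(X_\theta)=\Sfib\cup\{\Id\}$; since $\Sfib$ is generated by its idempotents by hypothesis, Corollary~\ref{cor-main1} applies verbatim and gives $\Ef=\Cf\cong\Ef_{y_0}\times\prod_{[y_0]\neq[y]\in\Z_\ell/\Z}G_\pi$. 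It remains to identify $G_\pi$ with $\Gstr$: this was discussed right after the proof of Theorem~\ref{thm-RMG}, where $G_\pi=e\Ef_0e$ is identified with $\Gstr$ via $a\cdot b\mapsto b$ once a minimal idempotent $e$ is fixed.

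For the matrix-semigroup reformulation of $\Ef(X_\theta)\setminus\{\Id\}$, I would argue that the direct product $(\Sfib\cup\{\Id\})\times\prod_{[z]\neq[0]}\Gstr$ with the identity removed is exactly $\Sfib\times\prod_{[z]\neq[0]}\Gstr$, because an element is the identity on $X_\theta$ iff it is the identity on the singular fibre \emph{and} on every regular fibre, i.e. iff its $\Sfib\cup\{\Id\}$-component is $\Id$ and all its $\Gstr$-components are $\one$. Then I invoke the second topological isomorphism in \eqref{eq-alter}, $M[G_\pi;I,\Lambda;A]\times\prod_{[y_0]\neq[y]}G_\pi\cong M[G_\pi^{Y/T};I,\Lambda;A]$ via $((i,g,\lambda),f)\mapsto(i,(g,f),\lambda)$, together with $\M^{fib}(X_\theta)=\Ef(X_\theta)\setminus\{\Id\}$ (the restriction to the singular fibre has only the identity outside the kernel, and one needs to note the same holds globally because $\Ef$ is compact and the singular orbit carries all the non-trivial behaviour — this is where one uses that $\Sfib$ is the kernel of $\Ef_0$ and that everything off the singular orbit is a group). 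This gives $\Ef(X_\theta)\setminus\{\Id\}\cong M[\Gstr^{\Z_\ell/\Z};\rset,\{\pm\};A]$, and the identification of the entry $a_{\lambda,g}$ with the function $\tilde f(0)=a_{\lambda,g}$, $\tilde f(z)=\one$ otherwise, is just the coordinatewise description of the little structure group sitting inside $G_\pi^{Y/T}$ as $G_\pi\times\{\one\}^{Y/T\setminus\{[0]\}}$ (the entries of $A$ lie in $G_\pi=eKe$ by Lemma~\ref{lem-lsgroup}).

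For the statement on $E(X_\theta)$, I would first recall from the Corollary after Proposition~\ref{one-singular-orbit} that $E(X_\theta)=\M(X_\theta)\sqcup\Z$, so $E(X_\theta)\setminus\Z=\M(X_\theta)$. Since $\Ef=\Cf$ holds (just established), Corollary~\ref{cor:reproducing M} applies: the structure group of $\M^{fib}(X_\theta)$ is $G_\pi^{Y/T}=\Gstr^{\Z_\ell/\Z}$, the structure group of $\M(X_\theta)$ is $\RSTp=\RSTfp\rtimes Y=\Gstr^{\Z_\ell/\Z}\rtimes\Z_\ell$, and $\M(X_\theta)$ is algebraically isomorphic to $M[\Gstr^{\Z_\ell/\Z}\rtimes\Z_\ell;\rset,\{\pm\};A]$ — the sandwich matrix being the \emph{same} $A$ because by Proposition~\ref{extension-semigroup} the normalised Rees data $I,\Lambda,A$ are completely determined by the kernel $\M^{fib}(X_\theta)$, and $A$ takes values in the little structure group which sits inside $\Gstr\times\{\one\}^{\Z_\ell/\Z\setminus\{[0]\}}\rtimes\{0\}\subset\Gstr^{\Z_\ell/\Z}\rtimes\Z_\ell$.

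The main obstacle I anticipate is bookkeeping around the identity element and the precise meaning of ``algebraic'' versus ``topological'' isomorphism: the first two displays are topological isomorphisms and follow from the compact-semigroup machinery (Theorem~\ref{thm-Rees-str-2}, Corollary~\ref{cor-ff}), but the last one can only be algebraic because, as noted in Section~\ref{semigroup-of-factor} and before Notation~\ref{not:structure groups}, $E(X_\theta)$ is not even left-topological while $\M(X_\theta)$ would have to be so if it were topologically a matrix semigroup; one has to be careful to invoke only Proposition~\ref{extension-semigroup} (purely algebraic) and not Theorem~\ref{thm-Rees-str-2} for this last claim, and to check that regularity of $E(X_\theta)$, needed in Proposition~\ref{extension-semigroup}, follows from complete regularity established in the Corollary after Proposition~\ref{one-singular-orbit}.
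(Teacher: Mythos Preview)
Your proposal is correct and follows essentially the same route as the paper's own proof: invoke Corollary~\ref{cor-main1} for the first display, equation~(\ref{eq-alter}) for the second, and Corollary~\ref{cor:reproducing M} for the third. Your additional verification of hypotheses (Propositions~\ref{one-singular-orbit} and~\ref{prop-sf} for the unique singular orbit structure and finite minimal rank, the identification $G_\pi\cong G_\theta$ made after Theorem~\ref{thm-RMG}, and complete regularity for the regularity hypothesis of Proposition~\ref{extension-semigroup}) is accurate and more explicit than what the paper writes down.

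One slip to clean up: you assert that $(\Sfib\cup\{\Id\})\times\prod_{[z]\neq[0]}\Gstr$ with the single element corresponding to $\Id_{X_\theta}$ removed ``is exactly $\Sfib\times\prod_{[z]\neq[0]}\Gstr$''. Taken literally this is false --- removing one point from a product does not yield a smaller product. The correct argument, which you in fact give immediately afterward, is to identify $\Ef(X_\theta)\setminus\{\Id\}$ directly with $\M^{fib}(X_\theta)$ (using $E(X_\theta)=\M(X_\theta)\sqcup\Z$, $\M^{fib}(X_\theta)=\Ef(X_\theta)\cap\M(X_\theta)$, and $\Ef(X_\theta)\cap\Z=\{\Id\}$) and then apply~(\ref{eq-alter}). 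This bypasses the first display entirely, and is exactly what the paper does: it does not try to deduce the second display from the first. Drop the misleading sentence and your argument is complete.
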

\begin{proof}
As $\Ef(X_\theta)_0 =  \Sfib\cup \{\Id\}$ it is generated by its idempotents. 
Furthermore, the minimal rank $r_\pi$ is equal to $|\Aa|$. 
The first expression for $\Ef(X_\theta)$ is thus a direct application of Corollary~\ref{cor-main1}. 
The second expression corresponds to (\ref{eq-alter}). The last statement follows from Corollary~\ref{cor:reproducing M}. 
\end{proof}
While the isomorphism between $E(X_\theta)\backslash \Z$ and $ M[\Gstr^{\Z_\ell/\Z}\rtimes \Z_\ell;I_\theta,\{\pm\};A]$ is not continuous when $\Gstr^{\Z_\ell/\Z}\rtimes \Z_\ell$ is equipped with the product topology, Theorem \ref{thm-main2} makes clear where the non-tameness comes from: whereas the restrictions of $E^{fib}(X_\theta)$ to individual fibres are finite semigroups, it is the fact that the structure group of its kernel
consists of {\em all possible} functions over the orbit space which implies that 
the cardinality of $ E^{fib}(X_\theta)$, and hence of $E(X_\theta)$, is larger than that of the continuum.

\subsection{Height}\label{generalised-height}
The assumption of the last theorem, namely that $\Sfib$ is generated by its idempotents, is not always satisfied. To treat the other cases we introduce a new notion of height.

Let $\rset$ be the R-set of a bijective substitution $\theta$ and $\Gamma_\theta$ be the group generated by $\{gh^{-1}:g,h\in I_\theta\}$. We have seen in Lemma~\ref{lem-JSG} that $\Gamma_\theta$ is the Rees structure group of the subsemigroup generated by the idempotents of $\Sfib$. 
If $\rset$ contains $\id$ then  $\Gamma_\theta$ contains $\rset$ and therefore coincides with $\Gstr$, and consequently $\Sfib$ is generated by its idempotents. However, $\Gamma_\theta$ need not even be a normal subgroup of $\Gstr$; see Section~\ref{ex462} for an example.

\begin{lem} 
Let $\tilde\Gamma_\theta$ be a normal subgroup of  $\Gstr$ which contains the little structure group $\Gamma_\theta$. 
Denote by $\phi:\Gstr\to \Gstr/\tilde\Gamma_\theta$ the canonical projection. Then 
$\phi(g_1) = \phi(g_2)$ for any two elements of $\rset$.  In particular, $\Gstr/\tilde\Gamma_\theta$ is a finite cyclic group.
\end{lem}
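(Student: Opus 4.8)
The plan is to show that any two elements $g_1, g_2 \in \rset$ have the same image under $\phi$, which amounts to showing $g_1 g_2^{-1} \in \tilde\Gamma_\theta$. Since $\tilde\Gamma_\theta \supseteq \Gamma_\theta$ and $\Gamma_\theta$ is by definition (Lemma~\ref{lem-JSG}, in the form used here) the group generated by all products $g h^{-1}$ with $g, h \in \rset$, this is immediate: $g_1 g_2^{-1}$ is exactly such a generator, hence lies in $\Gamma_\theta \subseteq \tilde\Gamma_\theta$. So the first assertion requires essentially no work beyond unwinding the definitions.

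For the ``in particular'' statement, I would argue as follows. By Lemma~\ref{I-is-everything}, $\Gstr$ is generated by $\rset$, so $\Gstr/\tilde\Gamma_\theta$ is generated by the set $\phi(\rset)$. By the first part, $\phi(\rset)$ is a single element, call it $c = \phi(g)$ for any $g \in \rset$. A group generated by a single element is cyclic, so $\Gstr/\tilde\Gamma_\theta = \langle c \rangle$ is cyclic. Finiteness is inherited from $\Gstr$, which is finite because it is a group of bijections of the finite alphabet $\Aa$ (equivalently, a subgroup of the finite group of permutations of $e\pi^{-1}(0)$, as noted after the proof of Theorem~\ref{thm-RMG}); a quotient of a finite group is finite.

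There is no real obstacle here; the only point requiring a moment's care is making sure the normality hypothesis on $\tilde\Gamma_\theta$ is invoked correctly: normality is what guarantees that $\Gstr/\tilde\Gamma_\theta$ is a group at all (so that ``canonical projection'' and ``quotient group'' make sense), and once that is granted the cyclicity follows purely from $\rset$ generating $\Gstr$ together with $\phi$ collapsing $\rset$ to a point. One could phrase the whole argument without ever choosing a basepoint in $\rset$, but picking one $g_0 \in \rset$ and writing every $g \in \rset$ as $g = (g g_0^{-1}) g_0$ with $g g_0^{-1} \in \Gamma_\theta$ makes the computation $\phi(g) = \phi(g_0)$ completely transparent.
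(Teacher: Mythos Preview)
Your proof is correct and follows essentially the same approach as the paper's: both observe that $g_1 g_2^{-1} \in \Gamma_\theta \subseteq \tilde\Gamma_\theta = \ker\phi$ to get $\phi(g_1)=\phi(g_2)$, and then use that $\rset$ generates $\Gstr$ to conclude the quotient is cyclic. Your version is slightly more explicit about finiteness and the role of normality, but the argument is the same.
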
 
\begin{proof}
If $\phi(g_1)\neq \phi(g_2)$ for two elements of $\rset$ then $\phi(g_1 g_2^{-1})\neq 1$. But $g_1g_2^{-1}\in\Gamma_\theta\subseteq \ker\phi$. Since $\rset$ generates $\Gstr$,
the class of $\rset$ is a generator of $\Gstr/\tilde\Gamma_\theta$.
\end{proof}
We denote the order of $\Gstr/\tilde\Gamma_\theta$ by $\tilde h$.
Note that $\tilde h$ must devide any $\nu>0$ for which $(\theta^n)_\nu \in\tilde\Gamma_\theta$ (here $n$ is large enough such that $\nu\leq \ell^n$). Indeed, $(\theta^n)_\nu {(\theta^n)_0}^{-1}$ is a product of $\nu$ elements of $\rset$ and so its image in $\Gstr/\tilde\Gamma_\theta$ is $\nu$ times the generator of $\Gstr/\tilde\Gamma_\theta$.  In particular, $\tilde h$ devides $\ell-1$.

\subsubsection{Classical height}
To better understand the meaning of the quantity $\tilde h$ we recall the notion of height which occurs in the context of constant length substitutions.
\newcommand{\hc}{h_{cl}}

The equicontinuous factor $\Zl$ which we have described above for a primitive aperiodic substitution $\theta$ of constant length is not always the {\em maximal} equicontinuous factor, i.e.\ there might be an intermediate equicontinuous factor $(\Xmax, +g)$ between $(X_\theta,\sigma)$ and 
$(\Z_\ell, +1)$. The relevant quantity which governs this is the height of the substitution. In view of what follows we refer to it here as its {\em classical height}. 
This is a natural number arising as the height of a tower
comprising a  Kakutani-Rohlin model for the dynamical system and shows  up also in the spectral analysis. It can be computed as follows. Consider a 
one-sided fixed point $u=u_0u_1\cdots $ of $\theta$. 
The (classical) height $\hc$ of $\theta$   is defined as 
\begin{equation} \label{height}
\hc(\theta):= \max \{n\geq 1: \gcd(n,\ell)=1, n | \gcd\{k: u_k=u_0 \} \}\, .
\end{equation}
For primitive substitutions
it turns out to be independent of the choice of $u$. The following result was shown by Dekking \cite{dekking}, with partial 
results by Kamae \cite{kamae} and Martin \cite{martin}. 
\begin{thm} \label{thm:dekking}
Let $\theta$  be a primitive aperiodic substitution of  length $\ell$ and classical height $\hc$. Then 
the maximal equicontinuous factor of $(X_\theta, \sigma)$ is 
$(\Z_{\ell} \times \Z/\hc\Z , \add\times\add)$.
\end{thm}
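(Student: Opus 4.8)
The plan is to proceed in two stages: first exhibit $(\Zl\times\Z/\hc\Z,\add\times\add)$ as an equicontinuous factor of $(X_\theta,\sigma)$, and then argue that it is the maximal one. The first stage is constructive. The first coordinate of the factor map is already available: it is the odometer map $\pi:X_\theta\to\Zl$ of \eqref{ef-map}. For the second coordinate I would fix a one-sided fixed point $u=u_0u_1\cdots$ of $\theta$, write $a_0=u_0$ and $H=\gcd\{k\ge 1:u_k=a_0\}$, so that $\hc$ is by definition the largest divisor of $H$ coprime to $\ell$. The combinatorial heart of this stage is the claim that there is a well-defined colouring $c:\Aa\to\Z/\hc\Z$ for which every occurrence of a given letter $a$ in $u$ sits at a position congruent to $c(a)$ modulo $\hc$; in particular $c(u_k)\equiv k\pmod{\hc}$. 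This is a genuine lemma, and it is where the extremal definition of $\hc$ and the primitivity of $\theta$ are used: one tracks how the positions of letters transform under a single application of $\theta$ and checks that the only obstruction to consistency modulo an $\ell$-coprime integer is divisibility by $\hc$. Granting the colouring, the fixed-point equation $\theta(u)=u$ gives at once $c(\theta_i(a))\equiv \ell\,c(a)+i\pmod{\hc}$ and $c(u_{k+1})\equiv c(u_k)+1$, hence, since every allowed two-letter word of $\theta$ occurs in $u$ (by primitivity/minimality), $c(b)\equiv c(a)+1$ whenever $ab$ is allowed.

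With $c$ in hand I would set $p:X_\theta\to\Zl\times\Z/\hc\Z$, $p(x)=(\pi(x),c(x_0))$. The second coordinate depends only on $x_0$, hence is continuous, and the congruences above give $p\circ\sigma=(\add\times\add)\circ p$. The image of $p$ is a nonempty closed $\sigma$-invariant subset; since $\gcd(\ell,\hc)=1$, the image of $\Z$ in $\Z/\ell^n\Z\times\Z/\hc\Z$ is everything for each $n$, so the subgroup generated by $(1,1)$ is dense in the inverse limit $\Zl\times\Z/\hc\Z$, the translation action is minimal, and $p$ is onto. Thus $(\Zl\times\Z/\hc\Z,\add\times\add)$ is an equicontinuous factor of $(X_\theta,\sigma)$.

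For the maximality stage, let $\pmax:X_\theta\to\Xmax$ be the maximal equicontinuous factor; by its universal property $p$ descends to a factor map $q:\Xmax\to\Zl\times\Z/\hc\Z$ with $q\circ\pmax=p$, and it remains to show $q$ is injective. Dualising — for a minimal system the continuous eigenfunctions are exactly the pullbacks of characters of $\Xmax$ — this is equivalent to the assertion that every continuous eigenvalue of $(X_\theta,\sigma)$ already lies in the character group of $\Zl\times\Z/\hc\Z$, i.e.\ is a root of unity of order dividing $\ell^n\hc$ for some $n$. One inclusion follows from the first stage. For the converse I would take a continuous eigenfunction $f:X_\theta\to\SM^1$, $f\circ\sigma=\lambda f$, and analyse its behaviour on the Kakutani--Rohlin towers supplied by the partitions $\mathcal P_n$ of Section~\ref{bijective} — towers of height $\ell^n$ over the clopen base $B^{(n)}=\theta^n(X_\theta)$, refined into columns according to the letter being substituted — using recognizability of the primitive aperiodic substitution. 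Since $f$ is almost constant on the thin tower pieces for large $n$, the eigenvalue relation along the tower, together with the way the base columns recur with the $\Z/\hc\Z$-periodicity detected by $c$, forces $\lambda$ to be a root of unity of order dividing $\ell^n\hc$ for some $n$. This is precisely the spectral analysis of Dekking \cite{dekking} (with earlier partial results of Kamae \cite{kamae} and Martin \cite{martin}), which I would invoke at this point rather than reprove.

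The main obstacle is this maximality stage, and within it the upper bound on the continuous eigenvalues: showing that nothing appears beyond the $\ell$-power roots of unity detected by $\pi$ and the $\hc$-th roots of unity detected by the colouring. This requires recognizability and a delicate analysis of eigenfunctions along the tower structure, and it is exactly the place where $\hc$ must be the \emph{largest} $\ell$-coprime common divisor of the return positions of $a_0$: a proper divisor would still yield a factor, but not the maximal one. A secondary, purely combinatorial difficulty is the well-definedness of the colouring $c$, which again rests on this extremal property of $\hc$ together with primitivity.
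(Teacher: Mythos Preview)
The paper does not prove this theorem: it is stated with the attribution ``The following result was shown by Dekking \cite{dekking}, with partial results by Kamae \cite{kamae} and Martin \cite{martin}'' and no proof is given. So there is no ``paper's own proof'' to compare against; the paper treats this as an imported result.

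Your sketch is a faithful outline of Dekking's original argument, and you yourself acknowledge this when you say you would invoke his spectral analysis for the maximality stage rather than reprove it. The first stage --- the colouring $c:\Aa\to\Z/\hc\Z$ and the resulting factor map $p=(\pi,c(\cdot_0))$ --- is correct, and in fact the paper spells out exactly this colouring in the paragraph immediately following the theorem (the function $a\mapsto i_u(a)\bmod\hc$ and the claim that every occurrence of $a$ in $u$ lies in $i_u(a)+\hc\N$), though as commentary rather than as a proof. Your identification of the maximality step as the genuine difficulty, and of recognizability plus the Kakutani--Rohlin tower analysis as the tools, is accurate; but since you defer to \cite{dekking} at precisely that point, your proposal amounts to the same thing the paper does: cite Dekking.
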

The theorem says that $\frac1\hc$ corresponds to a topological eigenvalue of the dynamical system $(X_\theta,\sigma)$ which does not already occur in the spectrum of  $(\Z_l,+1)$. It moreover implies that the surjection $\tilde\pi$ in (\ref{eq-ext})
factors through $\Z/\hc \Z\times\Z_\ell$,
$$E(X_\theta)\twoheadrightarrow \Z/\hc \Z\times\Z_\ell  \twoheadrightarrow \Z_\ell $$ and therefore leads to a continuous surjective semigroup morphism 
 \begin{equation*} \label{classical grading} E^{fib}(X_\theta)\twoheadrightarrow \Z/\hc \Z  
\end{equation*}
Stated differently, $E^{fib}(X_\theta)$ is a $\Z/\hc \Z$-graded right topological semigroup.

A more detailed analysis yields the following picture.
Let $u$ be any one-sided fixed point of $\theta$.
For $a\in \mathcal A$, let $i_u(a) = \min\{k:u_k=a\}$. We claim that the set $\{n\in\N: u_n=a\}$ of occurences of $a$ in $u$ is contained in $i_u(a)+\hc\N$ where $\hc$ is as in \eqref{height}. For, say that $a$ occurs at indices $i$ and $j$ in $u$. Let $v$ be the one-sided fixed point of $\theta$ that starts with $a$. By minimality there exists $i_0\in \N_0$ such that we see $a$ in $v$ at the indices $i_0+i$ and $i_0+j$, $a=v_{i_0+i}=v_{i_0+j}$.  
Recall that the height $h$ can be defined using any fixed point of $\theta$. Taking $v$ in place of $u$ in Definition \eqref{height} we see that 
all indices at which we see the letter $a$ in $v$ are multiples of $h$. Thus $h$ divides $i-j$, and our claim follows.
In other words, we can partition the alphabet into subsets $\mathcal A_k:=\{a\in \mathcal A: i_u(a)\equiv k\bmod h \}$ and $\sigma(A_k) = A_{k+1}$.
Note also that if $\theta$ is simplified then $\{k: u_k=u_0 \}$ contains $\ell-1$ and hence the height must divide $\ell-1$. {In Lemma \ref{general-height-vs-height}  and Theorem \ref{thm-grading}, we extend this partition to $\Gstr$ and $E(X_\theta)$.}

\begin{lem}\label{general-height-vs-height}
Let $\theta$ be a primitive aperiodic bijective substitution with structure group $\Gstr$. If $\theta$ has classical height $h_{cl}$, then there is a surjective group homomorphism $\phi_{cl}:\Gstr\to \Z/h_{cl}\Z$ such that  for all $g\in \rset$ 
we have $\phi_{cl}(g) = 1$.
\end{lem}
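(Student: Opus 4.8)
The plan is to build $\phi_{cl}$ by pulling back the height-grading of the alphabet through the identification of $\Gstr$ with permutations of a singular fibre, exactly as indicated in the paragraph preceding the lemma. Recall from Section~\ref{structural-section} that, after fixing a minimal idempotent $e\in \Ef_0(X_\theta,\Z^+)$, we may view $\Gstr$ as the group of permutations of $e\pi^{-1}(0)$, and under the identification $a\cdot b\mapsto b$ of $e\pi^{-1}(0)$ with $\Aa$ we recover the structure group as a subgroup of the symmetric group on $\Aa$. Now fix a one-sided fixed point $u$ of $\theta$ and use it to partition $\Aa=\bigsqcup_{k\in\Z/h_{cl}\Z}\Aa_k$, where $\Aa_k=\{a\in\Aa: i_u(a)\equiv k\bmod h_{cl}\}$, as described above. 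The first step is to show that every element of $\Gstr$, viewed as a permutation of $\Aa$, is \emph{graded}, i.e.\ there is a single $m\in\Z/h_{cl}\Z$ with $g(\Aa_k)=\Aa_{k+m}$ for all $k$; then $\phi_{cl}(g):=m$ is a well-defined group homomorphism $\Gstr\to\Z/h_{cl}\Z$.

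To prove that each generator $(\theta^n)_i$ of $\Gstr$ is graded, the key observation is a hierarchical refinement of the height partition: I would first check that for a fixed point $x=a\cdot b\in\pi^{-1}(0)$, the letters appearing at positions in an interval $[\,j\ell^n,\,(j+1)\ell^n-1\,]$ form the word $\theta^n(c)$ for some $c\in\Aa$, and that the height $h_{cl}$ (which divides $\ell-1$, hence is coprime to $\ell$) controls the congruence class mod $h_{cl}$ of the index at which a given letter first appears in $\theta^n(c)$, uniformly in $n$, via the same argument that establishes $\sigma(\Aa_k)=\Aa_{k+1}$: the position of a letter in $u$ determines its class mod $h_{cl}$. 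Concretely, for any allowed letter $c$ and any $a\in\Aa$, the positions $k$ with $\theta^n(c)_k=a$ all lie in one residue class mod $h_{cl}$, and this class equals $i_u(a)-i_u(c)\bmod h_{cl}$ up to a shift depending only on $c$. Applying this with $c$ and $\theta^n(c)$ replaced appropriately, the map $a\mapsto (\theta^n)_i(a)$ shifts $i_u$-classes by the constant $i_u$-class of the column index $i$, which is the same for every input letter — this is precisely gradedness, with $m$ equal to the class of $i$ mod $h_{cl}$. In particular, for $i=1$ (and more generally for the generators $g=\theta_i\theta_{i-1}^{-1}$ of $\rset$), the shift is exactly $1$, so $\phi_{cl}(g)=1$ for all $g\in\rset$; surjectivity of $\phi_{cl}$ follows since $\rset$ generates $\Gstr$.

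The main obstacle is making the bookkeeping with the identification $\Gstr\cong\Aut$-of-a-fibre precise enough that gradedness of the abstract column bijections $(\theta^n)_i$ really transfers to gradedness of the permutations they induce on $e\pi^{-1}(0)$; one has to be careful that the choice of minimal idempotent $e$ (equivalently, the normalisation $g_0\in\rset$) only changes the identification by a \emph{graded} permutation, so that the homomorphism $\phi_{cl}$ is independent of that choice and genuinely kills the little structure group $\Gamma_\theta$ (each $gh^{-1}$ with $g,h\in\rset$ has $\phi_{cl}=0$, consistent with the quotient being cyclic of order dividing $h_{cl}$). Once that is in place, $\phi_{cl}$ being a group homomorphism is automatic, since composition of graded permutations adds the shifts, and the statement follows. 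An alternative, cleaner route for the existence of \emph{some} surjection $\Gstr\to\Z/h_{cl}\Z$ is to invoke Theorem~\ref{thm:dekking}: the maximal equicontinuous factor is $\Z_\ell\times\Z/h_{cl}\Z$, so $\tilde\pi$ factors through $\Z/h_{cl}\Z\times\Z_\ell$, giving a continuous surjection $E^{fib}(X_\theta)\twoheadrightarrow\Z/h_{cl}\Z$; restricting to the kernel of $E^{fib}$ and then to $e\M^{fib}_0(X_\theta)e\cong\Gstr$ yields $\phi_{cl}$, and tracking a single generator $f_{[\theta_1\cdot\theta_1;+]}$-type element through the grading shows it maps to $1$. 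I would present the fixed-point/partition argument as the main proof and mention the Dekking route as a remark.
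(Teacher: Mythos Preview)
Your core idea is exactly what the paper does: show that each column bijection $\theta_j$ is \emph{graded} with respect to the height partition $\Aa=\bigsqcup_k\Aa_k$, and define $\phi_{cl}$ as the shift. But you have wrapped it in an unnecessary detour. The structure group $\Gstr$ is already, by Definition~\ref{def:structure group substitution}, a subgroup of $S_\Aa$ generated by the column bijections $(\theta^n)_i$; there is no need to pass through the identification $e\pi^{-1}(0)\cong\Aa$ via a minimal idempotent, and your ``main obstacle'' simply does not arise.

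The paper's proof is a direct five-line computation. Since $\theta_j(u_k)=u_{\ell k+j}$, one has $i_u(\theta_j(a))-i_u(a)\equiv (\ell-1)i_u(a)+j\equiv j\pmod{h_{cl}}$ (using $h_{cl}\mid \ell-1$), so $\phi_{cl}(\theta_j):=j\bmod h_{cl}$ is well-defined independently of $a$. Multiplicativity follows from $i_u(\theta_{j'}\theta_j(a))\equiv j'+\ell(j+\ell i_u(a))\equiv j'+j+i_u(a)$, and then $\phi_{cl}(\theta_j\theta_{j-1}^{-1})=1$ is immediate. Your hierarchical bookkeeping with $\theta^n$ and intervals $[j\ell^n,(j+1)\ell^n-1]$ is correct in spirit but overkill: the single relation $\theta_j(u_k)=u_{\ell k+j}$ already does all the work. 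The Dekking route you sketch as an alternative is genuinely different and would also work, though extracting that the generator maps to $1$ still needs essentially the same column-index observation.
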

\begin{proof} We may assume that $\theta$ is simplified and hence the height $h_{cl}$ divides $\ell-1$.
Fix an arbitrary one-sided fixed point $u=u_0u_1\cdots$ of $\theta$. 
For $a\in \mathcal A$, let $i_u(a) = \min\{k:u_k=a\}$; we have seen that  $\{n\in\N: u_n=a\}$ is contained in $i_u(a)+\hc\N$.
We now understand $k$ and $i_u(a)$ as an index modulo $\hc$. As
$\theta_j(u_k) = u_{\ell k+j}$ we see that $i_u(\theta_j(a)) - i_u(a) \equiv (\ell-1) i_u(a) + j $. Since the height must divide $\ell-1$ we find  $(\ell-1) i_u(a) + j  \equiv j$. Hence $i_u(\theta_j(a)) - i_u(a)$ does not depend on $a$ and so 
$\phi_{cl}(\theta_j) := i_u(\theta_j(a)) - i_u(a)$ is a well defined map from $\{\theta_j,j\geq 0\}$ to $\Z/\hc\Z$. We compute $i_u(\theta_{j'}\theta_j(a)) \equiv j'+\ell(j+\ell i_u(a)) \equiv j'+j+i_u(a)$ and thus see that $\phi_{cl}$ is multiplicative. It hence  
induces a surjective group homomorphism $\phi_{cl} :\Gstr \to \Z/h_{cl}\Z$. Clearly $\phi_{cl}(\theta_j\theta_{j-1}^{-1}) = 1$.
\end{proof}

\subsubsection{Generalised height}\label{sec:generalised height}
To discuss generalised height we introduce the maps $\evo$ and $\evo^z$, where
$$\evo:X_\theta\to\Aa,\quad \evo(x) := x_0$$
reads the letter on $0$ of $x$ and 
$\evo^z:= \evo|_{e\pi^{-1}(z)}$ is the restriction of $\evo$ to the subset $e\pi^{-1}(z)$ of the fibre over $z$. Here again, $e$ is a chosen minimal idempotent, but we take it from $E(X_\theta,\Z^+)$. Clearly, if $z$ is regular then $e\pi^{-1}(z)=\pi^{-1}(z)$. Note that if $z\in \Z^+$ then $\evo(x) = \evo(e(x))$ because $e$ does not affect the right infinite part of a fixed point. $\evo^z$ 
is a bijection which we will use below to identify $e\pi^{-1}(z)$ with $\Aa$. As we already mentioned, 
conjugation with $\evo^0$ identifies $G_\pi$ with $G_\theta$ and $\Gamma_\pi$ with $\Gamma_\theta$. 

\begin{lem} \label{lem-fz}
Let $f\in E(X_\theta,\Z^+)$ and $z\in\Z_\ell\backslash \Z^-$. 
There exists $f_z\in \Gstr$ such that for all $x\in \pi^{-1}(z)$ we have $\evo(f(x)) = f_z(x_0)$.
\end{lem}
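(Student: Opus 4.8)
The statement concerns $f \in E(X_\theta, \Z^+)$ and $z \in \Z_\ell \setminus \Z^-$; the conclusion is that the action of $f$ on the letter at position $0$, read across the fibre $\pi^{-1}(z)$, is implemented by a single element $f_z \in G_\theta$. The plan is to approximate $f$ on $\pi^{-1}(z)$ by forward shifts and to track what a shift does to the letter on $0$ in terms of the substitution data.

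\emph{First}, I would reduce to forward shifts. Since $f \in E(X_\theta,\Z^+)$ it is a pointwise limit of a net $(\sigma^{m_\nu})_\nu$ with $m_\nu \geq 0$. The fibre $\pi^{-1}(z)$ is a closed (hence compact) subset of $X_\theta$, and the cylinder sets specifying the letter on $0$ are clopen. Because the minimal rank is finite (Proposition~\ref{one-singular-orbit}), when $z$ is regular $\pi^{-1}(z)$ is a finite set; in general we only need to control $e\pi^{-1}(z)$ which is also finite, but actually the cleanest route is: for each $x$ in a finite set of representatives, $\sigma^{m_\nu}(x)_{[-1,0]}$ is eventually constant, equal to $f(x)_{[-1,0]}$, so after passing to a subnet we may assume there is a single $m = m_\nu \geq 0$ with $\sigma^m(x)_{[-1,0]} = f(x)_{[-1,0]}$ for all relevant $x$ simultaneously. (Here one uses that for $x \in \pi^{-1}(z)$ with $z \notin \Z^-$ we have $\pi(x) + m \in \Z^+ \cup (\Z_\ell \setminus \Z)$ is never the singular point unless... — in fact one does not even need this refinement; it suffices that $\sigma^m$ agrees with $f$ on these finitely many letters.)

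\emph{Second}, I would read off the effect of $\sigma^m$ on position $0$ in substitution terms. Write $x \in \pi^{-1}(z)$ decomposed into level-$n$ blocks $\theta^n(a_i)$, where $n$ is chosen large enough that $\ell^n > m$ and large enough that $\pi_n(x)$ determines the relevant coordinate; the block covering index $0$ is $\theta^n(a_0)$ with $a_0$ determined by $x$, and it starts at index $-\pi_n(x)$. Then $\sigma^m(x)_0 = x_m$, and $x_m = (\theta^n)_{j}(a_0)$ where $j = m + \pi_n(x) \bmod \ell^n$ — provided $0 \le j < \ell^n$, which holds for $\ell^n$ large; if $j$ spills into the next block one instead gets $(\theta^n)_{j-\ell^n}(a_1)$, but again taking $n$ large enough relative to $m$ and to the position $\pi_n(z)$ one arranges to stay inside a single block. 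Crucially, $a_0 = \evo^z$-image of $x$ is exactly $x_0$ read through the identification, so $\evo(\sigma^m(x)) = (\theta^n)_j(x_0)$ with $(\theta^n)_j \in G_\theta$ by definition of the structure group. Setting $f_z := (\theta^n)_j$ gives $\evo(f(x)) = \evo(\sigma^m(x)) = f_z(x_0)$ for all $x \in \pi^{-1}(z)$.

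\emph{The main obstacle} is the bookkeeping around block boundaries: one must be careful that $m + \pi_n(x) \bmod \ell^n$ lands strictly inside a single level-$n$ block for \emph{all} $x \in \pi^{-1}(z)$ at once, and that $\pi_n(x)$ depends only on $z$ (it does, since $\pi_n$ factors through $\pi$). The resolution is uniform: $\pi_n(x) = \pi_n(z)$ is the same for all $x \in \pi^{-1}(z)$, and choosing $n$ with $\ell^n > m + \pi_n(z)$ (possible since $\pi_n(z) < \ell^n$ always and we may also pass to a larger power absorbing $m$) forces $j = m + \pi_n(z) < \ell^n$, keeping us inside the block covering $0$, which is $\theta^n(a_0)$ for the letter $a_0 = x_0$. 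One should also check consistency: the answer $f_z$ must not depend on the choices of $m$ and $n$, but this is automatic because $\evo(f(x)) = f_z(x_0)$ must hold as a function of $x_0 \in \Aa$, and two group elements of $G_\theta$ inducing the same permutation of $\Aa$ — here I use the standing identification of $G_\theta$ with a group of permutations of $\Aa$ via $\evo^0$, so in fact $f_z$ is literally well-defined as that permutation. The covariance relation $\evo(\sigma^t x) = \evo(e(\sigma^t x))$ for $t \in \Z^+$ noted before the lemma is what lets us restrict attention to $E(X_\theta,\Z^+)$ without losing the right-infinite tail information, and it is the reason the hypothesis $z \notin \Z^-$ enters.
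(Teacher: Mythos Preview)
Your overall strategy is sound and in fact more streamlined than the paper's, but there is a concrete computational error in your second step. You write ``$a_0 = \evo^z$-image of $x$ is exactly $x_0$'', and conclude $f_z = (\theta^n)_j$. This is false: if the level-$n$ block covering index $0$ is $\theta^n(a_0)$ and it starts at index $-\pi_n(z)$, then
\[
x_0 \;=\; (\theta^n)_{\pi_n(z)}(a_0),\qquad\text{i.e.}\qquad a_0 \;=\; (\theta^n)_{\pi_n(z)}^{-1}(x_0).
\]
Hence, with $j = m + \pi_n(z)$,
\[
\evo(\sigma^m(x)) \;=\; x_m \;=\; (\theta^n)_j(a_0) \;=\; (\theta^n)_{m+\pi_n(z)}\,(\theta^n)_{\pi_n(z)}^{-1}(x_0),
\]
so the correct element is $f_z = (\theta^n)_{m+\pi_n(z)}\,(\theta^n)_{\pi_n(z)}^{-1}\in G_\theta$, not $(\theta^n)_j$. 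This is exactly the formula the paper obtains in its first step for $z\in\Z^+$. With this correction your argument goes through; the lemma's conclusion survives because the corrected $f_z$ is still in $G_\theta$.

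A second point you should make explicit: the step ``choose $n$ with $\ell^n > m + \pi_n(z)$'' is where the hypothesis $z\notin\Z^-$ is genuinely used. The quantity $\ell^n - 1 - \pi_n(z)$ is nondecreasing in $n$ and jumps by at least $\ell^{n}$ whenever the $\ell$-adic digit $y_{-(n+1)}$ of $z$ is $\neq \ell-1$; since $z\notin\Z^-$ this happens infinitely often, so $\ell^n - 1 - \pi_n(z)\to\infty$ and your choice of $n$ is possible. Your parenthetical ``since $\pi_n(z) < \ell^n$ always'' is not by itself enough.

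Once these two points are fixed, your argument is correct and more direct than the paper's: the paper first treats $z\in\Z^+$ and $f=\sigma^m$ explicitly, then reduces regular $z$ to this case by writing $\pi^{-1}(z) = h(\pi^{-1}(0))$ for some $h\in E(X_\theta,\Z^+)$ approximated by forward shifts, and only then passes to general $f$. You collapse the first two steps into a single block-decomposition computation valid for all $z\notin\Z^-$, which avoids the auxiliary limit in $h$.
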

\begin{proof}
We first show the result for $z=n\in \Z^+$ and $f=\sigma^m$, $m\geq 0$. Then $x = \sigma^n(a.b)$ for some fixed point $a.b$ and $n\geq 0$. It follows that $\sigma^m(x) = (\theta^{k})_{n+m} (\theta^{k})_{n}^{-1}(x_0)$ for $x\in \pi^{-1}(z)$ and all $k$ with 
$\ell^k>n+m$. Thus for $z=n$ we can take ${\sigma^m}_z = (\theta^{k})_{n+m} (\theta^{k})_{n}^{-1}$. 

Next suppose $z$ is regular and $f=\sigma^m$.
Since $(X,\sigma)$ is forward minimal there is $h\in E(X_\theta,\Z^+)$ with $\tilde\pi(h)=z$, where
$h$ is the limit of some generalised  sequence $\sigma^{n_\nu}$ and $\pi^{-1}(z) = h(\pi^{-1}(0))$. 
Let $x\in \pi^{-1}(z)$, i.e.\ $x=h(a.b)$ for some fixed point $a.b$. By continuity of $\sigma$ and $\evo$ we have
$$\evo\circ \sigma^m\circ h (a.b) = \lim_{\nu} \evo\circ \sigma^{m+n_\nu}(a.b)$$ and since  $\pi^{-1}(0)$ is finite there exists $\nu_0$ such that for all $\nu\geq \nu_0$ and all $a.b\in\pi^{-1}(z)$, 
$\lim_{\nu'} \evo\circ \sigma^{m+n_{\nu'}}(a.b) =  \evo\circ \sigma^{m+n_\nu}(a.b)$. 
Hence
$${\sigma^m}_z =  {\sigma^{m}}_{n_\nu}$$
once $\nu\geq \nu_0$.

Now let $f\in E(X_\theta,\Z^+)$ and $z\in\Z_\ell\backslash \Z^-$. $f$ is the limit of some generalised  sequence $\sigma^{n_\mu}$. We have
$$\evo\circ f (x) = \lim_{\mu} \evo\circ \sigma^{n_\mu}(x)$$
As $\pi^{-1}(z)$ is finite there exists $\mu_0$ such that for all $\mu\geq \mu_0$ and all $x\in \pi^{-1}(z)$, 
$\lim_{\mu'} \evo\circ \sigma^{n_{\mu'}}(x) =  \evo\circ \sigma^{n_\mu}(x)$. 
Hence $f_z = {\sigma^{n_\mu}}_z$ once $\mu\geq \mu_0$.
\end{proof}

\begin{thm}\label{thm-grading} Let $\tilde\Gamma_\theta$ be a normal subgroup of  $\Gstr$ which contains the little structure group $\Gamma_\theta$. 
There exists a continuous surjective semigroup morphism
$$\eta: E(X_\theta,\Z^+)\to \Gstr/\tilde\Gamma_\theta\cong  \Z/\tilde h\Z$$
such $\eta(\sigma f) = \eta(f)+1$. In other words, $\frac1{\tilde h}$ is a topological eigenvalue of the dynamical system $(E(X_\theta,\Z^+),\lambda_\sigma)$.
\end{thm}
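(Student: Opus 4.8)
The plan is to construct $\eta$ by gluing together the maps $f\mapsto f_z$ from Lemma~\ref{lem-fz}, composed with the projection $\phi:\Gstr\to\Gstr/\tilde\Gamma_\theta$. First I would fix a minimal idempotent $e\in E(X_\theta,\Z^+)$, and for $f\in E(X_\theta,\Z^+)$ and $z\in\Z_\ell\setminus\Z^-$ let $f_z\in\Gstr$ be the element supplied by Lemma~\ref{lem-fz}, so that $\evo(f(x))=f_z(x_0)$ for all $x\in\pi^{-1}(z)$. The key observation is that $f_z$ is well-defined only up to the ambiguity inherent in choosing the power $(\theta^k)_\nu$ realising a given shift amount, and changing $\nu$ by a multiple of $\ell^n$ changes the relevant permutation by an element of $\tilde\Gamma_\theta$ (since $\tilde h\mid \ell-1$ and $\tilde h$ divides any $\nu$ with $(\theta^n)_\nu\in\tilde\Gamma_\theta$, as noted right after the definition of $\tilde h$). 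Hence $\phi(f_z)\in\Gstr/\tilde\Gamma_\theta$ is canonically attached to $f$; I would then check that $\phi(f_z)$ does not depend on $z$, using covariance $\tilde f(\delta^t(y))=\sigma^t\tilde f(y)\sigma^{-t}$ together with the fact that $\phi(\sigma^t{}_w)$ only adds the class of $t$ (a constant independent of $w$) — more precisely, for $z,z'$ differing by $n\in\Z^+$ one has $f_{z'}=(\sigma^n)_z\, f_z$ modulo $\tilde\Gamma_\theta$ up to conjugation, and conjugation is trivial in the cyclic quotient. Define $\eta(f):=\phi(f_z)$ for any (hence every) such $z$.

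Next I would verify the three required properties. \emph{Multiplicativity:} for $f,g\in E(X_\theta,\Z^+)$ and $x\in\pi^{-1}(z)$, writing $w=\tilde\pi(g)+z$, we have $\evo(fg(x))=f_w(g_z(x_0))$ — this uses that $g(x)\in\pi^{-1}(w)$ and that $\evo\circ f$ on $\pi^{-1}(w)$ is computed by $f_w$ — so $(fg)_z=f_w g_z$, and applying $\phi$ gives $\eta(fg)=\eta(f)+\eta(g)$ since the quotient is abelian and the $w$-dependence of $\phi(f_w)$ has already been shown to vanish. \emph{The shift relation:} $(\sigma f)_z=(\sigma)_w f_z$ with $w=\tilde\pi(f)+z$, and $\phi((\sigma)_w)$ is the generator $1$ of $\Gstr/\tilde\Gamma_\theta$ by the computation that $(\theta^n)_{\nu+1}(\theta^n)_\nu^{-1}\in\rset$ maps to $1$ — hence $\eta(\sigma f)=\eta(f)+1$. \emph{Surjectivity:} already $\eta(\sigma)=1$ generates. \emph{Continuity:} since $\Gstr/\tilde\Gamma_\theta$ is finite (discrete), it suffices to show $\eta^{-1}(\bar g)$ is clopen for each class; but on a fixed finite fibre $\pi^{-1}(z)$ the value $f_z$ is determined by the finitely many values $\evo(f(x))$, $x\in\pi^{-1}(z)$, each of which is locally constant in $f$ for the topology of pointwise convergence, so $\eta$ is locally constant, hence continuous.

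The main obstacle I expect is the well-definedness of $\eta(f)$, i.e.\ showing both that $\phi(f_z)$ is independent of the choices hidden inside Lemma~\ref{lem-fz} (which power $k$, which $\nu$) and that it is independent of $z$. The first part rests on the divisibility remark: two realisations of the same ``displacement'' as $(\theta^n)_\nu$ and $(\theta^{n'})_{\nu'}$ have $\nu\equiv\nu'\bmod\tilde h$, so their quotient lands in $\tilde\Gamma_\theta$; this needs to be stated carefully because $f$ for general $f\in E(X_\theta,\Z^+)$ is only a limit of shifts, and one must pass to the eventually-constant regime guaranteed by finiteness of the fibre (exactly as in the proof of Lemma~\ref{lem-fz}). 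The $z$-independence is then a bookkeeping exercise in covariance, made painless by working in the abelian cyclic quotient where conjugation disappears. Once well-definedness is secured, the remaining verifications are short formal computations of the kind carried out above.
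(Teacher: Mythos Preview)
Your proposal is correct and follows essentially the same approach as the paper: define $\eta(f)$ as the class $f_z\tilde\Gamma_\theta$ using Lemma~\ref{lem-fz}, argue that this class is independent of $z$ (because $f_z$ is always a product of the same number of elements of $\rset$, all of which project to the common generator), and prove continuity by observing that $f_z$ is determined by the finitely many values $\evo(f(x))$, $x\in\pi^{-1}(z)$, hence locally constant. Two minor remarks: the minimal idempotent $e$ you fix at the outset plays no role and can be dropped, and your worry about ambiguity ``inside'' Lemma~\ref{lem-fz} is unnecessary since the equation $\evo(f(x))=f_z(x_0)$ pins down $f_z$ uniquely (the map $x\mapsto x_0$ from $\pi^{-1}(z)$ onto $\Aa$ is surjective); also, the paper obtains multiplicativity from continuity and $\eta(\sigma f)=\eta(f)+1$ rather than via your direct cocycle computation $(fg)_z=f_w g_z$, but both routes are fine.
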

\begin{proof}
By Lemma~\ref{lem-fz}, given $z\in\Z_{\ell}\backslash\Z^-$ we can assign to any $f\in E(X_\theta,\Z^+)$ a map $f_z\in I_\theta$. It depends on $z$ but since  two elements of $\rset$ differ by an element of $\Gamma_\theta$ the class $f_z \tilde\Gamma_\theta$ is independent of $z$.
We can therefore define $$\eta(f) := f_z\tilde\Gamma_\theta.$$ 
We show that $\eta$ is locally constant. 
For that we consider the following neighbourhood of $f$:
given $z\notin\Z^-$ let $$V = \{f'\in E(X_\theta,\Z^+): \evo(f(x)) = \evo(f'(x))\, \forall x\in\pi^{-1}(z) \}.$$ Then, if $f'\in V$ we must have $f_z = f'_{z}$ and hence $\eta(f) =\eta(f')$. Hence $\eta$ is locally constant, so continuous. It is immediate that $\eta(\sigma f) = \eta(f) + 1$.
By continuity we obtain $\eta(f_1 f_2) = \eta(f_1) + \eta(f_2)$.

Finally $e^{2\pi i\eta}$ is a continuous eigenfunction for the eigenvalue $\frac1{\tilde h}$.
\end{proof}
Let us compute the degree of the map $f_{[L\cdot R,+]}$ where $L \cdot R = (\theta^n)_{\nu-1}\cdot (\theta^n)_{\nu}$, $\nu\geq 0$. 
We have seen that $f_{[(\theta^n)_{\nu-1}\cdot (\theta^n)_{\nu},+]}$ is the limit of a subnet of $(\sigma^{\nu \ell^k})_k$. Furthermore $\tilde h$ devides $\ell-1$. Hence $\nu$ and $\nu\ell^k$ agree modulo $\tilde h$ showing that 
$\eta({\sigma^{\nu \ell^k}})$ is independent of $k$ and equal to $\nu$ modulo $\tilde h$. Thus
$$\eta(f_{[(\theta^n)_{\nu-1}\cdot (\theta^n)_{\nu},+]}) =  \nu\: \mbox{modulo}\: \tilde h.$$ 
The above calculation shows also that the degree of an element of 
$\M^{fib}(X_\theta,\Z^+)$ depends only on its restriction to the singular fibre $\pi^{-1}(0)$.
It is hence determined by a grading on $\M^{fib}_0(X_\theta,\Z^+)\cong \rset \times \Gstr\times\{+\}$. The latter can be easily obtained using the bijection (\ref{eq-bij}) between
$G^{(2)}_+$ and $\rset \times \Gstr\times\{+\}$; it is given by 
 $\eta:\rset \times \Gstr\times\{+\} \to \Gstr/\tilde \Gamma_\theta $,
$$ \eta(i,g,+) = g \tilde \Gamma_\theta.$$
In particular, as it should be, idempotents have degree $0$. 
\bigskip

As an aside we remark that the grading $\eta$ can be extended to all of $E(X_\theta)$. We can repeat the above with $E(X_\theta,\Z^-)$ while exchanging $\sigma$ with $\sigma^{-1}$ and $\rset$ with $\rset^{-1}$. One then finds that 
$\eta(f_{[(\theta^n)_{\nu-1}\cdot (\theta^n)_{\nu},-]}) = \nu -1$ modulo $\tilde h$ and 
$\eta(i,g,-) = g \tilde \Gamma_\theta$.
\bigskip

The grading of $E(X_\theta,\Z^+)$ restricts to a grading of the group $\RST = eE(X_\theta,\Z^+) e$.
We denote the elements of degree $k$ by ${\RST}_k := \eta^{-1}(\{k\})\cap \RST$. Thus $ \RST  = \bigsqcup_{k\in \Z/\tilde h\Z} {\RST}_k$ with ${\RST}_k {\RST}_l = {\RST}_{k+l}$ and
${\RST}_k  = \fg^k {\RST}_0$
for any choice of element $\fg\in {\RST}_1$.
Similarly, the grading restricts to $\RSTf$, the structure group of the kernel of $\Ef$.

Recall the definition \eqref{eq:def T} of $\Et$ as the subsemigroup of elements of $\Ef$ which act trivially on all regular fibres. Since we have only one orbit of singular fibres, its restriction $\Et_0$ to the fibre at $0\in\Z_\ell$ is faithful and  $e\Et e$ isomorphic to $e\Et_0 e$. 
We identify  $e\pi^{-1}(z)$ with $\Aa$ through the restricted evaluation map $\evo^z:e\pi^{-1}(z) \to \Aa$. Define $\Et_\theta:= \evo^0 e\Et_0 e (\evo)^{-1}$, a subgroup of the group of bijections of $\Aa$.
We now consider the situation in which $\tilde\Gamma_\theta$ is a subgroup of $\Et_\theta=\evo^0 e\Et_0 e (\evo)^{-1}$
and define
\begin{equation*}\label{def: CfGamma}\Cf(\tilde\Gamma_\theta) :=\{\tilde f\in e\,\Cf\, e\, : (\Phi^z_{0})^{-1}(\tilde f(z))\in (\evo^0)^{-1} \tilde\Gamma_\theta \evo^0 \:\forall z\in\Z_{\ell}\} . \end{equation*}
This group is independent of the choice of lift to define $\Phi^z_{0}$, as $\tilde\Gamma_\theta$ is normal in $\Gstr$.
\begin{prop}\label{fibre-structure-height1} Let $\tilde\Gamma_\theta$ be a normal subgroup of $\Gstr$ which contains $\Gamma_\theta$ and is contained in $\Et_\theta$. Then, w.r.t.\ the grading defined by $\tilde\Gamma_\theta$, we have the inclusion of groups
$$\RSTf_0  \subset \Cf(\tilde\Gamma_\theta) . $$
\end{prop}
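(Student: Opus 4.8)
The plan is to check, for an arbitrary $f\in\RSTf_0$, the defining membership condition of $\Cf(\tilde\Gamma_\theta)$, i.e.\ that $(\Phi^z_0)^{-1}(\tilde f(z))\in(\evo^0)^{-1}\tilde\Gamma_\theta\,\evo^0$ for every $z\in\Zl$, treating one $\Z$-orbit of $\Zl$ at a time. First I would observe that $\Ef(X_\theta)$ is a subsemigroup of $\Cf$, so $\RSTf=e\Ef(X_\theta)e\subseteq e\Cf e$ and hence each $f\in\RSTf_0$ is already a covariant family $\tilde f$; only the membership condition is at issue. Covariance of $\tilde f$ together with the rule $\Phi^{\delta^t(z)}_0(\varphi)=\sigma^t\,\Phi^z_0(\varphi)\,\sigma^{-t}$ shows that $(\Phi^z_0)^{-1}(\tilde f(z))\in e\Ef_0(X_\theta)e$ depends only on the $\Z$-orbit of $z$, and since every orbit meets $\Zl\setminus\Z^-$ (the singular orbit contains $0$, and a regular orbit is disjoint from $\Z$), it suffices to treat $z\notin\Z^-$. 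I would also record that $eE(X_\theta)e\subseteq E(X_\theta,\Z^+)$: writing $efe=(ef)\,e$, one has $e\in E(X_\theta,\Z^+)\setminus\Z^+$ (a non-identity idempotent lies in the kernel, which is disjoint from $\Z$ by Corollary~\ref{cor:prox=asym}), and by the argument in the proof of that corollary $kg\in E(X_\theta,\Z^+)$ whenever $g\in E(X_\theta,\Z^+)\setminus\Z^+$. Consequently Lemma~\ref{lem-fz} applies to $f$ and to the values $s(z)$ of any lift $s$.

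Next, fixing $z\notin\Z^-$ and a lift $s$, I would transport everything to bijections of $\Aa$ via the identifications $\evo^0\colon e\pi^{-1}(0)\to\Aa$ and $\evo^z\colon e\pi^{-1}(z)\to\Aa$. Since $f=efe$ and $\tilde\pi(f)=0$, $f$ maps $e\pi^{-1}(z)$ into itself, and $s(z)$ maps $e\pi^{-1}(0)$ onto $e\pi^{-1}(z)$ with inverse $s(z)^{-1}$, so $(\Phi^z_0)^{-1}(\tilde f(z))=s(z)^{-1}\circ\tilde f(z)\circ s(z)$ is a well-defined self-map of $e\pi^{-1}(0)$. Inserting $(\evo^z)^{-1}\circ\evo^z$ in the middle and setting $u_z:=\evo^z\circ s(z)\circ(\evo^0)^{-1}$ and $f_z:=\evo^z\circ\tilde f(z)\circ(\evo^z)^{-1}$ (restrictions understood), one gets the clean identity
\begin{equation}\label{eq-sketch-conj}
\evo^0\circ(\Phi^z_0)^{-1}(\tilde f(z))\circ(\evo^0)^{-1}=u_z^{-1}\,f_z\,u_z
\end{equation}
in the group of bijections of $\Aa$. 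Here Lemma~\ref{lem-fz} enters twice: applied to $f\in E(X_\theta,\Z^+)$ it says exactly that $\evo\circ f$ agrees on $\pi^{-1}(z)$ with $f_z\circ\evo$ for some $f_z\in\Gstr$ (this is the present $f_z$), and applied to $s(z)\in eE(X_\theta)e\subseteq E(X_\theta,\Z^+)$ at the base point $0$ it gives, after restricting to $e\pi^{-1}(0)$ and using $\evo^z=\evo|_{e\pi^{-1}(z)}$, that $u_z\in\Gstr$. Thus the right-hand side of \eqref{eq-sketch-conj} lies in $\Gstr$.

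To conclude, I would use the very definition of the grading in Theorem~\ref{thm-grading}: $\eta(f)=f_z\tilde\Gamma_\theta$, so $f\in\RSTf_0$ forces $f_z\in\tilde\Gamma_\theta$. Since $\tilde\Gamma_\theta$ is normal in $\Gstr$ and $\Gstr/\tilde\Gamma_\theta$ is cyclic, hence abelian, conjugation by $u_z\in\Gstr$ fixes the coset $f_z\tilde\Gamma_\theta$, so $u_z^{-1}f_z u_z\in\tilde\Gamma_\theta$. By \eqref{eq-sketch-conj} this says precisely $(\Phi^z_0)^{-1}(\tilde f(z))\in(\evo^0)^{-1}\tilde\Gamma_\theta\,\evo^0$; as $z$ ranges over representatives of all orbits we obtain $\tilde f\in\Cf(\tilde\Gamma_\theta)$, and since $\RSTf_0$ and $\Cf(\tilde\Gamma_\theta)$ are both groups this gives the asserted inclusion of groups.

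The main obstacle I anticipate is not conceptual but careful bookkeeping of identifications: making \eqref{eq-sketch-conj} a literal identity of bijections of $\Aa$ (correctly juggling $e\pi^{-1}(0)\cong\Aa$, $e\pi^{-1}(z)\cong\Aa$ and $\Gstr\subseteq\mathrm{Bij}(\Aa)$, and checking the fibre-preservation statements $f(e\pi^{-1}(z))\subseteq e\pi^{-1}(z)$ and $s(z)(e\pi^{-1}(0))=e\pi^{-1}(z)$ that make the conjugations meaningful), together with confirming that every lift value $s(z)$ really falls under the hypothesis of Lemma~\ref{lem-fz}. The substantive input — that a degree-$0$ element of $\RSTf$ acts on every fibre by the ``same'' element of $\Gstr$ modulo $\tilde\Gamma_\theta$ — is already packaged into Lemma~\ref{lem-fz} and Theorem~\ref{thm-grading}; the work is only in relating that fibre action to the conjugated values $(\Phi^z_0)^{-1}(\tilde f(z))$ appearing in the definition of $\Cf(\tilde\Gamma_\theta)$.
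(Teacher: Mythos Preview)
Your proposal is correct and follows essentially the same route as the paper: transport $(\Phi^z_0)^{-1}(\tilde f(z))$ to a bijection of $\Aa$ via $\evo^0$ and $\evo^z$, use Lemma~\ref{lem-fz} to identify both $\tilde f(z)$ and $s(z)$ with elements $f_z,u_z\in\Gstr$, and conclude by normality of $\tilde\Gamma_\theta$ once the grading forces $f_z\in\tilde\Gamma_\theta$. Your treatment is in fact a bit more careful than the paper's in two respects---you explicitly reduce to $z\notin\Z^-$ by covariance (needed since Lemma~\ref{lem-fz} only covers that range) and you verify $eE(X_\theta)e\subseteq E(X_\theta,\Z^+)$ so that the lemma applies to $s(z)$; the appeal to cyclicity of $\Gstr/\tilde\Gamma_\theta$ is harmless but unnecessary, since normality alone already yields $u_z^{-1}f_zu_z\in\tilde\Gamma_\theta$.
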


\begin{proof} 
Given $f\in \RSTf$ we have $\eta(f)  =0$ if and only if, for all $z\in\Z_\ell\backslash \Z^-$ we have ${f}_z\in \tilde\Gamma_\theta$. Note that,  with $\tilde f$  defined as in \eqref{eq:definition-f-tilde}, $f_z\circ \evo^z = \evo^z\circ \tilde f(z)$. 
Moreover, if $h\in E(\Z^+)$ then $h_z\circ \evo^z = \evo^{z+\pi_*(h)}\circ h\left|_{\pi^{-1}(z)}\right.$.
It follows from Lemma~\ref{lem-fz} that $\evo^{z+\pi_*(h)}\circ h\left|_{\pi^{-1}(z)}\right.\circ (\evo^z)^{-1}\in G_\theta$. We apply this to   $h=s(z)$, {the lift of $z$}, to see that 
$g:=\evo^{z}\circ s(z)\left|_{\pi^{-1}(0)}\right.(\evo^0)^{-1}\in G_\theta$. Now
\begin{align*}(\Phi^z_{0})^{-1}(\tilde f(z)) &= s(z)^{-1} \tilde f(z) s(z) = s(z)^{-1} \circ(\evo^z)^{-1}\circ f_z\circ  \evo^z\circ  s(z)\\& \in (\evo^0)^{-1}\circ g^{-1}\tilde\Gamma_\theta g \circ \evo^0 =  (\evo^0)^{-1}\circ \tilde\Gamma_\theta  \circ \evo^0\end{align*}
as $\tilde \Gamma_\theta$ is normal in $G_\theta$. Thus 
$f\in   \Cf(\tilde\Gamma_\theta) $. \end{proof}
\begin{definition} The normal completion $\Glstr$ of the little structure group $\Gamma_\theta$ is the smallest normal subgroup of $\Gstr$ which contains $\Gamma_\theta$.
The generalised height $h$ of a primitive aperiodic bijective substitution is the  order of 
$\Gstr/\Glstr$.
\end{definition}
\begin{prop} Generalised height must be at least as large as classical height.
\end{prop}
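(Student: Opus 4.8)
The statement to prove is: \emph{generalised height $h = |\Gstr/\Glstr|$ is at least as large as classical height $h_{cl}$}. Since divisibility among these quantities would give the inequality, the plan is to show that $h_{cl}$ divides $h$, equivalently that $h_{cl}$ divides $|\Gstr/\Glstr|$.

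First I would invoke Lemma~\ref{general-height-vs-height}: it provides a surjective group homomorphism $\phi_{cl}:\Gstr\to\Z/h_{cl}\Z$ with $\phi_{cl}(g)=1$ for every $g\in\rset$. The key observation is that $\ker\phi_{cl}$ is a normal subgroup of $\Gstr$ (kernels of homomorphisms are always normal). Next I would check that $\ker\phi_{cl}$ contains the little structure group $\Gamma_\theta$: by Lemma~\ref{lem-JSG}, $\Gamma_\theta$ is generated by the elements $gh^{-1}$ with $g,h\in\rset$, and $\phi_{cl}(gh^{-1})=\phi_{cl}(g)-\phi_{cl}(h)=1-1=0$, so indeed $\Gamma_\theta\subseteq\ker\phi_{cl}$. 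Since $\Glstr$ is by definition the \emph{smallest} normal subgroup of $\Gstr$ containing $\Gamma_\theta$, and $\ker\phi_{cl}$ is a normal subgroup containing $\Gamma_\theta$, we conclude $\Glstr\subseteq\ker\phi_{cl}$.

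From $\Glstr\subseteq\ker\phi_{cl}$ the result follows by elementary group theory: $\phi_{cl}$ factors through the quotient $\Gstr/\Glstr$, yielding a surjective homomorphism $\overline{\phi_{cl}}:\Gstr/\Glstr\twoheadrightarrow\Z/h_{cl}\Z$. Hence $h_{cl}=|\Z/h_{cl}\Z|$ divides $|\Gstr/\Glstr|=h$, and in particular $h_{cl}\le h$. (One should note that $\Gstr$ is finite here — the structure group of a bijective substitution is a subgroup of the symmetric group on the finite alphabet — so all the orders are finite and the divisibility statement is meaningful; but even without finiteness the surjection directly gives that $h_{cl}$ divides $h$ when $h$ is finite, and $h$ is finite since $\Gstr$ is.)

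There is essentially no obstacle here: the statement is a short formal consequence of Lemma~\ref{general-height-vs-height}, Lemma~\ref{lem-JSG}, and the universal/minimality property defining $\Glstr$. The only point requiring a moment's care is making sure the hypotheses of Lemma~\ref{general-height-vs-height} are in force — namely that $\theta$ is a primitive aperiodic bijective substitution and (by the standing assumption) simplified, so that $h_{cl}$ divides $\ell-1$ and the homomorphism $\phi_{cl}$ exists as stated — and confirming that the normality of $\ker\phi_{cl}$ together with the minimality clause in the definition of $\Glstr$ genuinely forces the containment $\Glstr\subseteq\ker\phi_{cl}$. Both are immediate. I would write the proof in four or five lines.
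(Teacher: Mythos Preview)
Your proof is correct and follows essentially the same route as the paper: invoke $\phi_{cl}$ from Lemma~\ref{general-height-vs-height}, check that $\Gamma_\theta\subseteq\ker\phi_{cl}$, deduce $\Glstr\subseteq\ker\phi_{cl}$, and conclude that $\Gstr/\Glstr$ surjects onto $\Z/h_{cl}\Z$. The only cosmetic difference is that the paper verifies $\phi_{cl}(\Glstr)=0$ by noting $\Glstr$ is generated by conjugates $ghg^{-1}$ with $h\in\Gamma_\theta$, whereas you appeal to the minimality clause in the definition of $\Glstr$; these are equivalent one-line observations.
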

\begin{proof}
Recall the quotient map $\phi_{cl}:\Gstr\to \Z/h_{cl}\Z$ from Lemma~\ref{general-height-vs-height}.
It satisfies $\phi_{cl}(\Gamma_\theta) = 0$. As  $\Glstr$ is generated by elements of the form $g h g^{-1}$ with $h\in \Gamma_\theta$ and $g\in \Gstr$ we have $\phi_{cl}(\Glstr) = 0$
and hence $\Gstr/\Glstr$ factors onto $\Z/h_{cl}\Z$.
\end{proof}
In Section~\ref{ex462} we provide an example with trivial classical height but non-trivial generalised height.
\begin{prop}\label{fibre-structure-height2} 
$\Glstr= \Et_\theta$ and
$$\RSTf_0  =  \Cf(\Glstr)\cong \Glstr^{\Z_\ell/\Z} .$$
\end{prop}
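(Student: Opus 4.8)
The plan is to establish the two assertions $\Glstr = \Et_\theta$ and $\RSTf_0 = \Cf(\Glstr) \cong \Glstr^{\Z_\ell/\Z}$ in sequence, leveraging the machinery already developed. First I would prove the group-theoretic identity $\Glstr = \Et_\theta$. One inclusion is nearly immediate: since $\Et_\theta$ is a normal subgroup of $\Gstr$ (this was observed right after Definition~\ref{def:structure group} — $e\Et e$ is normal in $e\Ef e$, hence $\Et_\theta$ normal in $G_\theta$) and since it must contain $\Gamma_\theta$ (the idempotents of $\Sfib$ lie in $\Et_0$ by Corollary~\ref{cor-main1}, and by Lemma~\ref{lem-lsgroup}/Lemma~\ref{lem-JSG} these generate a copy of $\Gamma_\theta$), minimality of the normal completion gives $\Glstr \subseteq \Et_\theta$. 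For the reverse inclusion $\Et_\theta \subseteq \Glstr$, I would use the grading $\eta: E(X_\theta, \Z^+) \to \Gstr/\Glstr$ of Theorem~\ref{thm-grading} applied with $\tilde\Gamma_\theta = \Glstr$: an element $f \in e\Et_0 e$ acts trivially on all regular fibres, so by Lemma~\ref{lem-fz} $f_z = \one$ for regular $z$, whence $\eta(f) = \one \tilde\Gamma_\theta = 0$ in $\Gstr/\Glstr$; but $\eta$ restricted to $e\Et_0 e$ reads off (after conjugation by $\evo^0$) the class of the element in $\Gstr/\Glstr$, so $\Et_\theta \subseteq \ker(\Gstr \to \Gstr/\Glstr) = \Glstr$.

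Next I would handle $\RSTf_0 = \Cf(\Glstr)$. The inclusion $\RSTf_0 \subseteq \Cf(\Glstr)$ is exactly Proposition~\ref{fibre-structure-height1} with $\tilde\Gamma_\theta = \Glstr$, now legitimate since we have just shown $\Glstr \subseteq \Et_\theta$ (so the hypothesis "$\tilde\Gamma_\theta$ contained in $\Et_\theta$" is met) and $\Glstr$ is normal by construction. For the reverse inclusion I would argue as in the proof of Theorem~\ref{thm-main}: given any $\tilde f \in \Cf(\Glstr)$, one approximates it in the topology of pointwise convergence by finite products of "elementary" covariant functions supported on finitely many regular orbits plus the fibre at $0$, with prescribed values in $(\evo^0)^{-1}\Glstr\,\evo^0$ at each. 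Each such elementary function is constructed by conjugating (via $\Phi_{0}^{z}$) an element of $e\Et_0 e = (\evo^0)^{-1}\Et_\theta\,\evo^0 = (\evo^0)^{-1}\Glstr\,\evo^0$ out to the orbit of $z$; this conjugate lies in $\RSTf$ (it is fibre-preserving, acts trivially off one orbit, and by the normality argument its degree is $0$ so it lands in $\RSTf_0$). Since $\RSTf_0$ is closed (it is $\eta^{-1}(0) \cap \RSTfp$, and $\eta$ is continuous, $\RSTfp$ closed) the limit $\tilde f$ lies in $\RSTf_0$.

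Finally, for the isomorphism $\Cf(\Glstr) \cong \Glstr^{\Z_\ell/\Z}$ I would invoke covariance exactly as in the proof of Corollary~\ref{cor-main1}: the covariance relation $\tilde f(\delta^t(z)) = \sigma^t \tilde f(z) \sigma^{-t}$ lets one factor out the $\Z$-action, so a covariant family $\tilde f$ with $(\Phi^z_0)^{-1}(\tilde f(z)) \in (\evo^0)^{-1}\Glstr\,\evo^0$ for all $z$ is determined freely and independently by one element of $\Glstr$ per $\Z$-orbit of $\Z_\ell$. This gives a group isomorphism $\Cf(\Glstr) \to \Glstr^{\Z_\ell/\Z}$, evaluation-at-orbit-representatives on one side and the obvious covariant extension on the other.

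I expect the main obstacle to be the reverse inclusion $\Cf(\Glstr) \subseteq \RSTf_0$, specifically verifying that the elementary conjugated functions genuinely lie in $\RSTfp$ (not merely in $\Cf$) and have degree $0$ — this is where one must carefully re-run the density/closedness argument of Theorem~\ref{thm-main} and track that the construction stays inside the kernel and inside the degree-zero part, using that $\Glstr \subseteq \Et_\theta$ so that the building blocks come from $e\Et_0 e$ and hence act trivially on regular fibres by construction. The other steps — the two group inclusions giving $\Glstr = \Et_\theta$, and the covariance bookkeeping for the product decomposition — are essentially direct applications of results already in hand.
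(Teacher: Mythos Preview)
Your proposal is correct and in large part parallels the paper's proof, but the argument you give for the inclusion $\Et_\theta \subseteq \Glstr$ is genuinely different and in fact more direct. You observe that any $f \in e\Et e$ acts trivially on regular fibres, so $f_z = \one$ for regular $z$ and hence $\eta(f)=0$; since the grading on $e\Ef_0 e \cong \Gstr$ is exactly the quotient map to $\Gstr/\Glstr$, this immediately yields $\Et_\theta \subseteq \Glstr$. The paper instead argues indirectly: it first embeds $\Cf(\Et_\theta)=e\Ct e$ into $\RSTf \subseteq \bigsqcup_k \mathfrak f^k \Cf(\Glstr)$ (via Theorem~\ref{thm-main} and Proposition~\ref{fibre-structure-height1}), then notes that for any element of the right-hand side the class $(\Phi_0^z)^{-1}(\tilde f(z))\Glstr$ is constant in $z$, whereas $\Cf(\Et_\theta)$ allows this class to be an arbitrary element of $\Et_\theta/\Glstr$ independently at each orbit --- the only way both hold is $\Et_\theta \subseteq \Glstr$. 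Your route avoids this constancy argument entirely.

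For the reverse inclusion $\Cf(\Glstr) \subseteq \RSTf_0$ you propose to re-run the density argument of Theorem~\ref{thm-main}; this works, but is unnecessary: once $\Glstr = \Et_\theta$ is established you have $\Cf(\Glstr) = \Cf(\Et_\theta) = e\Ct e$, and Theorem~\ref{thm-main} already gives $e\Ct e \subseteq \RSTf$. Then the degree-$0$ check is just the converse of the computation in Proposition~\ref{fibre-structure-height1} (if $(\Phi_0^z)^{-1}(\tilde f(z)) \in (\evo^0)^{-1}\Glstr\,\evo^0$ then $f_z \in \Glstr$ by normality, so $\eta(f)=0$), which is how the paper closes the loop. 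The final covariance argument for $\Cf(\Glstr) \cong \Glstr^{\Z_\ell/\Z}$ is the same in both.
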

\begin{proof} By Theorem~\ref{thm-main} we have  the inclusion of groups $ \Cf(\Et_\theta) = e\Ct e \subseteq e\Ef e=\RSTf$. Hence, by Proposition ~\ref{fibre-structure-height1}, 
$ \Cf(\Et_\theta)$ is a subgroup of $\bigsqcup_{k\in \Z/\tilde h\Z} \mathfrak f^k \Cf(\Glstr)$ 
where $\mathfrak f$ is some element from $\RSTf_1$. 
Since $s(z)$ and $s(z)^{-1}$ have opposite degree, $(\Phi_0^z)^{-1}(\tilde{\mathfrak f}(z)) \Glstr\in \Gstr/\Glstr$ must be the generator of $\Gstr/\Glstr$ for all $z$. Hence, for any $f\in \Cf(\Et_\theta)$, $(\Phi_0^z)^{-1}(\tilde{f}(z)) \Glstr\in \Gstr/\bar\Gamma_\theta$ is constant in $z$. 
This is possible only if $\Et_\theta$ is a subgroup of $\Glstr$. 
If that is the case then $\Cf(\Et_\theta) = \RSTf_0$, again by Proposition~\ref{fibre-structure-height1}. 

The covariance condition says that a function $\tilde f\in \Cf$ is determined on the $\Z$-orbit of a point $z\in\Z_\ell$ by its value on $z$. We may hence chose for each $\Z$-orbit $[z]\in\Z_\ell/\Z$ a representative $z$ and then the obtain a bijection from  $\Cf(\Glstr)$ to $\Glstr$-valued functions over the orbit space $\Z_\ell/\Z$ by restricting $\tilde f$ to the chosen representatives. This bijection is, of course, not canonical as it involves an uncountable choice of representatives, but it is a topological isomorphism of semigroups. 
\end{proof}

We end this section with the generalisation of Theorem~\ref{thm-main2} to substitutions which may have non-trivial height.
 \begin{thm}\label{thm-main4}
Let $\theta$ be a primitive aperiodic bijective substitution with generalised height $h$.
 Using the sets $\rset$, $\{\pm\}$ and the sandwich matrix $A$ from
normalised Rees matrix form $M[\Gstr;\rset,\{\pm\};A]$ for $\Sfib$, 
$\Ef(X_\theta)\backslash \{\Id\}$ is 
topologically isomorphic to 
$$\Ef(X_\theta)\backslash \{\Id\} \cong M[\RSTf;I_\theta,\{\pm\};A].$$
where $\RSTf$ is a $\Z/h\Z$-graded group and
$$\RSTf_k  =  \fg^k \Cf(\Glstr) $$
for any  element $\fg$ of degree $1$ of $\RSTf$. 
Here an entry $a_{\lambda,g}$ of $A$ is identified with the function $\tilde f\in \RSTf_0$ which satisfies $\tilde f(0)=a_{\lambda,g}$ and $\tilde f(z)=\one$ for regular $z$. 

If $\Gstr$ contains an element of order $h$ then    $\RSTf$ is a semidirect product
$$\RSTf \cong \Cf(\Glstr)\rtimes \Z/h\Z.$$

Furthermore  $E(X_\theta)\backslash \Z$ is algebraically isomorphic to 
$$E(X_\theta)\backslash \Z \cong M[\RST;I_\theta,\{\pm\};A]$$
where $\RST$ is the extension determined by $\RSTf\hookrightarrow \RST \twoheadrightarrow \Z_\ell$ and $A$ is understood to take values in the subgroup $\RSTf$. 
If moreover, generalised height is equal to classical height then 
there is a split section $s:\Z_\ell \to \RST$ whose image belongs to ${\RST}_0$. In particular we have the algebraic isomorphism
$$\RST\cong \RSTf\rtimes \Z_{\ell}.$$
 \end{thm}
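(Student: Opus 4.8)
The plan is to treat the four assertions in sequence, each time reducing to a result already established in the excerpt. First, the description of $\Ef(X_\theta)\backslash\{\Id\}$ as $M[\RSTf;I_\theta,\{\pm\};A]$ follows from Corollary~\ref{cor-ff}: since the singular fibre $\pi^{-1}(0)$ is finite (Proposition~\ref{prop-sf}), $\Ef(X_\theta)$ is a compact topological semigroup, so its kernel $\M^{fib}(X_\theta)$ is topologically isomorphic to its normalised Rees matrix form. That $I$ and $\Lambda$ may be taken to be $I_\theta$ and $\{\pm\}$, and that the sandwich matrix may be taken to be $A$ from $M[\Gstr;\rset,\{\pm\};A]$, follows from the fact that $\M^{fib}_0(X_\theta)$ is the restriction of $\M^{fib}(X_\theta)$ to $\pi^{-1}(0)$ and shares the same $\mathcal R$- and $\mathcal L$-class structure, together with the normalisation (the entries of $A$ lie in $\Glstr=\Et_\theta$, by Lemma~\ref{lem-lsgroup} and Proposition~\ref{fibre-structure-height2}, so identifying $a_{\lambda,g}$ with the element of $\RSTf_0$ supported at $0$ is consistent). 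The structure group is then $\RSTf=e\Ef(X_\theta)e$, and the grading by $\Z/h\Z$ is the restriction to $\RSTf$ of the grading $\eta$ from Theorem~\ref{thm-grading} applied with $\tilde\Gamma_\theta=\Glstr$ (so $\tilde h=h$); the identity $\RSTf_k=\fg^k\Cf(\Glstr)$ is exactly Proposition~\ref{fibre-structure-height2} for $k=0$ together with the grading relation $\RSTf_k=\fg^k\RSTf_0$.

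Second, for the semidirect product claim: if $\Gstr$ contains an element $\gamma$ of order $h$, then since $\Gstr/\Glstr$ is cyclic of order $h$ (the Lemma preceding this subsection), $\gamma$ maps to a generator of $\Gstr/\Glstr$, hence $\gamma\notin\Glstr$ has order exactly $h$ in the quotient, so $\langle\gamma\rangle$ is a complement to $\Glstr$ in $\Gstr$ realising $\Gstr\cong\Glstr\rtimes\Z/h\Z$. One must then lift this splitting to $\RSTf$: pick $\fg\in\RSTf_1$ whose value at $0$ (under $\evo^0$) is $\gamma$; because $\gamma^h=\one$ and $\Cf(\Glstr)$ is normal of index $h$ in $\RSTf$, the element $\fg^h$ lies in $\Cf(\Glstr)$ and in fact can be arranged to be trivial by an adjustment using normality of $\Glstr$, giving $\RSTf\cong\Cf(\Glstr)\rtimes\Z/h\Z$.

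Third, the algebraic description of $E(X_\theta)\backslash\Z$ follows from Proposition~\ref{extension-semigroup}: the exact sequence $\M^{fib}(X_\theta)\hookrightarrow\M(X_\theta)\stackrel{\tilde\pi}\twoheadrightarrow\Z_\ell$ (established via \eqref{eq-kernel-ext}) exhibits $\M(X_\theta)$ as an extension of the group $\Z_\ell$ by the completely simple semigroup $\M^{fib}(X_\theta)$, and $E(X_\theta)\backslash\Z=\M(X_\theta)$ is regular (it is completely regular by the Corollary at the end of Section~\ref{generalities}), so $\M(X_\theta)\cong M[\RST;I_\theta,\{\pm\};A]$ with $\RST$ the group extension $\RSTf\hookrightarrow\RST\twoheadrightarrow\Z_\ell$; the entries of $A$ lie in $\RSTf$ by Lemma~\ref{lem-lsgroup}.

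Finally, for the split section under the hypothesis $h=h_{cl}$: the point is to produce a right inverse $s:\Z_\ell\to\RST$ to $\tilde\pi$ landing in $\RST_0$. By Proposition~\ref{prop-shomo} (applicable since $\Ef=\Cf$ when $h=1$, but here we need the graded analogue) there is a multiplicative lift $\hat s:\Z_\ell\to e E(X_\theta,\Z^+)e$; the issue is to show it can be chosen of degree $0$. This is where Lemma~\ref{general-height-vs-height} enters: when $h=h_{cl}$, the classical-height homomorphism $\phi_{cl}:\Gstr\to\Z/h\Z$ and the generalised grading $\eta$ induce the same map on $\Gstr/\Glstr$, and $\phi_{cl}$ vanishes on the ``translation part'' coming from $\Z_\ell$ — concretely, the lift $s(z)$ built from shifts $\sigma^{n_\nu}$ with $n_\nu$ chosen so that $n_\nu\equiv 0\bmod h$ (possible since $\tilde\pi(s(z))=z$ only constrains $n_\nu$ modulo powers of $\ell$, and $\gcd(h,\ell)=1$) has $\eta(s(z))=0$. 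Carrying the multiplicativity argument of Proposition~\ref{prop-shomo} through with this degree-$0$ lift yields the desired split section with image in $\RST_0$, hence $\RST\cong\RSTf\rtimes\Z_\ell$. \textbf{The main obstacle} I expect is precisely this last step: showing the multiplicative lift can be chosen simultaneously covariant, involutive ($s(-z)=s(z)^{-1}$), \emph{and} of degree $0$ — the compatibility of these three constraints is what forces the use of $h=h_{cl}$ (via Lemma~\ref{general-height-vs-height}) and is the reason the theorem's final clause fails when generalised height strictly exceeds classical height.
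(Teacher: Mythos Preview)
Your overall architecture matches the paper's: parts 1 and 3 follow from Proposition~\ref{fibre-structure-height2} (together with Corollary~\ref{cor-ff}) and Proposition~\ref{extension-semigroup} respectively, exactly as the paper does, and your part 4 --- building a lift $s:\Z_\ell\to\RST$ of degree $0$ by arranging the approximating shifts $\sigma^{n_\nu}$ to satisfy $n_\nu\equiv 0\bmod h$ --- is essentially the paper's argument rephrased: the paper uses the maximal equicontinuous factor $\Z/h\Z\times\Z_\ell$ (Theorem~\ref{thm:dekking}) and lifts through the $\{0\}\times\Z_\ell$ slice, then applies Proposition~\ref{prop-shomo} to make the lift multiplicative.

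Part 2, however, has a genuine gap. Set aside your inference ``$\gamma$ has order $h$, hence $\gamma$ maps to a generator of $\Gstr/\Glstr$'' (this fails e.g.\ for $\Gstr=\Z/4\Z$, $\Glstr=2\Z/4\Z$, $\gamma=2$; the paper itself is tacit on this point and evidently intends an element of degree~$1$). The substantive problem is your lift of the splitting to $\RSTf$: choosing $\fg\in\RSTf_1$ with $\tilde\fg(0)$ corresponding to $\gamma$ does not control $\tilde\fg(z)$ for regular $z$, so $\fg^h$ need not be trivial there, and there is no ``adjustment by normality'' that fixes this --- multiplying $\fg$ by an element of $\Cf(\Glstr)$ leads to a twisted cocycle equation with no obvious solution. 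The paper's construction is different and requires no adjustment: it \emph{defines} $\fg$ coherently across all fibres at once via the transport maps,
\[
\fg(z):=\Phi_0^z\bigl((\evo^0)^{-1}(i_0,\gamma,+)\,\evo^0\bigr).
\]
Since each $\Phi_0^z:e\Ef_0 e\to e\Ef_z e$ is a group isomorphism (Lemma~\ref{structure}(3)), the order of $\fg(z)$ equals the order of $(i_0,\gamma,+)$, which is $h$ when $\gamma^h=\one$; hence $\fg^h=e$ globally, and $1\mapsto\fg$ is the desired split section for $\RSTf_0\hookrightarrow\RSTf\twoheadrightarrow\Z/h\Z$. The idea you are missing is that the section must be built simultaneously on all fibres using $\Phi_0^z$, not at the singular fibre first and then extended.
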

\begin{proof}
The first part follows from Prop.~\ref{fibre-structure-height2}. 
 It remains to show that $\RSTf \cong \Cf(\Glstr)\rtimes \Z/h\Z$ provided 
$\Gstr$ contains an element of order $h$. An element $\fg\in \RSTf$ with $\eta(\fg) = 1$ can be constructed as follows. We saw that any $(i_0,i,+)\in M[G_\theta;\rset,\{\pm\},A]$ with $i\in \rset$ has degree $1$. Pick $i\in \Gstr$ and
let $\fg$ be the function which satisfies 
$$ \fg(z):=\Phi_0^z((\evo^0)^{-1}(i_0,i,+)\evo^0). $$
Then $\fg$ is an element of degree $1$ in $\Cf(\Gstr)$. 
As  $\Phi_0^z$ restricts to a group isomorphism from $e\Ef_0e$ to $\Ef_z$ it preserves the order of an element and therefore, if $i^h=\one$, then $\fg$ has order $h$.  In that case, $\Z/h\Z \ni 1 \mapsto \fg \in \RSTf$ induces a split section for the exact sequence
$\RSTf_0\hookrightarrow \RSTf\stackrel{\eta}\twoheadrightarrow \Z/h\Z$.

The result for $E(X_\theta)$ follows from Proposition~\ref{extension-semigroup}. 
We have seen that, if $h$ is equal to the classical height, then $\tilde\pi$ factors through the maximal equicontinuous factor which is
$\Z/h\Z\times\Z_\ell$. By the axiom of choice we can construct a covariant lift 
$s:\Z_\ell \to \RST$ which factors through $\{0\}\times\Z_\ell\subset \Z/h\Z\times\Z_\ell$. It hence takes values in   ${\RST}_0$. It follows that the split homomorphism $\hat s$ constructed in Proposition~\ref{prop-shomo} also takes values in ${\RST}_0$. Thus 
${\RST}_0 \cong \RSTf_0\rtimes \Z_\ell$. This implies the last statement.
\end{proof}

\section{The virtual automorphism group of unique singular orbit systems}\label{Ellis-group}

In this section we investigate the relationship between the automorphism group and the virtual automorphism group of bijective substitutional systems, as defined by 
Auslander and Glasner \cite{AG-2019}. They show that an almost automorphic system is semi-regular iff it is equicontinuous. They also show that the Thue-Morse shift is semi-regular. Using our tools, in this section we  will extend their result to show that bijective substitution shifts are semi-regular. 

We start a little bit more generally considering minimal systems $(X,T, \sigma)$ where $T$ 
is an abelian group, unless we are talking about a substitution, in which case $T=\Z$.

\subsection{Automorphism groups of bijective substitutions}

The {\em automorphism group} $\Aut(X)$ of a dynamical system $(X,\sigma)$ is the group, under composition, of all homeomorphisms of $X$ which commute with $\sigma$. 
Since the elements of $E(X)$ are limits of generalised sequences of powers of $\sigma$, the automorphism group, viewed as a subset of $X^X$, lies in the commutant of $E(X)$. 
Similar to the situation of Ellis semigroups described in Section~\ref{semigroup-of-factor},
if $(X,\sigma)$ is minimal and $\pi: X \rightarrow Y$ is an equicontinuous factor map, 
then the map $\pi_*$ from Section~\ref{semigroup-of-factor} is well-defined on automorphisms of $X$ 
 inducing a group morphism $\pi_*: \Aut(X)\rightarrow \Aut(Y)$; see \cite{coven-quas-yassawi}. We also have $\Aut(Y)\cong Y$ for a minimal equicontinuous system. But $\pi_*$ is usually not surjective, although its image always contains $T\subset Y$.  
We can therefore analogously define $\Aut^{fib}(X)$ as the kernel of $\pi_*$, that is, the subgroup of automorphisms which preserve the $\pi$-fibres, and then determine $\Aut(X)$ through the extension $\Aut^{fib}(X) \hookrightarrow \Aut(X) \twoheadrightarrow \pi_*(\Aut(Y))$.  Contrary to most elements of $E^{fib}(X)$, the elements of $\Aut^{fib}(X)$ are always continuous and therefore, again by minimality, $\Aut^{fib}(X)$ is determined by its restriction to a fibre $\pi^{-1}(y_0)$. Furthermore, $\Aut^{fib}(X)$ must commute with $\RSTfp=eE^{fib}(X)e$ and therefore its restriction to $\pi^{-1}(y_0)$ is
 contained in the centraliser of the structure group $G_\pi=eE^{fib}_{y_0}(X)e$ in the permutation group $S_{e\pi^{-1}(y_0)}$ of the factor
 \cite[Theorem 33]{M-Y}. For primitive bijective substitution shifts it can be shown that $\Aut^{fib}(X)$ exhausts that centraliser and that 
$\pi_*: \Aut(X)\rightarrow \Aut(Y)$ is as small as possible: 

\begin{thm} \cite[Theorem 5]{L-M}\label{automorphism-theorem}
Let $\theta$ be a primitive aperiodic bijective substitution over the alphabet $\Aa$. Then $\Aut^{fib}(X_\theta)$ is isomorphic to the centraliser $C_{S_{\mathcal A}} (G_\theta) $ and
$$\Aut(X_\theta)\cong C_{S_{\mathcal A}} (G_\theta) \times \Z.$$
\end{thm}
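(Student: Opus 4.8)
The plan is to run the argument through the short exact sequence
\[\Aut^{fib}(X_\theta)\hookrightarrow\Aut(X_\theta)\stackrel{\pi_*}\twoheadrightarrow\pi_*(\Aut(X_\theta))\subseteq\Aut(\Z_\ell)\cong\Z_\ell\]
attached to the odometer factor $\pi\colon X_\theta\to\Z_\ell$ of \eqref{ef-map}, recalling (from \cite{coven-quas-yassawi} and minimality) that $\pi_*$ is a well-defined group homomorphism on automorphisms with $\Aut^{fib}(X_\theta)=\ker\pi_*$. Since $\sigma\in\Aut(X_\theta)$ and $\pi_*(\sigma)=1$, the image $\pi_*(\Aut(X_\theta))$ contains $\Z$; and since $\sigma$ is central in $\Aut(X_\theta)$, generates an infinite cyclic subgroup (aperiodicity gives $\sigma^n\ne\Id$ for $n\ne0$), and meets $\Aut^{fib}(X_\theta)$ only in $\Id$, it will suffice to prove the two statements
\[\text{(A)}\ \ \pi_*(\Aut(X_\theta))=\Z\qquad\text{and}\qquad\text{(B)}\ \ \Aut^{fib}(X_\theta)\cong C_{S_{\mathcal A}}(G_\theta),\]
for then $\Aut(X_\theta)=\Aut^{fib}(X_\theta)\times\langle\sigma\rangle\cong C_{S_{\mathcal A}}(G_\theta)\times\Z$ is immediate.

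I would prove (A) by counting. If $\Phi\in\Aut(X_\theta)$ has $\pi_*(\Phi)=z$, then $\Phi$ restricts to a bijection $\pi^{-1}(0)\to\pi^{-1}(z)$. By Propositions~\ref{prop-sf} and~\ref{one-singular-orbit} the singular fibre $\pi^{-1}(0)$ has $s^{(2)}$ elements, where $s^{(2)}$ is the number of allowed two-letter words and, by aperiodicity, exceeds $s:=|\mathcal A|$, whereas for $z\notin\Z$ the fibre $\pi^{-1}(z)$ is regular with exactly $s$ elements. Hence $z\notin\Z$ cannot occur, so $\pi_*(\Aut(X_\theta))\subseteq\Z$; together with the previous paragraph this gives (A).

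For (B) I would fix a minimal idempotent $e\in E(X_\theta,\Z^+)$ and study the homomorphism $\Phi\mapsto\bar\Phi$ from $\Aut^{fib}(X_\theta)$ to $S_{\mathcal A}$, where $\bar\Phi$ is the restriction of $\Phi$ to $e\pi^{-1}(0)$, transported to $S_{\mathcal A}$ along the bijection $\evo^0\colon e\pi^{-1}(0)\to\mathcal A$. It is well defined with values in $C_{S_{\mathcal A}}(G_\theta)$ because an automorphism lies in the commutant of $E(X_\theta)$, hence commutes with $e$ (so preserves $e\pi^{-1}(0)$) and with $G_\pi=eE^{fib}_0(X_\theta)e$, which $\evo^0$ identifies with $G_\theta$ by the discussion after Theorem~\ref{thm-RMG}. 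Injectivity is easy: if $\bar\Phi=\Id$ then $\Phi$ fixes every point of $e\pi^{-1}(0)$, in particular a $\theta$-fixed point $x_0$, and since $x_0$ has dense orbit and $\Phi$ is continuous and commutes with $\sigma$, we get $\Phi=\Id$. Surjectivity is the substantial part: for $\bar\rho\in C_{S_{\mathcal A}}(G_\theta)$, one uses that $G_\theta$ is generated by the column maps $\theta_0,\dots,\theta_{\ell-1}$, so $\bar\rho$ commutes with every $\theta_i$ and hence, applied letterwise, with $\theta$; the letterwise map $\Psi_{\bar\rho}\colon(x_k)_k\mapsto(\bar\rho(x_k))_k$ is then a homeomorphism carrying $X_\theta$ onto $X_\theta$ (allowedness of words is preserved since $\bar\rho\circ\theta^n=\theta^n\circ\bar\rho$), commutes with $\sigma$, and respects the block decomposition defining $\pi$, so that $\pi\circ\Psi_{\bar\rho}=\pi$; one checks directly that $\Psi_{\bar\rho}$ restricts on $e\pi^{-1}(0)$ to $\bar\rho$. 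This proves (B), and with (A) it finishes the proof.

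The main obstacle I expect is the surjectivity half of (B): showing that $\Aut^{fib}(X_\theta)$ realizes the \emph{whole} centraliser $C_{S_{\mathcal A}}(G_\theta)$, not merely a subgroup of it — that is, producing an honest fibre-preserving automorphism out of each centralizing permutation. This is precisely the place where one needs the identification of the Ellis structure group $G_\pi$ with the combinatorial structure group $G_\theta$ from Section~\ref{bijective}, together with the fact that $G_\theta\le S_{\mathcal A}$ is generated by the columns $\theta_i$, which is what allows the automorphism to be taken as a letterwise code. (Statement (B) is \cite[Theorem~5]{L-M}; the route sketched here recovers it inside the Ellis-semigroup framework developed in this paper.)
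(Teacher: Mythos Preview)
The paper does not actually prove this theorem: it is quoted from \cite[Theorem~5]{L-M}, and the paragraph preceding it only sketches the setup (restriction of $\Aut^{fib}(X)$ to a fibre lands in the centraliser of $G_\pi$, citing \cite[Theorem~33]{M-Y}) before deferring the two substantive claims --- that $\Aut^{fib}(X_\theta)$ \emph{exhausts} that centraliser and that $\pi_*(\Aut(X_\theta))$ is \emph{as small as possible} --- to the cited reference.

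Your proposal therefore cannot be compared against a proof in the paper; rather, you are supplying the argument the paper omits. Your proof is correct and follows exactly the route the paper's preamble suggests: the short exact sequence for $\pi_*$, the fibre-cardinality obstruction for (A), and the restriction-to-$e\pi^{-1}(0)$ map for (B), with the letterwise code $\Psi_{\bar\rho}$ giving surjectivity. The one point worth making explicit is that $G_\theta$ is generated already by the single-level columns $\theta_0,\dots,\theta_{\ell-1}$ (a consequence of the expansion formula \eqref{expansion}), since that is what you need for $\bar\rho$ to commute with $\theta$ letterwise; you invoke this but might state it. Otherwise the argument is complete and is essentially the Lema\'{n}czyk--Mentzen proof recast in the Ellis-semigroup language of this paper.
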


\subsection{Virtual automorphism groups}

We start with the two algebraic lemmas. Given a set  $X$, $x\in X$ and a group $G\subseteq S_X$, the permutation group of $X$,  let $\Stab_G(x)$ denote the stabilizer of $x$ in $G$, let $N_G(\Stab_G(x))$ 
denote the normaliser of $\Stab_G(x)$ in $G$, and let $C_{S_X}(G)$ denote the centraliser of $G$ in $S_X$.
The following lemma from group theory  is well-known but we include a proof.
Recall  that a group G acts {\em transitively} on $X$ if for each pair $x, y$ in $X$ there exists $g\in G$ such that $g(x) = y$.

\begin{lem}\label{algebraic-lemma}
Let $G$ be a subgroup of the permutation group $S_X$ which acts transitively on $X$, and let $x\in X$. Then $$N_G(\Stab_G(x)) / \Stab_G(x) \cong C_{S_X}(G).$$
\end{lem}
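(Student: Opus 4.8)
The plan is to realise $X$ as a coset space and then describe $C_{S_X}(G)$ concretely. Write $H:=\Stab_G(x)$. Since $G$ acts transitively on $X$, the orbit map $g\mapsto g(x)$ descends to a bijection $G/H\to X$, $gH\mapsto g(x)$, which intertwines the left-translation action of $G$ on $G/H$ with the given action on $X$. So it suffices to compute the centraliser of $G$ acting by left translations on $G/H$; equivalently, to parametrise those permutations $\phi$ of $X$ satisfying $\phi(g(y))=g(\phi(y))$ for all $g\in G$, $y\in X$.

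First I would define a map $\Psi\colon N_G(H)\to C_{S_X}(G)$ by sending $n\in N_G(H)$ to the permutation $\psi_n$ with $\psi_n\bigl(g(x)\bigr):=(gn^{-1})(x)$. Well-definedness: if $g(x)=g'(x)$ then $g^{-1}g'\in H$, and since $n\in N_G(H)$ normalises $H$ we get $n^{-1}(g^{-1}g')n\in H$, so $(gn^{-1})(x)=(g'n^{-1})(x)$; running the same argument with $n^{-1}$ shows $\psi_n$ is a bijection with inverse $\psi_{n^{-1}}$. By construction $\psi_n$ commutes with the $G$-action, $\psi_n(h\cdot g(x))=(hgn^{-1})(x)=h\cdot\psi_n(g(x))$, and a one-line computation gives $\psi_{n_1}\psi_{n_2}=\psi_{n_1n_2}$, so $\Psi$ is a group homomorphism. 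Its kernel consists of the $n$ with $n^{-1}(x)=x$, i.e.\ $\ker\Psi=H$. (I build $\Psi$ out of $n^{-1}$ rather than $n$ precisely so that $\Psi$ comes out a homomorphism and not an anti-homomorphism; this left/right coset bookkeeping is the only routine thing one must keep straight.)

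The real content, and the step I expect to demand the most care, is surjectivity of $\Psi$. Given $\phi\in C_{S_X}(G)$, transitivity lets me write $\phi(x)=a(x)$ for some $a\in G$, and $G$-equivariance then forces $\phi(g(x))=(ga)(x)$ for every $g\in G$. Since $\phi$ is a well-defined function, $g(x)=g'(x)$ implies $(ga)(x)=(g'a)(x)$, which unravels to $a^{-1}Ha\subseteq H$. Applying the same reasoning to $\phi^{-1}\in C_{S_X}(G)$, written $\phi^{-1}(x)=b(x)$, yields $b^{-1}Hb\subseteq H$, while $x=\phi^{-1}(\phi(x))=(ab)(x)$ gives $ab\in H$, hence $b\in a^{-1}H$. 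Substituting $b=a^{-1}h$ with $h\in H$ turns $b^{-1}Hb\subseteq H$ into $aHa^{-1}\subseteq H$, and together with $a^{-1}Ha\subseteq H$ this forces $aHa^{-1}=H$, i.e.\ $a\in N_G(H)$. Then $\phi=\Psi(a^{-1})$, so $\Psi$ is onto, and the first isomorphism theorem gives $N_G(\Stab_G(x))/\Stab_G(x)\cong C_{S_X}(G)$.
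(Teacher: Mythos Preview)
Your proof is correct and follows essentially the same route as the paper: both construct the map $N_G(H)\to C_{S_X}(G)$, $h\mapsto\bigl(f(x)\mapsto fh^{-1}(x)\bigr)$, verify it is a well-defined homomorphism with kernel $H$, and establish surjectivity by writing an arbitrary $\phi\in C_{S_X}(G)$ as $\phi(x)=a(x)$ for some $a\in G$. Your surjectivity argument is in fact slightly more careful than the paper's, since you explicitly invoke $\phi^{-1}$ to obtain the reverse inclusion $aHa^{-1}\subseteq H$, whereas the paper asserts normalisation from only the one inclusion $\Stab_G(x)\subseteq\Stab_G(h^{-1}(x))$.
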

\begin{proof}
Let $h\in N_G(\Stab_G(x))$.  We first claim that if two $f, f'\in G$ satisfy $f(x)=f'(x)$,  then $fh^{-1}(x)= f'h^{-1}(x)$. Indeed, $f(x)=f'(x)$ implies  
$f' = f s$ for some  $s\in \Stab_G(x)$. Furthermore $sh^{-1} = h^{-1}s'$ with $s'$ in $\Stab_G(x)$, as $h$ normalises $\Stab_G(x)$. Hence 
\[fh^{-1}(x) = fh^{-1}(s'x) =   f s h^{-1}(x)=   f'h^{-1}(x).\]
Therefore the map 
$ F: N_G(\Stab_G(x)) / \Stab_G(x) \to S_X$ defined by
\begin{equation*} F(h)(y) := fh^{-1}(x),\quad \mbox{for any $f\in G$ such that $f(x)=y$} \label{definition-phi} \end{equation*}
is well-defined. We show $F$ preserves the group multiplication. Let $h_1,h_2\in  N_G(\Stab_G(x)) /\Stab_G(x)$. Then 
\begin{equation}\label{eq-F-homo}
F(h_1)(F(h_2)(y)) = f_1h_1^{-1}(x) ,\quad \mbox{for any $f_1\in G$ such that 
$f_1(x)=F(h_2)(y)$}
\end{equation}
and $F(h_2)(y) = f_2h_2^{-1}(x)$ for any $f_2\in G$ such that $f_2(x) = y$. In particular,  $f_1(x)=f_2h_2^{-1}(x)$ so that we may take $f_1=f_2h_2^{-1}$ in (\ref{eq-F-homo}).
Thus $F(h_1)(F(h_2)(y))=f_2h_2^{-1}h_1^{-1}(x)$. As  $f_2(x) = y$ this is also equal to
$F(h_1h_2)(y)$.

Note that if  $F(h) = \Id$ then $fh^{-1}(x) = f(x)$ for all $f$, which implies that $F$ is injective.

Furthermore, using our first claim again, it can be checked that $F(h)$ commutes with the elements of $\Stab_G(x)$ and so the image of $F$ is contained in 
$C_{S_X}(G)$.
To see that the image of $F$ is all of $C_{S_X}(G)$ let $\psi\in C_{S_X}(G)$. Since $G$ acts  transitively there exists $h\in G$ such that $\psi(x) = h^{-1}(x)$. Then, for $f\in G$, 
\begin{equation}\psi(f(x)) = f\psi(x) = fh^{-1}(x).\label{surjective}\end{equation}
If we apply this formula to $f\in \Stab_G(x)$ we get $h^{-1}(x) = \psi (x) = \psi(f(x)) =   	fh^{-1}(x)$ which implies that $h^{-1}$ normalises $\Stab_G(x)$. Now \eqref{surjective} shows that  $\psi=F([h])$ where $[h]$ is the class of $h$ in  $N_G(\Stab_G(x)) /\Stab_G(x)$.
\end{proof}

Let $G_1$, $G_2$ be two groups acting on $X_1$ and $X_2$, resp..
Let $G\subset G_1\times G_2$ a subgroup such that the projection $p_1:G\to G_1$ is surjective. $G_1\times G_2$ acts on $X_1$ via the projection $p_1$ hence  $G$ also acts on $X_1$: if $f\in G$ then $f(x):= f_1(x)$ where $f_1=p_1(f)$. 
\begin{lem} \label{lem-exercice} 
In the above context, let $x\in X_1$. Then $p_1$ induces an isomorphism
$$N_G(\Stab_G(x))/\Stab_G(x)  \to N_{G_1}(\Stab_{G_1}(x))/\Stab_{G_1}(x).$$
\end{lem}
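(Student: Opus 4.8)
The plan is to show that the projection $p_1$ maps $\Stab_G(x)$ onto $\Stab_{G_1}(x)$, maps $N_G(\Stab_G(x))$ onto $N_{G_1}(\Stab_{G_1}(x))$, and that the kernel of $p_1$ restricted to $N_G(\Stab_G(x))$ is contained in $\Stab_G(x)$, so that passing to the quotients yields the desired isomorphism. The crucial preliminary observation is that since $G$ acts on $X_1$ through $p_1$, we have $\Stab_G(x) = p_1^{-1}(\Stab_{G_1}(x)) \cap G$; equivalently $\Stab_G(x) = \{f \in G : p_1(f) \in \Stab_{G_1}(x)\}$. In particular $\ker p_1 \subseteq \Stab_G(x)$, which immediately handles the kernel issue: the homomorphism $N_G(\Stab_G(x)) \to N_{G_1}(\Stab_{G_1}(x))/\Stab_{G_1}(x)$ obtained by composing $p_1$ with the quotient map (once we know it lands there) has kernel exactly $N_G(\Stab_G(x)) \cap p_1^{-1}(\Stab_{G_1}(x)) = N_G(\Stab_G(x)) \cap \Stab_G(x) = \Stab_G(x)$, since $\Stab_G(x)$ is normal in its own normaliser.

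First I would check that $p_1(\Stab_G(x)) = \Stab_{G_1}(x)$: the inclusion $\subseteq$ is clear from the definition of the action; for $\supseteq$, given $g_1 \in \Stab_{G_1}(x)$, surjectivity of $p_1$ gives some $f \in G$ with $p_1(f) = g_1$, and then $f \in \Stab_G(x)$ by the characterization above, so $g_1 = p_1(f) \in p_1(\Stab_G(x))$. Next I would show $p_1(N_G(\Stab_G(x))) = N_{G_1}(\Stab_{G_1}(x))$. For $\subseteq$: if $h \in N_G(\Stab_G(x))$ then $p_1(h) \Stab_{G_1}(x) p_1(h)^{-1} = p_1(h) p_1(\Stab_G(x)) p_1(h)^{-1} = p_1(h \Stab_G(x) h^{-1}) = p_1(\Stab_G(x)) = \Stab_{G_1}(x)$. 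For $\supseteq$: given $h_1 \in N_{G_1}(\Stab_{G_1}(x))$, pick $h \in G$ with $p_1(h) = h_1$; then $h \Stab_G(x) h^{-1}$ is a subgroup of $G$ mapping under $p_1$ onto $h_1 \Stab_{G_1}(x) h_1^{-1} = \Stab_{G_1}(x)$, and since $h \Stab_G(x) h^{-1} \supseteq h(\ker p_1)h^{-1} = \ker p_1$ (as $\ker p_1 \subseteq \Stab_G(x)$ and $\ker p_1$ is normal in $G$), we get $h\Stab_G(x)h^{-1} = p_1^{-1}(\Stab_{G_1}(x)) \cap G = \Stab_G(x)$, so $h \in N_G(\Stab_G(x))$.

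Finally I would assemble these facts. Define $\varphi : N_G(\Stab_G(x)) \to N_{G_1}(\Stab_{G_1}(x))/\Stab_{G_1}(x)$ as the composition of $p_1|_{N_G(\Stab_G(x))}$ (which lands in $N_{G_1}(\Stab_{G_1}(x))$ by the above) with the quotient projection. It is a group homomorphism, surjective because $p_1(N_G(\Stab_G(x))) = N_{G_1}(\Stab_{G_1}(x))$, and its kernel is $\{h \in N_G(\Stab_G(x)) : p_1(h) \in \Stab_{G_1}(x)\} = N_G(\Stab_G(x)) \cap \Stab_G(x) = \Stab_G(x)$. The first isomorphism theorem then gives the induced isomorphism $N_G(\Stab_G(x))/\Stab_G(x) \to N_{G_1}(\Stab_{G_1}(x))/\Stab_{G_1}(x)$, which is precisely the map induced by $p_1$. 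I do not anticipate a serious obstacle here; the only point requiring a little care is the repeated use of the identity $\Stab_G(x) = p_1^{-1}(\Stab_{G_1}(x)) \cap G$ and the fact that $\ker p_1 \subseteq \Stab_G(x)$, which is what makes the preimage arguments for the normalisers go through cleanly.
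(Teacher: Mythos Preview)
Your proof is correct and follows essentially the same route as the paper: both arguments hinge on the identity $\Stab_G(x)=p_1^{-1}(\Stab_{G_1}(x))$ (equivalently $\ker p_1\subseteq\Stab_G(x)$), use it to see that $p_1$ descends to an injective map on the quotients, and then check that $p_1$ carries $N_G(\Stab_G(x))$ onto $N_{G_1}(\Stab_{G_1}(x))$. The only cosmetic difference is that the paper verifies the normaliser equality via the pointwise characterisation $h\in N_G(\Stab_G(x))\Leftrightarrow \Stab_G(x)\cdot h^{-1}(x)=\{h^{-1}(x)\}$, whereas you use the subgroup-correspondence argument with $\ker p_1$; both are equivalent ways of exploiting the same preimage identity.
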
 
\begin{proof}
$f\in G$ belongs to $\Stab_G(x)$ iff $f_1(x) = x$. Hence 
$p_1^{-1}(\Stab_{G_1}(x))=\Stab_G(x)$. In particular $p_1$ descends to the quotients.
$h\in G$ belongs to $N_G(\Stab_G(x))$ iff $\forall f\in \Stab_G(x)$ we have $fh^{-1}(x) = h^{-1}(x)$. Since $p_1^{-1}(\Stab_{G_1}(x))=\Stab_G(x)$ the latter is equivalent to
$\forall f_1\in \Stab_{G_1}(x)$ we have $f_1h_1^{-1}(x) = h_1^{-1}(x)$. Hence
$N_{G_1}(\Stab_{G_1}(x))=p_1(N_G(\Stab_G(x)))$. Hence the map induced by $p_1$ is surjective. 
$p_1^{-1}(\Stab_{G_1}(x))=\Stab_G(x)$ shows that $p_1(h) \in \Stab_{G_1}(x)$ implies
$h\in \Stab_G(x)$ and so the map induced by $p_1$ is injective.
\end{proof}

\subsubsection{Definition of the virtual automorphism group}

Let $(X,\sigma,T)$ be a minimal system with abelian group $T$. We denote its Ellis semigroup here simply by $E$. 
Let $e\in E$ be a minimal idempotent and 
recall that  $\RSTp =eEe$ is a group. Pick $x_0\in e(X)$. Applying Lemma \ref{algebraic-lemma} to the space $e(X)$ and the group $G=\RSTp$, we get
\[  N_{\RSTp  } (\Stab_{\RSTp  }(x_0)   ) / (\Stab_{\RSTp  }(x_0)) \cong C_{S_{e(X)}}(\RSTp  ).     \] 
 
 \begin{definition}\label{vag-definition}
 Let $(X,\sigma,T)$ be minimal and $e\in E$ be a minimal idempotent in its Ellis semigroup.
  The {\em virtual automorphism group $V(X)$ of $(X,\sigma)$} is defined to be  $C_{S_{e(X)}}(\RSTp  )$.
 \end{definition}

While this definition depends on the choice of the minimal idempotent $e$ it does so only up to isomorphism, as different choices of idempotents lead to isomorphic groups.

We remark that the restriction map 
$$\Aut(X)\ni \Phi\mapsto\Phi|_{e(X)}\in C_{S_{e(X)}}(\RSTp  )$$ is well-defined, as automorphisms commute with the elements of $E(X)$. Furthermore, if $e(X)$ is dense in $X$ then this restriction map is an injective group homomorphism, as automorphisms are
continuous. A condition guaranteeing that $e(X)$ is dense in $X$ is point distality of 
$(X, \sigma,T)$, as minimal idempotents fix distal points. So for a point distal minimal system $(X, \sigma,T)$, $\Aut(X)$ is a subgroup of $V(X)$.
 
 \begin{definition}\label{semi-regular-definition}
 The minimal dynamical system  $(X,\sigma,T)$  is called {\em semi-regular} 
 if the restriction map $\Phi\mapsto\Phi|_{e(X)}$ is an isomorphism between  
 the automorphism group $\Aut(X)$ and the virtual automorphism group $V(X)$.  \end{definition}
 
Note that our definition is slightly different to Auslander and Glasner's, who simply require that the map $\Aut(X)\rightarrow V(X)$ be onto. However for point distal systems, and the systems we study here are point distal, the definitions coincide.

\subsubsection{Unique singular fibre systems} 
We investigate the virtual automorphism group $V(X)$ for minimal systems which have an equicontinuous  factor with a unique orbit of singular points.  
Recall the definition (\ref{eq:def T}) of $\Et$ as the subsemigroup of elements of $\Ef$ which act trivially on regular fibres and that it restriction $\Et_{y_0} $ to $\pi^{-1}(y_0)$ is faithful. 
Recall the notation $\RSTfp=e\Ef e$.
\begin{lem} \label{lem-aus1}
Let $(X,\sigma,T)$ be a minimal unique singular orbit system. 
Let $y_0$ be a singular point in its maximal equicontinuous factor and $x_0\in e\pi^{-1}(y_0)$.
If $e\Et_{y_0} e$ acts effectively\footnote{For any $x\in  e\pi^{-1}(y_0)$ there exists $\gamma\in e\Et_{y_0} e$ such that $\gamma(x)\neq x$.}
on $e\pi^{-1}(y_0)$, then
$$N_{\RSTp  }(\Stab_{\RSTp  }(x_0)) = N_{\RSTfp  }(\Stab_{\RSTfp  }(x_0)) \times T.$$
\end{lem}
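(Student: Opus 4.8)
The plan is to rephrase the identity in terms of stabilisers in $\RSTfp$, to exhibit an explicit central copy $\hat T$ of $T$ inside $\RSTp$, and then to reduce the whole statement to one assertion about $\tilde\pi$. Set $S:=\Stab_{\RSTp}(x_0)$. Since $x_0\in e\pi^{-1}(y_0)$ and $\pi(g(x))=\pi(x)+\tilde\pi(g)$ for every $g\in E(X)$ (with $y_0$ the neutral element of $Y$), any $g\in\RSTp$ fixing $x_0$ has $\tilde\pi(g)=y_0$, i.e.\ $g\in\ker(\tilde\pi|_{\RSTp})=e\Ef e=\RSTfp$; hence $S=\Stab_{\RSTfp}(x_0)\subseteq\RSTfp$ and $N_{\RSTfp}(S)=N_{\RSTp}(S)\cap\RSTfp$. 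Next, for $t\in T$ the homeomorphism $\sigma^t$ commutes with every element of $E(X)$ (left multiplication by a homeomorphism is continuous, so $\sigma^t$ commutes with every limit of translates), whence $\sigma^t e=e\sigma^t e\in\RSTp$, and $t\mapsto\sigma^t e$ is a homomorphism, injective since the system is point distal and $T$ acts faithfully, onto a subgroup $\hat T\le\RSTp$ which is central in $\RSTp$ and satisfies $\tilde\pi(\sigma^t e)=\delta^t(y_0)$; thus $\hat T\cong T$. Being central, $\hat T$ normalises $S$; moreover $N_{\RSTfp}(S)\cap\hat T\subseteq\RSTfp\cap\hat T=\{e\}$, so $N_{\RSTfp}(S)\,\hat T$ is an internal direct product $N_{\RSTfp}(S)\times\hat T$ contained in $N_{\RSTp}(S)$. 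The lemma therefore reduces to the reverse inclusion $N_{\RSTp}(S)\subseteq N_{\RSTfp}(S)\,\hat T$, and this follows once one knows that $\tilde\pi(h)$ lies in the $T$-orbit of $y_0$ for every $h\in N_{\RSTp}(S)$: writing $\tilde\pi(h)=\delta^{t}(y_0)$ with $t\in T$, the element $h(\sigma^{t}e)^{-1}$ lies in $\ker(\tilde\pi|_{\RSTp})\cap N_{\RSTp}(S)=N_{\RSTfp}(S)$, so $h\in N_{\RSTfp}(S)\,\hat T$.

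Everything thus comes down to proving that $\tilde\pi(h)$ lies in the $T$-orbit of $y_0$ for $h\in N_{\RSTp}(S)$, and this is where the work is. Put $t:=\tilde\pi(h)$ and $x_1:=h^{-1}(x_0)$, so that $x_1\in e\pi^{-1}(y_0-t)$. For $f\in S$ one has $h^{-1}fh\in S$ and $(h^{-1}fh)(x_1)=h^{-1}(f(x_0))=x_1$, so $S=h^{-1}Sh\subseteq\Stab_{\RSTfp}(x_1)$; conversely any $g\in\Stab_{\RSTfp}(x_1)$ satisfies $hgh^{-1}\in\Stab_{\RSTfp}(x_0)=S$, whence $\Stab_{\RSTfp}(x_1)\subseteq h^{-1}Sh=S$. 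Therefore $S=\Stab_{\RSTfp}(x_1)$. Now suppose $y_0-t$ were a regular point. By uniqueness of the singular orbit, $y_0-t$ then lies outside the $T$-orbit of $y_0$, so by definition of $\Et$ every element of $e\Et e\subseteq\RSTfp$ acts as the identity on the regular fibre $\pi^{-1}(y_0-t)$; in particular it fixes $x_1$, giving $e\Et e\subseteq\Stab_{\RSTfp}(x_1)=S=\Stab_{\RSTfp}(x_0)$. Restricting to $e\pi^{-1}(y_0)$, where $e\Et e$ restricts exactly to $e\Et_{y_0}e$, this says that every element of $e\Et_{y_0}e$ fixes $x_0$, contradicting the hypothesis that $e\Et_{y_0}e$ acts effectively on $e\pi^{-1}(y_0)$. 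Hence $y_0-t$ is singular, i.e.\ $t=\tilde\pi(h)$ lies in the $T$-orbit of $y_0$, as required.

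The main obstacle is this last step; the rest is bookkeeping, namely checking that $\sigma^t e$ is a central element of $\RSTp$ (using $ee=e$ and the centrality of $\sigma^t$ in $E(X)$), that $\RSTfp$ is precisely $\ker(\tilde\pi|_{\RSTp})$, and that the equality $S=\Stab_{\RSTfp}(x_1)$ may be read off on the fibre over $y_0$. The genuinely new input is the implication ``$h$ normalises $S\ \Rightarrow\ S=\Stab_{\RSTfp}(h^{-1}(x_0))\ \Rightarrow$ (if $h^{-1}(x_0)$ lies over a regular point) $e\Et_{y_0}e$ fixes $x_0$'', which is exactly the point at which both the uniqueness of the singular orbit and the effectiveness of $e\Et_{y_0}e$ are used.
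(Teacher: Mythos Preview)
Your argument is correct and in fact cleaner than the paper's at the crucial step. Both proofs set up the same reduction: show $\tilde\pi(h)\in T$ for every $h$ in the normaliser, then split off the central copy of $T$ via $t\mapsto\sigma^t e$. The difference lies in how the contradiction is derived when $\tilde\pi(h)\notin T$.

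The paper invokes Theorem~\ref{thm-main} (the inclusion $\Ct\subseteq\Ef$) to manufacture, for each $\gamma\in e\Et_{y_0}e$, an element $f\in\Stab_{\RSTp}(x_0)$ whose restriction to the fibre over $y=y_0+\tilde\pi(h)$ equals $\Phi^y_{y_0}(\gamma)$; choosing $\gamma$ to move the pullback of $h(x_0)$ then gives $f(h(x_0))\neq h(x_0)$. This is constructive but relies on the structural result that one can prescribe the behaviour of an element of $\Ef$ independently on each orbit. Your route bypasses Theorem~\ref{thm-main} entirely: from $hSh^{-1}=S$ you read off $S=\Stab_{\RSTfp}(h^{-1}(x_0))$, and then the elementary fact that $e\Et e$ acts trivially on every regular fibre forces $e\Et e\subseteq S$ whenever $h^{-1}(x_0)$ sits over a regular point, so that $e\Et_{y_0}e$ fixes $x_0$ --- contradicting effectiveness applied to the single point $x_0$. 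This is more economical: it uses only the definition of $\Et$ and the fact that idempotents fix regular fibres, not the deeper density statement underlying $\Ct\subseteq\Ef$. The price is that your argument is indirect (you never exhibit the offending element of $S$), whereas the paper's is explicit; but for the purposes of the lemma your version is arguably preferable.
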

\begin{proof} 
By Theorem~\ref{thm-main} $\Ef$ contains $\Ct$ as defined in (\ref{eq:def Ct}) and therefore 
\begin{equation}\label{eq-aus1}
\Stab_{\RSTp  }(x_0)\supset \{\tilde f\in  e\Ct e  :\tilde f (y_0)(x_0)=x_0\}.
\end{equation}
Let $h\in N_{\RSTp  }(\Stab_{\RSTp  }(x_0))$. Then $f (h(x_0))=h(x_0)$ for all $f\in \Stab_{\RSTp  }(x_0)$. Using $y:=\tilde\pi(h)+y_0$ this means that $\tilde f(y)(h(x_0)) = h(x_0)$. Suppose that $y-y_0 \notin T\subset Y$ ($T$ seen as a subgroup of the maximal equicontinuous factor $Y$). 
Since $e\Et_{y_0} e$ acts effectively there is $\gamma\in e\Et_{y_0} e$ such that 
$\Phi^{y}_{y_0}(\gamma)(h(x_0)) \neq h(x_0)$. 
By (\ref{eq-aus1}) there exists $f\in \Stab_{\RSTp  }(x_0)$ such that $\tilde f(y) = \Phi^{y}_{y_0}(\gamma)$. For this element we have 
$f(h(x_0)) =\Phi^{y}_{y_0}(\gamma)(h(x_0))\neq h(x_0)$. This is a contradiction and thus all  $h\in N_{\RSTp  }(\Stab_{\RSTp  }(x_0))$ must satisfy $\tilde\pi(h)\in T$. Clearly $\sigma\in N_{\RSTp  }(\Stab_{\RSTp  }(x_0))$ and so the above shows that  
the exact sequence $ {\RSTfp  }\hookrightarrow {\RSTp  }\twoheadrightarrow Y $
restricts to the exact sequence
$$ N_{\RSTfp  }(\Stab_{\RSTfp  }(x_0))\hookrightarrow N_{\RSTp  }(\Stab_{\RSTp  }(x_0)) \twoheadrightarrow T .$$
We can lift the subgroup $T\subseteq Y$ with the lift $s:T\to E$ given by  
$s(t) = \sigma^t$ to see that $N_{\RSTp  }(\Stab_{\RSTp  }(x_0))$ is a semi-direct product, which is in fact direct, as $\sigma^t$ commutes with $E$, since $T$ is abelian.
\end{proof}

\begin{lem} \label{lem-vag-structure}
Let $(X,\sigma,T)$ be a minimal system with equicontinuous factor $\pi:X\to Y$.
Let $y_0\in Y$, $x_0\in e\pi^{-1}(y_0)$ where $e$ is a minimal idempotent of $E(X)$. 
We have
$$N_{\RSTfp  }(\Stab_{\RSTfp  }(x_0))\cong C_{S_{e\pi^{-1}(y_0)}}(G_\pi) .$$
\end{lem}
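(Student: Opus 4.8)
The plan is to identify the group $\RSTfp = e\Ef e$ with a group of permutations of the single fibre $e\pi^{-1}(y_0)$ and then apply the abstract group-theoretic Lemma~\ref{algebraic-lemma} to that action. First I would recall that by Lemma~\ref{structure}(1), $e\Ef_{y_0} e$ is a group, being the homomorphic image of the group $\RSTfp$ under the restriction map $f\mapsto f|_{e\pi^{-1}(y_0)}$; call this restriction homomorphism $\rho:\RSTfp\to e\Ef_{y_0} e$. The key observation is that $\rho$ is \emph{surjective} with image $G_\pi = e\Ef_{y_0}e$ (by Definition~\ref{def:structure group}, identifying the structure group with this concrete permutation group), so that $G_\pi$ acts on $e\pi^{-1}(y_0)$. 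That action is transitive: minimality of $(X,\sigma)$ implies that the action of $E(X)$ on $X$ is transitive, hence $E^{fib}(X)$ acts transitively on each fibre $\pi^{-1}(y_0)$, and applying the idempotent $e$ gives a transitive action of $\RSTfp$, hence of its image $G_\pi$, on $e\pi^{-1}(y_0)$. (One checks $e\pi^{-1}(y_0) = eE(X)e\,(x_0)$ using $e(x_0)=x_0$, i.e. that $x_0$ is fixed by the minimal idempotent $e$.)

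Next I would reduce the statement about $\RSTfp$ to a statement about $G_\pi$. The homomorphism $\rho:\RSTfp\to G_\pi$ has the property that $\rho^{-1}(\Stab_{G_\pi}(x_0)) = \Stab_{\RSTfp}(x_0)$, simply because an element $f\in\RSTfp$ fixes $x_0$ if and only if its restriction $\rho(f)$ fixes $x_0$. This is exactly the hypothesis needed to transport normalisers and stabilisers along a surjective homomorphism: $\rho$ descends to an isomorphism
\[
N_{\RSTfp}(\Stab_{\RSTfp}(x_0))/\Stab_{\RSTfp}(x_0)\;\xrightarrow{\ \sim\ }\;N_{G_\pi}(\Stab_{G_\pi}(x_0))/\Stab_{G_\pi}(x_0).
\]
The argument is identical to the proof of Lemma~\ref{lem-exercice}: surjectivity of $\rho$ combined with $\rho^{-1}(\Stab_{G_\pi}(x_0))=\Stab_{\RSTfp}(x_0)$ gives both $N_{G_\pi}(\Stab_{G_\pi}(x_0)) = \rho(N_{\RSTfp}(\Stab_{\RSTfp}(x_0)))$ and injectivity of the induced map on quotients. (In fact Lemma~\ref{lem-exercice} can be cited directly, taking $G_1 = G_\pi$, $X_1 = e\pi^{-1}(y_0)$, and $G = \RSTfp$ realised as a subgroup of $G_\pi\times G_\pi^\perp$ where $G_\pi^\perp$ records the action on the remaining fibres; the projection $p_1$ to $G_\pi$ is then $\rho$ and is surjective.) Note the statement of the lemma is slightly imprecise: the left-hand side should be read as $N_{\RSTfp}(\Stab_{\RSTfp}(x_0))/\Stab_{\RSTfp}(x_0)$, matching the quotient appearing on the right after Lemma~\ref{algebraic-lemma}.

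Finally I would apply Lemma~\ref{algebraic-lemma} to the transitive action of $G_\pi\subseteq S_{e\pi^{-1}(y_0)}$ on the finite (or at worst compact) set $e\pi^{-1}(y_0)$ and the point $x_0$, which yields
\[
N_{G_\pi}(\Stab_{G_\pi}(x_0))/\Stab_{G_\pi}(x_0)\;\cong\;C_{S_{e\pi^{-1}(y_0)}}(G_\pi).
\]
Composing the two displayed isomorphisms gives the claim. I expect the only real subtlety to be the careful justification that $\rho$ has image exactly $G_\pi$ and that the $G_\pi$-action on $e\pi^{-1}(y_0)$ is transitive — both hinge on minimality of $(X,\sigma)$ and the fact that $e$ fixes $x_0$ (so $x_0\in e(X)$ is a legitimate base point and its $\RSTfp$-orbit is all of $e\pi^{-1}(y_0)$); everything after that is the purely algebraic bookkeeping already packaged in Lemmas~\ref{algebraic-lemma} and~\ref{lem-exercice}.
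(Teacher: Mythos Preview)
Your proposal is correct and follows essentially the same route as the paper: embed $\RSTfp$ in the product $\prod_{y\in Y_0} e\Ef_y e$, project onto the $y_0$-factor $G_\pi$ (your restriction map $\rho$), invoke Lemma~\ref{lem-exercice} to pass from $\RSTfp$ to $G_\pi$, and then apply Lemma~\ref{algebraic-lemma} to the transitive $G_\pi$-action on $e\pi^{-1}(y_0)$. Your remark that the left-hand side of the statement should carry the quotient by $\Stab_{\RSTfp}(x_0)$ is also correct --- the paper's own proof uses Lemma~\ref{lem-exercice}, which yields precisely that quotient, and the omission is a typographical slip; you are likewise right to flag transitivity as the one genuine hypothesis to check, which the paper leaves implicit.
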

\begin{proof}
$\RSTfp  $ is a subgroup of $e\Cf e=\prod_{y\in Y_0} e\Ef_y e$ where $Y_0\subset Y$ contains exactly one representative for each orbit and we suppose that $y_0\in Y_0$. Thus $e\Cf e$ has the form $G_1\times G_2$ with $G_1 = G_\pi = e\Ef_{y_0} e$ and the projection $p_1$ is surjective. We thus can apply Lemma~\ref{lem-exercice} and then Lemma~\ref{algebraic-lemma} to see that 
$$N_{\RSTfp  }(\Stab_{\RSTfp  }(x_0))\cong N_{e\Ef_{y_0} e}(\Stab_{e\Ef_{y_0} e}(x_0))\cong  C_{S_{e\pi^{-1}(y_0)}}(G_\pi).$$
\end{proof}
\begin{cor} \label{vag-structure}
Let $(X,\sigma)$ be a minimal unique singular orbit system. Let $y_0$ be a singular point of its maximal equicontinuous factor. If 
$e\Et_{y_0}e$ acts effectively then the virtual automorphism group is given by  
$$V(X)\cong C_{S_{e\pi^{-1}(y_0)}}(G_\pi) \times T.$$
\end{cor}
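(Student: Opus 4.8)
The plan is to combine the definition of the virtual automorphism group, Definition~\ref{vag-definition}, with the structural results already established. Recall that by definition $V(X) = C_{S_{e(X)}}(\RSTp)$, and that applying Lemma~\ref{algebraic-lemma} to the space $e(X)$ and the group $G = \RSTp$ acting transitively on it (transitivity holds because $(X,\sigma)$ is minimal and $e$ is a minimal idempotent, so $\RSTp$ acts transitively on $e(X)$) yields
$$V(X) \cong N_{\RSTp}(\Stab_{\RSTp}(x_0))/\Stab_{\RSTp}(x_0)$$
for any chosen $x_0 \in e(X)$. The strategy is then to choose $x_0 \in e\pi^{-1}(y_0)$ lying over the singular point $y_0$, compute the right-hand side using Lemmas~\ref{lem-aus1} and~\ref{lem-vag-structure}, and read off the claimed product decomposition.

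First I would invoke Lemma~\ref{lem-aus1}: since $(X,\sigma)$ is a minimal unique singular orbit system and $e\Et_{y_0}e$ acts effectively on $e\pi^{-1}(y_0)$ by hypothesis, we get
$$N_{\RSTp}(\Stab_{\RSTp}(x_0)) = N_{\RSTfp}(\Stab_{\RSTfp}(x_0)) \times T.$$
Next, observe that $\Stab_{\RSTp}(x_0) = \Stab_{\RSTfp}(x_0)$: an element of $\RSTp$ fixing $x_0$ must preserve the fibre $\pi^{-1}(y_0)$ (since $\tilde\pi$ of such an element must fix $y_0$, because $x_0$ fixed forces $\pi(x_0)$ fixed), hence lies in $\RSTfp$; the reverse inclusion is trivial. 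Therefore
$$V(X) \cong \bigl(N_{\RSTfp}(\Stab_{\RSTfp}(x_0)) \times T\bigr)/\Stab_{\RSTfp}(x_0) \cong \bigl(N_{\RSTfp}(\Stab_{\RSTfp}(x_0))/\Stab_{\RSTfp}(x_0)\bigr) \times T,$$
using that $T$ is central in $\RSTp$ (as $\sigma^t$ commutes with all of $E(X)$ and $T$ is abelian), so the quotient distributes over the direct product.

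Finally, Lemma~\ref{lem-vag-structure} applied with the equicontinuous factor $\pi$ and the point $y_0$ gives $N_{\RSTfp}(\Stab_{\RSTfp}(x_0)) \cong C_{S_{e\pi^{-1}(y_0)}}(G_\pi)$; this isomorphism manifestly sends the subgroup $\Stab_{\RSTfp}(x_0)$ to the trivial group (an element of the stabiliser maps to the identity permutation under the construction of Lemma~\ref{algebraic-lemma}, as already noted in its proof), so in fact $N_{\RSTfp}(\Stab_{\RSTfp}(x_0))/\Stab_{\RSTfp}(x_0) \cong C_{S_{e\pi^{-1}(y_0)}}(G_\pi)$ directly — indeed Lemma~\ref{lem-vag-structure} is really the statement at the level of these quotients. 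Substituting this into the previous display yields $V(X) \cong C_{S_{e\pi^{-1}(y_0)}}(G_\pi) \times T$, as claimed. The only genuinely delicate point is checking that the hypotheses of Lemma~\ref{lem-aus1} (minimal unique singular orbit, effective action of $e\Et_{y_0}e$) are exactly what is assumed here and that $y_0$ being a singular point of the \emph{maximal} equicontinuous factor is the right normalisation; everything else is bookkeeping of stabilisers and normalisers, so I do not anticipate a serious obstacle beyond making sure the identifications of $\Stab_{\RSTp}$ with $\Stab_{\RSTfp}$ and the centrality of $T$ are spelled out cleanly.
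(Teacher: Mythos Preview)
Your proof is correct and follows precisely the paper's approach: the paper's proof is simply ``Combine Lemmata~\ref{lem-aus1} and \ref{lem-vag-structure}'', and you have spelled out exactly this combination, including the bookkeeping steps (that $\Stab_{\RSTp}(x_0) = \Stab_{\RSTfp}(x_0)$ and that $T$ is central) which the paper leaves implicit. Your observation that Lemma~\ref{lem-vag-structure} is really a statement about the quotients $N/\Stab$ rather than the normalisers themselves is also correct---the paper's statement of that lemma omits the quotient, but its proof (via Lemmas~\ref{lem-exercice} and~\ref{algebraic-lemma}) clearly produces the quotient version you use.
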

\begin{proof}
Combine Lemmata~\ref{lem-aus1} and \ref{lem-vag-structure}. \end{proof}
We provide a criterion for effectiveness of the action of $e\Et_{y_0} e$.
\begin{lem}\label{effective-lemma}
Let $X$ be a set and 
$\Gamma$ be a non-trivial normal subgroup of a subgroup $G\subseteq S_X$ which acts transitively on $X$. Then  $\Gamma$ acts effectively on $X$.
\end{lem}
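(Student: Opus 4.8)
The plan is to argue by contradiction, exploiting transitivity to ``spread'' a hypothetical global fixed point of $\Gamma$ to all of $X$, and then invoke the faithfulness of the action of $S_X$ on $X$ together with non-triviality of $\Gamma$.

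First I would suppose that $\Gamma$ does \emph{not} act effectively, so that there exists a point $x_0 \in X$ with $\gamma(x_0) = x_0$ for every $\gamma \in \Gamma$; equivalently $\Gamma \subseteq \Stab_G(x_0)$. Next I would take an arbitrary $y \in X$ and use transitivity of $G$ to pick $g \in G$ with $g(x_0) = y$. Then for any $\gamma \in \Gamma$ I compute
\[
\gamma(y) = \gamma g(x_0) = g\bigl(g^{-1}\gamma g\bigr)(x_0) = g(x_0) = y,
\]
where the third equality uses that $g^{-1}\gamma g \in \Gamma$ by normality of $\Gamma$ in $G$, and hence fixes $x_0$. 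Since $y$ was arbitrary, every element of $\Gamma$ fixes every point of $X$, i.e.\ $\Gamma = \{\Id\}$, contradicting the hypothesis that $\Gamma$ is non-trivial. Therefore $\Gamma$ acts effectively.

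There is essentially no obstacle here: the only ingredients are the conjugation-invariance of $\Gamma$, the transitivity of $G$, and the fact that $S_X$ acts faithfully on $X$ by definition. The one point worth stating carefully is the meaning of ``effective'' as used in the paper (no point is fixed by the whole of $\Gamma$), so that the contradiction is set up against the correct negation.
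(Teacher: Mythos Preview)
Your proof is correct and follows essentially the same line as the paper's: the paper introduces the fixed-point set $F_\Gamma=\{a\in X:\Gamma(a)=\{a\}\}$, shows it is $G$-invariant using normality, and concludes by transitivity that $F_\Gamma$ is all of $X$ or empty. Your contradiction argument is just the contrapositive of this, carried out pointwise with the same conjugation computation.
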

\begin{proof}
Let $F_\Gamma=\{ a\in X:\Gamma(a)=\{a\}  \}$. 
If $h\in N_{S_X}( \Gamma)$ then for $a\in F_\Gamma$
\[  \Gamma h(a)  = \{ \gamma h(a): \gamma\in \Gamma    \} =  \{ h\gamma' (a): \gamma'\in \Gamma    \} = \{ h(a)\},  \]
so that $h(a)\in F_\Gamma$. 
By assumption, $G$ lies in $N_{S_X}( \Gamma)$, so that $G(F_{\Gamma})\subseteq F_{\Gamma}$. Since $G$ is transitive we have either $F_\Gamma=X$ or $F_\Gamma=\emptyset$. In the first case, $\Gamma$ can consist only of the identity, and in the second $\Gamma$ acts effectively.
\end{proof}
We thus see that $e\Et_{y_0} e$ acts effectively if it is non-trivial and 
$e\Ef_{y_0} e$ acts transitively on $e\pi^{-1}(y_0)$.

\subsubsection{Bijective substitutions}
We now focus again on the dynamical systems of primitive aperiodic bijective substitutions.
Recall that in this case we can identify $e\pi^{-1}(0)$, the image of the singular fibre at $0\in\Z_\ell$ under the chosen minimal idempotent $e\in E(X_\theta,\Z^+)$, with the alphabet $\Aa$ using the map $\evo^0$. Under this isomorphism   
$G_\pi = e \Ef_0 e \cong G_\theta$ and $e \Et_0 e \cong \Glstr$.  Since $\theta$ is assumed primitive, the structure group $\Gstr$ must act transitively on $\Aa$. Aperiodicity of the substitution implies that $\rset$ must consist of at least $2$ elements. 
Hence $\Gamma_\theta$ is non-trivial, so by Lemma \ref{effective-lemma}, $\Glstr$   acts effectively on $\mathcal A$.

\begin{cor} \label{vag-structure-height}
Let $\theta$ be a primitive  aperiodic bijective substitution.
The virtual automorphism group is given by  
$$V(X_\theta)\cong C_{S_\Aa}(\Gstr) \times \Z.$$
\end{cor}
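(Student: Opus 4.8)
The plan is to derive this straight from Corollary~\ref{vag-structure}, which computes $V(X)\cong C_{S_{e\pi^{-1}(y_0)}}(G_\pi)\times T$ for any minimal unique singular orbit system under the hypothesis that $e\Et_{y_0}e$ acts effectively on $e\pi^{-1}(y_0)$. So essentially all I need to do is check that hypothesis for $(X_\theta,\sigma)$ and translate the conclusion into substitutive language.

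First I would fix the setup. By Proposition~\ref{one-singular-orbit} the equicontinuous factor $\pi:X_\theta\to\Zl$ has exactly one orbit of singular fibres, that of $\pi^{-1}(0)$, so $(X_\theta,\sigma)$ is a minimal unique singular orbit system with $T=\Z$; take $y_0=0$, fix a minimal idempotent $e\in E(X_\theta,\Z^+)$ and a point $x_0\in e\pi^{-1}(0)$. As recalled just before the statement of the corollary, the restricted evaluation map $\evo^0$ identifies $e\pi^{-1}(0)$ with the alphabet $\Aa$, and under this identification $G_\pi=e\Ef_0 e$ becomes $\Gstr$ while $e\Et_0 e$ becomes the normal completion $\Glstr$ of the little structure group $\Gamma_\theta$; consequently $C_{S_{e\pi^{-1}(0)}}(G_\pi)$ is carried onto $C_{S_\Aa}(\Gstr)$.

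The one thing needing a genuine argument is effectiveness. Primitivity of $\theta$ makes $\Gstr$ act transitively on $\Aa$, and, as noted just before the statement of this corollary, aperiodicity of $\theta$ forces $\rset$ to have at least two elements, so the little structure group $\Gamma_\theta=\langle gh^{-1}:g,h\in\rset\rangle$, and hence its normal completion $\Glstr$, is a non-trivial normal subgroup of $\Gstr$. Lemma~\ref{effective-lemma}, applied with $G=\Gstr$ acting transitively on $\Aa$ and with the non-trivial normal subgroup $\Gamma=\Glstr$, then gives that $\Glstr\cong e\Et_0 e$ acts effectively on $e\pi^{-1}(0)\cong\Aa$. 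Hence Corollary~\ref{vag-structure} applies and yields $V(X_\theta)\cong C_{S_{e\pi^{-1}(0)}}(G_\pi)\times\Z\cong C_{S_\Aa}(\Gstr)\times\Z$, which is the claim.

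The point to be careful about is that Corollary~\ref{vag-structure} speaks of a singular point of the \emph{maximal} equicontinuous factor, while $\Zl$ need not be maximal (its classical height may be nontrivial). However, the proof of Lemma~\ref{lem-aus1} — the substantive half of Corollary~\ref{vag-structure} — uses only that $Y$ is an equicontinuous factor with a single orbit of singular points, together with $\Ef\supseteq\Ct$ from Theorem~\ref{thm-main} and the faithfulness of $\Et$ on the singular fibre, and never the maximality of $Y$; Lemma~\ref{lem-vag-structure} is already stated for an arbitrary equicontinuous factor. So one simply re-runs those two lemmas with $Y=\Zl$, $T=\Z$ and $y_0=0$, after which the identification $V(X_\theta)=C_{S_{e(X_\theta)}}(\RSTp)\cong N_{\RSTp}(\Stab_{\RSTp}(x_0))/\Stab_{\RSTp}(x_0)$ from Lemma~\ref{algebraic-lemma} and Definition~\ref{vag-definition} closes the argument; everything else is routine bookkeeping with the identifications $e\pi^{-1}(0)\cong\Aa$, $G_\pi\cong\Gstr$, $e\Et_0 e\cong\Glstr$ already in place.
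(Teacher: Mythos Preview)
Your proof is correct and follows exactly the paper's approach: verify effectiveness of $e\Et_0 e$ via primitivity (transitivity of $\Gstr$), aperiodicity ($|\rset|\geq 2$, hence $\Gamma_\theta$ and $\Glstr$ nontrivial), and Lemma~\ref{effective-lemma}, then invoke Corollary~\ref{vag-structure} and the identifications $e\pi^{-1}(0)\cong\Aa$, $G_\pi\cong\Gstr$. Your extra paragraph noting that Corollary~\ref{vag-structure} is phrased for the \emph{maximal} equicontinuous factor while $\Zl$ need not be maximal, and that Lemmas~\ref{lem-aus1} and~\ref{lem-vag-structure} nonetheless go through for any equicontinuous factor with a unique singular orbit, is a point the paper glosses over---so you are in fact slightly more careful here.
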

\begin{proof} All hypothesis of Corollary~\ref{vag-structure} are satisfied. \end{proof}

\begin{cor}\label{Vag=Aut} The dynamical system of a primitive aperiodic bijective substitution is semi-regular.
\end{cor}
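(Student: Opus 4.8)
The goal is to show that the restriction map $r:\Aut(X_\theta)\to V(X_\theta)=C_{S_{e(X_\theta)}}(\RSTp)$, $\Phi\mapsto\Phi|_{e(X_\theta)}$, is an isomorphism; by Theorem~\ref{automorphism-theorem} and Corollary~\ref{vag-structure-height} the two groups are abstractly isomorphic, but semi-regularity is exactly the statement that \emph{this particular} map is onto. Injectivity is already at hand: a primitive aperiodic bijective substitution system is point distal — a point of a regular fibre is, by equicontinuity of $\pi$, proximal only to points of the same fibre, and those are pairwise non-proximal by Proposition~\ref{one-singular-orbit}, so such a point is distal and a regular fibre exists — hence $e(X_\theta)$ is dense in $X_\theta$, and, automorphisms being continuous, $r$ is an injective group homomorphism as noted after Definition~\ref{vag-definition}. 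It remains to prove surjectivity.

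Fix $\psi\in V(X_\theta)$. First I would determine how $\psi$ moves the sets $e\pi^{-1}(z)$, $z\in\Z_\ell$. Since $\psi$ commutes with $\RSTp=eE(X_\theta)e$ and $\tilde\pi:\RSTp\to\Z_\ell$ is onto, with each $g\in\tilde\pi^{-1}(z)$ carrying $e\pi^{-1}(w)$ bijectively onto $e\pi^{-1}(w+z)$, a one-line computation gives that $\pi(\psi(x))-\pi(x)$ does not depend on $x\in e(X_\theta)$; write $c_0$ for this constant, so $\psi(e\pi^{-1}(z))=e\pi^{-1}(z+c_0)$ for every $z$. I expect the main obstacle to be the next step: showing $c_0\in\Z$, i.e. that $\psi$ cannot carry the singular fibre orbit onto a regular one. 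For this, consider the kernels $K_w:=\{g\in\RSTfp: g|_{e\pi^{-1}(w)}=\Id\}$ of the restriction homomorphisms $\RSTfp\to S_{e\pi^{-1}(w)}$. Using the identification $\RSTfp\cong G_\pi^{\Z_\ell/\Z}$ from Corollary~\ref{cor:reproducing M} (in the trivial-height case), respectively the graded description of Proposition~\ref{fibre-structure-height2}, together with the fact from Theorem~\ref{thm-main} that $\RSTfp$ contains elements acting non-trivially on a single prescribed fibre orbit and trivially on all others, one sees that $K_w$ determines the orbit $[w]\in\Z_\ell/\Z$: $K_w=K_{w'}$ iff $[w]=[w']$. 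On the other hand, because $\psi$ commutes with $\RSTfp$ and $\RSTfp$ preserves both $e\pi^{-1}(0)$ and $e\pi^{-1}(c_0)$, it intertwines the two $\RSTfp$-actions, whence $K_0=K_{c_0}$; therefore $[0]=[c_0]$, i.e. $c_0\in\Z$.

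Now replace $\psi$ by $\psi':=\psi\circ r(\sigma)^{-c_0}\in V(X_\theta)$, which by construction preserves every $e\pi^{-1}(z)$. In particular $\psi'|_{e\pi^{-1}(0)}$ commutes with $\RSTfp|_{e\pi^{-1}(0)}=e\Ef_0(X_\theta)e=G_\pi$, so it lies in $C_{S_{e\pi^{-1}(0)}}(G_\pi)$. By Theorem~\ref{automorphism-theorem}, after identifying $e\pi^{-1}(0)$ with $\Aa$ via $\evo^0$, the restriction $\Aut^{fib}(X_\theta)\to C_{S_{e\pi^{-1}(0)}}(G_\pi)$ is surjective, so there is $\Phi_0\in\Aut^{fib}(X_\theta)$ with $\Phi_0|_{e\pi^{-1}(0)}=\psi'|_{e\pi^{-1}(0)}$.

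Finally I would verify $r(\Phi_0)=\psi'$. Both elements lie in $V(X_\theta)$, preserve all fibres, and coincide on $e\pi^{-1}(0)$; hence $\psi'':=r(\Phi_0)^{-1}\psi'$ is a fibre-preserving element of $V(X_\theta)$ that is the identity on $e\pi^{-1}(0)$. Picking, for each $z$, some $g\in\RSTp$ with $\tilde\pi(g)=z$ and using $\psi''g=g\psi''$ shows $\psi''$ is the identity on $g(e\pi^{-1}(0))=e\pi^{-1}(z)$; as $z$ is arbitrary, $\psi''=\Id_{e(X_\theta)}$, so $r(\Phi_0)=\psi'$ and $\psi=\psi'\circ r(\sigma)^{c_0}=r(\Phi_0\circ\sigma^{c_0})$ with $\Phi_0\circ\sigma^{c_0}\in\Aut(X_\theta)$. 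This proves $r$ is onto, hence an isomorphism, so $(X_\theta,\sigma)$ is semi-regular. Apart from the identity $c_0\in\Z$, every step is a routine unwinding of the identifications established in Sections~\ref{fibre-preserving} and~\ref{bijective}.
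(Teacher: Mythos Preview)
Your proof is correct. It differs from the paper's in style and in one key sub-argument, so a short comparison is worthwhile.

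The paper's proof is three lines: it quotes Theorem~\ref{automorphism-theorem} and Corollary~\ref{vag-structure-height} to see that $\Aut(X_\theta)$ and $V(X_\theta)$ are both abstractly $C_{S_\Aa}(G_\theta)\times\Z$, observes that the restriction map $r$ sends the fibre-preserving part of the former into that of the latter, and concludes surjectivity from injectivity plus finiteness of the fibre-preserving parts (the $\Z$-factor on both sides being generated by the same element $r(\sigma)$, by Lemma~\ref{lem-aus1}). Your argument instead constructs an explicit preimage of an arbitrary $\psi\in V(X_\theta)$. The crucial step in both approaches is the assertion that a virtual automorphism can only shift fibres by an element of $T=\Z$: in the paper this is the content of Lemma~\ref{lem-aus1}, proved via the effectiveness of $e\Et_{y_0}e$; you instead detect the fibre orbit through the kernels $K_w$ of the restriction maps $\RSTfp\to S_{e\pi^{-1}(w)}$ and use $K_0=K_{c_0}$. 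Both arguments ultimately rest on the same supply of fibre-preserving Ellis elements furnished by Theorem~\ref{thm-main} and Proposition~\ref{fibre-structure-height2}, together with the non-triviality of $\Glstr$ (Lemma~\ref{effective-lemma} and the discussion preceding Corollary~\ref{vag-structure-height}).

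What your approach buys is an explicit inverse to $r$ and independence from the particular packaging of Corollary~\ref{vag-structure-height}; what the paper's approach buys is brevity once Lemma~\ref{lem-aus1} is in hand. A minor remark: in your Step~6 you use that $\RSTp$ acts transitively on $e(X_\theta)$ so that $g(e\pi^{-1}(0))=e\pi^{-1}(z)$; this is true (for minimal systems $eE(X)e\cdot x=e(X)$ for any $x\in e(X)$), but you might state it explicitly since it is also implicitly used in Step~1.
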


\begin{proof}
We see from Theorems  ~\ref{automorphism-theorem} and Corollary~\ref{vag-structure-height} 
that the virtual automorphism group is isomorphic to the automorphism group. 
Furthermore, their fibre preserving parts are isomorphic. Since these are finite groups and the automorphism group is included in the virtual automorphism group, the map from Definition~\ref{semi-regular-definition} must be an isomorphism.
\end{proof}
Since the virtual automorphism group $V(X_\theta)$ can be expressed by means of the Ellis semigroup $E(X_\theta)$, as we saw above, Corollary~\ref{Vag=Aut} describes the relation between $E(X_\theta)$ and $\Aut(X_\theta)$. 
Note that no non-trivial element of $\Aut^{fib}(X_\theta)$ can be an element of $E^{fib}(X_\theta)$ as 
$E^{fib}(X_\theta)\backslash \{ \Id\}$ is a proper ideal and therefore cannot contain an invertible element.

\section{Examples}\label{Examples}

We provide here a list of examples of Ellis semigroups of dynamical systems defined by a primitive, aperiodic, bijective substitution $\theta$ of constant length $\ell$ over a finite alphabet $\Aa$. For the benefit of the reader we summarise results and recall calculation of $E(X_\theta)$.

$E(X_\theta)$ is the disjoint union of its kernel $\M(X_\theta)$ with the acting group $\Z$.
It depends only on the $\Rr$-set $\rset\subset S_\Aa$, which can easily be computed  as in Lemma \ref{I-is-everything}.
But to describe its associated Rees matrix form we make a choice of minimal idempotent $e\in  E(X_\theta,\Z^+)$ which amounts to a choice of element $g_0\in \rset$. Different choices for $e$ lead to isomorphic expressions, and, as far as the fibre preserving parts are concerned, even homeomorphic ones.

The first result is that the structural semigroup $\Sfib$ is isomorphic to the normalised matrix semigroup
$$\Sfib \cong M[\Gstr;I_\theta,\{\pm\};A]$$
where the structure group $\Gstr$  is the group generated by $\rset$, the $+$ entries of $A$ equal 1, and the $-$ entries of  $A$ equal $g_0^{-1} g$, $g\in \rset$. They generate a subgroup of $\Gstr$ which we call the little structure group $\Gamma_\theta$.
Everything is finite at this level and so topologically trivial.

Next, the fibre preserving part $\M^{fib}(X_\theta)$ is topologically isomorphic the normalised matrix semigroup  
$$\M^{fib}(X_\theta)\cong M[\RSTf;I_\theta,\{\pm\};A]$$
where $\rset$ is the same as above. The quotient $\Gstr/\Glstr$ of $\Gstr$ by the normal completion $\Glstr$ of the little structure group must be a cyclic group, its order $h$ is the {generalised} height of the substitution.
The structure group of $\M^{fib}(X_\theta)$ is $\Z/h\Z$-graded and its subgroup of elements of degree $0$ is
$$\RSTf_0 \cong \Glstr^{\Z_\ell/\Z}.$$
If $\Gstr$ contains an element of order $h$ then 
$$\RSTf \cong \Glstr^{\Z_\ell/\Z}\rtimes \Z/h\Z,$$
a semidirect product whose explicit expression depends on the choice of 
an element $\mathfrak f$ of $\RSTf$ of degree $1$.
The sandwich matrix $A$ is the same as that for $\Sfib$, because we view $\Glstr$ as a subgroup of $\Glstr^{\Z_\ell/\Z}$: an element $g\in \Glstr$ maps to the function of 
$\Glstr^{\Z_\ell/\Z}$ which takes value $g$ on $[0]$ and $\one$ otherwise. 
If generalised height is trivial then we can rewrite the above 
$$\M^{fib}(X_\theta)\cong \Sfib\times \prod_{[z]\in\Z_\ell/\Z\atop [z]\neq [0]} \Gstr$$
again a topological isomorphism.

Finally,  the kernel $\M(X_\theta)$ of the full Ellis semigroup has Rees-matrix form
\begin{equation}\label{eq-Rees-for-M}\nonumber
\M(X_\theta) \cong M[\RST;I_\theta,\{\pm\};A],
\end{equation}
where $\RST$ is an extension of the equicontinuous factor $\Z_\ell$ by $\RSTf$.
This isomorphism is only algebraic. Again $\rset$ and the sandwich matrix $A$ are the same,  as $\RSTf$ is a subgroup of $\RST$. The extension is algebraically split if the height is equal to the classical height $h=h_{cl}$ of the substitution. In this case
$$\RST \cong \RSTf \rtimes \Z_\ell,$$
algebraically.
\bigskip

Besides the details for the Ellis semigroup we provide below also $C_\Aa(\Gstr)$, the centraliser of $\Gstr$ in the group of permutations of the alphabet, which is also
$\mbox{\rm Aut}^{fib}(X_\theta)$.

For arbitrary size of the alphabet we can say the following.  There is no aperiodic bijective substitution with $|\rset|=1$.
If $\rset$ contains two elements then $\Gamma_\theta$ must be a cyclic subgroup of $\Gstr$.  

\subsection{Two-letter alphabet} To be compatible with primitivity and aperiodicity we must have   $\rset=S_2$. Hence $\Gstr=S_2=\Z/2\Z$ and $\Gamma_\theta=\Glstr=S_2$. Thus all primitive, aperiodic, bijective substitutions on a two-letter alphabet have the same structural semigroup, namely  $M[ S_2; S_2, \{ \pm\}; A]$.
The sandwich matrix is $A=\begin{pmatrix}  \one & \one \\ \one & \omega\end{pmatrix}$ where $\omega$ interchanges $a$ with $b$.
The generalised height is trivial for these substitutions. We thus have
$$\RSTf \cong S_2^{\Z_\ell/\Z},\quad \mathrm{and}\quad
\RST \cong S_2^{\Z_\ell/\Z} \rtimes \Z_\ell$$
where $\ell$ is the length of the substitution.

$M[S_2; S_2; \{ \pm\}; A]$ is perhaps the simplest non-orthodox semigroup. 
Since $S_2$ is abelian, we have $C_\Aa(\Gstr)=  \Gstr$. Thus all these substitutions have $\mbox{\rm Aut}^{fib}(X_\theta) = S_2$, generated by the map $\omega$. The simplest example of this type is the (simplified) Thue-Morse substitution,
$\theta(a) = abba$, $\theta(b) = baab$, where the above result has been obtained by Marcy Barge in a direct calculation \cite{Barge}.  

\subsection{Three-letter alphabet}\label{ex462} 
If $\Gstr$ is a subgroup of $S_2\subset S_3$ then we reproduce the above results for the semigroup, but these can never be realised by a primitive substitution on three letters, as one letter would stay fixed. So we consider the two possible other cases,  $\Gstr=S_3$ and $\Gstr=A_3\cong \Z/3\Z$. For
$\Gstr=S_3$, we give below examples where $\Gamma_\theta \cong \Z/2\Z$ or $\Gamma_\theta\cong\Z/3\Z$. In the first case, $\Gamma_\theta \cong \Z/2\Z$ has normal completion $\Gstr$ and so the height is trivial. In the second case  $\Gamma_\theta\cong\Z/3\Z$ is normal in $\Gstr$ and the height equal to $2$. The example we provide for this case has classical height $1$. We also give an example where  $\Gstr\cong \Z/3\Z$, which, for aperiodic $\theta$, forces $\Gamma_\theta\cong\Z/3\Z$ so that we have again trivial height. 
\begin{enumerate}
\item
Consider the substitution $\theta$
\[
\begin{array}{c} a\\ b\\ c \end{array} 
\mapsto
\begin{array}{c} a\\ b\\ c \end{array}
\!\!\!\!\!\!{\begin{array}{c} b\\ a\\ c \end{array}}
\!\!\!\!\!\!{\begin{array}{c} c\\ b\\ a \end{array}}
\!\!\!\!\!\!{\begin{array}{c} c\\ a\\ b \end{array}}
\!\!\!\!\!\!{\begin{array}{c} a\\ b\\ c \end{array}}
\]
Then it can be verified that $I_{\theta}=\left\{    \begin{pmatrix} b\\a\\c \end{pmatrix},  \begin{pmatrix} b\\c\\a \end{pmatrix}     \right\}$ which generates $\Gstr = S_3$.  
The structural semigroup is  $M[S_3; I_\theta, \{ \pm\}; A]$, whose normalised sandwich matrix $A=\begin{pmatrix}  \one & \one  \\  
\one &\omega \end{pmatrix}$, where $\omega$ exchanges $b$ with $c$.
One finds $\Gamma_\theta=\left\{ \one,  \begin{pmatrix} c\\b\\a \end{pmatrix}  \right\}\cong \Z/2\Z$, which is not normal in $S_3$, and $ \Glstr = G_\theta= S_3$. Thus $\theta$ has trivial generalised height. Hence
$$\RSTf \cong S_3^{\Z_5/\Z},\quad \mathrm{and}\quad
\RST \cong S_3^{\Z_5/\Z} \rtimes \Z_5$$
Also $C_\Aa(\Gstr) = \mbox{\rm Aut}^{fib}(X_\theta)$ is trivial.

\item

Consider the substitution $\theta$
\[
\begin{array}{c} a\\ b\\ c \end{array} 
\mapsto
\begin{array}{c} a\\ b\\ c \end{array}
\!\!\!\!\!\!{\begin{array}{c} b\\ a\\ c \end{array}}
\!\!\!\!\!\!{\begin{array}{c} a\\ b\\ c \end{array}}
\!\!\!\!\!\!{\begin{array}{c} c\\ b\\ a \end{array}}
\!\!\!\!\!\!{\begin{array}{c} a\\ b\\ c \end{array}}
\!\!\!\!\!\!{\begin{array}{c} a\\ c\\ b \end{array}}
\!\!\!\!\!\!{\begin{array}{c} a\\ b\\ c \end{array}}
\]
It has $\theta_0=\theta_2=\theta_4=\theta_6 = \id$ and the other three are the transpositions of $S_3$,
$\theta_1 = \begin{pmatrix} b\\a\\c\end{pmatrix}$,
$\theta_3 =  \begin{pmatrix}c\\b\\a \end{pmatrix}$, and 
$\theta_5 =\begin{pmatrix}a\\c\\b \end{pmatrix}$.
Hence $\rset = \{\theta_1,\theta_3,\theta_5\}$ and $\Gstr=S_3$. The structural semigroup is $M[S_3; I_\theta, \{ \pm\}; A]$ has  normalised sandwich matrix 
$\begin{pmatrix}  \one & \one & \one \\  
\one &\omega&\omega^2 \end{pmatrix}$, where 
$\omega =  \begin{pmatrix} b\\c\\a\end{pmatrix}$, a cyclic permutation.

Every element in $\Gamma_\theta$ is an even permutation, thus $\Gamma_\theta =
\Glstr=A_3$. It follows that $\Gstr/\Gamma_\theta \cong \Z/2\Z$ and $\theta$ has generalised height equal to 2 and therefore
$$\RSTf_0 \cong {A_3}^{\Z_7/\Z},\quad \RSTf \cong {A_3}^{\Z_7/\Z}\rtimes \Z/2\Z,$$
as $\theta_1\in \rset$ has order $2$.

Note that the substitution has trivial classical height as any fixed point must contain the word $aa$. We do therefore not know whether the extension ${A_3}^{\Z_7/\Z}\rtimes \Z/2\Z\hookrightarrow \RST \stackrel{\tilde\pi}\twoheadrightarrow \Z_7$ defining the structure group $\RST$ of $\M(X_\theta)$ splits.
Again, $C_\Aa(\Gstr)=\mbox{\rm Aut}^{fib}(X_\theta)$ is trivial.

This example has a natural generalisation to alphabets of any size $s$ with $\Gstr=S_s$ and $\Gamma_\theta=A_s$, the alternating group on $s$ elements.

\item
Consider the substitution $\theta$
$$
\begin{array}{c c l}
a &  & abc \\
b & \mapsto & bca \\
c &  & cab
\end{array}
$$
whose third power is simplified. We find $\rset = \{\one,\omega,\omega^{2}\}\cong \Z/3\Z$ where $\omega =\begin{pmatrix} b\\c\\a \end{pmatrix}$ 
is a cyclic permutation. It follows that $\Gstr=\Gamma_\theta=\Glstr\cong\Z/3\Z$. The structural semigroup is $M[\Z/3\Z; I_\theta, \{ \pm\}; A]$ where
$A= \begin{pmatrix}  \one & \one & \one \\  
\one&\omega&\omega^{2} \end{pmatrix}$.  
$$\RSTf \cong (\Z/3\Z)^{\Z_{3}/\Z},\quad \mathrm{and}\quad
\RST \cong (\Z/3\Z)^{\Z_{3}/\Z} \rtimes \Z_3.$$
Finally, $C_\Aa(\Gstr)=\mbox{\rm Aut}^{fib}(X_\theta)\cong\Z/3\Z$, generated by $\omega$.
There is an obvious generalisation of this example to alphabets of any size $s\geq 2$, the case $s=2$ corresponding again to the Thue-Morse substitution.

\end{enumerate}

\subsection{Four-letter alphabet}
Our last example is related to the dihedral group $D_4$. Consider the substitution
$\theta$ of length $7$
$$
\begin{array}{c c l}
a &  & abadcba \\
b & \mapsto & badcbab \\
c &  & cdcbadc\\
d & & dcbadcd
\end{array}
$$
which has classical height 2. If we identify the letters with the edges of a square whose center is $0$ in such a way that 
$a$ corresponds to the lower right corner and we order the edges counterclockwise, then
$\theta_1$ and $\theta_3$ amount to the reflection along the $x$-axis and the $y$-axis, resp., while $\theta_2$ and $\theta_4$ amount to the reflection along the diagonal with slope $-1$ and $+1$, resp.. Finally $\theta_5=\theta_1$ and $\theta_6=\theta_0=\one$. 
Thus $\rset = \{\theta_1,\rho\}$ where $\rho=\theta_1\theta_2$ is the rotation by $\frac\pi2$ to the right. It follows that $G_\theta = D_4$ is the dihedral group of order $4$ and that $\Gamma_\theta$ the group of order $2$ generated by the reflection $\theta_2$. Its normal completion is thus the group generated by the reflections $\theta_2$ and $\theta_4$, which commute, so $\Glstr\cong \Z/2\Z\times\Z/2\Z$ showing that height is equal to the classical height, namely 2. Moreover, the element $\theta_1$ of $\rset$ has order $2$.
The structural semigroup is
$\Sfib=M[D_4; I_\theta, \{ \pm\}; A]$  with $A=\begin{pmatrix}  \one & \one  \\  
\one &\theta_4 \end{pmatrix}$.
Furthermore,
$$\RSTf \cong (\Z/2\Z\times\Z/2\Z)^{\Z_7/\Z}\rtimes\Z/2\Z,\quad \mathrm{and}\quad
\RST \cong (\Z/2\Z\times\Z/2\Z)^{\Z_7/\Z}\rtimes\Z/2\Z \rtimes \Z_7$$
Finally
$C_\Aa(\Gstr) = \mbox{\rm Aut}^{fib}(X_\theta)= \{ \id,\rho^2 \}$.

 \section*{Acknowledgement}
 The authors are very grateful to the referee, whose numerous suggestions and questions led not only to an improvement in the exposition, but also to an extension of the results, namely the recovering of the full Ellis semigroup from its fibre preserving part in 
 Corollary~\ref{cor:reproducing M}, and Theorems~\ref{thm-main2} and 
 \ref{thm-main4}.

{\footnotesize
\bibliographystyle{abbrv}


\def\ocirc#1{\ifmmode\setbox0=\hbox{$#1$}\dimen0=\ht0 \advance\dimen0
  by1pt\rlap{\hbox to\wd0{\hss\raise\dimen0
  \hbox{\hskip.2em$\scriptscriptstyle\circ$}\hss}}#1\else {\accent"17 #1}\fi}

}

\end{document}